\setlist[enumerate,1]{leftmargin=1cm}
\theoremstyle{plain}
\newtheorem{theorem}{Theorem}[section]
\newtheorem{proposition}[theorem]{Proposition}
\newtheorem{corollary}[theorem]{Corollary}
\newtheorem{lemma}[theorem]{Lemma}
\theoremstyle{definition}
\newtheorem{definition}[theorem]{Definition}
\theoremstyle{remark}
\newtheorem{remark}[theorem]{Remark}
 \DeclareRobustCommand{\checkarg}{\@ifnextchar[{\@witharg}{}}
 \DeclareRobustCommand{\@witharg}[1][]{\ensuremath{(#1)}}
 \DeclareRobustCommand{\scaleGen}[1]{\@ifnextchar[{\@scalewithargs{#1}}{\odot^{}_{#1}}}
 \def\@scalewithargs#1[#2][#3]{#2 \odot^{}_{#1} #3}
 \DeclareRobustCommand{\blankBinOp}{\@ifnextchar[{\@blankbinopwithargs}{}}
 \def\@blankbinopwithargs[#1][#2]{#1 #2}
\NewDocumentCommand{\myminus}{m}{%
  $\m@th#1{-}$%
}
\def\IPspace{\mathcal{I}}
\def\HIPspace{\IPspace_H}
\def\IPH{\HIPspace}
\def\dH{d_H}
\def\dHs{d'_H}
\def\IPA{\mathcal{I}_{\alpha}}
\def\dIA{d_{\alpha}}
\def\Exc{\mathcal{E}}
\def\mBxcA{\nu_{\BESQ}^{(-2\alpha)}}	
\def\cC{\mathcal{C}}	
\def\HA{\mathcal{N}^{\textnormal{\,sp}}}
\def\SHA{\Sigma(\HA)}
\def\HfinA{\HA_{\textnormal{fin}}}
\def\SHfinA{\Sigma\big(\HfinA\big)}
\def\umCladeA{\nu^{(\alpha)}_{\perp\textnormal{cld}}}	
\def\uF{\bF_{\perp}}	
\def\cN{\mathcal{N}}
\def\cNRE{\mathcal{N}\big([0,\infty)\times\Exc\big)}
\def\cS{\mathcal{S}}
\def\len{\textnormal{len}}			
\def\life{\zeta}					
\def\dis{\textnormal{dis}}			
\def\IPmag#1{\left\|\vphantom{I}#1\right\|}		
\def\skewer{\textsc{skewer}}		
\def\skewerP{\widebar{\skewer}}		
\newcommand{\xiA}{\xi}
\def\cutoffL#1#2{\textsc{cutoff}^{\leq #1}\ensuremath{\left(#2\right)}}
\def\cutoffG#1#2{\textsc{cutoff}^{\geq #1}\ensuremath{\left(#2\right)}}
\def\cutoffLB#1#2{\textsc{cutoff}^{\leq #1}\ensuremath{\big(#2\big)}}
\def\cutoffGB#1#2{\textsc{cutoff}^{\geq #1}\ensuremath{\big(#2\big)}}
\def\Dirac#1{\delta\left( #1 \right)}
\def\DiracBig#1{\delta\big( #1 \big)}	
\def\reverse{\mathcal{R}}							
\def\reverseI{\mathcal{R}_{\textnormal{IP}}}		
\def\scaleB{\scaleGen{\textnormal{sp}}}		
\def\scaleH{\scaleGen{\textnormal{cld}}}		
\def\scaleHA{\odot^{\alpha}_{\textnormal{cld}}}		
\def\ShiftRestrict#1#2{#1\big|^{\leftarrow}_{#2}} 
\def\shiftrestrict#1#2{#1|^{\leftarrow}_{#2}}
\def\Restrict#1#2{#1\big|_{#2}}
\def\restrict#1#2{#1|_{#2}}
\def\Concat{ \mathop{ \raisebox{-2pt}{\Huge$\star$} } }
\def\ConcatIL{ \mbox{\huge $\star$} }
\def\concat{\star}
\def\bN{\mathbf{N}}			
\def\bF{\mathbf{F}}			
\def\bX{\mathbf{X}}			
\def\bn{\mathbf{n}}
\newcommand{\IPLT}{\mathscr{D}}
\def\bff{\mathbf{f}}		
\def\BR{\mathbb{R}}				
\def\BN{\mathbb{N}}				
\def\Leb{\textnormal{Leb}}		
\def\to{\rightarrow}
\def\downto{\downarrow}
\def\cf{\mathbf{1}}				
\def\Pr{\mathbf{P}}				
\def\BPr{\mathbb{P}}			
\def\EV{\mathbf{E}}				
\def\cF{\mathcal{F}}			
\def\cFI{\cF_{\IPspace}}		
\def\cA{\mathcal{A}}	
\def\distribfont#1{\texttt{\upshape #1}}
\def\ExpDist{\distribfont{Exponential}\checkarg}
\def\GammaDist{\distribfont{Gamma}\checkarg}
\def\BetaDist{\distribfont{Beta}\checkarg}
\def\PoiDir{\distribfont{PD}\checkarg}
\def\PoiDirAT{\ensuremath{\PoiDir(\alpha,\theta)}}
\def\PRM{\distribfont{PRM}\checkarg}
\def\PRMLBA{\ensuremath{\distribfont{PRM}\big(\Leb\otimes\mBxcA\big)}}
\def\Stable{\distribfont{Stable}\checkarg}
\def\BESQ{\distribfont{BESQ}\checkarg}
\def\PDIP{\distribfont{PDIP}\checkarg}
\def\PDIPAT{\ensuremath{\PDIP(\alpha,\theta)}}
\newcommand{\cev}[1]{\ThisStyle{
	\stackengine{0ex}{\SavedStyle#1}{\SavedStyle
		\scaleobj{0.63}{\leftharpoonup}}{O}{c}{F}{\useanchorwidth}{S}}}
\let\save@mathaccent\mathaccent
\newcommand*\if@single[3]{%
  \setbox0\hbox{${\mathaccent"0362{#1}}^H$}%
  \setbox2\hbox{${\mathaccent"0362{\kern0pt#1}}^H$}%
  \ifdim\ht0=\ht2 #3\else #2\fi
  }
\newcommand*\rel@kern[1]{\kern#1\dimexpr\macc@kerna}
\newcommand{\widebar}{}
\DeclareRobustCommand*\widebar[1]{\@ifnextchar^{\wide@bar{#1}{0}}{\wide@bar{#1}{1}}}
\newcommand*\wide@bar[2]{\if@single{#1}{\wide@bar@{#1}{#2}{1}}{\wide@bar@{#1}{#2}{2}}}
\newcommand*\wide@bar@[3]{%
  \begingroup
  \def\mathaccent##1##2{%
    \let\mathaccent\save@mathaccent
    \if#32 \let\macc@nucleus\first@char \fi
    \setbox\z@\hbox{$\macc@style{\macc@nucleus}_{}$}%
    \setbox\tw@\hbox{$\macc@style{\macc@nucleus}{}_{}$}%
    \dimen@\wd\tw@
    \advance\dimen@-\wd\z@
    \divide\dimen@ 3
    \@tempdima\wd\tw@
    \advance\@tempdima-\scriptspace
    \divide\@tempdima 10
    \advance\dimen@-\@tempdima
    \ifdim\dimen@>\z@ \dimen@0pt\fi
    \rel@kern{0.6}\kern-\dimen@
    \if#31
      \overline{\rel@kern{-0.6}\kern\dimen@\macc@nucleus\rel@kern{0.4}\kern\dimen@}%
      \advance\dimen@0.4\dimexpr\macc@kerna
      \let\final@kern#2%
      \ifdim\dimen@<\z@ \let\final@kern1\fi
      \if\final@kern1 \kern-\dimen@\fi
    \else
      \overline{\rel@kern{-0.6}\kern\dimen@#1}%
    \fi
  }%
  \macc@depth\@ne
  \let\math@bgroup\@empty \let\math@egroup\macc@set@skewchar
  \mathsurround\z@ \frozen@everymath{\mathgroup\macc@group\relax}%
  \macc@set@skewchar\relax
  \let\mathaccentV\macc@nested@a
  \if#31
    \macc@nested@a\relax111{#1}%
  \else
    \def\gobble@till@marker##1\endmarker{}%
    \futurelet\first@char\gobble@till@marker#1\endmarker
    \ifcat\noexpand\first@char A\else
      \def\first@char{}%
    \fi
    \macc@nested@a\relax111{\first@char}%
  \fi
  \endgroup
}
\newcommand{\sskewer}{\textsc{sSkewer}}
\numberwithin{equation}{section}
\numberwithin{figure}{section}
\numberwithin{table}{section}
\newcommand\PoiIPP[1]{self-similar \ensuremath{(#1)} interval partition evolution}
\newcommand\CPoiIPP[1]{Self-similar \ensuremath{(#1)} interval partition evolution}
\newcommand\PoiIPPAT{\PoiIPP{\alpha,\theta}}
\newcommand\SSIPE[1]{\distribfont{SSIPE}\ensuremath{(#1)}}
\newcommand\SSIPEAT{\ensuremath{\distribfont{SSIPE}(\alpha,\theta)}}
\newcommand\SSIPEA{\distribfont{SSIPE}\ensuremath{(\alpha,0)}}
\newcommand\SSIPEzAT[1]{\ensuremath{\distribfont{SSIPE}_{#1}(\alpha,\theta)}}
\newcommand\SSIPEzA[1]{\ensuremath{\distribfont{SSIPE}_{#1}(\alpha,0)}}
\newcommand\PDIPEAT{\distribfont{PDIPE}\ensuremath{(\alpha,\theta)}}
\newcommand{\fskewer}{\textsc{fSkewer}}
\def\umCladeAbar{\overline{\nu}^{(\alpha)}_{\perp\textnormal{cld}}}	
\newcommand{\IPAo}{\mathcal{I}_{\alpha,1}}
\newcommand{\IPHo}{\mathcal{I}_{H,1}}
\newcommand{\IPHos}{\overline{\mathcal{I}}_{H,1}}
\newcommand{\IPHs}{\mathcal{I}_{H}^*}
\begin{document}

\ \vspace{-22pt}

\title[Two-parameter interval partition diffusions]{Diffusions on a space of interval partitions:\\ The two-parameter model}

\author[N.~Forman]{Noah Forman$^1$}
\address{$^1$ Department of Mathematics \& Statistics\\ McMaster University\\ Hamilton, ON L8S 4K1 \\ Canada}
\email{noah.forman@gmail.com}
\author[D.~Rizzolo]{Douglas Rizzolo$^2$}
\address{$^2$ Department of Mathematics\\ University of Delaware\\ Newark, DE 19716\\ USA}
\email{drizzolo@udel.edu}
\author[Q.~Shi]{Quan Shi$^3$}
\address{$^3$ Academy of Mathematics and Systems Science\\ Chinese Academy of Sciences\\ Beijing 100190\\ China}
\email{quan.shi@amss.ac.cn}
\author[M.~Winkel]{Matthias Winkel$^4$}
\address{$^4$ Department of Statistics\\ University of Oxford\\ Oxford, OX1 3LB\\ UK}
\email{winkel@stats.ox.ac.uk}

\begin{abstract}
We introduce and study interval partition diffusions with Poisson--Dirichlet$(\alpha,\theta)$ stationary distribution for parameters $\alpha\in(0,1)$ and $\theta\ge 0$. This extends previous work on the cases $(\alpha,0)$ and $(\alpha,\alpha)$ and builds on our recent work on measure-valued diffusions. Our methods for dealing with general $\theta\ge 0$ allow us to strengthen previous work on the special cases to include initial interval partitions with dust. 
In contrast to the measure-valued setting, we can show that this extended process is a Feller process improving on the Hunt property established in that setting. 
These processes can be viewed as diffusions on the boundary of a branching graph of integer compositions. Indeed, by studying their infinitesimal generator on 
suitable quasi-symmetric functions, we relate them to diffusions obtained as scaling limits of composition-valued up-down chains. 
\end{abstract}         

\keywords{Interval partition, branching processes, self-similar diffusion, Chinese restaurant process, infinitely-many-neutral-alleles model, regenerative composition structure}
\thanks{This research is partially supported by NSF grants {DMS-1204840, DMS-1444084, DMS-1855568}, UW-RRF grant A112251, EPSRC grant EP/K029797/1, NSERC RGPIN-2020-06907, SNSF grant P2ZHP2\_171955, NSFC grant 12288201.}
\subjclass[2010]{Primary 60J25, 60J60, 60J80; Secondary 60G18, 60G52, 60G55}

\maketitle

\ \vspace{-28pt}

\section{Introduction and main results}\label{sec:intro}

In this paper, we introduce and study a two-parameter family of interval partition diffusions such that, for each choice of parameters, $\alpha\in(0,1)$ and
$\theta\ge 0$, the stationary distribution is the corresponding two-parameter Poisson--Dirichlet interval partition, ${\tt PDIP}(\alpha,\theta)$. The members of this two-parameter family arise as the unique regenerative ordering of the coordinates of the Poisson--Dirichlet distribution, ${\tt PD}(\alpha,\theta)$, on the Kingman simplex. Gnedin and Pitman introduced \PDIPAT\ through an underlying family of regenerative composition structures \cite{GnedPitm05}.

The cases $\theta=0$ and $\theta=\alpha$ of our construction were introduced in \cite{Paper1-2}. A full two-parameter family of measure-valued Poisson--Dirichlet diffusions arising from a variant of our construction was introduced in \cite{FVAT}. Motivated in part by an earlier version of the present paper and by 
scaling limit results conjectured in \cite{RogeWink20}, a two-parameter family was obtained as a scaling limit in \cite{RivRiz20}. Encouraged by a referee, we establish in this version of the present paper that the two families coincide.


There is a long history of interest in dynamics preserving Poisson--Dirichlet and related distributions.  Of particular relevance for us, Ethier and Kurtz's \cite{EthiKurt81} infinitely-many-neutral-alleles diffusion model has stationary distribution $\PoiDir(0,\theta)$.  Recently, using techniques developed by Borodin and Olshanski \cite{BoroOlsh09} to study diffusive limits of random walks on partitions, Petrov \cite{Petrov09} constructed a two-parameter family of diffusions on the Kingman simplex with \PoiDirAT\ stationary distributions. This extends Ethier and Kurtz's model. These Ethier--Kurtz--Petrov (\texttt{EKP}$(\alpha,\theta)$) diffusions have also been studied e.g.\ in \cite{CdBERS17,Ethier14,FengSun10,FengSun19,RuggWalk09,RuggWalkFava13}.

\subsection{Kernels with a branching property and immigration} In \cite{Paper1-1}, we constructed branching interval partition diffusions from the level sets of marked L\'evy processes. In 
\cite{Paper1-2}, we computed their semigroups and adapted Shiga's \cite{Shiga1990} construction of Fleming--Viot superprocesses by normalization/time-change  to obtain $(\alpha,0)$- (and $(\alpha,\alpha)$-)interval partition diffusions. While our toolkit derives almost exclusively from marked L\'evy processes, the semigroup of transition kernels is more easily generalized to the two-parameter setting. 

\begin{definition}\label{def:IP_1}
	An \emph{interval partition} is a set $\beta$ of disjoint, open subintervals of some interval $[0,M]$, that cover $[0,M]$ up to a Lebesgue-null set. We write $\IPmag{\beta}$ to denote $M$. We refer to the elements of $\beta$ as its \emph{blocks}. The Lebesgue measure of a block is called its \emph{mass}.  The set of all interval partitions is denoted by $\HIPspace$. 
\end{definition}

Fix $\alpha\in(0,1)$ and $\theta\ge 0$. We define kernels $(\kappa_y^{\alpha,\theta},\,y\ge 0)$ on $\HIPspace$. These kernels possess a \emph{branching property} under which the state at time $y$ can be seen as the concatenation of a family of independent interval partitions indexed by the blocks of the initial interval partition, along with one additional independent interval partition representing \em immigration\em; see Figure \ref{fig:semigroup}. This generalizes the cases $\theta=0$ without immigration, and $\theta=\alpha$ with a specific immigration parameter, of \cite{Paper1-2}. In the general case, this is based on the Poisson--Dirichlet$(\alpha,\theta)$ interval partitions, ${\tt PDIP}(\alpha,\theta)$, of \cite{GnedPitm05}, which we recall in Section \ref{sec:IPspace}.
\begin{figure}[t]
 \centering
 \input{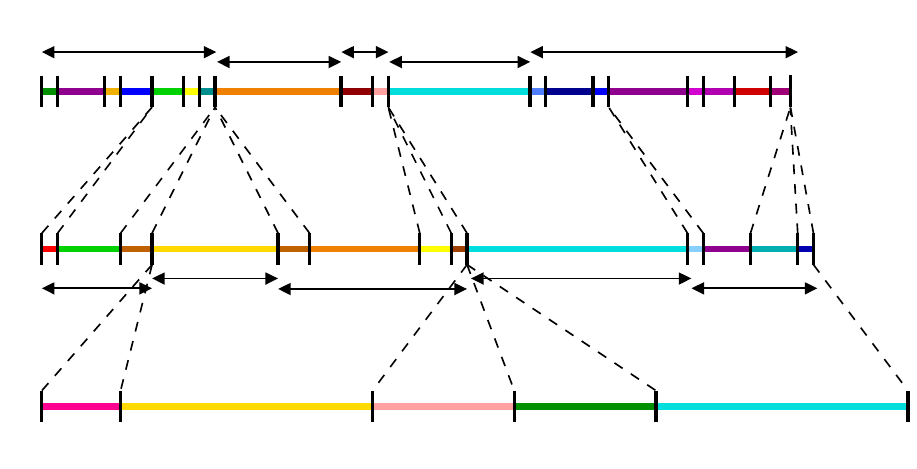_t}
 \caption{Illustration of the transition kernel $\kappa_y^{\alpha,\theta}$. The initial state $\beta^0$ has five blocks: $U_1,\ldots,U_5$. Some contribute $\varnothing$ for time $y$, here
   $U_1$, $U_3$ and $U_4$. The others each contribute a leftmost block $L_j^y$ and a partition $\beta_j^y=G_j\bar{\beta}_j$ of further blocks. If we add ``immigration'' $\beta_0^y=G^y\bar{\beta}_0$, then $\beta^y=\beta_0^y\concat(0,L_2^y)\concat\beta_2^y\concat(0,L_5^y)\concat\beta_5^y$. The semigroup property requires consistency of a transition from 0 to $z$ 
   and composition of the transitions from $0$ to $y$ and from $y$ to $z$.\label{fig:semigroup}\vspace{-0.1cm}}
\end{figure}
 
To be more precise, let us begin by formalizing this concatenation. We call the family $(\gamma_U)_{U\in\beta}$ in $\mathcal{I}_H$ indexed by $\beta\in\mathcal{I}_H$ \emph{summable} if $\sum_{U\in\beta}\IPmag{\gamma_U} < \infty$. We then define
$S(U) := \sum_{U^\prime=(u^\prime,v^\prime)\in\beta\colon u^\prime<u}\IPmag{\gamma_{U^\prime}}$ for 
$U = (u,v) \in \beta$, and the \emph{concatenation}
\begin{equation}\label{eq:IP:concat_def}
 \Concat_{U\in\beta}\gamma_U := \{(S(U)+x,S(U)+y)\colon\ U\in\beta,\ (x,y)\in \gamma_U\}.\vspace{-0.2cm}
\end{equation} 
We also write $\gamma\concat\gamma^\prime$ to concatenate two interval partitions. 
For $c\ge0$ let $c\gamma$ denote the interval partition obtained by multiplying each block in $\gamma$ by $c$.

Now, fix $b>0$, $r>0$. We consider an independent triple $\big(G,\bar\beta,L_{b,r}^{(\alpha)}\big)$ where $G\sim \GammaDist[\alpha,r]$, $\bar\beta\sim \PDIP[\alpha,\alpha]$, and $L_{b,r}^{(\alpha)}$ is $(0,\infty)$-valued with Laplace transform
\begin{equation}
  \EV\left[e^{-\lambda L_{b,r}^{(\alpha)}}\right] = \left(\frac{r+\lambda}{r}\right)^{\alpha}\frac{e^{br^2/(r+\lambda)}-1}{e^{br}-1}.
  \label{LMBintro}
\end{equation}
Then we define the distribution $\mu_{b,r}^{(\alpha)}$ of a random interval partition as 
\begin{equation}
 \mu_{b,r}^{(\alpha)}=e^{-br}\delta_\varnothing+(1\!-\!e^{-br})\Pr\left((0,L_{b,r}^{(\alpha)})\concat G\bar{\beta}\in\cdot\,\right).
\label{blockdesc}
\end{equation}
The idea is that for $r>0$, under $\mu_{b,r}^{(\alpha)}$, we associate with a block of size $b$ either no descendants (the empty interval partition $\varnothing$) with probability $e^{-br}$; or the descendants comprise one block of size $L_{b,r}^{(\alpha)}$ followed by the blocks of $G\bar\beta$.

\begin{definition}[$\kappa^{\alpha,\theta}_y(\beta,\,\cdot\,)$, $y>0$, $\beta\in\HIPspace$]\label{def:kernel:sp}
 Fix $\alpha\in (0,1)$, $\theta\ge 0$ and let $\beta\in\IPH$ and $y>0$. Then $\kappa^{\alpha,\theta}_y(\beta,\,\cdot\,)$ is defined to be the distribution of
$G^y\bar{\beta}_0\concat\Concat_{U\in\beta}\beta_U^y$ for independent $G^y\sim\GammaDist[\theta,1/2y]$, $\bar{\beta}_0\sim\PDIP(\alpha,\theta)$, and $\beta_U^y\sim \mu_{{\rm Leb}(U),1/2y}^{(\alpha)}$, $U\in\beta$, where ``Leb'' denotes Lebesgue measure.
\end{definition}

It is straightforward to see that only finitely many blocks $U\in\beta$ have descendants under $\mu_{{\rm Leb}(U),1/2y}^{(\alpha)}$ and hence that 
$\kappa_y^{\alpha,\theta}(\beta,\,\cdot\,)$ is well-defined. It is not obvious from this definition that these kernels form a transition semigroup. 

\begin{proposition}	\label{prop:sp}
	Fix $\alpha\in (0,1)$ and $\theta\ge 0$.
	\begin{enumerate}[label=(\roman*),ref=(\roman*)]
		\item \emph{Diffusion properties}:  \label{item:SSIP:diffusion}
		The family $(\kappa^{\alpha,\theta}_y,\, y\ge 0)$ forms the transition semigroup of a path-continuous $\HIPspace$-valued Hunt process. \smallskip
		\item \emph{Self-similarity}:  \label{item:SSIP:SS}
		If $(\beta^y,\,y\geq 0)$ is an instance of this Hunt process then so is $(c \beta^{y/c},\,y\geq 0)$ for any $c>0$.\smallskip
		\item \emph{Total mass}:  \label{item:SSIP:mass}
		The associated total mass process, $(\|\beta^y\|,\,y\geq 0)$, is a $\BESQ(2\theta)$, i.e.\ a $2\theta$-dimensional squared Bessel process.\smallskip
		\item \emph{Pseudo-stationarity}:  \label{item:SSIP:pseudostat}
		Consider an independent pair of $\bar\beta\sim\PDIP[\alpha,\theta]$ and $Z = (Z(y),y\ge 0)\sim\BESQ_{\mu}(2\theta)$, with an arbitrary initial distribution $\mu$. If the aforementioned Hunt process $(\beta^y,\,y\geq 0)$ has initial distribution $\beta^0\stackrel{d}{=}Z(0) \bar\beta$ then for each fixed $y\ge 0$ we have $\beta^y \stackrel{d}{=} Z(y) \bar\beta$.
	\end{enumerate}
\end{proposition}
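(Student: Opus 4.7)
My strategy is to build on the $\theta=0$ case, where \itemref{item:SSIP:diffusion}--\itemref{item:SSIP:pseudostat} are established in \cite{Paper1-2}, and to treat general $\theta\ge 0$ by layering in an independent immigration. Definition \ref{def:kernel:sp} already gives the decomposition $\kappa^{\alpha,\theta}_y(\beta,\,\cdot\,) = \mathrm{Law}\big(G^y\bar\beta_0 \concat \xi^y\big)$, where $\xi^y\sim \kappa^{\alpha,0}_y(\beta,\,\cdot\,)$ is independent of the immigration $G^y\bar\beta_0$ with $G^y\sim\GammaDist[\theta,1/2y]$ and $\bar\beta_0\sim\PDIP(\alpha,\theta)$; this immigration is the only new ingredient beyond the $\theta=0$ kernel.

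The main algebraic obstacle is the semigroup property in \itemref{item:SSIP:diffusion}. The branching property of $\kappa^{\alpha,0}$ from \cite{Paper1-2} reduces Chapman--Kolmogorov $\kappa^{\alpha,\theta}_y \kappa^{\alpha,\theta}_z = \kappa^{\alpha,\theta}_{y+z}$ to a single compatibility identity for the immigration: for independent $G^y\bar\beta_0$, $G^z\bar\beta_0'$ and $G^{y+z}\bar\beta_0''$,
$$G^z\bar\beta_0' \concat \kappa^{\alpha,0}_z\big(G^y\bar\beta_0,\,\cdot\,\big) \stackrel{d}{=} G^{y+z}\bar\beta_0''.$$
Equivalently, the law of $G^y\bar\beta_0$ is pseudo-stationary for the $(\alpha,0)$-semigroup-with-$\theta$-immigration. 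I would prove this by a Laplace-functional computation, leveraging the one-block formula (\ref{LMBintro}) together with the regenerative description of $\PDIP(\alpha,\theta)$ from \cite{GnedPitm05}: the point is that the leftmost-block size-biased pick from $\PDIP(\alpha,\theta)$ interacts with the $\GammaDist[\alpha,r]$ factor and the $\mu^{(\alpha)}_{b,r}$ leftmost-block Laplace transform so as to reconstitute the right-hand side.

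The remaining parts are more direct. For \itemref{item:SSIP:SS}, I would verify the identity $c\mu^{(\alpha)}_{b,r} = \mu^{(\alpha)}_{cb,r/c}$ via (\ref{LMBintro}) together with the scaling $cX\sim\GammaDist[\alpha,r/c]$ for $X\sim\GammaDist[\alpha,r]$, which gives self-similarity at each fixed time and hence for the Markov process. For \itemref{item:SSIP:mass}, an explicit computation from (\ref{LMBintro}) shows that each block $U\in\beta$ contributes a factor $\exp\big(\!-\!\mathrm{Leb}(U)\lambda/(1+2y\lambda)\big)$ to the Laplace transform of $\|\beta^y\|$, while the immigration contributes $(1+2y\lambda)^{-\theta}$, jointly matching the $\BESQ_{\|\beta\|}(2\theta)$ Laplace transform at time $y$. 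Part \itemref{item:SSIP:pseudostat} then follows by combining \itemref{item:SSIP:SS}, \itemref{item:SSIP:mass} with the pseudo-stationarity identity established in the semigroup step, together with standard properties of $\PDIP(\alpha,\theta)$ under size-biased sampling.

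For the Hunt property and path continuity in \itemref{item:SSIP:diffusion}, I would realize the process pathwise as an independent $(\alpha,0)$-branching process from $\beta^0$ --- already Hunt and path-continuous by \cite{Paper1-1,Paper1-2} --- concatenated with an independent Poisson point process of immigration clades whose intensity is chosen so that the one-dimensional marginals recover $G^y\bar\beta_0$. Summability of clade masses at each fixed time follows from \itemref{item:SSIP:mass}, which transfers path continuity; the Hunt property follows from the independence and regularity of the two components.
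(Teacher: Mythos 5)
Your plan is structurally close to the paper's but glosses over the parts that actually take work. Let me separate what you have right from where the gaps are.

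Your computations for \itemref{item:SSIP:SS} and \itemref{item:SSIP:mass} at a fixed time are correct and, for the total mass, arguably cleaner than the paper's route: a short calculation with \eqref{LMBintro} does give $\EV[e^{-\lambda \text{mass}}]=e^{-b\lambda/(1+2y\lambda)}$ for each block, and the immigration gives the $(1+2y\lambda)^{-\theta}$ factor, matching the $\BESQ_{\|\beta\|}(2\theta)$ transition Laplace transform; the paper instead obtains \itemref{item:SSIP:mass} from $\BESQ$ additivity via the clade construction. Your reduction of Chapman--Kolmogorov to the compatibility identity $G^z\bar\beta_0' \concat \kappa^{\alpha,0}_z(G^y\bar\beta_0,\,\cdot\,) \stackrel{d}{=} G^{y+z}\bar\beta_0''$ using the branching property of $\kappa^{\alpha,0}$ is also correct; this is precisely the statement that $\kappa^{\alpha,\theta}_y(\varnothing,\cdot)$ is an entrance law, which is Proposition \ref{prop:IPPAT:entrance} in the paper.

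The first real gap is the proof of that compatibility identity. You say you would do a ``Laplace-functional computation'' leveraging \eqref{LMBintro} and the regenerative structure of $\PDIP(\alpha,\theta)$, but this is not substantiated, and I do not think it is tractable in the form you sketch. Applying $\kappa^{\alpha,0}_z$ to $G^y\bar\beta_0$ --- a random partition with infinitely many blocks --- produces a complicated object: an independent Bernoulli thinning of the blocks (with survival probability $1-e^{-{\rm Leb}(U)/2z}$), each survivor replaced by a size-dependent leftmost block followed by an independent Gamma-scaled $\PDIP(\alpha,\alpha)$, all concatenated in the original order. There is no obvious Laplace functional of ordered interval partitions through which this simplifies. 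The paper's proof of Proposition \ref{prop:IPPAT:entrance} works because the Poisson random measure $\cev\bF_\theta$ of clades exposes an i.i.d.\ $\BetaDist(\theta,1)$/$\ExpDist$ structure in the crossing levels (cf.\ the telescoping identities \eqref{eq:star}--\eqref{eq:cevbetadecomp}), which then meets the iterated decomposition \eqref{eq:at_a0} of $\PDIP(\alpha,\theta)$. Your sketch about ``the leftmost-block size-biased pick'' does not capture this: the leftmost surviving block after $\kappa^{\alpha,0}_z$ is not a size-biased pick from $\bar\beta_0$, and the relevant structure is across all surviving blocks simultaneously. If you want to avoid the scaffolding machinery you would have to rediscover an equivalent of this i.i.d.\ structure, and at that point you are essentially reproducing Sections \ref{sec:immigration}--\ref{sec:Markov}.

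The second gap concerns the Hunt property. You propose the same pathwise realization as Definition \ref{def:IPPAT} (an independent $(\alpha,0)$-process concatenated with a Poisson measure of immigration clades), and then assert that ``the Hunt property follows from the independence and regularity of the two components.'' This is too quick. Knowing the one-dimensional marginals of the construction agree with $\kappa^{\alpha,\theta}_y$ does not show it is a version of the Markov process with that semigroup; you need the construction's own Markov property, i.e.\ that given the path up to level $y$, the restriction above $y$ has law $\BPr^{\alpha,\theta}_{\beta^y}$. In the paper this is Proposition \ref{prop:MP}, whose proof relies on the cutoff decomposition of $\cev\bF_\theta$ (Lemma \ref{lem:cevF:cutoff}, built on Corollary \ref{cor:min_cld:mid_spindle} and the mid-spindle Markov property Lemma \ref{lem:min_cld:mid_spindle}). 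Your plan silently assumes this. Finally, for \itemref{item:SSIP:pseudostat}, your compatibility identity gives pseudo-stationarity only for $\GammaDist[\theta,\rho]$ initial mass (as in Proposition \ref{prop:pseudostat:gamma}); extending to arbitrary $\mu$ requires the uniqueness-of-Laplace-transform argument with the modified Bessel function identity \eqref{eq:besselfuction}, which ``standard properties of $\PDIP$ under size-biased sampling'' does not supply.
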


In the language of \cite{Lamperti72}, these processes are 1-self-similar.

\subsection{Starting from dust}\label{sec:main_results}
One drawback of the state space $\HIPspace$, equipped with the Hausdorff metric $\dH$ between complements of the form $C(\beta):=[0,\|\beta\|]\setminus\bigcup_{U\in\beta}U$,
is that it is not complete. E.g., $\beta_n=\{(0,1/n),(1/n,2/n),\ldots,((n-1)/n,1)\}$, $n\ge 1$, is a Cauchy sequence that does not converge in $\HIPspace$. Indeed, $C(\beta_n)$ approaches $[0,1]$ under the Hausdorff metric $d_H$. To address this, we now extend the evolutions of Proposition \ref{prop:sp} to a completion of $(\HIPspace,d_H\circ C)$. The elements of this completion may be thought of as ``generalized interval partitions'' in which blocks $U\in\beta$ must still be disjoint open subsets of $[0,M]$, but $C(\beta,M):=[0,M]\setminus\bigcup_{\beta\in U}U$ need not be Lebesgue-null. We think of this residual, left-out mass $C(\beta,M)$ as ``dust'' \cite{Haas04}. 

More precisely, let $\mathcal{K}$ be the space of compact subsets of $[0,\infty)$ that contain $0$, equipped with the Hausdorff metric $d_H$. 
We associate with any $K\in\mathcal{K}$ the family $\beta(K)$ of open intervals formed by the connected components of $[0,\max K]\setminus K$.
Define $\IPHs$ to be the image of $\mathcal{K}$ under the map $\beta^*(K) := (\beta(K),\max K)$. The map $\beta^*$ is a bijection from $\mathcal{K}$ to $\IPHs$. Then $\IPHs$ is a set of ordered pairs $(\beta,M)$ in which $M\in [0,\infty)$ and $\beta$ is a family of disjoint open subintervals of $[0,M]$. This space can be viewed as an extension of $\IPH$ via the inclusion map $\beta \mapsto (\beta,\|\beta\|)$ from $\IPH$ to $\IPHs$. We refer to elements of $\IPHs$ as \emph{generalized interval partitions}.

Now $C\colon \IPHs\to \mathcal{K}$ given by $C(\beta,M) = [0,M]\setminus\bigcup_{U\in\beta}U$, is the inverse of the map $\beta^*\colon\mathcal{K}\rightarrow\IPHs$.
Equip both $\IPH$ and $\IPHs$ with the metric $d_H\circ C$. 
It is well-known that $(\mathcal{K},d_H)$ is locally compact and separable. 
Then it is easy to see that 
$(\mathcal{K},d_H)$, or equivalently $(\IPHs,d_H\circ C)$, is a metric completion of $(\mathcal{I}_H,d_H\circ C)$ in that every $K\in\mathcal{K}$ is a $d_H$-limit of some  
$C(\beta_n)$, $\beta_n\in \mathcal{I}_H$. 


Consider the distribution $\widetilde{\mu}_{0,r}^{(\alpha)}$ of $G_0\bar{\beta}_0$ for independent $G_0\sim\mathtt{Exponential}(r)$ and $\bar{\beta}_0\sim\PDIP(\alpha,0)$.
Since $L_{b,r}^{(\alpha)}$ tends to $\mathtt{Gamma}(\alpha,r)$ in distribution as $b\downarrow 0$, it will follow from Proposition \ref{prop:PDIP} that $\widetilde{\mu}_{0,r}^{(\alpha)}$ is the weak limit of $\mu_{b,r}^{(\alpha)}\big(\,\cdot\,\big|\,\HIPspace\setminus\{\varnothing\}\big)$. 

\begin{definition}[$\kappa^{\alpha,\theta}_y(\beta^*,\,\cdot\,)$, $y>0$, $\beta^*\in\IPHs$]\label{def:kernel:sp2}
	Fix $\alpha\in (0,1)$, $\theta\ge 0$, $\beta^*=(\beta,M)\in\IPHs$ and $y>0$. 
	Set 
	\begin{equation}\label{betay}
	\beta^y := G_0^y\bar{\beta}_0\concat\Concat_{U\in\beta \cup\{\{R_i\},\, i\le J_y\}}\beta_U^y	
	\end{equation}
	 for independent $G_0^y\sim\GammaDist[\theta,1/2y]$, $\bar{\beta}_0\sim\PDIP(\alpha,\theta)$, and $\beta_U^y\sim \mu_{{\rm Leb}(U),1/2y}^{(\alpha)}$,
           $U\in\beta$, as well as, in the case when $m={\rm Leb}(C(\beta^*))>0$, further independent variables $J_y\sim\mathtt{Poisson}(\alpha m/y)$, and $R_i\sim\mathtt{Unif}(C(\beta^*))$ and $\beta_{\{R_i\}}^y\sim\widetilde{\mu}_{0,1/2y}^{(\alpha)}$, $i\ge 1$.
	 Here the concatenation is according to the order of these disjoint sets from left to right. 
	 Then $\kappa^{\alpha,\theta}_y(\beta^*,\,\cdot\,)$ is defined to be the distribution of $(\beta^y, \|\beta^y\|)$. 
\end{definition}

Note that Definition \ref{def:kernel:sp2} specifies kernels $\kappa_y^{\alpha,\theta}$, $y>0$, on $\IPHs$. However, $\kappa^{\alpha,\theta}_y(\beta^*,\,\cdot)$ is, by construction, supported on 
$\HIPspace$ for all $\beta^*\in\IPHs$, in the sense that $\beta^y$ as specified in \eqref{betay} is almost surely in $\HIPspace$. If furthermore, $\beta^*=(\beta,\|\beta\|)\in\IPHs$ is associated with 
$\beta\in\HIPspace$, then the push-forward of $\kappa^{\alpha,\theta}_y(\beta^*,\,\cdot\,)$ under the natural projection from 
$\{(\gamma,M)\in\IPHs\colon\gamma\in\HIPspace,\,M=\|\gamma\|\}$ to $\HIPspace$ is $\kappa_y^{\alpha,\theta}(\beta,\,\cdot\,)$ as defined in Definition \ref{def:kernel:sp}.  This justifies using the same notation for both kernels. As a consequence of the following theorem, we can further use the push-forward of 
$\kappa^{\alpha,\theta}_y(\beta^*,\,\cdot\,)$ for general $\beta^*\in\IPHs$ as an entrance law for the Hunt process of Proposition \ref{prop:sp}.

\begin{theorem}	\label{thm:sp}
	Fix $\alpha\in (0,1)$ and $\theta\ge 0$. Then the family $(\kappa^{\alpha,\theta}_y,\, y\ge 0)$ forms the transition semigroup of a path-continuous self-similar Feller process on $(\IPHs,d_H\circ C)$.
\end{theorem}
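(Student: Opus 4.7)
The plan is to establish the Feller property of $(\kappa_y^{\alpha,\theta})_{y\ge 0}$ on $(\IPHs,d_H\circ C)$; path-continuity and self-similarity will then follow together with Proposition~\ref{prop:sp}. Concretely, I would verify: (a) the Chapman--Kolmogorov relation on $\IPHs$; (b) weak continuity of $\beta^*\mapsto\kappa_y^{\alpha,\theta}(\beta^*,\,\cdot\,)$ for each fixed $y>0$; and (c) $\kappa_y^{\alpha,\theta}(\beta^*,\,\cdot\,)\Rightarrow\delta_{\beta^*}$ as $y\downarrow 0$.

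For (a), since $\kappa_y^{\alpha,\theta}(\beta^*,\,\cdot\,)$ is by construction supported on $\IPH\subset\IPHs$ for every $\beta^*\in\IPHs$ and $y>0$, Chapman--Kolmogorov on $\IPH$ (Proposition~\ref{prop:sp}\itemref{item:SSIP:diffusion}) handles the second step, and what remains is to check that immigration-from-dust over $[0,y]$ followed by the $\IPH$-evolution over $[y,y+z]$ reproduces the direct immigration-from-dust recipe over $[0,y+z]$. This reduces to pseudo-stationarity of $(G_0^y,\bar\beta_0)$ via Proposition~\ref{prop:sp}\itemref{item:SSIP:pseudostat}, the $\BESQ(2\theta)$ semigroup for total mass (Proposition~\ref{prop:sp}\itemref{item:SSIP:mass}), and standard superposition/thinning for the Poisson intensity $\alpha m/y$.

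The main obstacle is (b). Let $\beta_n^*\to\beta^*$ in $d_H\circ C$ and set $m=\Leb(C(\beta^*))$. I would use a truncation: for $\e>0$, the finitely many blocks of $\beta_n$ of size exceeding $\e$ converge as an unordered set to the blocks of $\beta$ of size exceeding $\e$, and continuity of $b\mapsto\mu_{b,1/2y}^{(\alpha)}$ on $(0,\infty)$ then propagates this to joint convergence in distribution of their contributions under Definition~\ref{def:kernel:sp2}. The remaining small blocks of $\beta_n$ have total Lebesgue mass tending to $m+\Leb\big(\bigcup\{U\in\beta:\Leb(U)\le\e\}\big)$; each contributes a non-empty piece with Bernoulli probability $1-e^{-\Leb(U)/2y}$ and, conditional on non-emptiness, with law converging to $\widetilde\mu^{(\alpha)}_{0,1/2y}$. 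A triangular-array Poisson convergence theorem, applied to the point measure of non-empty contributions on $[0,\|\beta^*\|]\times(\HIPspace\setminus\{\varnothing\})$, yields a Poisson random measure in the limit; the identification of its intensity with the $\alpha m/y$ prescription of Definition~\ref{def:kernel:sp2} uses $1-e^{-br}\sim br$ together with the identification of the weak limit of $\mu_{b,r}^{(\alpha)}(\,\cdot\,|\HIPspace\setminus\{\varnothing\})$ as $\widetilde\mu_{0,r}^{(\alpha)}$. Sending $\e\downarrow 0$ then collapses the small ``visible'' blocks of $\beta$ of size $\le\e$ into the dust-immigration limit, which is the delicate step and requires a uniform bound on the impact of these blocks as $\e\downarrow 0$.

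For (c), as $y\downarrow 0$ the kernel $\mu_{b,1/2y}^{(\alpha)}\Rightarrow\delta_{\{(0,b)\}}$ and $G_0^y\Rightarrow 0$, so block contributions collapse back onto their original positions, while $\alpha m/y\to\infty$ forces the dust contributions to become Hausdorff-dense in $C(\beta^*)$; uniform tightness is supplied by the $\BESQ(2\theta)$ bound on total mass extended additively to the dust. Together (a)--(c) give a Feller semigroup on $(\IPHs,d_H\circ C)$. Its cadlag Feller modification is actually path-continuous because, for each $\delta>0$, Proposition~\ref{prop:sp}\itemref{item:SSIP:diffusion} gives continuous paths on $[\delta,\infty)$ (the state lies in $\IPH$), and right-continuity at $0$ is exactly (c). Self-similarity extends from Proposition~\ref{prop:sp}\itemref{item:SSIP:SS} because $\mu_{b,r}^{(\alpha)}$ and the intensity $\alpha m/y$ have the correct scaling.
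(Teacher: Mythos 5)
Your decomposition into (a) Chapman--Kolmogorov, (b) continuity in the initial state, and (c) entrance continuity is the right shape, but (b) is where the theorem's substance lies, and there your sketch has a gap at exactly the step you flag. After the truncation at level $\epsilon$, the blocks of $\beta_n$ of size $\le\epsilon$ do \emph{not} form a uniformly asymptotically negligible array as $n\to\infty$: those converging to blocks of $\beta$ of size in $(0,\epsilon]$ persist in the limit and are not individually small, so a triangular-array Poisson limit theorem only covers the part converging to dust. You are then left to commute the two limits $n\to\infty$ and $\epsilon\downarrow 0$, and the ``uniform bound on the impact of these blocks as $\epsilon\downarrow 0$'' you invoke is precisely the content that is missing: it must control, uniformly in $n$ and in the metric $d_H\circ C$, how the contributions from \emph{all} blocks of size $\le\epsilon$ --- masses and positions, since concatenation order matters for the resulting interval partition --- deviate from the dust-immigration limit. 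Without supplying that estimate the argument does not close.

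The paper avoids this obstacle by a genuinely different route: it constructs $\mathtt{SSIPE}^*_K(\alpha,\theta)$ explicitly from the clade / scaffolding-and-spindles picture (Section~\ref{sec:F:construction}), identifies its semigroup with $(\kappa_y^{\alpha,\theta})$ (Corollary~\ref{thm:semigroup*}) so that (a) is inherited from the Markov property of that construction rather than checked by kernel manipulations, establishes path-continuity at $0$ through the Hausdorff criterion of Lemma~\ref{lm:dH} and Proposition~\ref{prop:genIPE:cont0}, and --- for (b) --- proves continuity in the initial state by an \emph{almost-sure coupling} rather than a weak-convergence computation. Clades are embedded in a single Brownian motion via skew Brownian motion and its local time (Proposition~\ref{propPW18}, Corollaries~\ref{cor1PW18}--\ref{corBK}, after Pal--Winkel and Burdzy--Kaspi), and under this coupling the level-$y$ partitions started from $K_n\to K$ \emph{agree exactly} for all large $n$ (Proposition~\ref{prop:gencontinitial}). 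Pathwise agreement is far stronger than weak convergence and dissolves the commutation-of-limits problem. If you want to salvage your more analytic approach, the missing lemma is a quantitative, $d_H\circ C$-level comparison of the concatenated contributions of many small blocks with the dust recipe, uniform over block sizes in $(0,\epsilon]$ and over the number of such blocks; that is nontrivial and not supplied by the soft Poisson-convergence statement alone.
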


\begin{definition}\label{def:SSIPE*}
	We refer to the $\HIPspace$-valued Hunt processes of Proposition \ref{prop:sp} as \emph{self-similar $(\alpha,\theta)$-interval partition evolutions}, or  ${\tt SSIPE}(\alpha,\theta)$. 
           We write  ${\tt SSIPE}_\beta(\alpha,\theta)$ for the distribution of an  ${\tt SSIPE}(\alpha,\theta)$ starting from $\beta\in\HIPspace$.
          The corresponding $\IPHs$-valued Feller processes of Theorem \ref{thm:sp} are denoted by  ${\tt SSIPE}_{\beta^*}(\alpha,\theta)$, $\beta^*\in\IPHs$.
\end{definition}

Theorem \ref{thm:sp} establishes, in particular, the extension of the Hunt processes of \cite{Paper1-2} to Feller processes on a completion of the original state space. Also generally in the two-parameter setting, this Feller property opens up analytical techniques not previously available. This has already been exploited in \cite{Paper1-3} to connect to ${\tt EKP}(\alpha,\theta)$-diffusions, and this is also the key for us, in this paper, to connect our construction with its understanding of path properties and genealogical structure to the interval partition evolutions constructed in \cite{RivRiz20}.

\subsection{Unit-mass processes}\label{sec:unitmass} The set of all interval partitions of $[0,1]$ is denoted by $\IPHo := \{\beta\in\IPH \colon \|\beta\|=1\}$. While $\IPHs$ includes a completion of $\mathcal{I}_{H,1}$, it is more natural to consider the set $\IPHos$ of collections of disjoint open subintervals of $[0,1]$ equipped with the metric $d_H\circ C_1$, where
  $C_1(\beta)=[0,1]\setminus\bigcup_{U\in\beta}U$, again including the case of dust by allowing $C_1(\beta)$ to have positive Lebesgue measure. We refer to elements of $\IPHos$ as \emph{generalized partitions of $[0,1]$}. Then $\gamma\mapsto (\gamma,1)$ isometrically embeds $\IPHos$ in $\IPHs$. 
The space $(\IPHos,d_H\circ C_1)$ is a metric completion of $\mathcal{I}_{H,1}$.

\begin{definition}[De-Poissonization, \PDIPEAT]\label{def:depoiss}
	Let $\mu$ be a probability measure on $\IPHos$ and $\boldsymbol{\beta}:= (\beta^y, y\ge 0)$ an $\mathtt{SSIPE}(\alpha,\theta)$ with initial distribution $\mu$. We define the time-change
	\begin{equation}\label{eq:tau-beta}
		\tau_{\boldsymbol{\beta}}(u):= \inf \left\{ y\ge 0\colon \int_0^y \|\beta^z\|^{-1} d z>u \right\}, \qquad u\ge 0.
	\end{equation}
	The map from $\boldsymbol{\beta}$ to the process
	\[
	\bar{\beta}^u:= \big\| \beta^{\tau_{\boldsymbol{\beta}}(u)} \big\|^{-1}  \beta^{\tau_{\boldsymbol{\beta}}(u)},\quad u\ge 0,
	\]
	is called \emph{de-Poissonization}. The resulting process takes values in $\IPHos$ and is called a \emph{Poisson--Dirichlet$(\alpha, \theta)$ interval partition evolution}, or a \PDIPEAT. 
\end{definition}

It follows from Proposition \ref{prop:sp}\ref{item:SSIP:mass}, from the fact that $\kappa_y^{\alpha,\theta}(\beta^*,\,\cdot\,)$ is concentrated on $\mathcal{I}_H$ for all $y>0$ and $\beta^*\in\mathcal{I}_H^*$, and from well-known properties of squared Bessel processes, e.g.\ in \cite[p.\ 314-5]{GoinYor03}, that $\tau_{\boldsymbol{\beta}}$ is a.s.\ well-defined on all $u\ge0$ and continuous and strictly increasing for all $\theta \ge 0$, with 
\begin{equation}\label{eq:tau-beta-prop}
	\lim_{u\uparrow \infty} \tau_{\boldsymbol{\beta}}(u) = \inf \{y>0 \colon \beta^y = \varnothing \} < \infty \quad \text{if }\theta < 1. 
\end{equation}


\begin{theorem}\label{thm:dP-IP}
	\PDIPEAT\ is a path-continuous Feller process on $(\IPHos,d_H\circ C_1)$ with stationary law \PDIPAT.
\end{theorem}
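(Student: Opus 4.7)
The plan is to inherit the Markov, Feller, and path-continuity properties of \PDIPEAT\ from the corresponding properties of \SSIPEAT\ on $\IPHs$ (Theorem \ref{thm:sp}), transferred across the continuous time-change $\tau_{\boldsymbol{\beta}}$ and the normalization map $\gamma\mapsto\gamma/\|\gamma\|$. The main obstacle will be stationarity: pseudo-stationarity (Proposition \ref{prop:sp}\ref{item:SSIP:pseudostat}) only provides the shape distribution at fixed times, whereas the de-Poissonization evaluates the shape at the random time $\tau_{\boldsymbol{\beta}}(u)$, so a joint statement involving the mass path is required.

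First, for each $u\geq 0$, $\tau_{\boldsymbol{\beta}}(u)$ is a stopping time for the natural filtration of $\boldsymbol{\beta}$, being the hitting time of level $u$ for the continuous strictly increasing additive functional $T(y)=\int_0^y\|\beta^z\|^{-1}dz$. The strong Markov property of SSIPE at $\tau(u)$, combined with self-similarity (Proposition \ref{prop:sp}\ref{item:SSIP:SS}), shows that, conditional on the past, the rescaled process $\tilde\beta^y:=c\beta^{\tau(u)+y/c}$ with $c=\|\beta^{\tau(u)}\|^{-1}$ is an \SSIPEAT\ starting from $\bar\beta^u$. A change of variables $w=\tau(u)+z/c$ in \eqref{eq:tau-beta} identifies the de-Poissonization of $\tilde{\boldsymbol{\beta}}$ as $(\bar\beta^{u+v})_{v\geq 0}$, giving a consistent Markov transition semigroup on $\IPHos$. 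Path-continuity of $\bar{\boldsymbol{\beta}}$ follows from path-continuity of $\boldsymbol{\beta}$, continuity and strict monotonicity of $\tau_{\boldsymbol{\beta}}$ (cf.\ \eqref{eq:tau-beta-prop}), and continuity of normalization on $\IPHs\setminus\{\varnothing\}$. The space $(\IPHos,d_H\circ C_1)$ is compact (as the image of a closed subset of $\mathcal{K}$), so $C_0(\IPHos)=C(\IPHos)$; strong continuity of the semigroup as $u\downto 0$ is immediate, and continuity in the starting point follows from the Feller property of SSIPE on $\IPHs$ via the continuous mapping theorem applied to the composition of $\tau_{\boldsymbol{\beta}}$ and the normalization.

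For stationarity, start SSIPE from $\bar\beta^0\sim\PDIPAT$, so $\|\bar\beta^0\|=1$. The strategy is to prove, by induction on $n$, that for every grid $0=y_0<y_1<\cdots<y_n$, the conditional distribution of $\sigma^{y_n}:=\beta^{y_n}/\|\beta^{y_n}\|$ given $(\|\beta^{y_i}\|)_{i=0}^n$ is $\PDIPAT$. The inductive step uses the Markov property of SSIPE at $y_{n-1}$: conditional on $(\|\beta^{y_i}\|)_{i<n}$, the state $\beta^{y_{n-1}}=\|\beta^{y_{n-1}}\|\,\sigma^{y_{n-1}}$ takes the product form ``deterministic initial mass times $\PDIPAT$-distributed shape'' that is exactly the template of Proposition \ref{prop:sp}\ref{item:SSIP:pseudostat}. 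Applying pseudo-stationarity at time $y_n-y_{n-1}$ then makes $(\sigma^{y_n},\|\beta^{y_n}\|)$ a conditional independent product of $\PDIPAT$ and a $\BESQ(2\theta)$ variable, so that further conditioning on $\|\beta^{y_n}\|$ preserves $\sigma^{y_n}\sim\PDIPAT$. Taking a countable dense set of times and invoking path-continuity of the mass process and of $\sigma$ on $\{\beta^\cdot\neq\varnothing\}$ extends the statement to conditioning on the full mass path. Since $\tau_{\boldsymbol{\beta}}(u)$ is measurable with respect to the mass path, conditioning yields $\bar\beta^u=\sigma^{\tau_{\boldsymbol{\beta}}(u)}\sim\PDIPAT$, completing the proof.
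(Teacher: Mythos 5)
Your proof follows the same architecture as the paper's: transfer path-continuity, the Markov property, and the Feller property from the \SSIPEAT\ of Theorem \ref{thm:sp} through the time-change $\tau_{\boldsymbol{\beta}}$ and normalization, and derive the stationary law from pseudo-stationarity, Proposition \ref{prop:sp}\ref{item:SSIP:pseudostat}. Where the paper is terse --- it cites \cite[Theorem 19.25]{Kallenberg} and \cite{Helland78} for the Feller transfer and defers the stationarity step to \cite[Theorem 1.6]{FVAT} --- you reconstruct the argument, and your change-of-variables check for the Markov property and the identification of $(\IPHos,d_H\circ C_1)$ as compact are both correct.

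There is, however, a real gap in the stationarity step. Your induction establishes that conditional on the masses $(\|\beta^{y_i}\|)_{i=0}^n$ at times \emph{up to} $y_n$, the shape $\sigma^{y_n}\sim\PDIPAT$. Passing to a countable dense subset of $[0,y_n]$ extends this to conditioning on the mass path restricted to $[0,y_n]$, but it never touches masses at times $>y_n$. To deduce $\sigma^{\tau_{\boldsymbol{\beta}}(u)}\sim\PDIPAT$ by conditioning on the mass path, you need $\sigma^y$ to be conditionally $\PDIPAT$ given the \emph{entire} mass path; the future segment is not addressed by your induction or by the density argument. The missing observation is conditional independence from future masses: by the Markov property at time $y_n$ together with Proposition \ref{prop:sp}\ref{item:SSIP:mass}, the conditional law of $(\|\beta^z\|)_{z>y_n}$ given $\beta^{y_n}$ is $\BESQ_{\|\beta^{y_n}\|}(2\theta)$, which depends on $\beta^{y_n}$ only through $\|\beta^{y_n}\|$; hence $(\|\beta^z\|)_{z>y_n}$ is conditionally independent of $\sigma^{y_n}$ given $(\|\beta^{y_i}\|)_{i\le n}$. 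Once this is inserted, $\sigma^{y_n}$ is independent of the whole mass path, and the final step (choosing a version of the regular conditional distribution of $(\sigma^y)_{y>0}$ that works simultaneously for all $y$, which your appeal to path-continuity supplies, then evaluating at the mass-path-measurable time $\tau_{\boldsymbol{\beta}}(u)$) goes through as you indicated.
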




Indeed, in Theorem \ref{thm:twopargen}, we identify the generator of ${\tt PDIPE}(\alpha,\theta)$.

We point out that the corresponding extension of Fleming--Viot processes of \cite{FVAT} to include dust can be considered, but it is not continuous in the initial state, hence not Feller. 
Recall that the Ethier--Kurtz--Petrov (${\tt EKP}(\alpha,\theta)$) diffusion is stationary with the law ${\tt PD}(\alpha,\theta)$. This law is supported on the Kingman simplex: the set of non-increasing sequences in $[0,1]$ that sum to exactly $1$. However, this process can enter continuously from any state in the closure of the simplex: the set of sequences with sum at most 1. Petrov left open the question of whether the ${\tt EKP}(\alpha,\theta)$ diffusion ever leaves the Kingman simplex (at exceptional times) to visit points in the closure. This question was answered in the negative by Ethier \cite{Ethier14} using analytic methods based on the existence of densities. Our construction allows us to answer the analogous question for ${\tt PDIPE}(\alpha,\theta)$ probabilistically. Together with the results of a companion paper \cite{Paper1-3}, Theorem \ref{thm:dP-IP} gives a probabilistic approach to the main theorem of \cite{Ethier14}.

Specifically, denote by ${\tt RANKED}\colon\IPHos\rightarrow\overline{\nabla}_\infty$ the map that associates with $\beta\in\IPHos$ the 
decreasing sequence of interval lengths in $\beta$, where
\begin{equation*}
 \overline{\nabla}_\infty := \left\{ (x_i)_{i\ge1}\in [0,\infty)^{\mathbb{N}} \colon \sum\nolimits_{i\ge1} x_i\le 1\right\}
\end{equation*}
is equipped with the metric $d_{\infty}((x_i)_{i\ge1},(y_i)_{i\ge1}) = \sup_{i\ge1}|x_i-y_i|$.

\begin{corollary}\label{cor:RANKED} Mapping ${\tt PDIPE}_\beta(\alpha,\theta)$, $\beta\in\IPHos$, under ${\tt RANKED}$ yields a 
  $\overline{\nabla}_\infty$-valued Feller process.
\end{corollary}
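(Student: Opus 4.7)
My plan is to deduce the Feller property of the image process from that of \PDIPEAT\ (Theorem \ref{thm:dP-IP}). Three ingredients are needed: (a) a functional Markov property so that the pushforward process is itself Markov; (b) continuity of ${\tt RANKED}\colon (\IPHos,d_H\circ C_1)\to(\overline{\nabla}_\infty,d_\infty)$; and (c) a continuous section $\sigma\colon \overline{\nabla}_\infty\to\IPHos$ with ${\tt RANKED}\circ\sigma=\mathrm{id}$.

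For (a), I would observe that the kernel $\kappa_y^{\alpha,\theta}(\beta^*,\,\cdot\,)$ of Definition \ref{def:kernel:sp2} is the law of a concatenation of independent pieces, each of whose distribution depends on $\beta^*$ only through a single Lebesgue mass: $\Leb(U)$ for the contribution $\beta_U^y$ of a block $U\in\beta$, or the total dust mass $\Leb(C(\beta^*))$ for the Poisson dust contribution. Since ${\tt RANKED}$ of a concatenation is determined by the concatenands as a multiset, the law of ${\tt RANKED}(\beta^y)$ depends on $\beta^*$ only through ${\tt RANKED}(\beta^*)$. De-Poissonization preserves this, as the time-change $\tau_{\boldsymbol{\beta}}$ in \eqref{eq:tau-beta} is a functional of the total-mass trajectory, itself a measurable function of the ranked trajectory.

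For (b), if $\delta_n:=d_H(C_1(\beta_n),C_1(\beta))$, then for any block $(a,b)$ of either partition with $b-a>2\delta_n$, the sub-interval $[a+\delta_n,b-\delta_n]$ cannot meet the other complement (otherwise one finds a point at distance $\ge\delta_n$ from the partner), so it lies in a unique block of the other partition whose endpoints are within $\delta_n$ of $a,b$. This furnishes an injective length-preserving correspondence within $2\delta_n$ between blocks of mass exceeding $2\delta_n$, from which a direct order-statistic comparison yields $|{\tt RANKED}(\beta_n)_i-{\tt RANKED}(\beta)_i|\le 2\delta_n$ for all $i$, giving $d_\infty$-continuity. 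For (c), let $\sigma(\pi)$ be the partition with blocks $(S_{i-1},S_i)$ for $S_i:=\sum_{j\le i}\pi_j$ placed from the origin in decreasing order, with residual dust on $[S_\infty(\pi),1]$. Continuity of $\sigma$ follows by truncation: given $\pi^n\to\pi$ in $d_\infty$ and $\epsilon>0$, choose $K$ with $\pi_{K+1}<\epsilon/4$; for large $n$, partial sums up to index $K$ are $\epsilon/4$-close to those of $\pi$, and $\pi^n_i<\epsilon/2$ for $i>K$, so partial sums beyond $K$ are $\epsilon/2$-dense in their spans, while any mismatch between $S_\infty(\pi^n)$ and $S_\infty(\pi)$ is absorbed by the dust on the heavier side.

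Combining, the pushforward semigroup is $Q_yf=P_y(f\circ{\tt RANKED})\circ\sigma$, where $(P_y)$ is the \PDIPEAT\ semigroup. For $f\in C(\overline{\nabla}_\infty)$: by (b), $f\circ{\tt RANKED}\in C(\IPHos)$; by Theorem \ref{thm:dP-IP}, $P_y(f\circ{\tt RANKED})\in C(\IPHos)$; composing with continuous $\sigma$ from (c) gives $Q_yf\in C(\overline{\nabla}_\infty)$. Strong continuity at $y=0$ uses ${\tt RANKED}\circ\sigma=\mathrm{id}$:
\[
\|Q_yf-f\|_\infty=\|P_y(f\circ{\tt RANKED})\circ\sigma-(f\circ{\tt RANKED})\circ\sigma\|_\infty\le\|P_y(f\circ{\tt RANKED})-f\circ{\tt RANKED}\|_\infty\to 0.
\]
Since $\overline{\nabla}_\infty$ is compact under $d_\infty$ (by a standard diagonal argument using the bound $\pi_i\le 1/i$), this establishes the Feller property. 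I expect the main subtle point to be (c), because $d_\infty$-convergence does not control $S_\infty$; the argument must exploit that the Hausdorff metric tolerates exchanging small blocks with dust.
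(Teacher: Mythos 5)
Your proof is correct and follows the same overall architecture as the paper's: continuity of ${\tt RANKED}$, a ``canonical representative'' section $\sigma$ with ${\tt RANKED}\circ\sigma=\mathrm{id}$, and an intertwining/Dynkin-type reduction of the Markov and Feller properties of the ranked process to those of \PDIPEAT. The one place where you take a genuinely different (and in this case more elementary) route is step (a): you verify the Dynkin condition --- that $\text{RANKED}_*\kappa_y^{\alpha,\theta}(\beta^*,\cdot)$ depends on $\beta^*$ only through ${\tt RANKED}(\beta^*)$ --- by direct inspection of the explicit kernel of Definition~\ref{def:kernel:sp2}, each concatenand's law being a function of a single Lebesgue mass. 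The paper instead establishes this by a full-path coupling: for any two $K,K'\in\mathcal{K}_1$ with the same ranked block sequence (hence the same dust mass), it couples the Poisson random measures $\mathbf{F}_K$, $\mathbf{F}_{K'}$ and the clade constructions so that the entire ranked trajectories coincide a.s., then invokes Dynkin's criterion \cite[Theorem 13.5]{Sharpe}. The coupling approach is structurally more robust (it does not require an explicit kernel formula and simultaneously delivers path-continuity and the Borel right property used in the companion paper), whereas your kernel inspection is shorter and self-contained given Definition~\ref{def:kernel:sp2}. Your handling of (c) --- continuity of the section, absorbing the mismatch between $S_\infty(\pi^n)$ and $S_\infty(\pi)$ into dust --- is exactly the subtlety the paper addresses by working on the completion $\IPHos$ and by invoking ``$d_\infty$-convergence implies Hausdorff convergence of the canonical representatives,'' and your truncation argument makes that implication explicit.
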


We use this in \cite[Theorem 4.2]{Paper1-3} to identify this ranked process as ${\tt EKP}(\alpha,\theta)$, up to linear time-change, by calculating relevant parts of the infinitesimal generator of ${\tt PDIPE}(\alpha,\theta)$.

\subsection{Infinitesimal generators}\label{sec:intro:gen}

Rivera-Lopez and Rizzolo \cite{RivRiz20} recently established a two-parameter Poisson--Dirichlet interval partition diffusion as a scaling limit of up-down Markov chains on integer compositions. Specifically, denote by $\mathcal{C}=\bigcup_{n\ge 0}\mathcal{C}_n$ the space of all integer compositions of any $n\ge 0$, including the unique composition 
$\emptyset\in\mathcal{C}_0$ of 0. For a composition $\sigma=(\sigma_1,\ldots,\sigma_k)\in\mathcal{C}_n$ of $|\sigma|=n$ denote by $\ell(\sigma)=k$ its \em length\em. The diffusion of \cite{RivRiz20} on (a state space isometric to) 
$\overline{\mathcal{I}}_{H,1}$ is characterized by its generator $\mathcal{A}_{\alpha,\theta}$. Specifically, let we $m_\emptyset^\circ=1$, and for 
compositions $\sigma\in\mathcal{C}\setminus\{\emptyset\}$ of $n=|\sigma|\ge 1$ of length $\ell=\ell(\sigma)\in[n]$, we define 
\begin{equation}\label{eq:msigmacirc}
m_\sigma^\circ(\beta)=\sum_{\overset{\scriptstyle U_1,\ldots,U_\ell\in\beta}{\rm strictly\,increasing}}\prod_{j=1}^\ell({\rm Leb}(U_j))^{\sigma_j},\qquad\beta\in\mathcal{I}_{H,1},
\end{equation}
and we extend continuously to $\overline{\mathcal{I}}_{H,1}$. For these functions, the generator acts as   
\[
\mathcal{A}_{\alpha,\theta}m_\sigma^\circ=-|\sigma|(|\sigma|-1+\theta)m_\sigma^\circ+\sum_{j\colon\sigma_j\ge 2}\sigma_j(\sigma_j-1-\alpha)m_{\sigma-\square_j}^\circ+\sum_{j\colon\sigma_j=1}\eta_jm_{\sigma\ominus\square_j}^\circ,
\]
where we write $\sigma-\square_j=(\sigma_1,\ldots,\sigma_{j-1},\sigma_j-1,\sigma_{j+1},\ldots,\sigma_k)$ when $\sigma_j\ge 2$ and 
$\sigma\ominus\square_j=(\sigma_1,\ldots,\sigma_{j-1},\sigma_{j+1},\ldots,\sigma_k)$ when $\sigma_j=1$, and where $\eta_1=\theta$ and $\eta_j=\alpha$ for $j\ge 2$.
It was shown in \cite{RivRiz20} that the linear span of $\{m_\sigma^\circ,\sigma\in\mathcal{C}\}$ is a core of $\mathcal{A}_{\alpha,\theta}$.

\begin{theorem}\label{thm:twopargen} The process ${\tt PDIPE}(\alpha,\theta)$ of Theorem \ref{thm:dP-IP} has generator $2\mathcal{A}_{\alpha,\theta}$.
\end{theorem}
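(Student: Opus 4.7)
My strategy is to identify the Feller generator of $\PDIPEAT$ on the linear span of $\{m_\sigma^\circ:\sigma\in\mathcal{C}\}$, which forms a core of $\mathcal{A}_{\alpha,\theta}$ by~\cite{RivRiz20}, and verify that it agrees with $2\mathcal{A}_{\alpha,\theta}$ on that core. Since Theorem~\ref{thm:dP-IP} provides that $\PDIPEAT$ is Feller on $(\IPHos,d_H\circ C_1)$, equality of generators on a core then identifies the two Feller semigroups, hence the processes.

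\textbf{Reduction to the $\SSIPEAT$ generator.} I extend each $m_\sigma^\circ$ from $\IPHos$ to $\IPHs$ by $\widetilde{m}_\sigma(\beta^*):=m_\sigma(\beta)/\|\beta\|^{|\sigma|}$ (with $\widetilde{m}_\emptyset\equiv 1$); these extensions are $0$-self-similar and restrict to $m_\sigma^\circ$ on $\IPHo$. Writing $\mathcal{L}^{\mathrm{ss}}$ for the $\SSIPEAT$ generator, Proposition~\ref{prop:sp}\ref{item:SSIP:SS} forces $\mathcal{L}^{\mathrm{ss}}\widetilde{m}_\sigma$ to be $(-1)$-self-similar, i.e.\ $\|\beta\|\cdot\mathcal{L}^{\mathrm{ss}}\widetilde{m}_\sigma(\beta)=\mathcal{L}^{\mathrm{ss}}\widetilde{m}_\sigma(\beta/\|\beta\|)$. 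Applying Dynkin's formula to $\widetilde{m}_\sigma$ along the $\SSIPEAT$ path and performing the time change $u\mapsto\tau_{\boldsymbol\beta}(u)$ of~\eqref{eq:tau-beta}, for which $d\tau/du=\|\beta^{\tau(u)}\|$, I obtain $m_\sigma^\circ(\bar\beta^u)=\mathrm{mart.}+\int_0^u\|\beta^{\tau(s)}\|\cdot\mathcal{L}^{\mathrm{ss}}\widetilde{m}_\sigma(\beta^{\tau(s)})\,ds=\mathrm{mart.}+\int_0^u\mathcal{L}^{\mathrm{ss}}\widetilde{m}_\sigma(\bar\beta^s)\,ds$, so the $\PDIPEAT$ generator $\mathcal{L}^{\mathrm{dP}}$ satisfies $\mathcal{L}^{\mathrm{dP}}m_\sigma^\circ(\bar\beta)=\mathcal{L}^{\mathrm{ss}}\widetilde{m}_\sigma(\bar\beta)$ for $\bar\beta\in\IPHo$.

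\textbf{Computation of $\mathcal{L}^{\mathrm{ss}}\widetilde{m}_\sigma$.} Write $\widetilde{m}_\sigma=m_\sigma\cdot h$ with $h(\beta)=\|\beta\|^{-|\sigma|}$ and decompose $\mathcal{L}^{\mathrm{ss}}(m_\sigma h)=h\mathcal{L}^{\mathrm{ss}}m_\sigma+m_\sigma\mathcal{L}^{\mathrm{ss}}h+\Gamma^{\mathrm{ss}}(m_\sigma,h)$, where $\Gamma^{\mathrm{ss}}$ is the carré du champ. The middle term is explicit: by Proposition~\ref{prop:sp}\ref{item:SSIP:mass}, the total mass is $\BESQ(2\theta)$ with generator $2m\partial_m^2+2\theta\partial_m$, so $\mathcal{L}^{\mathrm{ss}}h=2|\sigma|(|\sigma|+1-\theta)\|\beta\|^{-|\sigma|-1}$. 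For the other two terms I perform a first-order-in-$y$ expansion of the kernel $\kappa_y^{\alpha,\theta}$ of Definition~\ref{def:kernel:sp}. Using the Laplace transform~\eqref{LMBintro}, a block of mass $b$ infinitesimally (i)~shrinks deterministically; (ii)~at rate $\alpha/b$ sheds a $\mathrm{Gamma}(\alpha,\cdot)\cdot\mathtt{PDIP}(\alpha,\alpha)$ child cluster; (iii)~has exponentially small probability of extinction; and the leftmost immigration piece $G_0^y\bar\beta_0$ creates, at order $y$, a fresh block cluster with $\mathtt{PDIP}(\alpha,\theta)$ profile. Substituting into $m_\sigma(\beta^y)=\sum_{U_1<\cdots<U_{\ell(\sigma)}\in\beta^y}\prod_j\mathrm{Leb}(U_j)^{\sigma_j}$ and collecting contributions indexed by which position $j\in[\ell(\sigma)]$ is affected, then combining with $m_\sigma\mathcal{L}^{\mathrm{ss}}h$ at $\|\bar\beta\|=1$, produces $2\mathcal{A}_{\alpha,\theta}m_\sigma^\circ$: the overall factor $2$ is the $\BESQ(2\theta)$ diffusion constant, the weights $\sigma_j(\sigma_j-1-\alpha)$ come from the $(\alpha,\alpha)$ child-cluster shape, and the $\eta_j$ dichotomy between $j=1$ (immigration, coefficient $\theta$) and $j\ge 2$ (interior split, coefficient $\alpha$) reflects the distinction between $\mathtt{PDIP}(\alpha,\theta)$ and $\mathtt{PDIP}(\alpha,\alpha)$ clusters. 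The core property of $\{m_\sigma^\circ\}$ for $\mathcal{A}_{\alpha,\theta}$ then closes the argument.

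\textbf{Main obstacle.} The crux is the first-order expansion in the previous step, particularly the carré du champ $\Gamma^{\mathrm{ss}}(m_\sigma,h)$, which couples $\BESQ(2\theta)$ fluctuations of the total mass with those of the polynomial sums $m_\sigma$. A disciplined bookkeeping is required for the disjoint ancestor pieces in the concatenation $G_0^y\bar\beta_0\Concat_{U\in\beta}\beta_U^y$: Gamma and Laplace-transform moment expansions for $L_{b,r}^{(\alpha)}$ and the $G\bar\beta$ child partitions must be matched against the combinatorial rules for $\sigma-\square_j$ and $\sigma\ominus\square_j$, tracking which piece lands in each position of each tuple in $m_\sigma$. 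This is precisely where the exact $\theta$- and $\alpha$-dependencies on the right-hand side of the theorem are produced, and where the diagonal coefficient $-|\sigma|(|\sigma|-1+\theta)$ emerges from the interplay of the $h$-derivatives and the cross-variation.
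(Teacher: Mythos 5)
Your proposal takes a genuinely different route from the paper, and the broad architecture is sound, but it stops short at precisely the step that carries all the content.

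The reduction from $\PDIPEAT$ to the $\SSIPEAT$ generator via $0$-self-similar extensions $\widetilde{m}_\sigma(\beta)=m_\sigma(\beta)/\|\beta\|^{|\sigma|}$ and the time change of Definition~\ref{def:depoiss} is a legitimate alternative framing, and your explicit computation of $\mathcal{L}^{\mathrm{ss}}h$ for $h=\|\cdot\|^{-|\sigma|}$ using the $\BESQ(2\theta)$ total mass is correct. The problem is the remaining two pieces, $h\,\mathcal{L}^{\mathrm{ss}}m_\sigma$ and $\Gamma^{\mathrm{ss}}(m_\sigma,h)$, which you explicitly flag as ``the crux'' and ``main obstacle'' and do not carry out. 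These are where every nontrivial coefficient in $\mathcal{A}_{\alpha,\theta}$ arises, and asserting that they ``produce $2\mathcal{A}_{\alpha,\theta}m_\sigma^\circ$'' is not a proof. Two concrete gaps: (1) your description (i)--(iii) of the first-order behaviour of a block mass under $\kappa_y^{\alpha,\theta}$ is not accurate --- $L_{b,1/2y}^{(\alpha)}$ fluctuates at scale $\sqrt{y}$, not $y$, so the diffusion term (which supplies $\sigma_j(\sigma_j-1)$ inside $\sigma_j(\sigma_j-1-\alpha)$) cannot be dismissed as ``deterministic shrinkage''; and (2) the interchange of the $y\downarrow 0$ limit with the infinite sum over $\ell$-tuples of blocks in $m_\sigma^\circ$ requires uniform domination estimates that you never state. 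Both issues are nontrivial and occupy a substantial portion of the paper's argument.

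For comparison: the paper sidesteps the direct kernel expansion entirely. It observes (Lemma~\ref{lm:WF}, via Pal's embedding of $\BESQ$ into Wright--Fisher) that a finite vector of de-Poissonized block and between-block masses evolves as an exact Wright--Fisher diffusion with explicit parameters $(\theta,-\alpha,\alpha,\dots)$, and then reads the generator off the known Wright--Fisher formula (Lemma~\ref{lm:WFgen}). This immediately yields Proposition~\ref{prop:nocons} for $\sigma$ with no two consecutive $1$s, with a dominated-convergence argument (\eqref{eq:dom}--\eqref{eq:upterm3}) controlling the infinite sum. The consecutive-$1$ case is then recovered combinatorially: Lemma~\ref{lm:1k} rewrites $m^\circ_{\mathbf{1}_k}$ as a linear combination of other $m_\rho^\circ$, and the proof of Theorem~\ref{thm:twopargen} runs an induction over runs of $1$s to show the span of $\{m_\sigma^*\}$ equals the span of $\{m_\sigma^\circ\}$. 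Your kernel-expansion route would not naturally encounter this particular split, but it trades that subtlety for the harder analytic work above; either way, the convergence estimates and the exact coefficient bookkeeping must be done, and your proposal leaves both undone.
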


In particular, this establishes the generator of the instances ${\tt PDIPE}(\alpha,0)$ and ${\tt PDIPE}(\alpha,\alpha)$ 
constructed previously in \cite{Paper1-2}.  Since \cite[Theorem 1.4]{RivRiz20} establishes that the operator $\mathcal{A}_{\alpha,\theta}$ as an operator on the linear span of $\{m_\sigma^\circ,\,\sigma\in\mathcal{C}\}$ is closable and generates a conservative Feller diffusion, this indeed identifies the process of \cite{RivRiz20} with ${\tt PDIPE}(\alpha,\theta)$.

\begin{corollary}\label{cor:rivriz} The two-parameter family of processes Rivera-Lopez and Rizzolo constructed in \cite{RivRiz20} is the same up to linear time-change as  
${\tt PDIPE}(\alpha,\theta)$, $\alpha\in(0,1)$, $\theta\ge 0$, as defined in Definition \ref{def:depoiss}.
\end{corollary}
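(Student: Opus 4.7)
The plan is to derive Corollary \ref{cor:rivriz} essentially as a packaging of Theorem \ref{thm:twopargen}, together with standard uniqueness results for Feller semigroups. The substantive work is the generator computation underlying Theorem \ref{thm:twopargen}; the corollary itself should be a short synthesis.

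By \cite[Theorem 1.4]{RivRiz20}, the operator $\mathcal{A}_{\alpha,\theta}$, defined initially on $\mathcal{D}^\circ := \mathrm{span}\{m^\circ_\sigma:\sigma\in\mathcal{C}\}$, is closable in $C(\IPHos)$ and its closure generates a conservative Feller diffusion on $\IPHos$; call this the Rivera-Lopez--Rizzolo process $Y=(Y_t)_{t\ge 0}$. In particular, $\mathcal{D}^\circ$ is a core for $\overline{\mathcal{A}_{\alpha,\theta}}$, and therefore also for $2\overline{\mathcal{A}_{\alpha,\theta}}$. Next, Theorem \ref{thm:dP-IP} tells us that \PDIPEAT\ is a Feller process on $(\IPHos,d_H\circ C_1)$ with some generator $\mathcal{L}$, and Theorem \ref{thm:twopargen} identifies $\mathcal{L}|_{\mathcal{D}^\circ}=2\mathcal{A}_{\alpha,\theta}|_{\mathcal{D}^\circ}$. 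Since $\mathcal{L}$ is closed and extends $2\mathcal{A}_{\alpha,\theta}|_{\mathcal{D}^\circ}$, one has $2\overline{\mathcal{A}_{\alpha,\theta}}\subseteq \mathcal{L}$. Because both operators are Feller generators and each has range equal to $C(\IPHos)$ after subtracting a sufficiently large multiple of the identity, this inclusion is forced to be equality: $\mathcal{L} = 2\overline{\mathcal{A}_{\alpha,\theta}}$.

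The proof then concludes via a standard linear time-change: the process $(Y_{2t})_{t\ge 0}$ has generator $2\overline{\mathcal{A}_{\alpha,\theta}} = \mathcal{L}$, so by uniqueness of the Feller semigroup attached to a given generator, \PDIPEAT\ agrees in law with $(Y_{2t})_{t\ge 0}$ from any common initial state in $\IPHos$. I anticipate no real obstacle; the only step demanding any care is the identification $\mathcal{L} = 2\overline{\mathcal{A}_{\alpha,\theta}}$ as opposed to some proper closed extension, and this is precisely where the closability/core content of \cite[Theorem 1.4]{RivRiz20} is used.
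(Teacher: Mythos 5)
Your proposal is correct and takes essentially the same route as the paper: the paper's justification for Corollary~\ref{cor:rivriz} is condensed into the remarks immediately preceding its statement (Theorem~\ref{thm:twopargen} identifies the generator on the span of the $m_\sigma^\circ$, and \cite[Theorem~1.4]{RivRiz20} supplies closability and the core property, forcing agreement of the two Feller semigroups up to the factor~$2$). You have merely spelled out the standard argument that a Feller generator containing another Feller generator must coincide with it, which is exactly the implicit step the paper relies on.
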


\begin{remark}
Results on infinitesimal generators in the present paper are confined to the statements in this subsection, their application in Section \ref{sec:updown} and their proofs in Section \ref{sec:infgen}. While \cite[Section 4]{Paper1-3} builds on Sections \ref{sec:intro}--\ref{sec:generalized} of the present paper, particularly on Corollary \ref{cor:RANKED} and its proof in Section \ref{sec:Feller}, the generator results here should be seen as a refinement of the partial generator calculations of \cite[Sections 1-3]{Paper1-3} to a core of its domain. 
\end{remark}

\subsection{Limits of up-down chains on the graph of compositions}\label{sec:updown}
The processes \PDIPEAT\ can be viewed as diffusions on the boundary of a weighted branching graph and are the scaling limits of natural up-down Markov chains on these branching graphs. Branching graphs and their boundaries have received substantial focus in the literature. 
Particular attention has been given to the Young graph and the Kingman graph, which are the respective settings of \cite{BoroOlsh09, Fulman2009} and \cite{ Fulman2009,Petrov09}. Both graphs have the set of Young diagrams as the vertex set, with an edge between two diagrams if one can be obtained by removing a box from the other. The two graphs only differ in the weights attached to each edge.  A general framework for studying the types of dynamics on branching graphs considered in \cite{BoroOlsh09, Petrov09} is developed in \cite{Petrov13}, while \cite{Fulman2009} gives a general approach to up-down chains that includes up-down chains on branching graphs.

Compositions give an ordered analogue of Young diagrams. There is a natural branching graph structure on the set of compositions, where there is an edge between two compositions if one can be obtained by adding a box to the other (either creating a new component or increasing the size of an existing component). Connections between the graph of compositions, the Kingman graph, and the Young graph have been explored in \cite{GnOl05}, resulting in the study of the graph of zigzag diagrams.  Although dynamics on the Young graph and Kingman graph have been well explored, the analogous dynamics on the graphs of compositions and zigzag diagrams have not.

The diffusions we construct can be thought of as taking place on the boundary of the branching graph of integer compositions, which was identified by Gnedin \cite{Gnedin97}.  In an earlier version of this paper we conjectured that these diffusions are the scaling limits of up-down chains on the graph of compositions whose up-transitions correspond to seating probabilities in the ordered Chinese Restaurant Process \cite{PitmWink09} and whose down-transitions come from the graph's edge weights; see the discussion in \cite{RogeWink20}. 
This is a natural ordered analogue of the up-down chains used by Petrov in \cite{Petrov09}, where the up-transitions correspond to a ranked Chinese Restaurant Process.  
This conjecture 
was supported by the fact that the diffusions we construct have the correct stationary distributions (see Theorem \ref{thm:dP-IP}) and, as we show in a companion paper \cite{Paper1-3}, the processes of ranked block sizes of Corollary \ref{cor:RANKED} evolve according to \texttt{EKP}$(\alpha,\theta)$ diffusions. 

In fact, this conjecture is made more precise by \cite{RivRiz20}. Inspired by the present paper, \cite{RivRiz20} proves the existence of the scaling limits of these chains on compositions 
and finds the generator for said processes. Corollary \ref{cor:rivriz} here is now the final piece of the jigsaw that proves this conjecture.

\subsection{Construction from marked L\'evy processes} As mentioned above, our toolkit and indeed part of our motivation derives from connections to marked L\'evy processes that go back to \cite{Paper1-1} in the case $\theta=0$. While we postpone details to later sections, we would like to point out some parallels to Ray--Knight theorems that are particularly pertinent when starting from dust on the one hand and in connection with general $\theta>0$ on the other hand. 

Roughly speaking, we need a c\`adl\`ag process $(X_t,\,t\in[0,T])$ with only positive jumps, which we call \emph{scaffolding}. Each jump $\Delta X_t$ is marked by a continuous excursion $f_t$ with excursion length $\zeta(f_t)=\Delta X_t$. We write $N=\sum_{t\in[0,T]\colon\Delta X_t>0}\delta(t,f_t)$ for the point measure of marked jumps. We can interpret each excursion as representing the width of a \emph{spindle} from bottom to top and the spindle is aligned with the scaffolding process, so that it attaches to the jump, as in Figure~\ref{fig:scaff_spind_skewer_1}. 
At each level $y$, the horizontal line intersects certain spindles $f_t$ and the concatenation of their widths $f_t(y-X_{t-})$ at level $y$ gives rise to an interval partition, called the \emph{skewer} at level $y$, denoted by $\skewer(y,N,X)$. With suitably chosen scaffolding and spindles, the skewer process with level $y$ increasing is the desired 
$\SSIPE{\alpha,\theta}$. 

\begin{figure}
 \centering
  \includegraphics[width = 4.8in,height=2.7in]{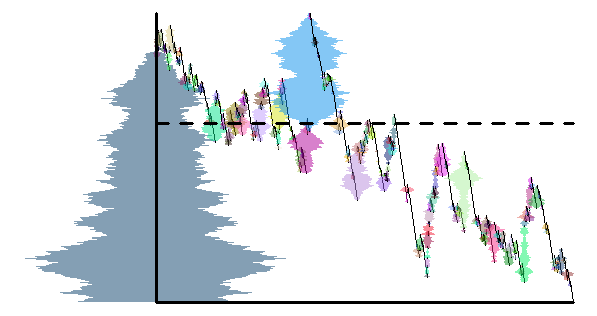}\\
  \hspace{.24in}\includegraphics[width = 2.2in,height=.15in]{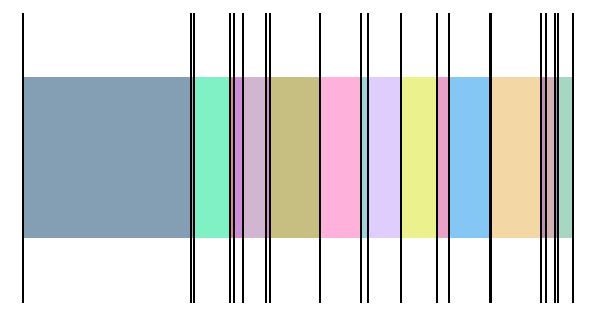}\hspace{.5in}
  \caption{A simulation of the stable L\'evy process with jumps marked by squared Bessel bridges, here depicted by symmetric spindle shapes
     inscribed into the jumps so that interval partitions are obtained by piercing the spindles in the jumps that cross level $y\ge 0$, as if on a skewer. Specifically, this 
     L\'evy process has an initial jump given by the lifetime of a ${\tt BESQ}_a(-2\alpha)$ process, and it is stopped when the L\'evy process returns to level $0$
     so that the interval partition evolution is starting from a single block $\{(0,a)\}$ at level 0.\label{fig:scaff_spind_skewer_1}}
\end{figure}

Also recall the Ray--Knight theorems for Brownian motion that identify, for suitable stopping times, the total local time up to this stopping time, as a process indexed by level, as certain squared Bessel processes. More precisely, the first Ray--Knight theorem considers the first hitting time of $-x$, levels $-x+y$, $y\in[0,x]$, and
finds a ${\tt BESQ}_0(2)$ on that time interval. The second Ray--Knight theorem considers the first time the local time at level 0 exceeds $z$, levels $y\in[0,\infty)$ and finds a ${\tt BESQ}_z(0)$. See e.g.\ \cite[Section XI.2]{RevuzYor}. Generalisations for Brownian motion perturbed at its past or future infimum \cite{LeGallYor1986,CarmPetiYor94} yield general ${\tt BESQ}_0(\delta)$. In our construction we will extract an interval partition for each level from a marked stable L\'evy process $\mathbf{X}$, in which jumps
have been marked by the marking kernel $s\mapsto{\tt BESQ}_{0,0}^s(4+2\alpha)$, i.e.\ every jump of height $\Delta\mathbf{X}_t=s$ is marked by an independent 
squared Bessel bridge, of dimension parameter $4+2\alpha$, of length $s$ from 0 to 0.
This allows us to obtain Ray--Knight theorems analogous to \cite[Theorem 1.8]{Paper1-2}, which covers cases $\theta=0$ and $\theta=\alpha$ without dust.

\begin{theorem}\label{thm:rayknight}
	Let $\bX$ be a spectrally positive stable L\'evy process of index $1+\alpha$, with Laplace exponent $\psi(\lambda) = \frac{\lambda^{1+\alpha}}{2^{\alpha}\Gamma(1+\alpha)}$. 
	Mark each jump $(t,\Delta\bX_t)$ of $\bX$, via the kernel $s\mapsto \BESQ^{s}_{0,0}(4+2\alpha)$, with a excursion $f_t$ and write $\bN=\sum_{t\ge 0\colon \Delta\bX_t > 0}\delta(t,f_t)$. Set $\underline{\bX} = (\underline{\bX}_t = \inf_{r\le t} \bX_r,\, t\ge 0)$.    
	\begin{enumerate}
		\item Let $z\ge 0$. Define
		\[
		\beta^y := \skewer(y,\bN|_{[0,T_{-z/2\alpha}]},\bX- \underline{\bX}),\qquad  y> 0.
		\]
		Set $\beta^0_*=(\varnothing,z)$ and $\beta^y_* = \big(\beta^y, \|\beta^y\|\big)$, $y>0$. Then the process $(\beta^y_*,\, y\ge 0)$ 
		is an ${\tt SSIPE}_{(\varnothing,z)}(\alpha,0)$, i.e.\ an ${\tt SSIPE}(\alpha,0)$ starting from dust of mass $z$, with total mass process $\BESQ_z(0)$. 
		\item Let $x>0$. 
		Set $\bX^{(\theta)}:= \bX-(1-\frac{\alpha}{\theta})\underline{\bX}$ and 
		$T^{(\theta)}_{-x} = \inf\{t\ge 0\colon \bX^{(\theta)}_t < -x\}$. 
		Then the process 
		\[
		\beta^y := \skewer\left(y,\bN|_{[0,T^{(\theta)}_{-x}]}, x +\bX^{(\theta)} \right),\qquad  y\in [0,x]
		\]
		is an ${\tt SSIPE}_\varnothing(\alpha,\theta)$ restricted to the time interval $[0,x]$.
		Its total mass process $\big(\|\beta^y\|,\,y\in [0,x]\big)$ is a $\BESQ_0(2\theta)$ on that time interval. 
	\end{enumerate}
\end{theorem}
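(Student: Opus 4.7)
The plan is to reduce both parts to the ``Ray--Knight type'' Theorem~1.8 of \cite{Paper1-2}, which handles ${\tt SSIPE}_\beta(\alpha,0)$ from genuine interval partitions $\beta\in\IPH$ (no dust) and ${\tt SSIPE}_\varnothing(\alpha,\alpha)$, both constructed from the same marked L\'evy scaffolding used here. Part (1) will be obtained by approximating the dust state $(\varnothing,z)$ by interval partitions with vanishing block sizes; Part (2) will be obtained via a path decomposition of $\bX^{(\theta)}$ relative to its running infimum.

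For Part (1), consider $\beta_n:=\{(kz/n,(k+1)z/n)\colon 0\le k<n\}$, which converge to $(\varnothing,z)$ in $d_H\circ C$. By the Feller property in Theorem \ref{thm:sp}, ${\tt SSIPE}_{\beta_n}(\alpha,0)$ converges weakly in path space to ${\tt SSIPE}_{(\varnothing,z)}(\alpha,0)$. By the branching property of Proposition \ref{prop:sp} and the case $(\alpha,0)$ of \cite[Theorem 1.8]{Paper1-2}, ${\tt SSIPE}_{\beta_n}(\alpha,0)$ is realised as the skewer of $n$ independent marked L\'evy scaffoldings, each seeded by an initial jump of height $z/n$ and stopped at its first return to $0$. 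Concatenating and passing to the limit, standard excursion theory for the spectrally positive $(1+\alpha)$-stable L\'evy process identifies this limit with the marked reflected process $\bX-\underline{\bX}$ stopped at $T_{-z/2\alpha}$: the local time at $0$ of $\bX-\underline{\bX}$ coincides up to a constant factor with $-\underline{\bX}$, and the factor $1/(2\alpha)$ is fixed by the normalization $\psi(\lambda)=\lambda^{1+\alpha}/(2^\alpha\Gamma(1+\alpha))$. The identification $(\|\beta^y\|)\sim\BESQ_z(0)$ is then immediate from Proposition \ref{prop:sp}\ref{item:SSIP:mass}.

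For Part (2), the key identity is
\[
x+\bX^{(\theta)}_t=\bigl(x+\tfrac{\alpha}{\theta}\underline{\bX}_t\bigr)+(\bX_t-\underline{\bX}_t),
\]
so that $x+\bX^{(\theta)}$ splits as its running infimum $x+(\alpha/\theta)\underline{\bX}$ plus the reflected stable process $\bX-\underline{\bX}$. In particular $T^{(\theta)}_{-x}=T_{-x\theta/\alpha}$, and the jumps and marks of $\bX^{(\theta)}$ above its infimum inherit the Poisson excursion structure of $\bX-\underline{\bX}$, exactly as in the $\theta=\alpha$ case of \cite[Theorem 1.8]{Paper1-2}, but now riding on the ``slower'' infimum $(\alpha/\theta)\underline{\bX}$. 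Excursions crossing a given level $y$ supply the branching contributions $\beta_U^y\sim\mu_{\mathrm{Leb}(U),1/2y}^{(\alpha)}$ of Definition \ref{def:kernel:sp}, while the continuum of excursion-starting points distributed along the infimum over $[0,T^{(\theta)}_{-x}]$ contributes the immigration partition $G_0^y\bar\beta_0$ with $G_0^y\sim\GammaDist[\theta,1/2y]$ and $\bar\beta_0\sim\PDIP(\alpha,\theta)$: this is read off by integrating the excursion intensity against the marking kernel $s\mapsto\BESQ_{0,0}^s(4+2\alpha)$ and matching the Laplace transform \eqref{LMBintro} together with the regenerative description of \PDIPAT\ recalled in Section \ref{sec:IPspace}. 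Once the skewer is identified as ${\tt SSIPE}_\varnothing(\alpha,\theta)$, the total mass claim $(\|\beta^y\|,\,0\le y\le x)\sim\BESQ_0(2\theta)$ follows from Proposition \ref{prop:sp}\ref{item:SSIP:mass}; alternatively it can be established directly via the generalised Ray--Knight theorem for spectrally positive stable processes perturbed by their past infimum, in the spirit of \cite{LeGallYor1986,CarmPetiYor94}.

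The main obstacle lies in Part (2): showing that the skewer contribution coming from the excursions anchored at the moving infimum $(\alpha/\theta)\underline{\bX}$ has \emph{exactly} the law $G_0^y\bar\beta_0$ with the prescribed Gamma and \PDIP\ factors. This requires combining the L\'evy--It\^o decomposition of $\bX$, the marking kernel $\BESQ_{0,0}^s(4+2\alpha)$, and the Gnedin--Pitman regenerative ordering underlying \PDIPAT\ in order to isolate both the leading Gamma-distributed block and the Poisson--Dirichlet remainder; the Laplace transform \eqref{LMBintro} provides the sharp quantitative check. Once this is in place, matching the one-dimensional transition laws of the skewer against $\kappa_y^{\alpha,\theta}$ from Definitions \ref{def:kernel:sp}--\ref{def:kernel:sp2}, combined with the Markov property inherited from the L\'evy scaffolding, identifies the skewer process as ${\tt SSIPE}_\varnothing(\alpha,\theta)$.
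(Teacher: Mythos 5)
Your path decomposition in Part (2) is the crux of the paper's argument: $x+\bX^{(\theta)}_t=\bigl(x+\tfrac{\alpha}{\theta}\underline{\bX}_t\bigr)+(\bX_t-\underline{\bX}_t)$, hence $\underline{\bX}^{(\theta)}=\tfrac{\alpha}{\theta}\underline{\bX}$ and $T^{(\theta)}_{-x}=T_{-x\theta/\alpha}$. But where the paper stops is where you keep going, and that is the problem. Once you see that $x+\bX^{(\theta)}$ has the same excursions as $\bX$ above $\underline{\bX}$ but with the intensity of excursion--start--levels rescaled by the constant factor $\theta/\alpha$, the construction in Theorem \ref{thm:rayknight}(ii) is \emph{literally the same} as $\fskewer(y,\cev\bF_\theta)$ in Definition \ref{defn:cev-beta} with $\cev\bF_\theta\sim\PRM(\tfrac{\theta}{\alpha}\Leb\otimes\umCladeA)$, restricted to levels $\le x$. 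Proposition \ref{prop:identify_SSIPE} has already proven that $\fskewer(\cdot,\cev\bF_\theta)$ is ${\tt SSIPE}_\varnothing(\alpha,\theta)$, and the Gamma/\PDIP\ entrance law you flag as ``the main obstacle'' is precisely the content of Proposition \ref{prop:IPPAT:entrance} (proved via Lemma 5.3 of \cite{FVAT} and the Beta--Gamma calculus, not via the Laplace transform \eqref{LMBintro}, which describes $\mu^{(\alpha)}_{b,r}$, i.e.\ descendants of a block of \emph{positive} initial mass, not the immigration contribution). Your proposal re-derives a solved problem by a harder route, and leaves it as a sketch; recognize the reduction and Part (2) is two paragraphs.

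For Part (1) the route you take is genuinely different from the paper's, and it has a real gap. The paper identifies the marked excursions of $\bX$ above $\underline{\bX}$, stopped at $T_{-z/2\alpha}$ and linearly rescaled in the infimum coordinate, with the measure $\mathbf{F}_{[0,z]}\sim\PRM\bigl(\Leb|_{[0,z]}\otimes\tfrac{1}{2\alpha}\nu^{(\alpha)}_{\perp\rm cld}\bigr)$ from \eqref{eq:Fdust}, and then invokes Proposition \ref{prop:entrlaw} and Corollary \ref{thm:semigroup*}, which already establish that the skewer of this construction is ${\tt SSIPE}^*_{[0,z]}(\alpha,0)$. You instead approximate $(\varnothing,z)$ by $\beta_n$ with $n$ blocks of size $z/n$ and invoke the Feller property (Theorem \ref{thm:sp}) to get weak convergence ${\tt SSIPE}_{\beta_n}(\alpha,0)\to{\tt SSIPE}_{(\varnothing,z)}(\alpha,0)$. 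That part is fine. What is missing is the claim that the skewer processes of the concatenated clades $\Concat_{U\in\beta_n}\bN_U$ converge to the skewer of $(\bN|_{[0,T_{-z/2\alpha}]},\bX-\underline{\bX})$. The concatenated scaffolding consists of $n$ pieces, each beginning with a genuine upward jump of height $\zeta(\bff_U)$ where $\bff_U\sim\BESQ_{z/n}(-2\alpha)$; this is not the reflected process, and ``standard excursion theory'' does not by itself yield the identification, nor does it explain the factor $1/2\alpha$, which you wave off as ``fixed by the normalization.'' To make this rigorous you would need to show that the initial spindles' total contribution to the skewer at any fixed level $y>0$ vanishes in probability, and that the remaining (non-initial) excursions converge in law to the point process of reflected excursions up to local time $z/2\alpha$ --- roughly the content of Propositions \ref{prop:F:totalmass} and \ref{prop:entrlaw}, which the paper proves by entirely different means. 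As written, the proposal asserts the conclusion of that convergence rather than proving it.
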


\subsection{Diversity property}
The partitions that arise as states in our diffusions possess an interesting property.

\begin{definition}\label{def:diversity_property} 
	For $0\!<\!\alpha \!<\!1$, we say that an interval partition $\beta\!\in\!\HIPspace$ has the \emph{($\alpha$-)diversity property}, or that $\beta$ is an 
	\emph{interval partition with ($\alpha$-)diversity}, if the following limit exists for every $t\in [0,\IPmag{\beta}]$:
	\begin{equation}
		\IPLT^{(\alpha)}_{\beta}(t) := \Gamma(1-\alpha) \lim_{h\downarrow 0}h^\alpha \#\{(a,b)\in \beta\colon\ |b-a|>h,\ b\leq t\}.\label{eq:IPLT}
	\end{equation}
	We denote by $\IPA\subset\HIPspace$ the set of interval partitions with $\alpha$-diversity. 
\end{definition}

\begin{proposition}\label{prop:entrlaw:intro}
	Let $\beta^*\in\IPHs$ and $(\beta^y, M^y)$ an ${\tt SSIPE}_{\beta^*}(\alpha,\theta)$. Then almost surely we have
	$\beta^y\in\IPA$ for all $y>0$. 
	If $\beta^0\in \IPA$, then $(\beta^y, y\ge 0)$ is a path-continuous Hunt process on the space $\IPA$ endowed with a metric $\dIA$ that we specify in Section \ref{sec:IPspace}.
\end{proposition}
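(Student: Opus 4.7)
The strategy is to establish the two claims separately: the fixed-time diversity statement follows from the explicit branching structure of the transition kernel, while the path-continuity and Hunt property on $(\IPA,\dIA)$ will be derived from the Feller property of Theorem \ref{thm:sp} combined with a continuous coupling to marked L\'evy processes.

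For the fixed-time claim, fix $y>0$ and $\beta^*=(\beta,M)\in\IPHs$. By Definition \ref{def:kernel:sp2}, $\beta^y$ is an almost-surely finite concatenation of three kinds of pieces: the immigration part $G_0^y\bar\beta_0$ with $\bar\beta_0\sim\PDIPAT$; the descendants $\beta_U^y$, for the finitely many blocks $U\in\beta$ surviving to level $y$, each of the form $(0,L_U)\concat G_U\bar\beta_U$ with $\bar\beta_U\sim\PDIP(\alpha,\alpha)$; and, when $\Leb(C(\beta^*))>0$, the finitely many immigrating dust descendants $\beta_{\{R_i\}}^y$, each a rescaled $\PDIPA$. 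Since any $\PDIPAT$ enjoys the $\alpha$-diversity property almost surely by the regenerative construction of Gnedin--Pitman, it suffices to note that the $\alpha$-diversity property is preserved under scaling and finite concatenation: this is immediate from \eqref{eq:IPLT}, with the diversity function of a concatenation obtained by translating and summing those of the pieces.

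For the Hunt property starting from $\IPA$, the plan is to exploit the marked L\'evy process construction of Theorem \ref{thm:rayknight} (extended to general $\theta\ge 0$ along the lines of the companion papers), realising $\beta^y=\skewer(y,\bN,\bX)$ from a $(1+\alpha)$-stable spectrally positive scaffolding $\bX$ with squared Bessel bridge spindles. The $\alpha$-diversity profile $t\mapsto\IPLTA_{\beta^y}(t)$ should then be identifiable with a local-time-like functional of the marked L\'evy process, and the joint continuity of this functional in the level $y$ and the mass coordinate $t$ was established for $\theta\in\{0,\alpha\}$ in \cite{Paper1-2}. Extending the argument to general $\theta$ requires only incorporating the immigration component, whose joint continuity in level follows from the Feller continuity of \SSIPEAT\ given by Theorem \ref{thm:sp} together with the fact that the immigration kernel at level $y$ is a suitably scaled \PDIPAT. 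This yields path-continuity of $y\mapsto\beta^y$ in $\dIA$; combined with the strong Markov property and right-continuity inherited from the Feller property, the quasi-left-continuity needed for the Hunt property is a consequence of path-continuity.

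The main obstacle will be controlling the diversity uniformly in $y$ as $y\downarrow 0$ when starting from a partition $\beta^0\in\IPA$ that already carries its own diversity. Specifically, I would need to show that the total diversity contributed by the descendants $\beta_U^y$ of the initial blocks approximates the initial diversity function $\IPLTA_{\beta^0}$ uniformly on $[0,\|\beta^0\|]$ as $y\downarrow 0$; this is a pathwise regularity statement about the marked L\'evy scaffolding near its entrance level, and is the step most sensitive to the parameter $\theta$ and to the presence or absence of dust. Once this entrance-continuity is in hand, path-continuity on all of $[0,\infty)$ follows by the Markov property together with the fixed-time diversity statement above.
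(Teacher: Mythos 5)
Your fixed-time argument for each given $y>0$ is sound: Definition~\ref{def:kernel:sp2} exhibits $\beta^y$ as a finite concatenation of pieces each in $\IPA$, and the $\alpha$-diversity property survives scaling and finite concatenation. But this only gives $\Pr(\beta^y\in\IPA)=1$ for each fixed $y$; to conclude ``a.s.\ $\beta^y\in\IPA$ for all $y>0$'' you need path-continuity in $(\IPA,\dIA)$, and it is precisely there that your proposal breaks down.

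The central gap is your claim that $\dIA$-path-continuity of the immigration component ``follows from the Feller continuity of \SSIPEAT\ given by Theorem~\ref{thm:sp}.'' Theorem~\ref{thm:sp} gives Feller continuity only in the Hausdorff metric $d_H\circ C$ on $\IPHs$, which is strictly coarser than $\dIA$: the diversity functional $\IPLT^{(\alpha)}_{\,\cdot\,}$ is badly discontinuous in the Hausdorff topology (indeed most Hausdorff limits of partitions with $\alpha$-diversity fail to have it), so Hausdorff-Feller regularity says nothing about the continuity of $y\mapsto\IPLT^{(\alpha)}_{\beta^y}$. In the paper, $\dIA$-path-continuity of the immigration process $(\cev\beta^y,\,y\ge 0)$ is Proposition~\ref{prop:cts_imm}, proved in Section~\ref{sec:imm_cts_pf} via the pathwise uniform estimate of Lemma~\ref{lem:cts_imm-step2} (imported from \cite{FVAT}): one controls, uniformly over all levels in $[0,y_0]$, both the total mass \emph{and the diversity rate} contributed by clades entering within a short window below the current level. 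This is genuinely analytic work on the Poisson random measure $\cev\bF_\theta$ and is not a corollary of any abstract semigroup property. Moreover, in the paper's development, the path-continuity on $\IPA$ logically precedes the Feller property on $\IPHs$, so invoking the latter is also circular. You also misidentify the bottleneck: you flag the entrance regularity of the descendants $\beta^y_U$ of the initial blocks as the obstacle, but that part is already covered for $\theta=0$ by Proposition~\ref{prop:0:len}\ref{item:IPPA:cts} (from \cite{Paper1-1}); the new work for general $\theta$ is exactly the immigration component you attempt to dismiss via Feller continuity.

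Finally, the Hunt property needs more than path-continuity and right-continuity: the paper verifies Sharpe's definition by combining path-continuity (as above), the strong Markov property (Proposition~\ref{prop:IP:SMP}, itself requiring continuity in the initial condition, Proposition~\ref{prop:IP:initial}), and the Lusin property of $(\IPA,\dIA)$ (Proposition~\ref{prop:IPH}). The ``for all $y>0$'' upgrade then follows by restarting at a sequence $y_n\downarrow 0$, using the Markov property together with the fixed-time statement and the already-established path-continuity from $\IPA$ — this chain argument you sketch correctly, but it cannot close without the missing $\dIA$-continuity lemma.
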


Diversity is a continuum analogue to the number of components of a composition \cite{CSP}. The \PDIPAT\ stationary laws of our diffusions are supported on $\IPA$; e.g.\ this follows from the regenerative property noted in \cite{GnedPitm05} and the total diversity property noted in \cite{CSP}. Diversities arise, for example, in spinal projections of continuum random trees (CRTs): we can decompose any continuum tree around a path from the root to a leaf, called a ``spine,'' and represent the mass of each subtree branching off the spine as a block in an interval partition. 
Examples include the Brownian CRT and other stable CRTs \cite{HPW}, as well as  
the two-parameter family of binary trees studied in \cite{PitmWink09}. Diversities in spinal projections then describe distances along the spine in the tree. 
By virtue of this connection the $\big(\frac12,\frac12\big)$- and $\big(\frac12,0\big)$-interval partition evolutions, previously studied in \cite{Paper1-2}, were used in \cite{Paper4} to construct a continuum-tree-valued process stationary with the law of the Brownian CRT. By generalizing \cite{Paper1-2} in the present work, we open the door to subsequent generalizations of the construction of the continuum-tree-valued process in \cite{Paper4}. In the setting of algebraic trees, in which distances along a spine are omitted and only the branching structure is retained, the Brownian tree diffusion of \cite{LohrMytnWint18} was extended in \cite{NussWint20} to the $(\alpha,1-\alpha)$ case.



\subsection{Organization of the paper}

We recall notions and results on related processes in Section \ref{sec:prelim}. In Section \ref{sec:IPAT}, we construct and study ${\tt SSIPE}(\alpha,\theta)$ as a two-parameter family of $\HIPspace$-valued Hunt process and prove Proposition \ref{prop:sp} and Theorem \ref{thm:rayknight}(ii). In Section \ref{sec:generalized}, we construct and study ${\tt SSIPE}(\alpha,\theta)$ as $\IPHs$-valued Feller processes and establish Theorems \ref{thm:sp}, \ref{thm:dP-IP} and \ref{thm:rayknight}(i), as well as Corollary \ref{cor:RANKED} and Proposition \ref{prop:entrlaw:intro}. In Section \ref{sec:infgen}, we turn to the infinitesimal generator and establish Theorem \ref{thm:twopargen}.

\section{Preliminaries}\label{sec:prelim}

The $\mBxcA$ interval partition evolutions of \cite{Paper1-1}, called type-1 evolutions in \cite{Paper1-2} are, in the language of the present work, \PoiIPP{\alpha,0}s, or \SSIPEA. In Section \ref{sec:IPspace} we recall the state space of interval partitions for these evolutions introduced in \cite{Paper1-0}. In Section \ref{sec:skewer} we recall from \cite{Paper1-1,Paper1-2} the construction of \SSIPEA. Finally, in Section \ref{sec:theta_1}, we recall from \cite{FVAT} relevant methods needed to extend to all $\theta\ge0$.

\subsection{The state space: interval partitions with diversity}\label{sec:IPspace}

The state spaces $(\IPA,\dIA)$ and $(\HIPspace,\dHs)$ for our evolutions were introduced in \cite{Paper1-0}. We review key definitions and results from that study here.

We adopt the notation $[n] := \{1,2,\ldots,n\}$.
For $\beta,\gamma\in \IPH$, a \emph{correspondence} from $\beta$ to $\gamma$ is a finite sequence of ordered pairs of intervals $(U_1,V_1),\ldots,(U_n,V_n) \in \beta\times\gamma$, $n\geq 0$, where the sequences $(U_j)_{j\in [n]}$ and $(V_j)_{j\in [n]}$ are each strictly increasing in the left-to-right ordering of the interval partitions.
The $\alpha$-\emph{distortion} of a correspondence $(U_j,V_j)_{j\in [n]}$ from $\beta$ to $\gamma$ in $\IPA$, denoted by $\dis_\alpha(\beta,\gamma,(U_j,V_j)_{j\in [n]})$, is defined to be the maximum of the following four quantities:
 \begin{enumerate}[label=(\roman*), ref=(\roman*)]
  \item $\sum_{j\in [n]}|\Leb(U_j)-\Leb(V_j)| + \IPmag{\beta} - \sum_{j\in [n]}\Leb(U_j)$, \label{item:IP_m:mass_1}
  \item $\sum_{j\in [n]}|\Leb(U_j)-\Leb(V_j)| + \IPmag{\gamma} - \sum_{j\in [n]}\Leb(V_j)$, \label{item:IP_m:mass_2}
  \item $\sup_{j\in [n]}|\IPLT^{(\alpha)}_{\beta}(U_j) - \IPLT^{(\alpha)}_{\gamma}(V_j)|$,
  \item $|\IPLT^{(\alpha)}_{\beta}(\infty) - \IPLT^{(\alpha)}_{\gamma}(\infty)|$.
 \end{enumerate}
 Similarly, the \emph{Hausdorff distortion} of a correspondence $(U_j,V_j)_{j\in [n]}$ between $\beta,\gamma\in\HIPspace$, denoted by $\dis_H(\beta,\gamma,(U_j,V_j)_{j\in[n]})$, is defined to be the maximum of (i)--(ii).

 For $\beta,\gamma\in\HIPspace$ we define
 \begin{equation}\label{eq:IPH:metric_def}
  \dHs(\beta,\gamma) := \inf_{n\ge 0,\,(U_j,V_j)_{j\in [n]}}\dis_H\big(\beta,\gamma,(U_j,V_j)_{j\in [n]}\big),
 \end{equation}
 where the infimum is over all correspondences from $\beta$ to $\gamma$. For $\beta,\gamma\in\IPA$ we similarly define
 \begin{equation}\label{eq:IP:metric_def}
  d_\alpha(\beta,\gamma) := \inf_{n\ge 0,\,(U_j,V_j)_{j\in [n]}}\dis_\alpha\big(\beta,\gamma,(U_j,V_j)_{j\in [n]}\big).
 \end{equation}
The name ``Hausdorff distortion'' refers to a relationship between $\dHs$ and the Hausdorff metric on compact sets \cite[Theorem 2.3]{Paper1-0}. Indeed, we showed that $d_H^\prime$ generates the same topology on $\mathcal{I}_H$ as the metric $d_H\circ C$ given by $C(\beta)=[0,\|\beta\|]\setminus\bigcup_{U\in\beta}U$ and
\[
d_H(K_1,K_2)=\inf\{\varepsilon>0\colon K_1\subseteq K_2^\varepsilon\ \mbox{and}\ K_2\subseteq K_1^\varepsilon\},
\]
for compact $K_1,K_2\subset[0,\infty)$, where $K^\varepsilon=\{x\in[0,\infty)\colon |x-y|\le\varepsilon\mbox{ for some }y\in K\}$ is the $\varepsilon$-thickening of $K$.


\begin{lemma}[{\cite[Theorems~2.3 and 2.4]{Paper1-0}}]\label{prop:IPH}
 $(\IPH,\dHs)$ is a Polish metric space, while $(\IPA,d_\alpha)$ is a Lusin space.
\end{lemma}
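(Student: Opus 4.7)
The plan is to handle the two claims separately, since the state space $(\IPH,\dHs)$ is genuinely metric-complete-after-remetrization while $(\IPA,d_\alpha)$ requires an extra coordinate to capture the diversity.

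\textbf{Polish structure of $(\IPH,\dHs)$.} First I would verify that $\dHs$ is a metric. Symmetry and non-negativity are immediate; the triangle inequality follows by pasting correspondences: given $(U_j,V_j)_{j\in[n]}$ from $\beta$ to $\gamma$ and $(V_k',W_k)_{k\in[m]}$ from $\gamma$ to $\delta$, pass to the common refinement obtained by restricting to indices where the same $\gamma$-block is used on both sides, and use the triangle inequality on block masses plus the ``leftover mass'' terms \itemref{item:IP_m:mass_1}--\itemref{item:IP_m:mass_2}. Next I would identify the topology with that of $d_H\circ C$ on the image inside $\mathcal{K}$, invoking the cited equivalence. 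The set $\mathcal{K}$ of compact subsets of $[0,\infty)$ containing $0$, with the Hausdorff metric, is a well-known Polish space. The image of $\IPH$ under $C$ is precisely $\{K\in\mathcal{K}\colon\Leb(K)=0\}$. Since Lebesgue measure is lower semicontinuous on $(\mathcal{K},d_H)$ (Hausdorff limits can only enlarge, in the Kuratowski sense, the inner set), this image is closed in $\mathcal{K}$, hence Polish. Separability transfers back through the homeomorphism, giving the first assertion.

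\textbf{Lusin structure of $(\IPA,d_\alpha)$.} The metric $d_\alpha$ refines $\dHs$ by additionally matching the values of the diversity function $\IPLT_\beta^{(\alpha)}$ at corresponding endpoints. The plan is to embed $\IPA$ into a Polish space and show the image is Borel. Define
\begin{equation*}
 \Psi\colon \IPA\longrightarrow \IPH\times C\bigl([0,\infty),[0,\infty)\bigr),\qquad \Psi(\beta)=\bigl(\beta,\widetilde{\IPLT}_\beta^{(\alpha)}\bigr),
\end{equation*}
where $\widetilde{\IPLT}_\beta^{(\alpha)}(t)=\IPLT_\beta^{(\alpha)}(t\wedge \|\beta\|)$. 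I would verify that $\Psi$ is a homeomorphism onto its image when the target carries the product of $\dHs$ and the uniform metric on compacta, by unpacking the four quantities controlling $d_\alpha$: items (iii)--(iv) force uniform convergence of the diversity functions along the matched points, which together with monotonicity and continuity of these functions gives uniform convergence on $[0,\infty)$. The image $\Psi(\IPA)$ is Borel since the existence of the limit in \eqref{eq:IPLT} and its compatibility with the given function component can be written as a countable intersection of Borel conditions of the form $\limsup$ equals $\liminf$ along rational sequences $h\downto 0$. A Borel subset of a Polish space is Lusin, which finishes the proof.

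\textbf{Main obstacle.} The delicate step is the forward direction of the homeomorphism claim for $\Psi$: if $d_\alpha(\beta_n,\beta)\to 0$, I need to upgrade the information from pointwise control at matched endpoints to uniform control of $\widetilde{\IPLT}_{\beta_n}^{(\alpha)}$. This should go through by using an extremal correspondence achieving near-infimal distortion, exploiting the monotonicity of $\IPLT^{(\alpha)}$ and \itemref{item:IP_m:mass_1} to bound the oscillation of the diversity between matched blocks by the mass distortion. The reverse direction, continuity of $\Psi^{-1}$, is easier because one recovers each of the defining quantities of $\dis_\alpha$ from the product metric.
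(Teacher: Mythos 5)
The statement you are proving is cited to Theorems 2.3 and 2.4 of \cite{Paper1-0}; the present paper does not prove it, so there is no paper proof to compare against, and your attempt must be evaluated on its own terms.

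Your argument for the Polish claim contains a fatal error. You assert that $C(\IPH)=\{K\in\mathcal{K}\colon\Leb(K)=0\}$ is closed in $(\mathcal{K},d_H)$ because ``Lebesgue measure is lower semicontinuous on $(\mathcal{K},d_H)$.'' In fact Lebesgue measure on compact sets is \emph{upper} semicontinuous, not lower: if $K_n\to K$ then eventually $K_n\subseteq K^\varepsilon$, so $\Leb(K_n)\le\Leb(K^\varepsilon)\downarrow\Leb(K)$, giving $\limsup\Leb(K_n)\le\Leb(K)$; there is no bound in the other direction. The paper itself hands you a counterexample to your closedness claim: for $\beta_n=\{(0,1/n),(1/n,2/n),\dots,((n-1)/n,1)\}$, the sets $C(\beta_n)$ converge in $d_H$ to $[0,1]$, which has Lebesgue measure $1$ and is therefore not in $C(\IPH)$. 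So $C(\IPH)$ is not closed, and your deduction of Polishness collapses. With the correct (upper) semicontinuity you would get that $C(\IPH)$ is a $G_\delta$ subset of $\mathcal{K}$, hence topologically Polish, which would suffice if the lemma only claimed the topology of $\IPH$ is Polish. But the paper introduces $\dHs$ as a \emph{different} metric precisely because $d_H\circ C$ is incomplete, strongly suggesting that the lemma asserts $\dHs$ itself is a complete separable metric. Your route through the Hausdorff metric says nothing about $\dHs$-Cauchy sequences and cannot establish that; one needs a direct completeness argument for $\dHs$, exploiting that $\dHs$-Cauchy is a strictly stronger condition than $(d_H\circ C)$-Cauchy (for instance, the sequence $\beta_n$ above is $(d_H\circ C)$-Cauchy but not $\dHs$-Cauchy, as $\dHs(\beta_n,\beta_{2n})\ge 1/2$).

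Your plan for the Lusin claim is reasonable in outline: embed $(\IPA,d_\alpha)$ in a Polish space and show the image is Borel. The embedding $\Psi(\beta)=(\beta,\widetilde{\IPLT}^{(\alpha)}_\beta)$ and the Borelness argument via countable limsup/liminf conditions along rational $h\downarrow 0$ are the standard pattern. The delicate point you flag --- promoting control at matched endpoints (items (iii)--(iv) of the distortion) to uniform control of the diversity function --- genuinely needs care, since item (iv) only controls the total diversity and item (iii) only controls values at matched blocks; the gaps between matches are constrained only through the mass terms. Still, the monotonicity plus mass control should close this, and the sketch is plausible. The Lusin half would likely go through if the details were written out, but the Polish half as written is wrong.
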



An interval partition $\beta\in\HIPspace$ can be reversed and scaled by $c>0$ as 
\begin{equation}\label{eq:IP:xforms}
 \reverse_{\textnormal{IP}}(\beta) := \big\{(\IPmag{\beta}-b,\IPmag{\beta}-a)\colon (a,b)\!\in\!\beta\big\}, \quad 
  c\beta := \big\{(ca,cb)\colon (a,b)\!\in\! \beta\big\}.
\end{equation}

For $\alpha\in(0,1)$ and $\theta\ge 0$, let $W_j\sim{\tt Beta}(\theta+j\alpha,1-\alpha)$, $j\ge 1$, independent, and set $P_n=W_1\cdots W_{n-1}(1 - W_n)$, $n \ge 1$. Then the 
decreasing rearrangement of $(P_n,n \ge 1)$ is known to be $\PoiDir(\alpha,\theta)$-distributed. To construct an interval partition with block lengths $P_n$, $n\ge 1$, start from $\{(0,P_1)\}$
and proceed inductively by inserting an interval of length $P_{n+1}$ to the right of any of the first $n$ blocks with probability $\alpha/(n\alpha+\theta)$ and into left-most position with remaining probability $\theta/(n\alpha+\theta)$. The distribution of the limiting interval partition is called $\PDIP(\alpha,\theta)$.    

The $\reverseI$-reversal of ${\tt PDIP}(\alpha,\theta)$ was studied in \cite{GnedPitm05} and \cite{PitmWink09} under the name ``regenerative $(\alpha,\theta)$-interval partition'' in view of a multiplicative regenerative property and connections to ranges of subordinators and exponential subordinators. The above construction captures the limiting proportions of customers at tables in the ordered $(\alpha,\theta)$-Chinese restaurant process \cite{PitmWink09}.
As a classical example, the excursion intervals of a standard Brownian bridge form a $\PDIP\big(\frac12,\frac12\big)$. If we instead consider excursion intervals in $(0,1)$ of unconditioned Brownian motion, including the incomplete final excursion, then the $\reverseI$-reversal of this set (to move the incomplete excursion interval to the start) is a \PDIP[\frac12,0]. 

Other examples are related to squared Bessel processes. Specifically, for $r\!\in\! \mathbb{R}$, $z\!\ge\! 0$ and $B$ a standard one-dimensional Brownian motion, it is well-known that there exists a unique strong solution to the equation 
\[
Z(t) = z + r t + 2 \int_0^t \sqrt{|Z(s)|} d B(s), 
\]
which is called an \emph{$r$-dimensional squared Bessel process} starting from $z$ and denoted by $\BESQ_{z}(r)$. When $r\le 0$, the boundary point 0 is not an entrance boundary for $(0,\infty)$, while exit at 0 (we will then force absorption) happens almost surely. For $r=d\in\BN$, the squared norm of a $d$-dimensional Brownian motion is a \BESQ[d]. 
The \BESQ[0] process, also known as the Feller diffusion, is a continuous-state branching process that arises as a scaling limit of critical Galton--Watson processes. From this point of view, we can interpret \BESQ[r] with $r>0$ as a Feller diffusion with immigration, and the case $r < 0$ as a Feller diffusion with emigration at rate $|r|$. See \cite{PitmYor82,GoinYor03,Pal13}.
If we replace Brownian motion by \distribfont{BESQ}$(2-2\alpha)$ in the forgoing example, $\alpha\in (0,1)$, then we get $\PDIP(\alpha,\alpha)$ and $\PDIP[\alpha,0]$. See e.g.\ \cite[Examples 3--4 and Sections 8.3--8.4]{GnedPitm05}.

\begin{proposition}\label{prop:PDIP}
 Fix $\alpha\in (0,1)$. Let $S\sim\ExpDist(\lambda)$ and, independently, let $Y$ be a \Stable[\alpha] subordinator with Laplace exponent $q^\alpha$. Set 
  $T := \inf\{s > 0:$ $Y(s) > S\}$ and
  $$\beta := \big\{(Y(t-),Y(t)) \colon t\in [0,T),\, Y(t-)\neq Y(t)\big\}.$$
  Then $Y(T-)$, $S-Y(T-)$, and $(1/Y(T-)) \beta$ are jointly independent, with respective distributions $\GammaDist[\alpha,\lambda]$, $\GammaDist[1\!-\!\alpha,\lambda]$, and $\PDIP[\alpha,\alpha]$. Also,
  \begin{equation}\label{eq:PDIPa0_eg}
   \frac{1}{S} \Big( \big\{(0,S-Y(T-))\big\} \concat \beta \Big) \sim \PDIP[\alpha,0].
  \end{equation}
\end{proposition}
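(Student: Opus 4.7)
The plan is to combine a compensation-formula computation at the first-passage time $T$ with the self-similarity of the stable subordinator and the regenerative-composition theory of \cite{GnedPitm05}.

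First, I would compute the joint law of $\big(Y(T-),\,S-Y(T-)\big)$. The stable$(\alpha)$ subordinator with $\psi(q)=q^\alpha$ has L\'evy measure $\nu(dx)=\alpha x^{-1-\alpha}dx/\Gamma(1-\alpha)$ and potential density $u^{\alpha-1}/\Gamma(\alpha)$. Applying the compensation formula to the jump at $T$, integrating over the independent $S\sim\ExpDist(\lambda)$, and using $\int_w^\infty \nu(dx) = w^{-\alpha}/\Gamma(1-\alpha)$, yields
\begin{equation*}
 \Pr\big(Y(T-)\in du,\ S-Y(T-)\in dw\big) = \frac{\lambda u^{\alpha-1} w^{-\alpha}e^{-\lambda(u+w)}}{\Gamma(\alpha)\Gamma(1-\alpha)}\,du\,dw,
\end{equation*}
which factorizes as a $\GammaDist[\alpha,\lambda]$ density times a $\GammaDist[1-\alpha,\lambda]$ density, so $Y(T-)$ and $S-Y(T-)$ are independent with the claimed marginals.

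To peel off the partition-valued factor, I would use the memoryless property of $S$ together with $1/\alpha$-self-similarity of $Y$. Conditionally on the pre-$T$ history, the overshoot $S-Y(T-)$ is $\ExpDist(\lambda)$ conditioned to lie below the next jump of $Y$, so given $Y(T-)$ it is independent of $\beta$. Conditionally on $Y(T-)=u$, self-similarity of $Y$ (via a standard stable-subordinator-bridge scaling) gives that the law of $(1/u)\beta$ does not depend on $u$. Combining these with the preceding paragraph yields the three-way independence. For the distributional identification $(1/Y(T-))\beta\sim\PDIP[\alpha,\alpha]$, I would invoke the regenerative-composition framework of \cite{GnedPitm05}: the rescaled interval partition of open gaps in the range of a stable$(\alpha)$ subordinator run until its undershoot at an independent exponential level is precisely the regenerative $(\alpha,\alpha)$-interval partition (see Examples 3--4 and Sections 8.3--8.4 of \cite{GnedPitm05}).

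Finally, for \eqref{eq:PDIPa0_eg}, Gamma--Beta algebra gives $S\sim\ExpDist(\lambda)$ independent of $V:=Y(T-)/S\sim\BetaDist[\alpha,1-\alpha]$. Hence the rescaled leading block of $\{(0,S-Y(T-))\}\concat\beta$ has length $1-V\sim\BetaDist[1-\alpha,\alpha]$, matching the first block $P_1=1-W_1$ in the \PDIP[\alpha,0] stick-breaking (with $W_1\sim\BetaDist[\alpha,1-\alpha]$ since $\theta=0$). The rescaled residual, of total mass $V$, is \PDIP[\alpha,\alpha]-distributed by the preceding step and independent of $V$, and this is exactly the residual law prescribed by the \PDIP[\alpha,0] stick-breaking after removal of its first block (since $W_j\sim\BetaDist[j\alpha,1-\alpha]$ for $j\ge 2$ are precisely the Beta parameters of \PDIP[\alpha,\alpha]). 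The main obstacle will be step three: cleanly matching the left-to-right ordering of the jump intervals in $\beta$ to the Gnedin--Pitman regenerative-composition parametrization, in particular translating between their subordinator-range conventions and our \PDIP[\alpha,\theta] stick-breaking definition and correctly accounting for the excision of the overshoot jump.
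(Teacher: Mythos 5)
Your approach is correct and genuinely different from the paper's. The paper proves this by running the argument through the $\BESQ(2-2\alpha)$ process: it takes $Z\sim\BESQ(2-2\alpha)$, uses the Perman--Pitman--Yor fact that the last zero $G$ of $Z$ in $[0,1]$ is $\BetaDist[\alpha,1-\alpha]$ and independent of the rescaled bridge $(Z(uG)/G)_{u\in[0,1]}$, then uses $\BESQ$ scaling with the independent exponential $S$ to set $Z'=SZ(\cdot/S)$, identifies $Y$ as the inverse local time of $Z'$ at $0$, and reads off $Y(T-)=SG$, $S-Y(T-)=S(1-G)$, so that the Gamma marginals and all independences drop out of Beta--Gamma algebra and the Perman--Pitman--Yor result in one stroke, while the $\PDIP[\alpha,\alpha]$ law comes from identifying $\beta$ with the excursion intervals of the $\BESQ(2-2\alpha)$ bridge. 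You instead work entirely at the subordinator level: the compensation formula computation is correct (using $\nu(dx)=\frac{\alpha}{\Gamma(1-\alpha)}x^{-1-\alpha}dx$, $u(x)=\frac{x^{\alpha-1}}{\Gamma(\alpha)}$, $\bar\nu(w)=\frac{w^{-\alpha}}{\Gamma(1-\alpha)}$, changing variables $s=u+w$ does give the factorized density), and your stick-breaking argument for \eqref{eq:PDIPa0_eg} is also sound — the Beta parameters $W_j\sim\BetaDist[j\alpha,1-\alpha]$ for $j\ge 2$ do coincide with the $\PDIP[\alpha,\alpha]$ stick-breaking, and the insertion probabilities match after removing the first block. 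The trade-off is that the paper's route gets the full three-way independence essentially for free from the Perman--Pitman--Yor decomposition, whereas your route requires making rigorous the ``bridge conditioning'' claim that, given $Y(T-)=u$, the path $(Y(t),\,t<T)$ is a stable subordinator bridge to $u$ whose rescaled gap partition is independent of the endpoint; this is the real content of your Step 2, and the phrase ``$\ExpDist(\lambda)$ conditioned to lie below the next jump'' obscures the fact that what is actually needed is the product structure of the joint law of $(Y(T-),\,S-Y(T-),\,\restrict{Y}{[0,T)})$ coming out of the compensation formula. You flag the ordering/reversal issue in the Gnedin--Pitman correspondence as the main obstacle, but since $\PDIP[\alpha,\alpha]$ is reversal-invariant (being the excursion interval partition of a time-reversible bridge) that obstacle is benign; the technical weight of your argument sits in the bridge-conditioning step, which the paper sidesteps entirely.
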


This result is well-known in the folklore around the Poisson--Dirichlet distribution, but for completeness we prove it here.


\begin{proof}
 Let $(Z(t),\,t\ge 0) \sim \distribfont{BESQ}(2-2\alpha)$, as mentioned in the example above the proposition. Then, e.g.\ from \cite[Lemma 3.7]{PermPitmYor92}, the last zero of $Z$ in $[0,1]$ is $G\sim \BetaDist[\alpha,1-\alpha]$, independent of $(Z(uG)/G,u\in[0,1])$, which is a $\distribfont{BESQ}(2-2\alpha)$ bridge. By the scaling invariance of ${\tt BESQ}$, if $S\sim\ExpDist[\lambda]$ is independent of $Z$ then $Z' = SZ(\cdot/S)$ has the same distribution as $Z$ and is also independent of $S$. Let $Y$ denote the level 0 inverse local time process of $Z'$, so $Y$ is a \Stable[\alpha] subordinator independent of $S$ \cite{CSP}, and let $T$ be as in the statement of the proposition. Then
 $$Y(T-) = SG \sim \GammaDist[\alpha,\lambda] \mbox{ and }S - Y(T-) = S(1-G) \sim \GammaDist[1\!-\!\alpha,\lambda]\!,$$
and they are independent, by Beta-Gamma algebra. Moreover, both variables are independent of the bridge mentioned above. The excursion intervals of that bridge comprise the interval partition $\beta$ of the proposition; thus, the claimed \PDIP[\alpha,\alpha] law of $(1/Y(T-))\beta$, as well as the \PDIP[\alpha,0] in \eqref{eq:PDIPa0_eg}, follow from those laws arising in connection with excursion intervals, as noted above the proposition.
\end{proof}

We will also require the following identity.

\begin{lemma}\label{lem:IP:concat_bound}
 For any $\beta_1, \beta_2, \gamma_1, \gamma_2 \in \IPA$, 
 \begin{equation}\label{eq:IP:concat_bound}
	|\dIA(\beta_2, \gamma_2 ) - \dIA(\beta_1, \gamma_1 )| \le \dIA(\beta_1\concat\beta_2, \gamma_1\concat \gamma_2 ) \le \dIA(\beta_1, \gamma_1 ) + \dIA(\beta_2, \gamma_2 ).
 \end{equation}
\end{lemma}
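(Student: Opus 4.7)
The plan is to prove both inequalities by direct manipulation of the correspondences in the definition \eqref{eq:IP:metric_def} of $\dIA$.

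For the right inequality (subadditivity), I would concatenate near-optimal correspondences. Given pairs $(U^{(i)}_j,V^{(i)}_j)_{j\in[n_i]}$ from $\beta_i$ to $\gamma_i$ realising $\dis_\alpha\le\dIA(\beta_i,\gamma_i)+\eta$ for $i=1,2$, list them in sequence with the $i=2$ pairs shifted by $\|\beta_1\|$ on the left and $\|\gamma_1\|$ on the right, to form a correspondence from $\beta_1\concat\beta_2$ to $\gamma_1\concat\gamma_2$. Quantities (i), (ii) and (iv) in the definition of $\dis_\alpha$ are additive across the two blocks; for quantity (iii), a pair from the $i=2$ block contributes $|\IPLT^{(\alpha)}_{\beta_1}(\infty)+\IPLT^{(\alpha)}_{\beta_2}(U)-\IPLT^{(\alpha)}_{\gamma_1}(\infty)-\IPLT^{(\alpha)}_{\gamma_2}(V)|$, which splits via the triangle inequality into a part bounded by (iv) of the $i=1$ correspondence plus a part bounded by (iii) of the $i=2$ correspondence. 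Taking the maximum of the four quantities and letting $\eta\downarrow 0$ yields the right inequality.

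For the left inequality, I would rearrange to $\dIA(\beta_1,\gamma_1)\le\dIA(\beta_2,\gamma_2)+\dIA(\beta_1\concat\beta_2,\gamma_1\concat\gamma_2)$ (and symmetrically) and reduce it to a \emph{cancellation identity} $\dIA(\beta\concat\delta,\gamma\concat\delta)=\dIA(\beta,\gamma)=\dIA(\delta\concat\beta,\delta\concat\gamma)$, valid for arbitrary $\beta,\gamma,\delta\in\IPA$. Granting this, the left inequality follows from the triangle inequality for $\dIA$ with intermediate point $\beta_1\concat\beta_2$:
\begin{align*}
\dIA(\beta_1,\gamma_1)&=\dIA(\beta_1\concat\gamma_2,\gamma_1\concat\gamma_2)\\
&\le \dIA(\beta_1\concat\gamma_2,\beta_1\concat\beta_2) + \dIA(\beta_1\concat\beta_2,\gamma_1\concat\gamma_2)\\
&=\dIA(\beta_2,\gamma_2) + \dIA(\beta_1\concat\beta_2,\gamma_1\concat\gamma_2).
\end{align*}

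The ``$\le$'' direction of the cancellation identity is immediate from the right inequality together with $\dIA(\delta,\delta)=0$. The main obstacle is the ``$\ge$'' direction: from a correspondence on $(\beta\concat\delta,\gamma\concat\delta)$ of distortion $\epsilon$, one must produce a correspondence on $(\beta,\gamma)$ of distortion at most $\epsilon$. The plan is to restrict to the uncrossed pairs --- those matching a $\beta$-interval to a $\gamma$-interval --- and neutralise the crossed pairs, which pair a $\beta$- or $\gamma$-interval with a $\delta$-interval on the opposite side, by a swap argument that replaces each such pair with an identity matching of $\delta$ to itself. Respecting the strictly increasing order on both sides during the swap requires a case analysis based on the relative positions of the transitions between the $\beta$- or $\gamma$- and $\delta$-parts on each side, and the crucial simplification is that the shared $\delta$-contribution to the total diversity cancels in quantity (iv), while the additional mass and local-diversity contributions of the crossed pairs are absorbed by the slack they create in quantities (i)--(iii) of the original correspondence.
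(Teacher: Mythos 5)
Your right-inequality argument is the same as the paper's (and correct): concatenate near-optimal correspondences and observe that quantities (i), (ii), (iv) are additive across the two blocks, while the level-(iii) diversity terms split via the triangle inequality using additivity of $\IPLT^{(\alpha)}$ under concatenation.

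For the left inequality you take a genuinely different route. The paper works directly with an arbitrary correspondence $C$ on $\beta_1\concat\beta_2$ and $\gamma_1\concat\gamma_2$, splits it at the two transition indices (last pair with both entries in the ``1''-parts, first pair with both entries in the ``2''-parts) into $C_1$ and $C_2$, and bounds the distortion of each restriction in terms of $\dis_\alpha(\cdot,\cdot,C)$ and the distortion of the other restriction; the ``mismatched'' pairs in between are all of one type and are absorbed in that bookkeeping. You instead reduce to a two-sided cancellation identity $\dIA(\beta\concat\delta,\gamma\concat\delta)=\dIA(\beta,\gamma)=\dIA(\delta\concat\beta,\delta\concat\gamma)$ and deduce the left inequality via the ordinary triangle inequality through the intermediate point $\beta_1\concat\beta_2$. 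That reduction is correct, and it buys a cleaner high-level structure.

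The issue is that the entire difficulty has been pushed into the ``$\ge$'' direction of the cancellation identity, and you only sketch it. Note that this direction is exactly the special case $\beta_2=\gamma_2=\delta$ of the left inequality you are trying to prove, so no work has actually been saved: your swap argument has to do the same thing the paper does, namely control the crossed (``mismatched'') pairs. Two specific points you would need to pin down. First, when you discard a crossed pair $(U,V)$ with $U\in\beta$ and $V$ in the shifted $\delta$, the leftover-mass term $\|\beta\|-\sum\Leb(U_j)$ in quantity (i) for the restricted correspondence \emph{increases} by $\Leb(U)$; you must show this increase is compensated by the slack the crossed pair created in (i)--(ii) of the original correspondence together with the unmatched $\delta$-mass on one side, and this is not a cancellation but an inequality that needs checking for all four quantities simultaneously. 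Second, the proposed swap ``replace each crossed pair by the identity matching of $\delta$ to itself'' can conflict with the ordering constraint: the $\delta$-block on the $\beta\concat\delta$ side that you would want to pair identically may already be matched in $C$ (necessarily to a later $\delta$-block on the other side), so the swap cascades, and the case analysis you defer is of the same nature and size as the paper's $k$-versus-$m$ case analysis. So the proposal is a valid alternative skeleton, but as written the crux is left at the same level of detail as the paper's ``we leave it to the reader,'' and it is not demonstrably simpler; I would want to see the swap argument carried out for at least quantities (i) and (ii) before accepting it as a proof.
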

\begin{proof}
 The second inequality is straightforward. We prove the first. Throughout this proof, for $\beta\in\IPA$ and $x\in\BR$ we write $\beta+x$ to denote $\{(a+x,b+x)\colon (a,b)\in \beta\}$.
 
 Consider an arbitrary correspondence $C = (U_j,V_j)_{j\in [n]}$ from $\beta_1\concat\beta_2$ to $\gamma_1\concat\gamma_2$. Let $k,m\in [n]$ denote, respectively, the greatest index for which both $U_k\in \beta_1$ and $V_k\in\gamma_1$, and the least index for which both $U_{m+1}\in \beta_2+\|\beta_1\|$ and $V_{m+1}\in \gamma_2+\|\gamma_1\|$. Then $C_1 = (U_j,V_j)_{j\in [k]}$ and $C_2 = (U_{j+m}\!-\!\|\beta_1\|,V_{j+m}\!-\!\|\gamma_1\|)_{j\in [n-m]}$ are, respectively, correspondences from $\beta_1$ to $\gamma_1$ and from $\beta_2$ to $\gamma_2$.
 
 If $m > k$ then either every pair $(U_j,V_j),\ j\in [k+1,m]$, satisfies $U_j\in\beta_1$ while $V_j\in\gamma_2+\|\gamma_1\|$ or every pair has $U_j\in\beta_2+\|\beta_1\|$ while $V_j\in\gamma_1$ (there cannot be ``mismatched'' pairs of both kinds, as this would violate the ordering property of correspondences). 
 
 We leave it to the reader to confirm that
 \begin{equation*}
 \begin{split}
  \dis_\alpha(\beta_1,\gamma_1,C_1) &\le \dis_\alpha(\beta_1\concat\beta_2,\gamma_1\concat\gamma_2,C) + \dis_\alpha(\beta_2,\gamma_2,C_2)\\
  \text{and}\ \ \dis_\alpha(\beta_2,\gamma_2,C_2) &\le \dis_\alpha(\beta_1\concat\beta_2,\gamma_1\concat\gamma_2,C) + \dis_\alpha(\beta_1,\gamma_1,C_1).
 \end{split}
 \end{equation*}
 
 As we could obtain this bound beginning from an arbitrary correspondence between the concatenated partitions, this proves the lemma.
\end{proof}

\subsection{Scaffolding, spindles, and \SSIPEA}\label{sec:skewer}

Let $\Exc$ denote the set of non-negative, real-valued excursions that are continuous except, possibly, at their birth and death times, when they may have c\`adl\`ag jumps:
\begin{equation}
 \Exc := \left\{f\colon \BR\to [0,\infty)\ \middle| \begin{array}{c}
    \displaystyle \exists\ z\in(0,\infty)\textrm{ s.t.\ }\restrict{f}{(-\infty,0)\cup [z,\infty)} = 0,\ f(0) = f(0+),\\[0.2cm]
    \displaystyle f\text{ positive and continuous on }(0,z),\ f(z-)\text{ exists}
  \end{array}\right\}\!.\label{eq:cts_exc_space_def}
\end{equation}
We define the \emph{lifetime} and \emph{amplitude}, $\life,A\colon \Exc \to (0,\infty)$ via
\begin{equation}
 \life(f) = \sup\{s\geq 0\colon f(s) > 0\} \quad\mbox{and}\quad
 	A(f) = \sup\{f(s),\, s\in [0,\life(f)]\}.
\end{equation}



 For the purpose of the following, for $m>0$ let $H^m\colon \Exc\to [0,\infty]$ denote the first hitting time of level $m$. 
As in \cite[Section 2.3]{Paper1-1}, we write $\mBxcA$ to denote a $\sigma$-finite Pitman--Yor excursion measure \cite{PitmYor82} on $\Exc$ associated with \BESQ[-2\alpha]. In particular, under the probability measure $\mBxcA(\,\cdot \mid A(f) > m)$, the restricted canonical process $(f(y),\,y\in [0,H^m(f)])$  is a $\BESQ_0(4+2\alpha)$ stopped upon first hitting $m$, independent of $(f(H^m(f) + y),\,y\ge0)\sim\BESQ_m(-2\alpha)$. As in \cite{Paper1-1}, we choose to scale this measure so that
\begin{equation}\label{eq:mBxcA}
\begin{split}
 \mBxcA\{f\in\Exc\colon A(f) > m\} &= \frac{2\alpha(\alpha+1)}{\Gamma(1-\alpha)}m^{-1-\alpha} \text{\ \ for all }m>0,\\
 \mBxcA\{f\in\Exc\colon \life(f) > z\} &= \frac{\alpha}{2^{\alpha}\Gamma(1-\alpha)\Gamma(1+\alpha)}z^{-1-\alpha} \text{\ \ for all }z>0.
\end{split}
\end{equation}

\begin{figure}
 \centering
 \input{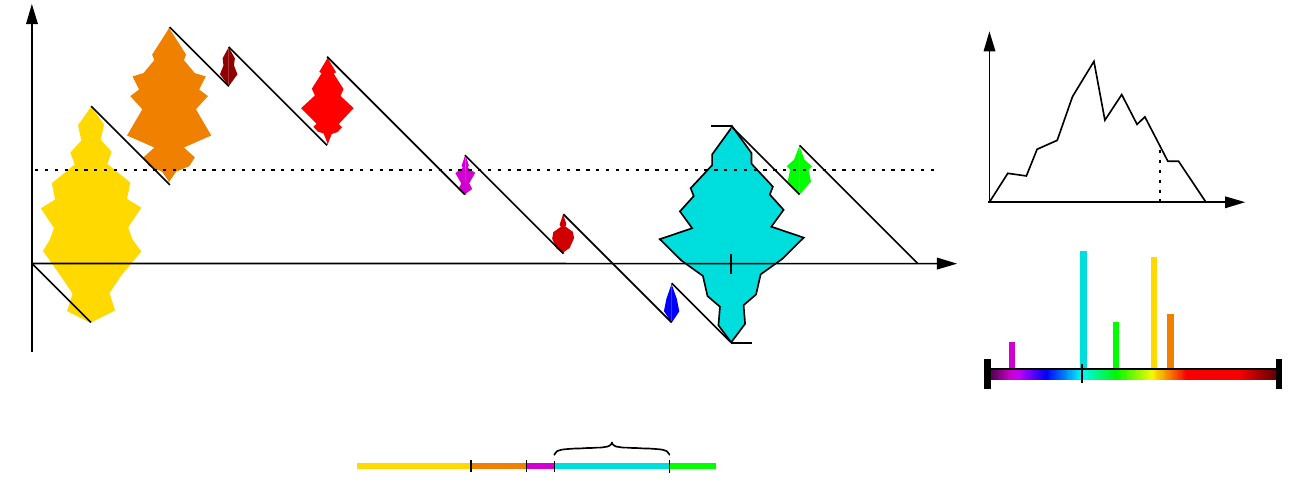_t}
 \caption{A discrete scaffolding and spindles. Left: The slanted black lines comprise the graph of the scaffolding $X$, shaded blobs decorating jumps describe spindles: points $(t,f_t)$ of $N$. The skewer extracts intervals from level $y$, with color only used to illustrate the relation between blocks and spindles. Upper right: Graph of one spindle. Lower right: The (measure-valued) ``superskewer'' studied in \cite{FVAT}; heights of vertical lines represent masses of atoms along a colour spectrum on [0,1]; the superskewer arises from a richer point measure $V$ of spindles with ``type labels,'' $(t,f_t,x_t)$, $x_t\in[0,1]$; here, the colors represent these type labels. Not drawn to scale.\label{fig:scaf_marks}}
\end{figure}

Now, consider a Poisson random measure $\bN$  on $[0,\infty)\times\Exc$ with intensity $\Leb\otimes\mBxcA$, denoted by \PRM[\Leb\otimes\mBxcA]. We wish to pair this with a \emph{scaffolding function} $\bX$. The idea is that each point $(t,f_t)$ of $\bN$ coincides with a jump $\Delta\bX(t) = \bX(t) - \bX(t-) = \life(f_t)$. We then view the \emph{spindle} $f_t$ as describing an evolving mass that is born at level $\bX(t-)$, evolves continuously, and then dies at level $\bX(t)$. The mass of this spindle at level $y$ is then $f_t(y-\bX(t-))$. See Figure \ref{fig:scaf_marks}.

 For $(\cS,d_{\cS})$ a Borel subset of a complete and separable metric space, we denote by $\cN(\cS)$ the space of boundedly finite measures on that space. 
For $N\in\cNRE$, we define the \emph{length} of $N$ to be
\begin{equation}
 \len(N) := \inf\Big\{t>0\colon N\big([t,\infty)\times\Exc\big) = 0\Big\} \in [0,\infty].\label{eq:PP:len_def}
\end{equation}
When the following limit exists for $t\in [0,\len(N)]\cap [0,\infty)$, we further define 
\begin{equation}
 \xiA_N(t) := \lim_{z\downto 0}\left(\int_{[0,t]\times\{g\in\Exc\colon\zeta(g) > z\}}\!\!\life(f)N(du,df) - \frac{(1+\alpha)t}{(2z)^{\alpha}\Gamma(1-\alpha)\Gamma(1+\alpha)}\right).\label{eq:scaff_def}   
\end{equation}
We also write $\xiA(N) := \big( \xiA_N(t),\ t\in [0,\len(N)]\cap [0,\infty) \big)$ and we denote the length of the domain interval for this process by $\len(\xiA(N)) := \len(N)$. We call $\xiA(N)$ the \emph{scaffolding associated with $N$}.

\begin{proposition}[Proposition 2.12 of \cite{Paper1-1}]\label{prop:stable_JCCP}
 For $\bN$ a \PRMLBA\ on $[0,\infty)\times \Exc$, the convergence in \eqref{eq:scaff_def} holds a.s.\ uniformly in $t$ on any bounded interval. 
 Moreover, the scaffolding $\xiA(\bN)$ is a spectrally positive stable L\'evy process of index $1+\alpha$, with L\'evy measure $\Pi(dz)= \mBxcA\{\life \in dz\}$ on $(0,\infty)$ and Laplace exponent $\psi$ given by 
\begin{equation}
\begin{split}
 \Pi(dz) & = \frac{\alpha(\alpha+1)}{2^{\alpha}\Gamma(1-\alpha)\Gamma(1+\alpha)}z^{-\alpha-2}dz \quad 
 \text{and} \quad \psi(\lambda) = \frac{\lambda^{1+\alpha}}{2^{\alpha}\Gamma(1+\alpha)}.
\end{split}\label{eq:scaff:laplace}
\end{equation}
\end{proposition}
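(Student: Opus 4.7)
The plan is to recognize \eqref{eq:scaff_def} as a L\'evy--It\^o compensated Poisson integral and then verify the claimed L\'evy measure and Laplace exponent by direct computation. Throughout, write $\Pi(dz) := \mBxcA\{\life\in dz\}$; by differentiating the second identity in \eqref{eq:mBxcA} we obtain
\begin{equation*}
\Pi(dz) = \frac{\alpha(1+\alpha)}{2^\alpha\Gamma(1-\alpha)\Gamma(1+\alpha)}\,z^{-2-\alpha}\,dz \quad\text{on }(0,\infty),
\end{equation*}
which is the desired L\'evy measure. Crucially, $\int_0^\infty(1\wedge z^2)\Pi(dz)<\infty$, while $\int_0^\infty(1\wedge z)\Pi(dz)=\infty$, so compensation of small jumps is both possible and necessary.

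First I would push $\bN$ forward under $(u,f)\mapsto(u,\life(f))$ to obtain a $\mathrm{PRM}(\Leb\otimes\Pi)$ on $[0,\infty)\times(0,\infty)$. For each fixed $z>0$, the truncated sum
\begin{equation*}
M^z(t) := \int_{[0,t]\times\{g\in\Exc\colon \life(g)>z\}} \life(f)\,\bN(du,df) \;-\; t\int_{\{\life>z\}}\life\,d\mBxcA
\end{equation*}
is a c\`adl\`ag, square-integrable martingale with orthogonal increments over disjoint ``jump-size layers.'' A direct evaluation using the above $\Pi$ gives
\begin{equation*}
\int_{\{\life>z\}}\life\,d\mBxcA \;=\; \int_z^\infty s\cdot\frac{\alpha(1+\alpha)\,ds}{2^\alpha\Gamma(1-\alpha)\Gamma(1+\alpha)\,s^{2+\alpha}} \;=\; \frac{(1+\alpha)}{(2z)^\alpha\Gamma(1-\alpha)\Gamma(1+\alpha)},
\end{equation*}
which matches precisely the subtracted term in \eqref{eq:scaff_def}. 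Hence \eqref{eq:scaff_def} is literally $\lim_{z\downarrow0}M^z(t)$.

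Next, to upgrade pointwise to almost-sure uniform convergence on any bounded interval $[0,T]$, I would apply Doob's $L^2$ maximal inequality to the martingale difference $M^{z_1}-M^{z_2}$ for $0<z_2<z_1$: its predictable quadratic variation at time $T$ equals $T\int_{(z_2,z_1]}s^2\Pi(ds)$, which vanishes as $z_1,z_2\downarrow 0$ because $\int_0^1 s^2\Pi(ds)<\infty$. Passing along a deterministic sequence $z_n\downarrow0$ and invoking Borel--Cantelli gives a.s.\ uniform convergence on $[0,T]$; sample-path monotonicity in $z$ then upgrades this to the continuum limit. This is the step I expect to be the main technical obstacle, although it is entirely standard for L\'evy--It\^o compensated integrals.

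Finally, to identify the limit as a L\'evy process with the claimed Laplace exponent, independence and stationarity of increments are inherited from the Poisson structure of $\bN$, and the L\'evy--Khintchine formula for a spectrally positive process with no Gaussian part and drift chosen by the compensation above yields
\begin{equation*}
\psi(\lambda) \;=\; \int_0^\infty\bigl(e^{-\lambda s}-1+\lambda s\bigr)\Pi(ds).
\end{equation*}
Differentiating inside the integral (justified since the resulting integrand is integrable for $\alpha\in(0,1)$) gives $\psi'(\lambda)=\int_0^\infty(1-e^{-\lambda s})s\,\Pi(ds)$, and applying the standard identity $\int_0^\infty(1-e^{-\lambda s})s^{-1-\alpha}ds=\lambda^\alpha\Gamma(1-\alpha)/\alpha$ together with $\psi(0)=0$ yields
\begin{equation*}
\psi(\lambda) \;=\; \frac{\alpha(1+\alpha)}{2^\alpha\Gamma(1-\alpha)\Gamma(1+\alpha)}\cdot\frac{\Gamma(1-\alpha)}{\alpha(1+\alpha)}\,\lambda^{1+\alpha} \;=\; \frac{\lambda^{1+\alpha}}{2^\alpha\Gamma(1+\alpha)},
\end{equation*}
as claimed in \eqref{eq:scaff:laplace}.
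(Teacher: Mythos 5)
The paper cites this result from \cite{Paper1-1} (Proposition 2.12 there) without reproducing its proof, so there is nothing in the present text to compare against; but your sketch is a correct, calculation-verified instance of the standard L\'evy--It\^o compensated-integral argument, which is the natural and almost certainly the intended route: push $\mBxcA$ forward under $\zeta$ to get $\Pi$, verify that the compensator in \eqref{eq:scaff_def} is exactly $t\int_{\{\life>z\}}\life\,d\mBxcA$, obtain a.s.\ uniform convergence on compacts from Doob's $L^2$-maximal inequality plus Borel--Cantelli and monotone interpolation in $z$, and read off the Laplace exponent from $\psi(\lambda)=\int_0^\infty(e^{-\lambda s}-1+\lambda s)\,\Pi(ds)$. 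All the explicit constants and integrability checks ($\int(1\wedge z^2)\,\Pi(dz)<\infty$, $\int(1\wedge z)\,\Pi(dz)=\infty$, $\int_1^\infty s\,\Pi(ds)<\infty$, the Gamma-integral identity) are correct, so I see no gap.
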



\begin{definition}[Point measure of spindles]\label{def:PP_spindles}
 We write $\HA\subset\cNRE$ to denote the set of all counting measures $N$ on $[0,\infty)\times \Exc$ with the following properties:
 \begin{enumerate}[label=(\roman*), ref=(\roman*)]
  \item $N\big( \{t\}\times\Exc \big)\leq 1$ for every $t\in [0,\infty)$,
  \item $N\big( [0,t]\times\{f\in\Exc\colon \life(f) > z\} \big) < \infty$ for every $t,z > 0$, 
  \item the convergence in \eqref{eq:scaff_def} holds uniformly in $t$ on bounded intervals.
 \end{enumerate}
 We define $\HfinA := \{N\in\HA\colon \len(N) <\infty\}$. We call the members of $\HA$ \emph{point measures of spindles}. We denote the $\sigma$-algebras on these spaces generated by evaluation maps by $\SHA$ and $\SHfinA$.
\end{definition}

Let $(N_a)_{a\in\mathcal{A}}$ denote a family of elements of $\HfinA$ indexed by a totally ordered set $(\cA,\preceq)$. For the purpose of this definition, set
$S(a) := \sum_{b\preceq a}\len(N_b)$ and $S(a-) := \sum_{b\prec a}\len(N_b)$ for each $a\in\mathcal{A}$. 
If $S(a-) < \infty$ for every $a\in \mathcal{A}$ and if for every consecutive $a\prec b$ in $\mathcal{A}$ we have $N_a(\{\len(N_a)\}\times\Exc) + N_b(\{0\}\times\Exc)\leq 1$, then we define the \emph{concatenation} to be the counting measure
\begin{equation}
 \Concat_{a\in\mathcal{A}}N_a := \sum_{a\in\mathcal{A}}\int\Dirac{S(a-)+t,f}N_a(dt,df).\label{eqn:concat-N}
\end{equation}

\begin{definition}[Skewer] \label{def:skewer}
 Let $N= \sum_{i\in \mathbb{N}} \Dirac{t_i,f_i} \in \cNRE$ and $X$ a c\`adl\`ag process such that 
 $\sum_{t\ge 0,\, \Delta X(t)> 0} \Dirac{t, \Delta X(t)} = \sum_{i\in \mathbb{N}} \Dirac{t_i, \zeta(f_i)}$. 
  The \emph{skewer} of the pair $(N,X)$ at level $y$ is the interval partition
 \begin{equation}
  \skewer(y,N,X) :=     
  	\{ (M^y(t-),M^y(t)) \colon M^y(t-)<M^y(t),\ t\ge 0 \},
 \end{equation}
 where $M^y(t) = \int_{[0,t]\times\Exc} f\big( y- X(s-) \big) N(ds,df)$. 
 Denote the process by 
 \[
  \skewerP(N,X):= (\skewer(y,N,X),\ y\ge 0). 
 \]
 For simplicity, when $X = \xiA(N)$ we write $\skewerP(N):= \skewerP(N,\xiA(N))$ and 
 $\skewer(y,N):= \skewer(y,N,\xiA(N))$.
\end{definition}

See Figure \ref{fig:scaf_marks} for an illustration of how $\skewer(y,N,X)$ extracts an interval partition from a point measure $N$ of spindles via the level set at level $y$ of $X$. As the level $y\in[0,\infty)$ is varied, this yields the skewer process $\skewerP(N,X)$. For more detail we refer to \cite[Section 2.3--2.4 and 3.1]{Paper1-1}.

We now recall the construction of random point measures used in \cite{Paper1-1}.

\begin{definition}[$\bN_\beta$ and $\Pr^{\alpha,0}_{\beta}$]\label{def:IPPA}
 Let $\beta\in\IPH$. If $\beta = \varnothing$, set $\bN_{\beta} := 0$. Otherwise, we carry out the following construction independently for each $U\in\beta$. Let 
$\bN\sim\PRM[\Leb\otimes\mBxcA]$ and $\bff\sim\BESQ_{\Leb(U)}(-2\alpha)$ independent, and consider the hitting time $T := \inf\{t>0\colon \xiA_{\bN}(t) = -\life(\bff)\}$. Let $\bN_U := \Dirac{0,\bff}+\restrict{\bN}{[0,T]}$. 
 We write $\Pr^{\alpha,0}_{\beta}$ to denote the law of $\bN_{\beta} := \ConcatIL_{U\in\beta}\bN_U$ on $\HfinA$. For probability distributions $\mu$ on $\IPH$, we write $\Pr^{\alpha,0}_{\mu} := \int \Pr^{\alpha,0}_{\beta}\mu(d\beta)$ to denote the $\mu$-mixture of the laws $\Pr^{\alpha,0}_{\beta}$.
\end{definition}

\begin{proposition}[Propositions 5.3 and 6.11 of \cite{Paper1-1}]\label{prop:0:len}
 \begin{enumerate}[label=(\roman*), ref=(\roman*)]
  \item For any $\beta\!\in\!\IPH$, for\linebreak $(\bN_U,\, U \in \beta)$ as in Definition \ref{def:IPPA}, $\sum_{U\in\beta}\len(\bN_U)$ is a.s.\ finite.\label{item:IPPA:finite}
  \item The map $\beta\mapsto\Pr^{\alpha,0}_\beta$ is a stochastic kernel from $\HIPspace$ to $\HfinA$.\label{item:IPPA:kernel}
  \item For $\beta\in\IPA$, there is a version of $\bN_\beta$ for which $\skewerP(\bN_\beta)$ is well-defined and $\dIA$-path-continuous. For $\beta\in\HIPspace$, the same holds except at time 0, where the process is merely $d'_H$-continuous.\label{item:IPPA:cts} 
 \end{enumerate}
\end{proposition}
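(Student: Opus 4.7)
The plan is to handle the three parts in turn, with the substantive work in (iii).

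For (i), I would use scaling and a direct first-moment bound. By the scaling of $\BESQ(-2\alpha)$, $\zeta(\mathbf{f}_U)\stackrel{d}{=}\Leb(U)Z$ for a fixed law $Z$; by the scaling of the spectrally positive $(1{+}\alpha)$-stable scaffolding in Proposition \ref{prop:stable_JCCP}, the hitting time of $-z$ is distributed as $z^{1+\alpha}T$ for a fixed stable subordinator variable $T$ of index $1/(1{+}\alpha)$. Hence $\len(\bN_U)\stackrel{d}{=}\Leb(U)^{1+\alpha}\Xi$ for one nonnegative law $\Xi$ with tail $P(\Xi>x)=O(x^{-1/(1+\alpha)})$, so $E[\len(\bN_U)\wedge 1]=O(\Leb(U))$. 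Summing over $U\in\beta$ produces a finite first moment for $\sum_U(\len(\bN_U)\wedge 1)$ and, together with a Borel--Cantelli argument controlling the finitely many $U$ with $\len(\bN_U)>1$, delivers almost-sure finiteness of $\sum_U\len(\bN_U)$.

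Part (ii) reduces to measurability of $\beta\mapsto\Pr^{\alpha,0}_\beta$. One can enumerate blocks of $\beta\in\IPH$ measurably (say by decreasing mass, ties broken by leftmost endpoint) and couple the independent ingredients via a single auxiliary i.i.d.\ sequence of $(\PRMLBA,\BESQ(-2\alpha))$ pairs. Since the concatenation \eqref{eqn:concat-N} is measurable in its arguments and each individual $\bN_U$ is a measurable function of $(U,\bN,\mathbf{f})$, this presents $\Pr^{\alpha,0}_\beta$ as a stochastic kernel.

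The heart of the proposition is (iii). My plan is to first establish $\dIA$-path-continuity of $\skewerP(\bN_U)$ for a single block and then transfer it to the concatenation. For a single $U$, both the block-mass and the diversity functionals of $\skewer(y,\bN_U)$ can be expressed as occupation/crossing functionals of the scaffolding--spindle pair, and these are continuous in $y>0$ by continuity of local times of the $(1{+}\alpha)$-stable scaffolding and by the scaling tail $\mBxcA\{A(f)>m\}=O(m^{-1-\alpha})$ in \eqref{eq:mBxcA}, which prevents accumulation of spindle crossings at any fixed level. For general $\beta$, the decomposition $\skewer(y,\bN_\beta)=\ConcatIL_{U\in\beta}\skewer(y,\bN_U)$ together with the finite-concatenation generalisation of Lemma \ref{lem:IP:concat_bound} reduces continuity to the one-block case, provided one controls an $\epsilon$-tail over $U\in\beta$; this tail is controlled uniformly on compact subintervals of $(0,\infty)$ by the scaling estimate in (i), since only finitely many $U$ contribute nontrivially on a neighbourhood of any fixed positive level.

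The main obstacle is behaviour at time $0$. For $\beta\in\IPA$, one must show $\dIA(\skewer(y,\bN_\beta),\beta)\to 0$ as $y\downarrow 0$. The mass convergence is routine, but the diversity convergence requires proving that the $\IPLT^{(\alpha)}$ contributed by the small spindles born near the bottom of each block's scaffolding reproduces $\IPLT^{(\alpha)}_\beta$ in the limit. I would establish this through a uniform law of large numbers applied to the PRM, using the $\mBxcA$ tail in \eqref{eq:mBxcA} together with the scaling relation $\len(\bN_U)\stackrel{d}{=}\Leb(U)^{1+\alpha}\Xi$ to control the density of spindles alive at level $y$ per unit of ancestral Lebesgue mass. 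For $\beta\in\HIPspace\setminus\IPA$ no $\dIA$-continuity at $0$ is possible, since diversity appears instantaneously from the PRM; however, $d'_H$-continuity still holds because $d'_H$ is equivalent to $d_H\circ C$ and the Hausdorff convergence $C(\skewer(y,\bN_\beta))\to C(\beta)$ as $y\downarrow 0$ follows from the mass part of the argument, which does not require diversity.
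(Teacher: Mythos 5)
This proposition is recalled, not proved, in the present paper; its statement cites Propositions~5.3 and~6.11 of the earlier work \cite{Paper1-1}, so there is no in-text proof to compare against. With that caveat, here is an assessment of your sketch against what the result actually requires.

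Your plan for (i) is sound: the scaling identity $\len(\bN_U)\stackrel{d}{=}\Leb(U)^{1+\alpha}\Xi$ is correct (writing $\Xi = \zeta(\bff_1)^{1+\alpha}T_1$ with $\bff_1\sim\BESQ_1(-2\alpha)$ and $T_1$ the hitting time of $-1$ by the stable scaffolding), the tail $\Pr(\Xi>x)=O(x^{-1/(1+\alpha)})$ follows from the $\Stable(1/(1+\alpha))$ tail of $T_1$ combined with $\Pr(\zeta(\bff_1)^{1+\alpha}>x)=O(x^{-1})$ by a Breiman-type argument, and the truncated first-moment bound $\EV[\len(\bN_U)\wedge 1]=O(\Leb(U))$ together with $\sum_U\Pr(\len(\bN_U)>1)=O(\sum_U\Leb(U))<\infty$ does give a.s.\ finiteness. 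Part (ii) is a routine measurability argument and your coupling set-up is fine. For (iii) at positive levels, your reduction via finite concatenation on compact subintervals of $(0,\infty)$ and continuity of local times of the stable scaffolding is also essentially the right route; note that only finitely many $U$ satisfy $\zeta^+(\bN_U)>y_1$ because $\zeta^+(\bN_U)$ scales like $\Leb(U)$ times a heavy-tailed variable with tail $\sim 1/z$, and $\sum_U\Leb(U)<\infty$.

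The genuine gap is in your treatment of $\dIA$-continuity at $y=0$ for $\beta\in\IPA$, where you have the mechanism of diversity convergence backwards. You write that ``the $\IPLT^{(\alpha)}$ contributed by the small spindles born near the bottom of each block's scaffolding reproduces $\IPLT^{(\alpha)}_\beta$,'' but the opposite is true: the scaffolding $\xi(\bN_U)$ starts at $\zeta(\bff_U)$ and is killed on hitting $0$, so its local time at level $y$ tends to $0$ as $y\downarrow 0$, and with it the diversity contributed by the small tail spindles of each $\bN_U$. What actually reproduces $\IPLT^{(\alpha)}_\beta$ in the limit is the family of \emph{initial} spindle blocks $\{(0,\bff_U(y))\colon U\in\beta\}$, whose masses converge to $\Leb(U)$ and whose aggregate therefore recreates the diversity of $\beta$. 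The nontrivial work then splits into two halves: (a) showing that the diversity of $\{(0,\bff_U(y))\}$ converges to $\IPLT^{(\alpha)}_\beta$ uniformly in position, which is not automatic since $\IPLT^{(\alpha)}$ is a limiting functional and needs uniform control over the $h\downarrow 0$ limit as the block sizes perturb; and (b) showing that the \emph{extra} diversity supplied by the tail spindles across all $U$ vanishes uniformly, which needs an aggregate local-time estimate rather than a pointwise one. Your ``uniform law of large numbers applied to the PRM'' alludes to the right sort of technology but is aimed at the wrong target; the key tool in this line of work is the identification of $\IPLT^{(\alpha)}$ with scaffolding local time, and the argument must be phrased in those terms for both (a) and (b). As written, the proposal would not close the $y=0$ continuity for $\dIA$.

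A smaller remark on the $d'_H$-continuity at $0$ for general $\beta\in\HIPspace$: the assertion that it ``follows from the mass part of the argument'' is true but compressed. One must check Hausdorff convergence $C(\skewer(y,\bN_\beta))\to C(\beta)$ in both directions: that endpoints of initial spindle blocks and cumulative masses converge (which needs a tail/dominated-convergence control over $U\in\beta$, not just per-block convergence), and that no spurious blocks of macroscopic size appear near level $0$, which needs a PRM estimate on maximal heights of non-initial spindles. This is exactly the kind of two-sided Hausdorff criterion spelled out in Lemma~\ref{lm:dH} and Proposition~\ref{prop:genIPE:cont0} of the present paper, and a complete proof of (iii) would need to spell out the analogous estimates.
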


\begin{proposition}[Theorems 1.2--1.8 of \cite{Paper1-2}]
 The $\dIA$-path-continuous version of $\skewerP(\bN_\beta)$ is the \SSIPEzA{\beta}, i.e.\ the Hunt process in the $\theta=0$ case of Proposition \ref{prop:sp}, with all of the properties claimed in that theorem.
\end{proposition}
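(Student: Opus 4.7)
The plan is to verify, in the case $\theta=0$, each of the four properties of Proposition~\ref{prop:sp} for the $\dIA$-continuous version of $\skewerP(\bN_\beta)$ supplied by Proposition~\ref{prop:0:len}\ref{item:IPPA:cts}. The central structural fact I would exploit is a \emph{branching property at each level}: for fixed $y>0$, the excursions of the scaffolding $\xiA(\bN_\beta)$ above level $y$ are in bijection with the blocks of $\skewer(y,\bN_\beta)$, and restricting $\bN_\beta$ to the associated time intervals yields independent ``clades'' above $y$. Combined with the strong Markov property of the stable$(1+\alpha)$ scaffolding (Proposition~\ref{prop:stable_JCCP}) and the Poisson structure of the marks, this shows that, conditionally on $\skewer(y,\bN_\beta)=\beta^y$, the shifted point measure above level $y$ is a version of $\bN_{\beta^y}$, which yields the Markov property as soon as the one-step kernel is identified.

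To identify the kernel with $\kappa_y^{\alpha,0}$ of Definition~\ref{def:kernel:sp}, the branching property reduces the problem to a single initial block of mass $b$; write $r=1/2y$. One then computes, in order: (a) that $\Pr(\skewer(y,\bN_{\{(0,b)\}})=\varnothing)=e^{-br}$, by combining the probability that the initial BESQ$(-2\alpha)$ is absorbed before reaching level $y$ with the probability that no jump of the scaffolding inserted on the way down crosses level $y$; (b) the Laplace transform~\eqref{LMBintro} for the mass of the leftmost surviving spindle at level $y$, using the Laplace exponent~\eqref{eq:scaff:laplace} of the scaffolding together with the entrance laws~\eqref{eq:mBxcA} of the BESQ$(4+2\alpha)$ excursion inside a crossing jump; and (c) that the remaining spindle masses at level $y$, read left to right, form $G\bar\beta$ with $G\sim\GammaDist[\alpha,r]$ and $\bar\beta\sim\PDIP(\alpha,\alpha)$, independent of the leftmost mass. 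Step (c) is precisely Proposition~\ref{prop:PDIP} in disguise: the successive overshoots above level $y$ that do not immediately trigger a full descent below $y$ form the range of a \Stable[\alpha] subordinator run until it exceeds an independent exponential variable (the overshoot of the first excursion that does fall below $y$), whose rescaled jump sizes yield \PDIP[\alpha,\alpha] with the stated gamma prefactor.

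Self-similarity (ii) follows by the joint scaling of the stable$(1+\alpha)$ scaffolding and BESQ$(-2\alpha)$ spindles, which commutes with the skewer map at appropriately rescaled levels. For (iii), the total mass $(\|\beta^y\|,\,y\ge0)$ is the aggregate spindle mass at level $y$; a Ray--Knight-type argument for the stable scaffolding in the spirit of Theorem~\ref{thm:rayknight}(i) identifies it as a $\BESQ_{\|\beta\|}(0)$ absorbed at the exhaustion time of $\bN_\beta$. Pseudo-stationarity (iv) then falls out: starting from $\beta^0\stackrel{d}{=}Z(0)\bar\beta$ with $\bar\beta\sim\PDIP(\alpha,0)$, the stick-breaking representation~\eqref{eq:PDIPa0_eg} is preserved by $\kappa_y^{\alpha,0}$ because the leftmost contribution in the kernel plays the role of the $\GammaDist[1-\alpha,r]$ leading piece, while the branching property propagates \PDIP[\alpha,\alpha] structure through the remaining blocks.

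\textbf{The main obstacle} in this programme is step (b): extracting the precise Laplace transform~\eqref{LMBintro} for the leftmost surviving mass at a fixed level. This requires a delicate computation combining the fluctuation theory of the spectrally positive stable Lévy process (the joint law of the first jump of the scaffolding that crosses level $y$ and of the undershoot below $y$) with the explicit BESQ$(-2\alpha)$ and BESQ$(4+2\alpha)$ entrance and exit laws recorded in~\eqref{eq:mBxcA}. Once this single-block identification is in hand, the branching property globalises it, path-continuity and the Hunt property follow from Proposition~\ref{prop:0:len}\ref{item:IPPA:cts} together with the kernel measurability in Proposition~\ref{prop:0:len}\ref{item:IPPA:kernel}, and the remaining properties are structural consequences of the Poissonian representation.
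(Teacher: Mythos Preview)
The paper does not prove this proposition at all: it is stated in the preliminaries section as a direct citation of Theorems~1.2--1.8 of \cite{Paper1-2}, with no accompanying argument. So there is no ``paper's own proof'' to compare against; the result is imported wholesale from prior work.

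Your sketch is a plausible outline of how the proofs in \cite{Paper1-1,Paper1-2} actually go, and you have correctly located the technically heaviest step (the exact Laplace transform \eqref{LMBintro} for the leftmost surviving mass). But you should be aware that what you are describing is not a short argument: the branching/Markov-like property you invoke is itself a substantial result (it is the content of \cite[Proposition~5.9 and Section~6]{Paper1-1}), the single-block kernel identification occupies a chunk of \cite{Paper1-2}, and the pseudo-stationarity in the $\theta=0$ case is \cite[Theorem~1.5]{Paper1-2} rather than a quick corollary. Your step~(c) invoking Proposition~\ref{prop:PDIP} is morally right but glosses over why the successive overshoots above level~$y$ have exactly the subordinator-stopped-at-an-exponential structure; this is where the stable fluctuation theory enters in earnest. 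In short, your plan is sound as a roadmap, but each bullet is a theorem in its own right in the cited papers, and the present paper simply takes the package as given.
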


\subsection{Scaffolding excursions above the minimum and associated clades}\label{sec:theta_1}

We now recall the developments in \cite{FVAT} that are used to construct Fleming--Viot diffusions, supported on the purely atomic measures, that are stationary with \PoiDirAT\ ranked atom masses, in particular with $\theta>0$.

\begin{definition}\label{def:stable_exc}
 (1) A \emph{clade} is a point measure of spindles $N\in\HfinA$ with the property that $\xiA(N)$ is a non-negative c\`adl\`ag excursion. Recall that heights of the scaffolding $\xiA(N)$ correspond to times in the skewer process $\skewerP(N,\xiA(N))$. In light of this, 
	for a point measure $N\in \HfinA$, we define its \emph{lifetime} by
 \begin{equation}\label{eq:PP:life_def}
  \zeta^+(N) := \sup_{t\ge 0}  \xiA_N(t). 
 \end{equation}
 (2) For a point measure $N\in\HA$ and an interval $[a,b]$, the \emph{shifted restriction} $\shiftrestrict{N}{[a,b]\times\Exc}$ is defined as the restriction of $N$ to $[a,b]\times\Exc$, translated so that it is supported on $[0,b-a]\times\Exc$. The shifted restriction of a function $X\colon [0,\infty)\to \BR$, denoted by $\shiftrestrict{X}{[a,b]}$, is defined correspondingly:
 \begin{equation}
 \begin{split}
  \ShiftRestrict{N}{[a,b]\times\Exc}([c,d]\!\times\! A) &= N\big(([c\!+\!a,d\!+\!a]\!\cap\![a,b])\! \times\! A\big),\ \ c\!\le\! d,\; A\!\in\!\SHA\\
  \ShiftRestrict{X}{[a,b]}(t) &= \cf\{t\in [0,b-a]\}X(t+a),\quad t\in\BR.
 \end{split}
 \end{equation}
\end{definition}

\begin{figure}
 \centering
 \includegraphics[width=13cm,height=6.5cm]{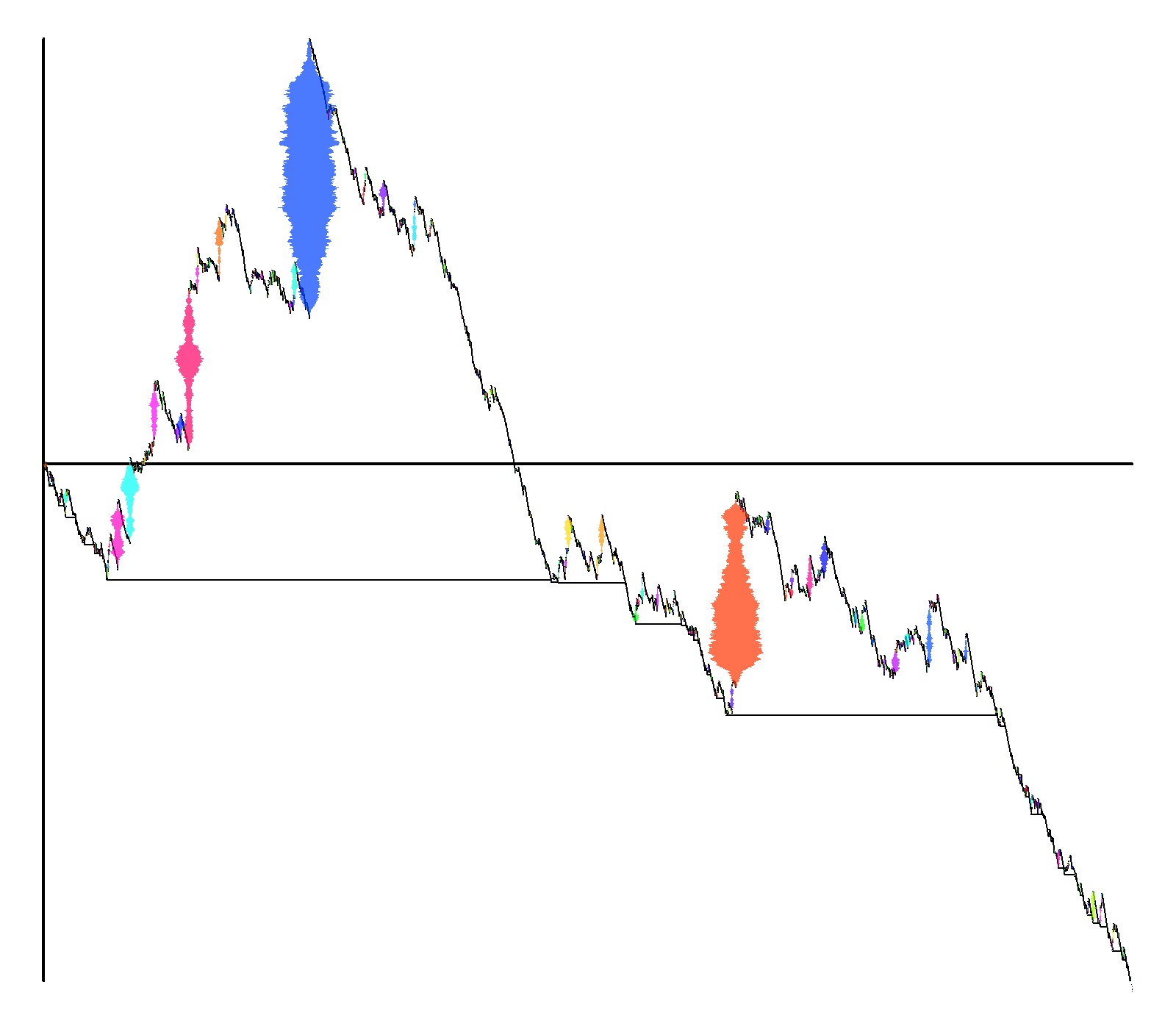}
 \caption{Simulated $\Stable(1.3)$ scaffolding and spindles, superposed with a plot of its infimum process. Its excursions above the infimum are the points of $\uF$ in \eqref{eq:imm_PPP}.\label{fig:mclades}}
\end{figure}

Let $\bN$ denote a \PRMLBA\ on $[0,\infty)\times\Exc$ and $\bX := \xiA(\bN)$. Let
\begin{equation}
 T^{-y} := \inf\left\{t\ge0\colon \bX(t) < -y\right\} \qquad \text{for} \qquad y\ge 0.
\end{equation}
This is a $\Stable(1/(1+\alpha))$ subordinator \cite[Theorem VII.1]{BertoinLevy}, the jumps of which correspond to the intervals of excursions of $\bX$ above its infimum process. We define
\begin{equation}\label{eq:imm_PPP}
 \uF :=\sum_{y\ge 0\colon T^{(-y)-}<T^{-y}}
 \Dirac{y,\ShiftRestrict{\bN}{\left[T^{(-y)-},T^{-y}\right)}}.
\end{equation}
Then this is a point measure of clades, with each point $\Dirac{y,N}$ corresponding to an excursion $\xiA(N)$ of $\bX$ above its infimum, when $\bX$ reaches level $-y$; see Figure \ref{fig:mclades}. 
In \cite{FVAT}, we considered the corresponding point measure of excursions for a point measure of spindles with type labels, $\mathbf{V} \sim \PRM\big(\Leb\otimes\mBxcA\otimes\distribfont{Unif}[0,1]\big)$. We denoted by $\varphi$ the map that projects away these type labels, so that we could have a coupled pair $(\mathbf{V},\mathbf{N})$ with $\bN = \varphi(\mathbf{V})$. In \cite[Proposition 4.3]{FVAT} the analog to $\uF$ in that setting was shown to be a $\PRM(\Leb\otimes\umCladeAbar)$ with an It\^o intensity measure $\umCladeAbar$ on a space of clades with type labels, paired with scaffoldings. The pairs $(V,X)$ arising as points in that measure had the a.s.\ property $X = (\xi\circ\varphi)(V)$. It follows that in the present setting, $\uF$ is a $\PRM(\Leb\otimes\umCladeA)$, the It\^o intensity measure of which, $\umCladeA$, is the pushforward of $\umCladeAbar$ via the map $(V,X)\mapsto\varphi(V)$.

In fact, the point measure $\uF$ can replace the role of $\bN$ in the construction of $\bN_U$ in Definition \ref{def:IPPA}:
 \begin{equation}\label{eq:clade_from_F}
  \bN_U = \delta(0,\mathbf{f}) + \Restrict{\mathbf{N}}{[0,T]} = \delta(0,\mathbf{f}) + \Concat_{\text{points }(y,N_y)\text{ in }\uF\colon y < \life(\bff)} N_y  \sim \mathbf{P}^{\alpha,0}_{\{(0,{\rm Leb}(U))\}},
 \end{equation}
 where the concatenation is in order of increasing $y$.

Along with the point measure $\bN_\beta$ of Definition \ref{def:IPPA}, the point measure $\uF$, up to a constant change of intensity dependent on $\theta$, is the final ingredient needed to construct \SSIPEAT\ with $\theta>0$. Before we proceed to this construction, we recall a few key properties of $\umCladeA$ from \cite{FVAT}.

For $c>0$ and $N\in\HfinA$ we define the scaling operator
\begin{equation}\label{eq:min-cld:xform_def}
 c\scaleHA N = \int \Dirac{c^{1+\alpha}t,c\scaleB f}N(dt,df), \text{ where } c\scaleB f = \big(cf(y/c),\,y\in\BR\big).
\end{equation}
Note the following relations: 
\begin{equation}\label{eq:min-cld:xform_property}
 \zeta^+(c \scaleHA N) = c\zeta^+(N), \quad 
 \len(c \scaleHA N) = c^{1+\alpha}\len(N).
\end{equation}
We may also define $c\scaleHA N$ in the same manner for $N\in\HA$ with $\len(N) = \infty$.

\begin{lemma}[Self-similarity of $\umCladeA$, Lemma 4.4 in \cite{FVAT}]\label{lem:min_cld:scaling}
	For $c>0$, we have
	\begin{equation}
	c\umCladeA(c\scaleHA A) = \umCladeA(A) \qquad \text{for }A\in\SHA.\label{eq:min_cld:scaling}
	\end{equation}
\end{lemma}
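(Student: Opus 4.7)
The plan is to deduce the self-similarity identity from a single joint space--time scaling of the Poisson measure of spindles $\bN\sim\PRM(\Leb\otimes\mBxcA)$ and its scaffolding $\bX=\xiA(\bN)$. For $c>0$, define
\[
\Psi_c\colon[0,\infty)\times\Exc\to[0,\infty)\times\Exc,\qquad \Psi_c(t,f)=\big(c^{1+\alpha}t,\,c\scaleB f\big).
\]
From the tail formula \eqref{eq:mBxcA} for $\life$ under $\mBxcA$ together with $\life(c\scaleB f)=c\life(f)$, one reads off the scaling identity $\mBxcA\circ(c\scaleB)^{-1}=c^{1+\alpha}\mBxcA$. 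Since the pushforward of $\Leb$ under $t\mapsto c^{1+\alpha}t$ equals $c^{-(1+\alpha)}\Leb$, the joint pushforward preserves the intensity, so $\Psi_{c\ast}\bN\stackrel{d}{=}\bN$.

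Next I would verify that $\Psi_c$ intertwines with the scaffolding construction $\xiA$ at the pathwise level. Performing the change of variables inside \eqref{eq:scaff_def} and rescaling the cutoff by $w=z/c$, the prefactor $c$ coming from $\life(c\scaleB f)=c\life(f)$ combines with the factor $(2cw)^{\alpha}$ in the compensator to give the clean identity
\[
\xiA(\Psi_{c\ast}\bN)(s) \;=\; c\,\xiA(\bN)(s/c^{1+\alpha}),\qquad s\ge 0,
\]
which is the pathwise manifestation of the $(1+\alpha)$-self-similarity of the stable scaffolding from Proposition \ref{prop:stable_JCCP}.

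From this identity, the running infimum transforms as $\inf_{r\le s}\xiA(\Psi_{c\ast}\bN)(r) = c\inf_{u\le s/c^{1+\alpha}}\bX(u)$, so the excursions of the scaled scaffolding above its infimum are in bijection with those of $\bX$, with a level $-y$ in the original process corresponding to level $-cy$ in the scaled one. Within each excursion interval the action of $\Psi_c$ on the shifted restriction coincides with the action of $c\scaleHA$, by direct comparison with \eqref{eq:min-cld:xform_def}--\eqref{eq:min-cld:xform_property}. Hence the excursion point measure built from $\Psi_{c\ast}\bN$ is
\[
\uF'=\sum_{(y,N)\in\uF}\delta\big(cy,\,c\scaleHA N\big),
\]
which, by the first step, satisfies $\uF'\stackrel{d}{=}\uF\sim\PRM(\Leb\otimes\umCladeA)$. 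Computing the pushforward intensity of $\uF'$ directly gives $\PRM\big(c^{-1}\Leb\otimes(c\scaleHA)_\ast\umCladeA\big)$. Equating the two intensities forces $(c\scaleHA)_\ast\umCladeA=c\umCladeA$, which, upon substituting $A\mapsto c\scaleHA A$ in the definition of pushforward, is the claimed identity \eqref{eq:min_cld:scaling}.

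The only delicate point is the intertwining in the second step: one must verify that the compensator in \eqref{eq:scaff_def} rescales consistently so that the limit survives the change of cutoff parameter, and that the change of variables is legitimate at the level of the truncated integrals (to which Proposition \ref{prop:stable_JCCP} applies). Everything else---the scaling of $\mBxcA$, the excursion-decomposition correspondence, the identification of $\Psi_c$ with $c\scaleHA$ on a single clade, and the reading-off of the scaling at the level of Poisson intensities---is formal from the definitions.
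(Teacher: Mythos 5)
Your argument is correct and is the natural route: all the scaling relations are pushed through a single space–time rescaling of the Poisson measure of spindles, then read off at the level of the Poisson intensity of the clade point measure $\uF$. Since the paper only cites Lemma 4.4 of \cite{FVAT} for this fact, you have supplied a self-contained derivation; it matches the spirit of how these scaling identities are typically established in this framework (invariance of the PRM, pathwise intertwining with $\xiA$, then equating It\^{o} intensities).

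One small overstatement to tidy up: the identity $(c\scaleB)_\ast\mBxcA = c^{1+\alpha}\mBxcA$ is \emph{not} read off from the lifetime tail in \eqref{eq:mBxcA} alone. The tail formula only pins down the pushforward of the lifetime marginal. To get the full identity you also need that the conditional law of $f$ given $\zeta(f)=s$ (a $\BESQ^{s}_{0,0}(4+2\alpha)$ bridge) is carried by $c\scaleB$ to the corresponding bridge of length $cs$ --- i.e.\ the $\BESQ$ scaling property at the level of bridges. This is standard and is part of what makes $\mBxcA$ a Pitman--Yor excursion measure, but it is an input, not a consequence of \eqref{eq:mBxcA}. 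With that caveat made explicit, the rest --- the change of variables $w=z/c$ in the compensator of \eqref{eq:scaff_def}, the transformation of the running infimum and of the level parameter $y\mapsto cy$, the identification of the shifted-restriction action with $c\scaleHA$, and the final comparison of intensities $c^{-1}\Leb\otimes(c\scaleHA)_\ast\umCladeA = \Leb\otimes\umCladeA$ --- is correct and yields exactly \eqref{eq:min_cld:scaling} after unwinding the pushforward notation.
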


\begin{figure}
	\centering
	\input{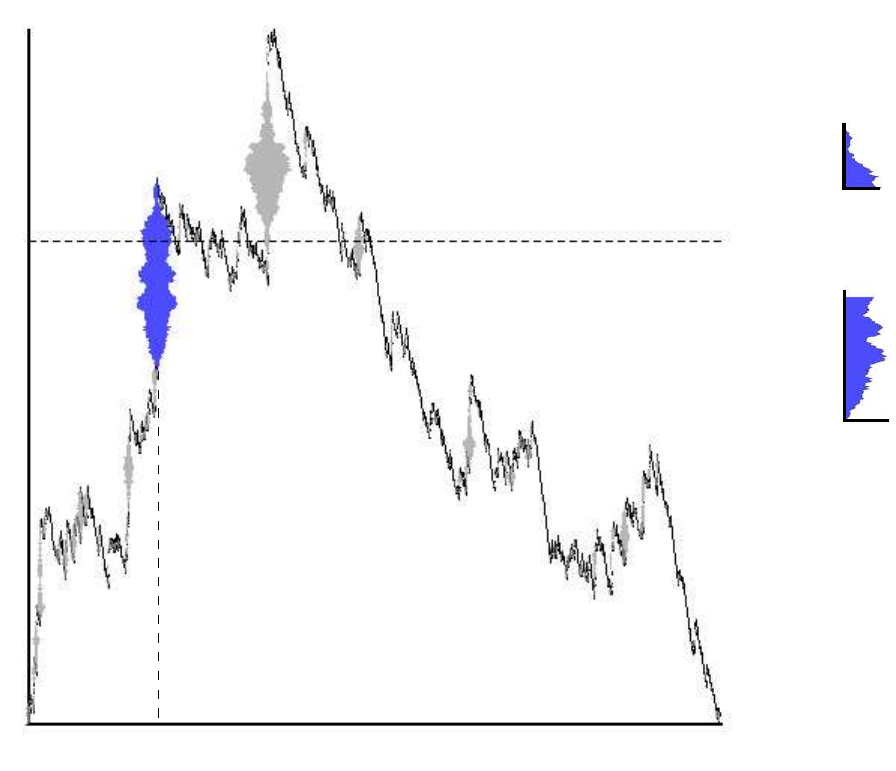_t}
	\caption{Simulation of a clade $N\sim\umCladeA(\,\cdot\,\mid\,\zeta^+>y)$ (with $\alpha = 0.7$) and the broken spindle components, $\big(\check f^y_T,\hat f^y_T\big)$, of the leftmost spindle to cross level $y$, denoted $f_T$ and shaded blue. The leftmost block mass $m^y(N)$ is the width of this blue spindle as it crosses the horizontal dashed line.\label{fig:min_cld_y}}
\end{figure}

Let $N\in \HA$ and $X:= \xiA(N)$. For $y\ge 0$, if an atom $(t,f_t)$ of $N$ satisfies 
$y\in \big( X(t-), X(t)\big)$, i.e.\@ the spindle $f_t$ crosses level $y$, then we define $\hat f^y_{t}$ and $\check f^y_{t}$ to be its broken components split about that crossing. 
Let $m^y = f_t (y - X(t-))$. 
See Figure~\ref{fig:min_cld_y}. Recall the notation $\zeta^+$ and $\len$ from \eqref{eq:PP:life_def}
and \eqref{eq:PP:len_def}. 
\begin{proposition}[Proposition 4.5 and 4.10(i) in \cite{FVAT}]\label{prop:min_cld:stats}\ \ \vspace{-8pt}\\
	\begin{enumerate}[label=(\roman*), ref=(\roman*)] 
		\item $\displaystyle \umCladeA\big\{ \zeta^+ \!>\! z \big\} = \alpha z^{-1}, \quad z>0.$\label{item:MCS:max}
		\item $\displaystyle \umCladeA\big\{\len > x\big\} = \frac{\left(2^{\alpha}\Gamma(1+\alpha)\right)^{1/(1+\alpha)}}
		{\Gamma(\alpha/(1+\alpha)) }
		x^{-1/(1+\alpha)}, \quad x>0$.\label{item:MCS:len}
		\vspace{4pt}
		\item $\umCladeA(m^y \in \cdot \mid \zeta^+ > y) \sim \GammaDist[1-\alpha,1/2y], \quad y>0$.\label{item:MCS:m_y}
	\end{enumerate}
\end{proposition}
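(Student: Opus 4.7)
The measure $\umCladeA$ records excursions of the reflected spectrally positive $(1+\alpha)$-stable scaffolding $\mathbf{X}-\underline{\mathbf{X}}$ at $0$, with $\mBxcA$-distributed spindles attached to each jump. My plan is to treat (i)--(iii) as fluctuation-theoretic questions about this stable excursion measure, combined for (iii) with the Pitman--Yor structure of the spindle marks.

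\textbf{Part (i).} Lemma~\ref{lem:min_cld:scaling} together with $\zeta^+(c\scaleHA N)=c\zeta^+(N)$ forces $\umCladeA\{\zeta^+>z\}=C/z$ for some $C>0$; the task is to identify $C=\alpha$. I plan to compute $P(A_1\le z)$ in two ways, where $A_1:=\sup_{t\le T^{-1}}(\mathbf{X}(t)-\underline{\mathbf{X}}(t))$. By the Poissonian description of $\uF$, one has $P(A_1\le z)=e^{-C/z}$. On the other hand, $|A_1-\sup_{[0,T^{-1}]}\mathbf{X}|\le 1$, and a standard two-sided exit computation---using the scale function $W(z)=2^{\alpha}z^{\alpha}$ of the spectrally negative reflection $-\mathbf{X}$, whose Laplace exponent coincides with $\psi$ from~\eqref{eq:scaff:laplace}---gives $P(\sup_{[0,T^{-1}]}\mathbf{X}>z)=1-(z/(z+1))^{\alpha}$. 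Matching the leading $\alpha/z$ asymptotic as $z\to\infty$ forces $C=\alpha$.

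\textbf{Part (ii).} Clade lengths $\len(N)$ are exactly the jumps of the first-passage subordinator $T^{-y}$, so $\umCladeA\{\len\in\cdot\,\}$ is the L\'evy measure of $T^{-y}$. Since $\psi(\lambda)=\lambda^{1+\alpha}/(2^{\alpha}\Gamma(1+\alpha))$, the subordinator $T^{-y}$ is $1/(1+\alpha)$-stable with Laplace exponent $\psi^{-1}(\lambda)=(2^{\alpha}\Gamma(1+\alpha))^{1/(1+\alpha)}\lambda^{1/(1+\alpha)}$. Reading off the L\'evy tail of a $\beta$-stable subordinator with Laplace exponent $c\lambda^{\beta}$ (namely $c/\Gamma(1-\beta)\cdot x^{-\beta}$) immediately yields the stated formula, with $\Gamma(\alpha/(1+\alpha))=\Gamma(1-1/(1+\alpha))$.

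\textbf{Part (iii) and main obstacle.} From Lemma~\ref{lem:min_cld:scaling} together with $m^y(c\scaleHA N)=c\,m^{y/c}(N)$ one gets $m^y\stackrel{d}{=}y\,m^1$, reducing matters to $y=1$. I plan to apply the compensation formula at the first jump time $T$ at which $\xiA(N)$ crosses level~$1$: the undershoot $U=1-\xiA(N)(T-)\in(0,1]$ and jump size $V>U$ have a joint law under $\umCladeA(\cdot\mid\zeta^+>1)$ that is explicit in terms of the L\'evy measure $\Pi$ in~\eqref{eq:scaff:laplace} and the potential density for $\mathbf{X}$-paths restricted below~$1$; conditional on $(U,V)$, the spindle $f_T$ is $\mBxcA(\cdot\mid\zeta=V)$-distributed (the Pitman--Yor bridge: $\BESQ_0(4+2\alpha)$ up to amplitude, then $\BESQ(-2\alpha)$ down to $0$), and $m^1=f_T(U)$. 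Combining these ingredients reduces (iii) to verifying $E[e^{-\lambda m^1}]=(1+2\lambda)^{-(1-\alpha)}$, the Laplace transform of $\GammaDist[1-\alpha,1/2]$. I expect the hardest step to be precisely this bookkeeping: producing the clean Gamma transform requires careful tracking of normalizing constants from three independent sources---the stable L\'evy density, the stable potential kernel below~$1$, and the Pitman--Yor normalization in $\mBxcA$---and only after attentive unwinding do the $(1-\alpha,1/2)$ parameters emerge.
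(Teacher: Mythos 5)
The paper does not actually prove this proposition; the bracketed attribution in the statement is the proof --- it is cited verbatim from Propositions~4.5 and~4.10(i) of the reference \cite{FVAT}. Your attempt to supply a direct argument is therefore a genuinely different route by default, and the verdict splits by part.

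Parts~(i) and~(ii) are correct. For~(i), the self-similarity argument pinning down the tail to $C/z$ is sound, the sandwich $\sup_{[0,T^{-1}]}\mathbf{X}\le A_1\le \sup_{[0,T^{-1}]}\mathbf{X}+1$ is valid, and the two-sided exit identity with scale function $W(z)=2^\alpha z^\alpha$ (satisfying $\int_0^\infty e^{-\lambda x}W(x)\,dx=1/\psi(\lambda)$) gives exactly $P(\sup_{[0,T^{-1}]}\mathbf{X}>z)=1-(z/(z+1))^\alpha\sim\alpha/z$; matching against $1-e^{-C/z}\sim C/z$ does force $C=\alpha$. For~(ii), identifying $\umCladeA\{\len\in\cdot\,\}$ with the L\'evy measure of the first-passage subordinator $T^{-y}$, whose Laplace exponent is $\psi^{-1}(\lambda)=(2^\alpha\Gamma(1+\alpha))^{1/(1+\alpha)}\lambda^{1/(1+\alpha)}$, and reading off the tail $c/\Gamma(1-\beta)\cdot x^{-\beta}$ with $\beta=1/(1+\alpha)$ gives the stated constant. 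Both of these are clean and could stand as alternatives to citing \cite{FVAT}.

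Part~(iii), however, is not a proof but a program. You reduce to $y=1$ correctly via $m^y(c\scaleHA N)=c\,m^{y/c}(N)$ and Lemma~\ref{lem:min_cld:scaling}, and you identify the right ingredients (compensation formula for the undershoot/jump-size pair $(U,V)$, the $\nu_{\mathtt{BESQ}}^{(-2\alpha)}(\cdot\mid\zeta=V)$ bridge law for the spindle, and $m^1=f_T(U)$). But then you stop: you write that you ``plan to'' combine these and that ``the hardest step'' is the bookkeeping that you ``expect'' to produce the $\GammaDist[1-\alpha,1/2]$ transform $(1+2\lambda)^{-(1-\alpha)}$. None of that bookkeeping is done. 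Producing the joint density of $(U,V)$ requires the undershoot/overshoot identity for the stable scaffolding under the It\^o excursion measure (not merely under the law of $\mathbf{X}$ killed at $T_{-1}$, and with a nontrivial normalization inherited from $\umCladeA\{\zeta^+>1\}=\alpha$), and one must then integrate the $\BESQ^V_{0,0}(4+2\alpha)$ bridge marginal at time $U$ against that density. Until this is carried out and shown to collapse to a Gamma law, part~(iii) remains unproven; as written, it would need to be replaced either by the explicit computation or by the citation to \cite{FVAT} that the paper itself uses.
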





\begin{lemma}[Mid-spindle Markov property (MSMP) for $\umCladeA$, Proposition 4.7 in \cite{FVAT}]\label{lem:min_cld:mid_spindle}
 Fix $y>0$. Consider $\bn$ with law $\umCladeA\big(\,\cdot\;|\;\life^+ > y\big)$. Let $(T,f_T)$ denote the first spindle in $\bn$ that crosses level $y$, and let $\hat f^y_{\bn,T}$ and $\check f^y_{\bn,T}$ denote its broken components split about that crossing. 
 Finally, let $m^y(\bn) := \check f^y_T(y-\xiA_{\bn}(T-)) = \hat f^y_T(0)$. Given $m^y(\bn)$, the process $\restrict{\bn}{[0,T)}+\DiracBig{T,\check f^y_{\bn,T}}$ is conditionally independent of $\shiftrestrict{\bn}{(T,\infty)}+\DiracBig{0,\hat f^y_{\bn,T}}$. 
 Moreover, under the conditional law, 
 $(\shiftrestrict{\bn}{(T,\infty)}, \hat f^y_{\bn,T})$ has the law of  
 $(\restrict{\bN'}{[0,\tau]}, f')$, where $f'\sim\BESQ_{m^y(\bn)}(-2\alpha)$, $\bN'\sim\PRMLBA$ independent of $f'$, and $\tau$ is the hitting time of level $-(y+ \zeta(f'))$ by the scaffolding $\xiA(\bN')$. 
\end{lemma}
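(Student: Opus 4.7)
My plan is to first note that this lemma is essentially the projection of \cite[Proposition 4.7]{FVAT} under the type-label-forgetting map $\varphi$: since the intensity $\umCladeA$ is the push-forward of $\umCladeAbar$ under $\varphi$, both the conditional independence and the distributional description transfer by applying $\varphi$ to both sides of the equality in law in \cite[Proposition 4.7]{FVAT}. For a self-contained argument, I would combine three Markov-type ingredients in sequence, mirroring the proof there.

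First, I realize $\bn$ via It\^o excursion theory. Sampling $\bN\sim\PRMLBA$ with scaffolding $\bX := \xiA(\bN)$, Proposition \ref{prop:min_cld:stats}\ref{item:MCS:max} together with the PPP representation of $\uF$ identifies $\umCladeA(\,\cdot\,\mid\,\zeta^+>y)$ as the law of the first excursion of $\bX$ above its running infimum whose supremum exceeds $y$, rebased to start at time $0$ and scaffolding value $0$. Under this realization, $T$ is the first time within this excursion at which a jump of the scaffolding crosses level $y$, which is an a.s.\ finite stopping time for the natural filtration of $(\bN,\bX)$.

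Second, I apply the strong Markov property of $\bN$ at $T$. A standard Palm calculation for the Poisson point measure $\bN$ (restricted to spindles of sufficient lifetime to produce a crossing) identifies the conditional law of $f_T$ given the pre-$T$ history as $\mBxcA(\,\cdot\,\mid\,\zeta > s_0)$, where $s_0 := y - \xiA_{\bn}(T-)$; simultaneously, $\shiftrestrict{\bn}{(T,\infty)}$ is, conditionally on the pre-$T$ data, an independent \PRMLBA\ restricted to $[0,\tau]$, where $\tau$ is the first time the corresponding scaffolding reaches $-\xiA_\bn(T)$, so that the original excursion of $\bX$ above its infimum ends at that level.

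Third, I apply the Markov property of $f_T$ at its local time $s_0$. Since $\mBxcA$ arises from the excursion theory of \BESQA\ away from $0$, and $\{\zeta > s_0\}$ is the event that the excursion has not been absorbed by its local time $s_0$, conditional on this event and on the value $m^y := f_T(s_0)$, the pre-$s_0$ part $\check f^y_T = (f_T(s),\, s \in [0,s_0])$ and the post-$s_0$ part $\hat f^y_T = (f_T(s_0+s),\, s\ge 0)$ are conditionally independent, with $\hat f^y_T$ distributed as \BESQA\ started at $m^y$ and absorbed at $0$. Combining the three steps, and using that $\xiA_\bn(T) = \xiA_\bn(T-) + \zeta(f_T) = y + \zeta(\hat f^y_T)$, the future $\shiftrestrict{\bn}{(T,\infty)} + \delta(0, \hat f^y_T)$ is, given $m^y$, conditionally independent of $\restrict{\bn}{[0,T)} + \delta(T, \check f^y_T)$ and has the stated joint distribution $(\restrict{\bN'}{[0,\tau]}, f')$. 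I expect the main obstacle to be rigorously establishing the Palm-type identification of Step~2 at the first-crossing stopping time $T$---namely that $f_T$ has conditional law $\mBxcA(\,\cdot\,\mid\,\zeta > s_0)$ and that the tail of $\bn$ after $T$ is the stated independent stopped \PRMLBA---a careful but standard manipulation of the Poisson point measure structure, which is carried out in the type-labelled setting in \cite[Section~4]{FVAT}.
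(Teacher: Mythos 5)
Your first paragraph reproduces exactly what the paper does: the lemma is stated as a direct import of \cite[Proposition 4.7]{FVAT}, and the only additional observation needed — which the paper records just below \eqref{eq:imm_PPP} — is that $\umCladeA$ is the pushforward of $\umCladeAbar$ under the type-label-forgetting map $\varphi$, so the conditional independence and conditional laws transfer unchanged. The remaining three-step sketch is a faithful outline of the argument in [FVAT] and is consistent with the paper's use of the result, so the proposal is correct and takes the same route.
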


\section{\CPoiIPP{\alpha,\theta}s for $\theta>0$}\label{sec:IPAT}

For any point measure $\cev{F}$ on $[0,\infty)\times \HfinA$, 
we define an interval-partition-valued process by concatenation, whenever it is well-defined: 
\begin{equation}
 \fskewer\big(y, \cev{F}\big) := \Concat_{\text{points } (s,N_s) \text { of } \cev{F}\colon s\in [y,0]}
 	\skewer(y - s, N_s), \quad y\ge 0.\label{eq:fskewer}
\end{equation}
In this formula we adopt the convention that concatenation over $s\in [y,0]$, with $y>0$, denotes concatenation over $s\in [0,y]$ in reverse order. I.e.\ if for two points $(s_1,N_1)$, $(s_2,N_2)$, we have $s_1 < s_2$, then we concatenate $\skewer(y\! -\! s_2, N_2)$ to the left of $\skewer(y\! -\! s_1, N_1)$. In the population and branching interpretation of the skewer process, these two points represent two sub-populations that enter via immigration at respective times $s_1$ and $s_2$. We follow our convention that any new immigrant clade is placed to the left of the current population.

\begin{definition}[$\cev{\boldsymbol{\beta}}$]\label{defn:cev-beta}
 Fix $\alpha\in (0,1)$, $\theta > 0$. Let $\cev\bF_\theta$ be a $\PRM\big(\frac{\theta}{\alpha}\Leb\otimes\umCladeA\big)$ on $[0,\infty)\times \HfinA$. We define an interval-partition-valued process 
 \begin{equation}\label{eq:cev-beta}
  \cev{\boldsymbol{\beta}} := \big(\cev{\beta}^y, y\ge 0\big) \quad \text{where} \quad \cev{\beta}^y:= \fskewer\big(y, \cev\bF_\theta\big), \quad y\ge0.
 \end{equation}
\end{definition}

The transition kernel of Definition \ref{def:kernel:sp} possesses a branching property. In that branching perspective, $\cev{\boldsymbol{\beta}}$ will serve as an \emph{immigration process}, describing all descendants of individuals that enter via immigration, which occurs, in some sense, with rate proportional to $\theta$. 


\begin{proposition}\label{prop:cts_imm}
 $\big(\cev{\beta}^y,\,y\ge0\big)$ a.s.\ has continuous paths in $(\IPA,\dIA)$.
\end{proposition}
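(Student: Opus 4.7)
The plan is to approximate $\cev{\boldsymbol{\beta}}$ by truncated processes $(\cev\beta^y_\epsilon,\,y\ge 0)$ obtained by restricting the immigration PRM to clades whose scaffolding height exceeds a cutoff $\epsilon>0$, to verify continuity of each truncation as a finite concatenation of continuous skewer processes, and then to show the residual vanishes uniformly on $[0,T]$ as $\epsilon\downarrow 0$.

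Fix $T>0$; it suffices to prove continuity on $[0,T]$. By Proposition~\ref{prop:min_cld:stats}\ref{item:MCS:max} we have $\umCladeA\{\zeta^+>\epsilon\}=\alpha/\epsilon<\infty$, so the random set $A_\epsilon:=\{(s,N)\text{ atom of }\cev\bF_\theta\colon s\le T,\,\zeta^+(N)>\epsilon\}$ is a.s.\ finite, and we set
\[
\cev\beta^y_\epsilon := \Concat_{(s,N)\in A_\epsilon,\,s\le y,\,\text{decreasing }s}\skewer(y-s,N),\qquad y\in[0,T].
\]
For each clade $(s,N)\in A_\epsilon$, the mid-spindle Markov property (Lemma~\ref{lem:min_cld:mid_spindle}) realizes the post-first-crossing piece of $N$ as an evolution of the type covered by Proposition~\ref{prop:0:len}\ref{item:IPPA:cts}, so $y\mapsto\skewer(y-s,N)$ is $\dIA$-path-continuous on $[s,s+\zeta^+(N)]$ with value $\varnothing$ at both endpoints. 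Iterating Lemma~\ref{lem:IP:concat_bound} over the finitely many clades of $A_\epsilon$ then yields $\dIA$-continuity of $y\mapsto\cev\beta^y_\epsilon$.

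The heart of the argument is uniform-in-$y$ control of the residual. Using the empty correspondence, $\dIA(\varnothing,\gamma)\le\|\gamma\|+\IPLT^{(\alpha)}_\gamma(\infty)$, and iterating Lemma~\ref{lem:IP:concat_bound},
\[
\dIA\bigl(\cev\beta^y,\cev\beta^y_\epsilon\bigr)\le\sum_{(s,N)\notin A_\epsilon,\,s\le y}\!\Bigl(\|\skewer(y-s,N)\|+\IPLT^{(\alpha)}_{\skewer(y-s,N)}(\infty)\Bigr).
\]
Campbell's formula together with the substitution $N=\epsilon\scaleHA N'$ and the self-similarity of $\umCladeA$ (Lemma~\ref{lem:min_cld:scaling})---using that $\|\cdot\|$ and $\IPLT^{(\alpha)}$ scale by factors of $\epsilon$ and $\epsilon^\alpha$ respectively under $\scaleB$---bounds the expectations of these two sums, for each fixed $y\in[0,T]$, by $O(\epsilon)$ and $O(\epsilon^\alpha)$. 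Promoting these pointwise bounds to the supremum over $y\in[0,T]$ uses a Kolmogorov-type continuity estimate on the residual in $y$, whose $L^2$ bound follows from the Poisson second-moment formula applied with the same scaling.

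Combining the two steps, for any $\eta>0$ we can choose $\epsilon$ small so that $\sup_{y\in[0,T]}\dIA(\cev\beta^y,\cev\beta^y_\epsilon)<\eta$ with probability at least $1-\eta$; together with the $\dIA$-continuity of $\cev\beta^y_\epsilon$ this yields $\dIA$-path-continuity of $\cev{\boldsymbol{\beta}}$. The principal obstacle is the uniform-in-$y$ control of the diversity contribution from small clades: while mass admits a clean additive structure, the functional $\IPLT^{(\alpha)}$ is more delicate, and bounding it requires a careful small-block analysis of $\skewer(u,N)$ for small $u$ and clades with small $\zeta^+$, drawing on the mid-spindle Markov property in combination with the self-similar scaling of $\umCladeA$.
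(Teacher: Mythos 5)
Your truncation scheme is a genuine alternative to the paper's: you cut by clade lifetime ($\zeta^+(N)>\epsilon$), whereas the paper cuts by entry level relative to the current level $y$ (clades entering in $(y-\delta',y]$ are the residual). Both decompositions isolate a finite, continuous piece plus a residual to be controlled uniformly in $y$, and that structure is sound. But the proof has a genuine gap at the step you yourself flag as ``the principal obstacle'': the uniform-in-$y$ control of the residual, particularly of the diversity contribution, is not actually established.

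The pointwise Campbell estimates — expected residual mass $O(\epsilon)$, expected residual diversity $O(\epsilon^\alpha)$ — are plausible and can be verified along the lines you sketch. But promoting them to a bound on $\sup_{y\in[0,T]}$ is not a formality, and the ``Kolmogorov-type continuity estimate'' is not worked out. Two concrete difficulties. First, a naive attempt to control each small clade by its sup over levels fails: by the scale-invariance of $\nu_{\tt BESQ}^{(0)}$ one has $\umCladeA\big[\sup_u\|\skewer(u,\cdot)\|\cdot\cf\{\zeta^+\le\epsilon\}\big]=\infty$, so the sum over all small clades on $[0,T]$ of their maximal masses diverges a.s.\ — any uniform argument must actively exploit that at a fixed level only a narrow window of small clades is alive, and this is where the chaining would need real estimates. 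Second, for the mass residual one could try a Dini-type argument (the mass of the small-clade subprocess is continuous in $y$, being the difference of the ${\tt BESQ}_0(2\theta)$ total mass and a finite concatenation of continuous mass processes, and it decreases to zero as $\epsilon\downarrow 0$); but no analogous shortcut exists for $\IPLT^{(\alpha)}_{\,\cdot}(\infty)$, since establishing continuity of $y\mapsto\IPLT^{(\alpha)}_{\cev\beta^y}(\infty)$ is part of what the proposition asserts, so the diversity residual must be bounded directly and uniformly.

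The paper provides precisely such a bound as Lemma~\ref{lem:cts_imm-step2}: a.s., for every $\varepsilon>0$ there is $\delta'>0$ such that, simultaneously for every $z\in[0,y_0]$, the total mass and the limsup-diversity of the skewer of clades entering in $(z,z+\delta']$ stay below $\varepsilon/3$ on $(z,z+\delta']$. This is lifted from the superprocess analysis in \cite{FVAT} and is the genuinely hard technical ingredient. Your residual (small-lifetime clades alive at level $y$) is a subset of the paper's (all clades with recent entry), so the paper's estimate would suffice if cited; as written, your argument neither re-proves nor invokes such a bound, so the proof is incomplete. A minor further point: for $\dIA$-continuity of the truncated process, the paper's Lemma~\ref{lem:min_cld:skewer}(i) gives this directly for $\bn\sim\umCladeA(\,\cdot\mid\zeta^+>\epsilon)$; routing instead through the mid-spindle Markov property and Proposition~\ref{prop:0:len}(iii) needs extra care, since the latter only guarantees $\dHs$-continuity at time $0$ for general initial interval partitions, and the delicate point for a clade is precisely the behaviour at its entry level.
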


We will prove this in Section \ref{sec:imm_cts_pf}.

To describe a general \SSIPEAT, we must combine the immigration process with the process describing all descendants of the time-zero population; the latter is the \SSIPEA\ of \cite{Paper1-2}.

\begin{definition}[$\BPr^{\alpha,\theta}_{\mu}$]\label{def:IPPAT}
 Fix $\alpha\!\in\! (0,1)$, $\theta\!>\!0$ and a probability distribution $\mu$ on $\IPH$. Let $\bN_{\mu}\!\sim\! \Pr^{\alpha,0}_{\mu}$ as in Definition~\ref{def:IPPA} and, independently, let $\cev\bF_\theta$ and $\cev{\boldsymbol{\beta}}$ be as in Definition \ref{defn:cev-beta}. Let
 \begin{equation}\label{eq:SSIPEAT:def}
  \gamma^y = \skewer\left(y,\bN_{\mu}\right) \quad\text{and}\quad \beta^y := \cev{\beta}^y\concat \gamma^y \quad \text{for }y\ge0.
 \end{equation}
 From Propositions \ref{prop:0:len}(iii) and \ref{prop:cts_imm}, we see that $(\beta^{y},y\ge 0)$ is a.s.\ $\dIA$-path-continuous, except at time 0 if the initial state is in $\IPH\setminus\IPA$, in which case the process is a.s.\ 
 $\dHs$-continuous at time $0$. We denote by $\BPr^{\alpha,\theta}_{\mu}$ the law of $(\beta^{y}, y\ge 0)$ on $\cC([0,\infty),\IPH)$. 
\end{definition}

In Section \ref{sec:main_results} we introduced the term \PoiIPPAT\ (\SSIPEAT) to describe a Hunt process with $\big(\kappa^{\alpha,\theta}_y,\,y\ge0\big)$ as its transition semigroup.

\begin{proposition}\label{prop:identify_SSIPE}
 $\BPr^{\alpha,\theta}_\beta$ is the law of an $\SSIPEzAT{\beta}$, for $\beta\in\IPA$.
\end{proposition}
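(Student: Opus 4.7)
My plan is to verify that $(\beta^y)_{y\ge 0}$ from Definition \ref{def:IPPAT} is Markov with transition semigroup $(\kappa^{\alpha,\theta}_y)$, which by definition identifies it as $\SSIPEzAT{\beta}$. The kernel $\kappa^{\alpha,\theta}_y(\beta,\,\cdot\,)$ has the branching form $G^y\bar\beta_0 \concat \Concat_{U\in\beta}\beta_U^y$ with independent pieces, and this is mirrored by the construction $\beta^y = \cev{\beta}^y \concat \gamma^y$, in which $\gamma^y = \Concat_{U\in\beta}\skewer(y,\bN_U)$ is itself a concatenation of independent clades, independent of $\cev{\boldsymbol\beta}$. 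By the $\theta=0$ theory of \cite{Paper1-2}, each $\skewer(y,\bN_U)$ has marginal law $\mu^{(\alpha)}_{\Leb(U),1/(2y)}$ and the process $\skewerP(\bN_\beta)$ is Markov with semigroup $(\kappa^{\alpha,0}_y)$. Hence the proof reduces to: (a) identifying $\cev{\beta}^y$ in law with $G^y\bar\beta_0$ where $G^y\sim\GammaDist[\theta,1/(2y)]$ and $\bar\beta_0\sim\PDIPAT$; and (b) the full Markov property.

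For (a), I would exploit the Poisson structure of $\cev\bF_\theta\sim\PRM(\frac{\theta}{\alpha}\Leb\otimes\umCladeA)$. Only clades $(s,N_s)$ with $\zeta^+(N_s) > y-s$ contribute to $\cev{\beta}^y$; by Proposition \ref{prop:min_cld:stats}, their depths $t=y-s\in(0,y]$ form a PPP of intensity $(\theta/t)\,dt$, and for each one, Lemma \ref{lem:min_cld:mid_spindle} decomposes $\skewer(y-s,N_s)$ into a leftmost block of mass $m^{y-s}\sim\GammaDist[1-\alpha,1/(2(y-s))]$ followed by an independent trailing partition arising as the level-$0$ skewer of a primitive-clade--like structure of that mass. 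Reassembling these pieces in reverse arrival order (most recent immigration leftmost) and running the Gnedin--Pitman stick-breaking construction from Section \ref{sec:IPspace} backwards, I would match the sizes of successive leftmost blocks (normalised by the running total mass) with the $\BetaDist[\theta+j\alpha,1-\alpha]$ factors in \PDIPAT. Simultaneously, a Campbell-type computation should identify the unconditional total mass $\|\cev{\beta}^y\|$ as $\GammaDist[\theta,1/(2y)]$ and show it independent of the normalised partition, yielding the desired $G^y\bar\beta_0$ law.

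For (b), I would use two independence statements built into the construction. First, each block of $\beta^y$ is the crossing of some active spindle at level $y$; Lemma \ref{lem:min_cld:mid_spindle}, applied to the clade containing it, shows that the above-$y$ continuation of that spindle, together with the subsequent scaffolding, is an independent primitive clade of mass equal to the block's length, and different blocks yield conditionally independent continuations. Second, the restriction of $\cev\bF_\theta$ to $s>y$ is, by the Poisson property, an independent fresh immigration PPP driving $\cev{\boldsymbol\beta}$ on $[y,\infty)$ in the same way that $\cev\bF_\theta$ drives it on $[0,\infty)$. Combining, the conditional law of $(\beta^{y+u})_{u\ge 0}$ given the history up to level $y$ is that of an $\SSIPE(\alpha,\theta)$ starting from $\beta^y$, which yields both the Markov property and the identification of the semigroup as $(\kappa^{\alpha,\theta}_y)$.

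The principal obstacle is (a): bookkeeping the interleaving of infinitely many immigration clades and matching the resulting ordered block-size law to \PDIPAT. The intensity factor $\theta/\alpha$ in Definition \ref{defn:cev-beta} is tuned exactly so that the Beta parameters come out right, but the verification demands a careful PPP/regenerative calculation. A minor secondary point is that for $\beta\in\IPH\setminus\IPA$ the path-continuity at $y=0$ holds only in the weaker metric $\dHs$ (Proposition \ref{prop:0:len}\ref{item:IPPA:cts}), but this affects neither the marginal nor the Markovian verification.
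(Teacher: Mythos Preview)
Your overall plan is correct and matches the paper's: establish the Markov property and identify the one-step transitions by treating the immigration part $\cev{\beta}^y$ and the branching part $\gamma^y$ separately. For (b), your argument is essentially that of Proposition~\ref{prop:MP} and Lemma~\ref{lem:cevF:cutoff}: the restriction of $\cev\bF_\theta$ to $(y,\infty)$ is fresh immigration, and the Markov-like property of $\umCladeA$ (Corollary~\ref{cor:min_cld:mid_spindle}) together with the $\theta=0$ Markov-like property of $\bN_\beta$ handle the above-$y$ continuations.

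For (a), the paper takes a different and cleaner route (Proposition~\ref{prop:IPPAT:entrance}). Rather than matching individual blocks to the $\BetaDist[\theta+j\alpha,1-\alpha]$ stick-breaking factors, it works \emph{clade by clade}: by Lemma~\ref{lem:min_cld:skewer}\ref{item:MCSk:entrance}, each surviving clade contributes $\ExpDist[1/(2(y-S_i))]\cdot\PDIP(\alpha,0)$; the ratios $(y-S_i)/(y-S_{i-1})$ are i.i.d.\ $\BetaDist[\theta,1]$; a multivariate Beta--Gamma identity then reduces the infinite concatenation to $G$ times an iterated application of the decomposition $\gamma\stackrel{d}{=}(B\bar\gamma)\concat((1-B)\bar\beta)$ with $B\sim\BetaDist[\theta,1]$, $\bar\gamma\sim\PDIPAT$, $\bar\beta\sim\PDIP(\alpha,0)$. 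Your proposed block-level matching is harder to execute because each clade contributes infinitely many blocks, not one, and the CRP stick-breaking of Section~\ref{sec:IPspace} does not produce blocks in left-to-right order; you would first have to recover the $(\BetaDist[\theta,1],\PDIP(\alpha,0))$ iteration from the $\BetaDist[\theta+j\alpha,1-\alpha]$ scheme, which amounts to reproving the identity the paper uses directly.

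One omission: the paper proves Proposition~\ref{prop:identify_SSIPE} jointly with Proposition~\ref{prop:sp}\ref{item:SSIP:diffusion}. In that logical order, the Hunt process is being \emph{constructed} here, so beyond the simple Markov property and the semigroup the paper also verifies path-continuity (Propositions~\ref{prop:0:len}\ref{item:IPPA:cts} and~\ref{prop:cts_imm}), continuity in the initial state (Proposition~\ref{prop:IP:initial}), and the strong Markov property (Proposition~\ref{prop:IP:SMP}). Your sketch yields simple Markov plus the correct semigroup, which would identify the law with an already-existing Hunt process, but does not by itself establish the Hunt property.
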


We prove this, along with Proposition \ref{prop:sp}\ref{item:SSIP:diffusion}, in Section \ref{sec:Hunt}. To do so, we must prove the Hunt property and the claimed transition semigroup for $(\beta^y,\,y\ge0)$ as in \eqref{eq:SSIPEAT:def}.

\subsection{Entrance law for the immigration process}\label{sec:immigration}

\begin{lemma}\label{lem:min_cld:skewer}
 Fix $y>0$ and consider $\bn\sim\umCladeA\big(\,\cdot\;\big|\;\life^+>y\big)$. 
 \begin{enumerate}[label=(\roman*),ref=(\roman*)]
  \item The process $\skewerP (\bn)$ constructed in Definition~\ref{def:skewer} is  a well-defined random variable in $\cC ([0,\infty), (\IPA,  \dIA))$ under the Borel $\sigma$-algebra generated by uniform convergence.  
It is a path-continuous excursion in $\IPA$ starting from $\varnothing$.\label{item:MCSk:cts}
  \item $\skewer(\bn,y) \stackrel{d}{=} B^y \bar\beta$, where $B^y \sim \ExpDist(1/2y)$ independent of $\bar\beta\sim \PDIP(\alpha,0)$.\label{item:MCSk:entrance}
 \end{enumerate}
\end{lemma}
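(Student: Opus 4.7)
The plan is to prove (ii) first by applying the Mid-Spindle Markov Property (Lemma~\ref{lem:min_cld:mid_spindle}) at level $y$, and then deduce (i) from the same decomposition together with the path-continuity of clade skewers from \cite{Paper1-2}.

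For (ii), let $T$ be the first time a spindle $f_T$ of $\bn$ crosses level $y$ and set $m:=m^y(\bn)=\hat f^y_T(0)$. No spindle before $T$ reaches level $y$, so $f_T$ contributes the leftmost block $(0,m)$ of $\skewer(y,\bn)$. By MSMP, conditional on $m$, the forward part $\bn_+:=\delta(0,\hat f^y_T)+\shiftrestrict{\bn}{(T,\infty)}$ has the distribution of $\delta(0,f')+\restrict{\bN'}{[0,\tau]}$ with $f'\sim\BESQ_m(-2\alpha)$, $\bN'\sim\PRMLBA$ independent, and $\tau=\inf\{t\colon\xi(\bN')(t)<-(y+\zeta(f'))\}$. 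The identity $\xi(\bn_+)(t)=\xi_\bn(T+t)-y$ for $t>0$ then identifies crossings of level $y$ in $\xi_\bn$ after $T$ with crossings of level $0$ in $\xi(\bn_+)$, giving $\skewer(y,\bn)=\{(0,m)\}\concat\gamma$ where $\gamma$ collects blocks from spindles in $\bN'$ that cross level $-\zeta(f')$. Applying the excursion PRM representation~\eqref{eq:imm_PPP} to $\bN'$, such spindles lie inside excursions of $\xi(\bN')$ above its running infimum at infimum depth $y'\in(\zeta(f'),y+\zeta(f'))$; reparametrizing $\hat y:=y'-\zeta(f')\in(0,y)$ and using reflection-invariance in $\hat y$, I identify $\gamma$ with $\fskewer(y,\cev\bF')$ for a $\PRM(\Leb\otimes\umCladeA)$ $\cev\bF'$ on $[0,y]\times\HfinA$, which is precisely $\cev\beta^y$ in the case $\theta=\alpha$ of Definition~\ref{defn:cev-beta}.

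To conclude (ii), Proposition~\ref{prop:min_cld:stats}\ref{item:MCS:m_y} gives $m\sim\GammaDist(1-\alpha,1/(2y))$, and identifying $\cev\beta^y$ in the case $\theta=\alpha$ with the time-$y$ state of an \SSIPE{\alpha,\alpha} of \cite{Paper1-2} started from $\varnothing$, its pseudo-stationarity yields $\gamma\stackrel{d}{=}Z\bar\beta'$ for independent $Z\sim\GammaDist(\alpha,1/(2y))$ and $\bar\beta'\sim\PDIP(\alpha,\alpha)$; then Proposition~\ref{prop:PDIP} with $\lambda=1/(2y)$ identifies $\{(0,\GammaDist(1-\alpha,\lambda))\}\concat(\GammaDist(\alpha,\lambda)\cdot\PDIP(\alpha,\alpha))\stackrel{d}{=}\ExpDist(\lambda)\cdot\PDIP(\alpha,0)$, which is the claimed $B^y\bar\beta$. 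Should the identification with \cite{Paper1-2} not yet be in hand, one can instead compute $\EV[e^{-\lambda\|\gamma\|}]$ directly from the PRM intensity $\Leb\otimes\umCladeA$ to obtain the Laplace transform $(1+2y\lambda)^{-\alpha}$ of $\GammaDist(\alpha,1/(2y))$, and separately argue the $\PDIP(\alpha,\alpha)$ structure of $\gamma/\|\gamma\|$ via the Stable$(\alpha)$ subordinator characterization in Proposition~\ref{prop:PDIP}.

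For (i), the same MSMP decomposition at any $u\in(0,\zeta^+(\bn))$ shows that $(\skewer(u+s,\bn),\,s\ge 0)=\skewerP(\bn^u_+)$; the explicit description of $\bn^u_+$ decomposes its skewer into a continuously descending block from the $\BESQ_{m^u(\bn)}(-2\alpha)$-spindle $f'$ concatenated with independent clade skewers $\skewer(s+\hat y,N_{\hat y})$ indexed by the excursion PRM, each $\dIA$-path-continuous by Proposition~\ref{prop:0:len}\ref{item:IPPA:cts}. Continuity of the concatenation in $\dIA$ follows from Lemma~\ref{lem:IP:concat_bound} combined with moment bounds drawn from~\eqref{eq:mBxcA} and Proposition~\ref{prop:min_cld:stats}; continuity at $u=0$ with limit $\varnothing$ follows from the excursion property $\xi_\bn(0)=0$, since Lemma~\ref{lem:min_cld:scaling} combined with the law in (ii) forces both the Lebesgue mass and the $\alpha$-diversity of $\skewer(u,\bn)$ to vanish as $u\downto 0$. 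The main obstacle I anticipate is the uniform control needed for (i): while Lebesgue-mass distortion is readily bounded from the PRM structure, uniformly bounding the diversity distortion as $u$ varies in a compact interval requires a careful layer-cake estimate on the number of small-mass crossings, via the stable scaffolding and the clade max-height law of Proposition~\ref{prop:min_cld:stats}\ref{item:MCS:max}.
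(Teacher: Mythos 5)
Your reduction of (ii) via the Mid-Spindle Markov Property to the law of $\gamma$ (the skewer minus its leftmost block), and your concluding use of Proposition~\ref{prop:min_cld:stats}\ref{item:MCS:m_y} and the \PDIP[\alpha,0] decomposition in Proposition~\ref{prop:PDIP}, match the paper. The problem is what you do with $\gamma$. The paper applies \cite[proof of Proposition~3.4]{Paper1-2} directly to the $\PRMLBA$-killed picture that the MSMP supplies, obtaining $\|\gamma\|\sim\GammaDist[\alpha,1/2y]$ independent of $\gamma/\|\gamma\|\sim\PDIP[\alpha,\alpha]$ in one step. You instead pass through the excursion point measure of $\bN'$ above its infimum, identify $\gamma$ with $\fskewer(y,\cev{\bF}')=\cev\beta^y$ for $\theta=\alpha$, and then invoke pseudo-stationarity from \cite{Paper1-2}. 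That invocation needs the identification of $\cev\beta^y$ with the $\SSIPE(\alpha,\alpha)$ of \cite{Paper1-2}, i.e.\ Proposition~\ref{prop:identify_SSIPE}, whose proof runs through Propositions~\ref{prop:IPPAT:entrance} and~\ref{prop:IP:transn} and hence depends on the present Lemma~\ref{lem:min_cld:skewer}(ii) --- so route~(a) is circular as written. Your fallback~(b) correctly gives the Laplace transform of $\|\gamma\|$ via the $\nu_{\tt BESQ}^{(0)}$ pushforward of $\frac{1}{2\alpha}\umCladeA$, but the ``separately argue the $\PDIP(\alpha,\alpha)$ structure'' step is precisely the content of \cite[Prop.~3.4]{Paper1-2} and is not supplied; moreover the subordinator structure behind Proposition~\ref{prop:PDIP} is harder, not easier, to recover after the $\cev{\bF}'$ repackaging than in the raw $\PRMLBA$ picture.

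For (i) you miss the key idea. The paper takes $\bn$ (without loss of generality) to be $\ShiftRestrict{\bN}{[T',T'')}$, the first excursion of $\bX$ above its infimum with lifetime exceeding $y$. Since $\bX$ does not jump at the local minimum $T'$,
\[
\skewerP\big(\restrict{\bN}{[0,T'')},\restrict{\bX}{[0,T'')}-\bX(T')\big)=\skewerP\big(\restrict{\bN}{[0,T')},\restrict{\bX}{[0,T')}-\bX(T')\big)\concat\skewerP(\bn);
\]
the left-hand side and the first factor on the right are $\dIA$-continuous by \cite[Proposition~3.8]{Paper1-1}, and Lemma~\ref{lem:IP:concat_bound} forces $\dIA$-continuity of $\skewerP(\bn)$, with $\skewer(0,\bn)=\varnothing$ by construction. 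Your level-by-level MSMP decomposition has two problems. First, Proposition~\ref{prop:0:len}\ref{item:IPPA:cts}, which you cite for continuity of the individual clade skewers $\skewerP(N_{\hat y})$, concerns $\mathbf{N}_\beta$ from Definition~\ref{def:IPPA}, not $\umCladeA$-clades; continuity for the latter is (essentially) what is to be proved, so this citation is circular unless you first repackage the whole forward part as $\skewerP(\mathbf{N}_{\{(0,m^u(\bn))\}})$. Second, and more seriously, your argument at $u\downto0$ uses Lemma~\ref{lem:min_cld:scaling} ``combined with the law in~(ii),'' which gives a fixed-$u$ distributional bound but not almost-sure convergence along $u\downto0$; you flag the missing uniform ``layer-cake'' control yourself and do not supply it. The embedding-and-concatenation argument replaces the entire level-by-level analysis and the $u\downto0$ estimate with a single application of the concatenation inequality.
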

These results have superprocess analogs in \cite[Propositions 4.6, 4.10(ii)]{FVAT}.

\begin{proof}
 We may assume that $\bn$ is the clade corresponding to the first excursion above the minimum of $\bX$ with lifetime $\life^+ > y$. Then we can write $\bn = \ShiftRestrict{\bN}{[T', T'')}$ for a pair of a.s.\@ finite random times $T',T''$. 
 
 
 (i) It is well known that $\bX$ does not jump when it attains a local minimum at time $T'$; see e.g.\ \cite[Proposition 4.5(iii)]{FVAT}. Thus, $\bX(T') = \bX(T'-)$, and 
 \[\begin{split}
  &\skewerP (\restrict{\bN}{[0, T'')},\restrict{\bX}{[0, T'')} - \bX(T'))\\
  &\ \ = \skewerP (\restrict{\bN}{[0, T')},\restrict{\bX}{[0, T')} - \bX(T'))\concat \skewerP (\bn).
 \end{split}\]
 From \cite[Proposition 3.8]{Paper1-1}, the first two skewer processes in this formula are a.s.\ $\dIA$-continuous. 
 The $\dIA$-continuity of $\skewerP(\bn)$ follows, by Lemma \ref{eq:IP:concat_bound}. 
 Finally, by definition of $\bn$ we get $\skewer(0,\bn) = \varnothing$. 
 
 (ii) By the MSMP, Lemma \ref{lem:min_cld:mid_spindle}, given $\restrict{\bn}{[0,T^{\ge y}]}$, the point measure $\shiftrestrict{\bn}{(T^{\ge y},\infty)}$ is conditionally a \PRMLBA\ killed when its scaffolding hits level $-\xi_{\bn}(T^{\ge y})$. Let $\gamma$ denote $\skewer(\bn,y)$ minus its leftmost block. Then by \cite[proof of Proposition 3.4]{Paper1-2}, $G := \IPmag{\gamma}\sim \GammaDist[\alpha,1/2y]$ and, independently, $\bar\gamma := (1/G) \gamma\sim \PDIP[\alpha,\alpha]$. By the Markov property of $\bn$ when $\xi(\bn)$ first hits level $y$, $\gamma$ is independent of $m^y(\bn)$. Finally, our claim follows from Lemma \ref{prop:min_cld:stats}\ref{item:MCS:m_y}, which notes that $m^y(\bn) \sim \GammaDist(1-\alpha,1/2y)$, and the characterization of $\PDIP(\alpha,0)$ in Proposition \ref{prop:PDIP}.
\end{proof}

The following proposition confirms that, for each $y>0$, $\cev{\beta}^y$ is well-defined and lies in $\IPA$ a.s.. This is not obvious, as the diversity property of Definition \ref{def:diversity_property} is not generally preserved over infinite concatenations. 
\begin{proposition}[Cf.\ Proposition 5.1 in \cite{FVAT}]\label{prop:IPPAT:entrance}
 For any $y > 0$, consider $G\sim\GammaDist[\theta,1/2y]$ and an independent interval partition $\gamma\sim\PDIP[\alpha,\theta]$. Then $G \gamma \stackrel{d}{=} \cev\beta^y$, where $\cev\beta^y$ is as in Definition \ref{defn:cev-beta}.
\end{proposition}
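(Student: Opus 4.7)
The approach is to represent $\cev{\beta}^y$ explicitly as a Poisson superposition of scaled $\PDIP(\alpha,0)$-partitions and then match this against a corresponding Poissonian representation of $G\gamma$.

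\emph{Step 1 (Poisson thinning).} A point $(s,N)$ of $\cev{\bF}_\theta$ contributes a nonempty block to $\cev{\beta}^y$ exactly when $\zeta^+(N)>y-s$. By the restriction theorem for Poisson random measures together with Proposition~\ref{prop:min_cld:stats}\ref{item:MCS:max}, the surviving clades form a PRM on $[0,y)\times\HfinA$ with intensity $\frac{\theta}{y-s}\,ds\otimes\umCladeA(\,\cdot\mid\zeta^+>y-s)$; after the reparametrization $s':=y-s$, this becomes a PRM on $(0,y]\times\HfinA$ of intensity $\frac{\theta}{s'}\,ds'\otimes\umCladeA(\,\cdot\mid\zeta^+>s')$. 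By Lemma~\ref{lem:min_cld:skewer}\ref{item:MCSk:entrance}, each marked point $(s'_i,N_i)$ contributes $\skewer(s'_i,N_i)\stackrel{d}{=}B_i\bar{\beta}_i$ with $B_i\sim\ExpDist(1/(2s'_i))$ and $\bar{\beta}_i\sim\PDIP(\alpha,0)$ independent, and these pairs are mutually independent across $i$. Following the concatenation convention of~\eqref{eq:fskewer},
\[ \cev{\beta}^y \stackrel{d}{=} \Concat_i B_i\bar{\beta}_i, \]
concatenated from leftmost (smallest $s'_i$) to rightmost (largest).

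\emph{Step 2 (Total mass via Campbell's formula).} One computes
\begin{equation*}
 \EV\big[e^{-\lambda\|\cev{\beta}^y\|}\big] = \exp\!\left(-\theta\int_0^y\frac{2\lambda}{1+2\lambda s'}\,ds'\right) = (1+2\lambda y)^{-\theta},
\end{equation*}
which is the Laplace transform of $\GammaDist(\theta,1/(2y))$; hence $\|\cev{\beta}^y\|\stackrel{d}{=}G$.

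\emph{Step 3 (Full distributional identity and main obstacle).} To upgrade to $\cev{\beta}^y\stackrel{d}{=}G\gamma$, the plan is to match the PPP of Step~1 against a corresponding Poissonian representation of $G\gamma$. Using the regenerative composition structure underlying $\PDIP(\alpha,\theta)$ \cite{GnedPitm05,PitmWink09}, $\gamma$ can be iteratively peeled into independent rescaled $\PDIP(\alpha,0)$-pieces, while the jump decomposition of $G\sim\GammaDist(\theta,1/(2y))$ (whose L\'evy measure on $(0,\infty)$ is $\theta x^{-1}e^{-x/(2y)}\,dx$) supplies the piece-masses. The main obstacle is aligning these two Poissonizations and verifying that the piece-mass distributions are exactly $\ExpDist(1/(2s'_i))$ against the intensity $\theta/s'\,ds'$ from Step~1. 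Two viable routes are: (i) direct appeal to an explicit Gnedin--Pitman subordinator construction of $\PDIP(\alpha,\theta)$; or (ii) a scaling-and-peeling recursion obtained by conditioning on $s'_{(1)}:=\max_i s'_i$ (so that $s'_{(1)}/y\sim\BetaDist(\theta,1)$), giving $\cev{\beta}^y\stackrel{d}{=}\cev{\beta}^{s'_{(1)}}\concat (B^{s'_{(1)}}\bar{\beta}^{s'_{(1)}})$, then verifying that $G\gamma$ satisfies the same identity (via the regenerative peeling of $\gamma$ combined with beta--gamma algebra for $G$) and invoking uniqueness. A close analogue in the measure-valued setting appears as \cite[Proposition~5.1]{FVAT}.
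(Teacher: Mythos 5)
Your Steps 1 and 2 are correct: the thinning of $\cev\bF_\theta$ by $\{\zeta^+(N) > y - s\}$ does yield a PRM with intensity $\frac{\theta}{s'}\,ds'\otimes\umCladeA(\,\cdot\mid\zeta^+>s')$ after $s'=y-s$, each surviving clade contributes $B_i\bar\beta_i$ with $B_i\sim\ExpDist(1/2s'_i)$ and $\bar\beta_i\sim\PDIP(\alpha,0)$ by Lemma~\ref{lem:min_cld:skewer}\ref{item:MCSk:entrance}, and your Campbell computation correctly gives $\|\cev\beta^y\|\sim\GammaDist(\theta,1/2y)$. (The total-mass check is a useful sanity check, though it follows a posteriori from the full identity.) Your observation that the largest surviving $s'$ satisfies $s'_{(1)}/y\sim\BetaDist(\theta,1)$ is also correct.

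However, Step 3 is not a proof: you explicitly label the core of the argument ``the main obstacle'' and list ``two viable routes'' without executing either. The genuinely hard part of this proposition is precisely what you defer — upgrading the matching of total masses to the joint law of the entire ordered arrangement, over an infinite accumulation of Poisson points near $s'=0$. The paper closes this gap by exactly the chained-Beta reparametrization you gesture at in route (ii), but with a crucial ingredient you do not supply: one sets $B_i := (y-S_i)/(y-S_{i-1})\sim\BetaDist(\theta,1)$ i.i.d., changes variables to $E_i := M_i\big(\prod_{j\le i}B_j\big)^{-1}\sim\ExpDist(1/2y)$ i.i.d., observes that $(B_i)$, $(E_i)$, $(\bar\beta_i)$ are jointly independent sequences, and then invokes a multivariate Beta--Gamma identity (\cite[Lemma~5.3]{FVAT}) to factor a single $G\sim\GammaDist(\theta,1/2y)$ out of the entire infinite concatenation, leaving precisely the iterated stick-breaking $\gamma\stackrel{d}{=}(B\bar\gamma)\concat((1-B)\bar\beta)$ of \eqref{eq:at_a0}. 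Without that distributional identity (or an equivalent uniqueness argument that you would still need to state and verify for your recursion), your Step 3 remains a plan, not a proof. I would accept Steps 1--2 as setup, but you need to actually carry out the factorization; merely noting that the Poissonizations ``should'' align is not sufficient.
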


\begin{proof}
 Let $\big((S_i,N_i),\,i\ge1\big)$ denote the sequence of points in $\cev\bF_\theta$ in which the clade crosses level $y$, i.e.\ for which $\life^+(N_i) + S_i > y > S_i$. These comprise the points of an inhomogeneous Poisson random measure. Let
 \begin{equation*}
  M_i := \IPmag{\skewer\left(y\!-\!S_i,N_{i}\right)}, \quad  
  \bar\beta_i := \frac{1}{M_i} \skewer\left(y\!-\!S_i,N_{i}\right).
 \end{equation*}
 Then $\cev\beta^y = \ConcatIL_{i=\infty}^1 M_i\bar\beta_i$, where the concatenation is ordered so that the $\bar\beta_{i+1}$ term is attached to the left of the $\bar\beta_i$ term, for each $i$.
 
 From the arguments in \cite[proof of Proposition 5.1]{FVAT}, we get:
 \begin{enumerate}
  \item the sequence $B_i := (y-S_i)/(y - S_{i-1}),\ i\ge1$, is i.i.d.\ $\BetaDist(\theta,1)$, and 
  \item the $(N_i,\,i\ge1)$ are conditionally independent given $(S_j,\,j\ge1)$, with respective conditional distributions $\umCladeA\{\,\cdot \mid \life^+ + S_i > y\}$.
 \end{enumerate}
 By Lemma \ref{lem:min_cld:skewer}\ref{item:MCSk:entrance}, conditionally given $(S_j,\,j\ge1)$, we get 
 \[
  (M_i,\bar\beta_i) \sim \ExpDist(1/2(y-S_i))\otimes\PDIP(\alpha,0) \qquad \text{for }i\ge1.
 \] 
 We get a nicer characterization via the telescoping product of the $\BetaDist$ variables $(B_i)$ and their interaction with the exponential variables $(M_i)$:
 \begin{equation}\label{eq:star}E_i := M_i \Bigg(\prod_{j=1}^i B_j\Bigg)^{\!\!-1} \sim \ExpDist[1/2y].
\end{equation}
 Then, as in \cite{FVAT}, the sequences $(B_i)$, $(E_i)$, and $(\bar\beta_i)$ are each i.i.d.\ and are jointly independent of each other. Now, by a multivariate distributional identity noted in \cite[Lemma 5.3]{FVAT}, which extends the usual Beta-Gamma calculus,
 \begin{equation}\label{eq:cevbetadecomp}
  \cev\beta^y
   = \Concat_{i=\infty}^1 \Bigg(\Bigg(E_i\prod_{j=1}^i B_j\Bigg) \bar\beta_i\Bigg) \stackrel{d}{=} G \Concat_{i=\infty}^1 \Bigg((1-B_i)\Bigg(\prod_{j=1}^{i-1} B_j\Bigg) \bar\beta_i\Bigg),
 \end{equation}
 where $G\sim\GammaDist(\theta,1/2y)$. To conclude, we appeal to a classical decomposition (see e.g.\ \cite[Corollary 8]{PitmWink09}):
 \begin{equation}\label{eq:at_a0}
  \gamma \stackrel{d}{=} (B \bar\gamma) \concat ((1-B) \bar\beta),
 \end{equation}
 where $\bar\gamma\sim\PDIP[\alpha,\theta]$, $\bar\beta\sim\PDIP[\alpha,0]$, and $B\sim\BetaDist[\theta,1]$ independent of each other. Iterating \eqref{eq:at_a0} yields that the right hand side in \eqref{eq:cevbetadecomp} has the claimed distribution.
\end{proof}

\subsection{Proof of Proposition \ref{prop:cts_imm}: path-continuity of the immigration process}\label{sec:imm_cts_pf}
      
  Let $\cev{\bF}_\theta\sim{\tt PRM}\big(\frac{\theta}{\alpha}\restrict{\Leb}{(0,\infty)}\otimes\umCladeA\big)$. 
  Fix $y_0>0$. We first lift from \cite[Section 7.2]{FVAT} a result that controls uniformly in level $[0,y_0]$ the contributions of newly entered clades. To this end, let us introduce some notation. 
For every $z\ge 0$, we define 
    \[\cev{\beta}_{z}^y:= \fskewer\Big(y,\Restrict{\cev{\bF}_{\theta}}{[z,\infty)\times\HfinA}\Big).\]
   That is, only those clades entering above level $z$ count for the process $(\cev{\beta}_{z}^y,y\ge 0)$. In particular, $\cev{\beta}_{z}^y= \varnothing$ for all $y\le z$. 
   For every $z\ge 0$, let 
    \begin{equation*}
      \overline{D}_z^y := \Gamma(1-\alpha)\limsup_{h\downarrow 0} h^{\alpha} \#\{ U\in \cev{\beta}_{z}^y\colon \Leb(U)>h \}, \qquad y\ge 0.
    \end{equation*}
\vspace{-0.3cm}
  \begin{lemma}\label{lem:cts_imm-step2}
     Almost surely, for all $\varepsilon>0$, there exists $\delta^\prime>0$   (that depends on $\varepsilon$ and the realization) such that 
    \begin{equation}\label{eqn:lemma1}
      \sup_{y\in (z, z+\delta^\prime]} \|\cev{\beta}_{z}^y\|<\varepsilon/3,\quad  \sup_{y\in (z, z+\delta^\prime]} \overline{D}_z^y <\varepsilon/3, \quad\text{ for every}~ z\in [0,y_0].
    \end{equation}
  \end{lemma}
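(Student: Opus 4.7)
My plan is to discretize $[0,y_0]$ with a deterministic grid, reduce both sup-bounds to one canonical estimate using the stationarity and self-similarity of $\cev\bF_\theta$, and close by Borel--Cantelli. Fix $\delta'>0$ and set $z_k := k\delta'$ for $k = 0, 1, \ldots, N := \lceil y_0/\delta'\rceil$. Whenever $z_k \le z \le y$, the clades contributing to $\cev\beta_z^y$ form a subset of those contributing to $\cev\beta_{z_k}^y$, and since $\fskewer$ concatenates in decreasing order of entry time, $\cev\beta_z^y$ appears as the right-most sub-concatenation of $\cev\beta_{z_k}^y$. Thus the multi-set of block lengths of $\cev\beta_z^y$ is contained in that of $\cev\beta_{z_k}^y$, which gives $\|\cev\beta_z^y\|\le \|\cev\beta_{z_k}^y\|$ trivially and $\overline D_z^y\le \overline D_{z_k}^y$ by monotonicity of the counting functional under the $\limsup$. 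Since any $(z,y)$ with $z\in[z_k,z_{k+1}]$ and $y\in(z,z+\delta']$ has $y\in(z_k, z_k+2\delta']$,
\begin{equation*}
  \sup_{z\in[0,y_0]}\sup_{y\in(z,z+\delta']}\|\cev\beta_z^y\| \;\le\; \max_{k\le N}\sup_{y\in(z_k,z_k+2\delta']}\|\cev\beta_{z_k}^y\|,
\end{equation*}
and an identical reduction holds for $\overline D$.

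By Poisson stationarity in the first coordinate, $(\cev\beta_{z_k}^{z_k+u},u\ge 0)\stackrel{d}{=}(\cev\beta^u,u\ge 0)$ for every $k$. Moreover, Lemma~\ref{lem:min_cld:scaling} gives that the map $(s,N)\mapsto (cs, c\scaleHA N)$ preserves the intensity $\tfrac{\theta}{\alpha}\Leb\otimes\umCladeA$, and hence the law of $\cev\bF_\theta$. A direct check using $\skewer(cy,c\scaleHA N) = c\skewer(y,N)$ then yields the path-level self-similarity $(\cev\beta^{cu},u\ge 0)\stackrel{d}{=}(c\,\cev\beta^u,u\ge 0)$, and combining with the $\alpha$-homogeneity of the diversity functional,
\begin{equation*}
  \sup_{y\in(z_k,z_k+2\delta']}\|\cev\beta_{z_k}^y\| \stackrel{d}{=} 2\delta'\sup_{u\in[0,1]}\|\cev\beta^u\|, \qquad \sup_{y\in(z_k,z_k+2\delta']}\overline D_{z_k}^y \stackrel{d}{=} (2\delta')^\alpha \sup_{u\in[0,1]}\overline D_0^u.
\end{equation*}

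The technical heart is a tail estimate
\begin{equation*}
  \Pr\!\Big[\sup_{u\in[0,1]}\|\cev\beta^u\|>M\Big] + \Pr\!\Big[\sup_{u\in[0,1]}\overline D_0^u > M\Big] \;\le\; \varphi(M), \qquad M \ge 1,
\end{equation*}
with $\varphi$ decaying faster than any polynomial---e.g.\ sub-exponentially. For the mass I would identify $(\|\cev\beta^u\|,u\ge 0)$ with $\BESQ_0(2\theta)$ via the clade-by-clade decomposition: the one-dimensional marginal is already the Gamma law from Proposition~\ref{prop:IPPAT:entrance}, and the total-mass excursion of a Poisson clade drawn from $\umCladeA$ should match the It\^o measure in the standard Poissonian construction of $\BESQ_0(2\theta)$, after which classical sub-exponential estimates on the running maxima of squared Bessel processes apply. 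For the diversity, Lemma~\ref{lem:min_cld:skewer}(ii) shows $\skewer(u,N)\stackrel{d}{=}B^u\bar\beta$ with $\bar\beta\sim\PDIP(\alpha,0)$ of finite Mittag--Leffler diversity, and an analogous Laplace/Campbell computation against the $\PRM\big(\tfrac{\theta}{\alpha}\Leb\otimes\umCladeA\big)$ delivers the parallel bound. Granted such a $\varphi$, taking $\delta'_n = 2^{-n}$ the grid-max at scale $\delta'_n$ exceeds $\varepsilon/3$ with probability at most $(y_0 2^n + 1)\big[\varphi(\varepsilon 2^{n-1}/3) + \varphi(\varepsilon 2^{\alpha(n-1)}/3)\big]$, which is summable in $n$, and Borel--Cantelli produces the desired $\delta'$ almost surely. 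The main obstacle is producing these tail bounds without circularly invoking the very path-continuity of $\cev{\boldsymbol{\beta}}$ we are trying to establish; the cleanest route seems to be to carry out the $\BESQ_0(2\theta)$ identification for the total-mass process directly from the Poisson structure, in tandem with rather than as a prerequisite for Proposition~\ref{prop:cts_imm}.
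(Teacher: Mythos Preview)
Your grid-plus-scaling-plus-Borel--Cantelli skeleton is sound, and the monotonicity step is correct (one slip: under $\fskewer$ clades with larger entry time $s$ sit to the \emph{left}, so $\cev\beta_z^y$ is the left-most, not right-most, sub-concatenation of $\cev\beta_{z_k}^y$; this does not affect your inequalities). The total-mass half can indeed be closed along the lines you sketch: the clade-by-clade Pitman--Yor construction identifies $(\|\cev\beta^u\|,\,u\ge0)$ as $\BESQ_0(2\theta)$ directly from the $\PRM$ structure, without any appeal to $d_\alpha$-continuity of the $\IPA$-valued process, and the running-max tails of $\BESQ$ are more than adequate for your summability.

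The genuine gap is the diversity half. You assert a tail bound $\Pr[\sup_{u\in(0,1]}\overline D_0^u>M]\le\varphi(M)$ with super-polynomial decay and propose to obtain it by a ``Laplace/Campbell computation,'' but $\overline D$ is a $\limsup$ of block counts, not an additive functional, so Campbell's formula does not apply to it directly. Your Borel--Cantelli sum actually requires $\varphi(M)=o(M^{-1/\alpha-\epsilon})$; via subadditivity of $\overline D$ over concatenation this would follow from control of $\int\sup_{v\ge0}\IPLT_{\skewer(v,N)}(\infty)\,\umCladeA(dN)$ (or exponential moments thereof), and the one-level identity in Lemma~\ref{lem:min_cld:skewer}\ref{item:MCSk:entrance} gives you nothing uniform in $v$. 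You flag the circularity risk yourself, and here it bites: a pathwise-sup bound on the clade diversity is essentially what the lemma is for. The paper does not argue from scratch but imports the estimate from Claim~1 in the proof of \cite[Proposition~7.3]{FVAT}, where the corresponding statement is established in the superskewer setting by a different route; if you want a self-contained argument, the missing ingredient is a direct uniform-in-level moment or tail estimate for the clade diversity under $\umCladeA$.
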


The second part of \eqref{eqn:lemma1} has been obtained as Claim 1 in the proof of \cite[Proposition 7.3]{FVAT}. 
That argument was made in the setting of the measure-valued ``superskewer'' map, but it translates without modification to the setting of the skewer map; see Figure \ref{fig:scaf_marks} for an illustration relating these maps. 
The same arguments can be adapted to prove the total mass part. We omit the details. 

Our next step is to prove that almost surely, $\cev{\beta}^y$ has the $\alpha$-diversity property for every $y\ge 0$. \vspace{-0.1cm}
\begin{lemma}\label{lem:cts_imm-step3}
 Almost surely, $\cev{\beta}^y \in \IPA$ for every $y\!\in\! [0,y_0]$. 
\end{lemma}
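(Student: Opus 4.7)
My plan is to decompose $\cev{\beta}^y$ at a small cutoff $y-\delta$ and exploit that clades entering before time $y-\delta$ contribute only finitely often, each giving an $\IPA$-valued skewer, while Lemma~\ref{lem:cts_imm-step2} will uniformly control the mass and the limsup diversity of the remainder.

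I will work on the almost-sure event on which the conclusion of Lemma~\ref{lem:cts_imm-step2} holds and on which $\skewerP(N)$ takes values in $\IPA$ for each of the countably many atoms $(s,N)$ of $\cev{\bF}_\theta$; this is a countable intersection of probability-one events by Lemma~\ref{lem:min_cld:skewer}\ref{item:MCSk:cts}. Fix $y\in(0,y_0]$ (the case $y=0$ is trivial). For each $\delta\in(0,y)$, Proposition~\ref{prop:min_cld:stats}\ref{item:MCS:max} implies $\int_0^{y-\delta}\frac{\theta}{\alpha}\,\umCladeA\{\zeta^+>y-s\}\,ds=\theta\ln(y/\delta)<\infty$, so almost surely only finitely many atoms $(s,N_s)$ of $\cev{\bF}_\theta$ with $s<y-\delta$ satisfy $\zeta^+(N_s)>y-s$. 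Writing $\cev{\beta}^y_{[0,y-\delta]}$ for the finite concatenation of the nonempty skewers $\skewer(y-s,N_s)$ with $s<y-\delta$, I obtain
\[
\cev{\beta}^y \;=\; \cev{\beta}^y_{y-\delta}\concat \cev{\beta}^y_{[0,y-\delta]},
\]
and $\cev{\beta}^y_{[0,y-\delta]}\in\IPA$ as a finite concatenation of elements of $\IPA$.

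Next, for fixed $t\in[0,\|\cev{\beta}^y\|]$ set
\[
A_h \;:=\; h^\alpha\,\#\bigl\{U\in\cev{\beta}^y\colon \Leb(U)>h,\ U\subset[0,t]\bigr\}.
\]
The case $t=0$ is trivial; otherwise, Lemma~\ref{lem:cts_imm-step2} lets me choose $\delta$ small enough that $\|\cev{\beta}^y_{y-\delta}\|<t$. Splitting the count at the interface between the two factors,
\begin{align*}
A_h \;=\;{}& h^\alpha\,\#\{U\in\cev{\beta}^y_{y-\delta}\colon \Leb(U)>h\}\\
 &{}+ h^\alpha\,\#\{U\in\cev{\beta}^y_{[0,y-\delta]}\colon \Leb(U)>h,\ U\subset[0,t-\|\cev{\beta}^y_{y-\delta}\|]\}.
\end{align*}
As $h\downarrow 0$, the second summand converges to $\Gamma(1-\alpha)^{-1}\IPLT^{(\alpha)}_{\cev{\beta}^y_{[0,y-\delta]}}(t-\|\cev{\beta}^y_{y-\delta}\|)$ because $\cev{\beta}^y_{[0,y-\delta]}\in\IPA$, while the first summand has $\limsup$ at most $\Gamma(1-\alpha)^{-1}\overline{D}^y_{y-\delta}$. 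Hence
\[
\limsup_{h\downarrow 0} A_h - \liminf_{h\downarrow 0} A_h \;\le\; \Gamma(1-\alpha)^{-1}\,\overline{D}^y_{y-\delta};
\]
sending $\delta\downarrow 0$ and invoking Lemma~\ref{lem:cts_imm-step2} forces this difference to $0$, so $\lim_{h\downarrow 0} A_h$ exists, giving the $\alpha$-diversity of $\cev{\beta}^y$ at $t$.

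The chief subtlety is securing simultaneous control of the mass (so that $\|\cev{\beta}^y_{y-\delta}\|<t$ can be arranged) and of the limsup diversity of the late-arriving clades, uniformly in $y\in[0,y_0]$; both are provided by Lemma~\ref{lem:cts_imm-step2}, which is the heart of the argument.
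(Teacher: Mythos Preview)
Your approach is essentially the same as the paper's: decompose $\cev\beta^y$ at a cutoff level, observe that the ``old'' part is a finite concatenation of $\IPA$-elements, and bound the oscillation of the diversity count by $\overline{D}^y_{y-\delta}$, then invoke Lemma~\ref{lem:cts_imm-step2}. The argument is correct in substance.

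One presentational point: after declaring the almost-sure event you will work on, you fix $y$ and then write ``so \emph{almost surely} only finitely many atoms\ldots''. That phrasing introduces a $y$-dependent null set, which is not allowed for a ``for every $y\in[0,y_0]$'' conclusion. The clean fix (which the paper does explicitly) is to include in the upfront almost-sure event the property that for every $\delta>0$ there are only finitely many atoms $(s,N_s)$ with $s\in[0,y_0)$ and $\zeta^+(N_s)>\delta$; this follows from Proposition~\ref{prop:min_cld:stats}\ref{item:MCS:max} and a countable intersection over rational $\delta$. Then, for any $y$ and any $\delta\in(0,y)$, any atom with $s<y-\delta$ contributing at level $y$ has $\zeta^+(N_s)>y-s>\delta$, so finiteness is automatic on the chosen event. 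Your intensity computation $\theta\ln(y/\delta)$ is correct but unnecessary once this is in place.
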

 
\begin{proof}[Proof of Lemma~\ref{lem:cts_imm-step3}]
By Lemma~\ref{lem:min_cld:skewer}\ref{item:MCSk:cts}, Proposition \ref{prop:min_cld:stats}\ref{item:MCS:max}, and standard properties of Poisson random measures, the following holds almost surely: for all $\delta>0$, there are finitely many points $(s,N_s)$ of $\cev\bF_\theta$ with $s\in[0,y_0)$ and $\zeta^+(N_s)>\delta$; moreover, for each $(s,N_s)$, the process $y\mapsto \skewer(y,N_s)$ is path-continuous in $(\IPA, \dIA)$.
 For the remainder of this proof, we will argue on the intersection of this almost sure event and the one in Lemma~\ref{lem:cts_imm-step2}.

  Fix any $\varepsilon\!>\!0$ and take $\delta^\prime\!>\!0$ so that \eqref{eqn:lemma1} holds. 
 Fix any $y\in [0, y_0]$ and let $y^{\prime}:= \max(y-\delta^{\prime}, 0)$. 
Define the concatenation of long-living clades that enter below level $y^{\prime}$ by
\begin{equation}\label{eq:gamma_y'}
\gamma_{y^{\prime}}^z = \fskewer\Big( z,\ \Restrict{\cev{\bF}_\theta}{[0,y')\times\{N\in\HA\colon \zeta^+(N)>\delta^\prime/4 \}} \Big), \quad z\ge 0,
\end{equation}  
 which takes values in $\IPA$, due to the choice of the almost sure event.
Then we have the decomposition $\cev{\beta}^y =\cev{\beta}_{y^{\prime}}^{y}  \concat \gamma_{y^{\prime}}^y$. 
 Indeed, among the clades that are born below level $y^{\prime}$, only those long-living ones contribute to $\cev{\beta}^y$ at level $y$. 
	
	For every $t\ge 0$, set 
\begin{align*}
\overline{D}_{\cev{\beta}^y}(t)&:= \Gamma(1-\alpha)\limsup_{h\downarrow 0} h^{\alpha} \#\{ U\in \cev{\beta}^y\colon \Leb(U)>h, \sup U \le t \}, \\
\underline{D}_{\cev{\beta}^y}(t)&:= \Gamma(1-\alpha)\liminf_{h\downarrow 0} h^{\alpha} \#\{ U\in \cev{\beta}^y\colon \Leb(U)>h, \sup U \le t \}. 
\end{align*}
Then $\cev{\beta}^y$ has the $\alpha$-diversity property if and only if $\overline{D}_{\cev{\beta}^y}(t)= \underline{D}_{\cev{\beta}^y}(t)$ for every $t\ge 0$. 
 Moreover, we then have $\IPLT_{\cev{\beta}^y}(t)= \overline{D}_{\cev{\beta}^y}(t)= \underline{D}_{\cev{\beta}^y}(t)$.

Write $\Delta:= \|\cev{\beta}_{y^{\prime}}^{y} \|$. By the identity $\cev{\beta}^y =\cev{\beta}_{y^{\prime}}^{y}  \concat \gamma_{y^{\prime}}^y$, we have, for every $t\ge 0$, 
\[
\overline{D}_{\cev{\beta}^y}(t)\le \overline{D}_{y^{\prime}}^y + \mathbf{1}\{t\ge \Delta\}\IPLT_{\gamma_{y^{\prime}}^y}(t-\Delta), \quad 
\underline{D}_{\cev{\beta}^y}(t) \ge \mathbf{1}\{t\ge \Delta\}\IPLT_{\gamma_{y^{\prime}}^y}(t-\Delta). 
\]
  Since $\overline{D}_{y^{\prime}}^y<\varepsilon/3$ by \eqref{eqn:lemma1}, we have  
$    \left|\overline{D}_{\cev{\beta}^y}(t) - \underline{D}_{\cev{\beta}^y}(t)\right|
 <\varepsilon.$ 
  Since $\varepsilon$ was arbitrary, we deduce the existence of
$\IPLT_{\cev{\beta}^y}(t)= \overline{D}_{\cev{\beta}^y}(t)= \underline{D}_{\cev{\beta}^y}(t)$ for all $t\ge 0$. We conclude that $\cev{\beta}^y \in \IPA$ for every $y\!\in\! [0,y_0]$. 
\end{proof}

To complete the proof of Proposition \ref{prop:cts_imm}, it suffices to fix any realization in the almost sure event considered in the proof of Lemma~\ref{lem:cts_imm-step3} and to check the continuity of $y\mapsto \cev{\beta}^y$ at any $x\in [0,y_0]$, under the metric $\dIA$. 
 
  Fix any $\varepsilon\!>\!0$ and take $\delta^\prime\!>\!0$ so that \eqref{eqn:lemma1} holds. 
Let $x^{\prime}:= \max(x - \delta^\prime\!/2, 0)$ and define the process $(\gamma_{x^{\prime}}^y, y\ge 0)$ as in \eqref{eq:gamma_y'}, which evolves continuously in $(\IPA, \dIA)$. 
  In particular, there is $\delta\in(0,\delta^\prime\!/4)$ such that for every $z\!\in\![0,y_0]$ with $|x\!-\!z|\!<\!\delta$, we have $\dIA(\gamma_{x^{\prime}}^x, \gamma_{x^{\prime}}^z )\!<\!\varepsilon/3$. 
Recall \eqref{eq:IP:metric_def}, then there exists a correspondence $(U^x_j,U^z_j)_{j\in [n]}$ from $\gamma_{x^{\prime}}^x$ to $\gamma_{x^{\prime}}^z$, such that the $\alpha$-distortion satisfies 
\[
 \dis_\alpha (\gamma_{x^{\prime}}^x, \gamma_{x^{\prime}}^z, (U^x_j,U^z_j)_{j\in [n]}) \le  \dIA(\gamma_{x^{\prime}}^x, \gamma_{x^{\prime}}^z )  + \varepsilon/3\le 2\varepsilon/3. 
\]
Since $\cev{\beta}^y =\cev{\beta}_{x^{\prime}}^{y}  \concat \gamma_{x^{\prime}}^y$ for all $y\ge x^{\prime}\!+\!\delta^{\prime}\!/4$, which in particular holds for $y=x$, we have 
\[
\max_{i\in [n]}\left|\IPLT_{\cev{\beta}^x} (U^x_i) \!-\! \IPLT_{\gamma_{x^{\prime}}^x} (U^x_i) \right|\!\le\!\overline{D}_{x^{\prime}}^x,\,  
\left|\IPLT_{\cev{\beta}^x} (\infty) \!-\! \IPLT_{\gamma_{x^{\prime}}^x} (\infty) \right|\!\le\!\overline{D}_{x^{\prime}}^x,\, 
\left|\|\cev{\beta}^x\|\!-\! \|\gamma_{x^{\prime}}^x\| \right|\!\le\! \|\cev{\beta}_{x^{\prime}}^x\|, 
\]
and similar bounds with $x$ replaced by $z$. 
Note that $(U^x_j,U^z_j)_{j\in [n]}$ induces a correspondence from $\cev{\beta}^x$ to $\cev{\beta}^z$.
Consequently, we have
\[
\left| \dis_\alpha (\gamma_{x^{\prime}}^x, \gamma_{x^{\prime}}^z, (U^x_j\!,U^z_j)_{j\in [n]}) \!-\! \dis_\alpha ( \cev{\beta}^x\!\!, \cev{\beta}^z\!\!, (U^x_j\!,U^z_j)_{j\in [n]}) \right|\!\le \! \max (\overline{D}_{x^{\prime}}^x ,\overline{D}_{x^{\prime}}^z, \|\cev{\beta}_{x^{\prime}}^x\|,\|\cev{\beta}_{x^{\prime}}^z\|).   
\] 
Since $x,z\in (x^{\prime}, x^{\prime}+\delta^{\prime})$, we have by \eqref{eqn:lemma1} that $\max (\overline{D}_{x^{\prime}}^x ,\overline{D}_{x^{\prime}}^z, \|\cev{\beta}_{x^{\prime}}^x\|,\|\cev{\beta}_{x^{\prime}}^z\|)<\varepsilon/3$. It follows that 
$\dIA(\cev{\beta}^x, \cev{\beta}^z)\!\le\!  \dis_\alpha ( \cev{\beta}^x, \cev{\beta}^z, (U^x_j,U^z_j)_{j\in [n]}) \!\le \!\varepsilon$. 
This implies the continuity at $x$, as required.  \qed

\subsection{Simple Markov property}\label{sec:Markov}

Recall the broken spindle notation illustrated in Figure \ref{fig:min_cld_y}. We define the \emph{lower} and \emph{upper cutoff processes}, depicted in Figure \ref{fig:cutoff}, by
\begin{equation}\label{eq:cutoff}
\begin{split}
 \cutoffLB{y}{N} &:= \sum_{\text{points }(t,f_t)\text{ of }N} \left(\begin{array}{r@{\,}l}
	\cf\big\{y\in \big( X(t-), X(t)\big)\big\}&\DiracBig{\Theta(t),\check f^y_t}\\[3pt]
	+\; \cf\big\{X(t) \leq y\big\}&\DiracBig{\Theta(t),f_t}
   \end{array}\right),\\
 \cutoffGB{y}{N} &:= \sum_{\text{points }(t,f_t)\text{ of }N} \!\left(\!\!\begin{array}{r@{\,}l}
 	 \cf\big\{y\in \big( X(t-), X(t)\big)\big\}&\DiracBig{t\!-\!\Theta(t),\hat f^y_t}\\[3pt]
	 +\; \cf\big\{X(t-) \geq y\big\}&\DiracBig{t\!-\!\Theta(t),f_t}
	\end{array}\!\!\right)\!,
\end{split}
\end{equation}
where $\Theta(t) := \Leb\{u\le t\colon X(u)\le y\}$ for $t\ge 0$.

\begin{figure}
 \centering
 \input{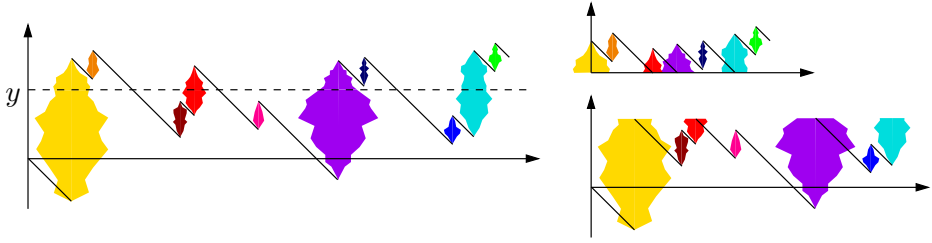}
 \caption{Decomposition of a point measure of spindles into upper and lower cutoff processes, as in \eqref{eq:cutoff}.\label{fig:cutoff}}
\end{figure}

\begin{corollary}[Markov-like property of $\umCladeA$]\label{cor:min_cld:mid_spindle}
	Let $y\!>\!0$ and $\bn\sim\umCladeA\big(\,\cdot\,\big|\,\life^+\!>\!y\big)$.\linebreak
	Then given $\cutoffLB{y}{\bn}$, the point measure $\cutoffGB{y}{\bn}$ has conditional distribution 
	$\Pr^{\alpha,0}_{\beta^y}$ with $\beta^y = \skewer(y,\bn)$. 
\end{corollary}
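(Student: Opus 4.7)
The plan is to iterate the Mid-spindle Markov Property (MSMP, Lemma \ref{lem:min_cld:mid_spindle}) across the successive up-crossings of level $y$ by $\xi(\bn)$. Let $T_1<T_2<\cdots<T_K$ enumerate these crossings; they are a.s.\ finite in number by Proposition \ref{prop:min_cld:stats}\ref{item:MCS:len} and correspond bijectively to the blocks $U_1,\ldots,U_K$ of $\beta^y=\skewer(y,\bn)$, with $\Leb(U_i)$ equal to the width at level $y$ of the crossing spindle $f_{T_i}$. All of these quantities are measurable with respect to $\cutoffLB{y}{\bn}$, as the terminal value of each lower broken component $\check f^y_{T_i}$ equals $\Leb(U_i)$.

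First I would apply MSMP at $T_1=T$: conditional on the pre-$T_1$ data $\restrict{\bn}{[0,T_1)}+\Dirac{T_1,\check f^y_{T_1}}$, the pair $(\hat f^y_{T_1},\shiftrestrict{\bn}{(T_1,\infty)})$ has the distribution of $(f',\restrict{\bN'}{[0,\tau_1]})$ with $f'\sim\BESQ_{\Leb(U_1)}(-2\alpha)$, an independent $\bN'\sim\PRMLBA$, and $\tau_1$ the first hitting time of $-(y+\zeta(f'))$ by $\xi(\bN')$. Let $\sigma_1$ denote the first hitting time of $-\zeta(f')$ by $\xi(\bN')$. By Definition \ref{def:IPPA}, $\bN_{U_1}:=\Dirac{0,f'}+\restrict{\bN'}{[0,\sigma_1]}$ has law $\Pr^{\alpha,0}_{\{(0,\Leb(U_1))\}}$; this is precisely the first sub-clade of $\cutoffGB{y}{\bn}$, contributing everything above level $y$ between the up-crossing at $T_1$ and the subsequent return to level $y$. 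By the strong Markov property of $\bN'$ at $\sigma_1$, the continuation $\shiftrestrict{\bN'}{(\sigma_1,\tau_1]}$ is independent of $\restrict{\bN'}{[0,\sigma_1]}$ and is itself a $\PRMLBA$ killed when its shifted scaffolding hits $-y$.

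Iterating this extraction across the remaining up-crossings yields a decomposition $\cutoffGB{y}{\bn}=\bN_{U_1}\concat\cdots\concat\bN_{U_K}$ into conditionally independent clades with $\bN_{U_i}\sim\Pr^{\alpha,0}_{\{(0,\Leb(U_i))\}}$. The time-compression in the definition \eqref{eq:cutoff} of $\cutoffGB{y}{\bn}$ aligns with this concatenation because $T_i-\Theta(T_i)=\sum_{j<i}\len(\bN_{U_j})$. By Definition \ref{def:IPPA}, this concatenation has distribution $\Pr^{\alpha,0}_{\beta^y}$ given $\beta^y$, and since $\beta^y$ is measurable with respect to $\cutoffLB{y}{\bn}$, this yields the claimed conditional law.

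The main obstacle is justifying the iteration rigorously: after the first application of MSMP, the residual process $\shiftrestrict{\bN'}{(\sigma_1,\tau_1]}$ is not itself an $\umCladeA$-distributed clade, so MSMP cannot be re-invoked directly. Instead, one conditions on there being a next up-crossing of the relevant level and appeals to the excursion theory for the scaffolding above its running infimum from Section \ref{sec:theta_1} to recast the first such sub-excursion as $\umCladeA$-conditioned; MSMP then applies. Carrying the nested conditional independences correctly across $K$ iterations is the delicate bookkeeping.
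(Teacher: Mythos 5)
Your overall strategy---decompose $\cutoffGB{y}{\bn}$ into sub-clades $\bN_{U_i}$ at the successive level-$y$ crossings, establish that each is conditionally $\Pr^{\alpha,0}_{\{(0,\Leb(U_i))\}}$, and obtain conditional independence from the rest given $\cutoffLB{y}{\bn}$---captures the right structure and uses the same key tool (MSMP). But the paper's own proof is considerably shorter: it applies the MSMP \emph{once} to see that the conditional law of $\cutoffGB{y}{\bn}$ given $\cutoffLB{y}{\bn}$ coincides with the conditional law of $\cutoffGB{0}{N'}$ given $\skewer(0,N')$ for $N' = \Dirac{0,f'}+\restrict{\bN'}{[0,\tau]}$, and then cites the Markov-like property of such $N'$ from \cite[Proposition 5.9]{Paper1-1}, which already packages the iterated decomposition you are trying to build by hand. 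Your route, if completed, would effectively re-prove a version of that cited proposition.

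As written, though, there is a concrete error at the very first step. You assert that the up-crossings $T_1<\cdots<T_K$ of level $y$ are a.s.\ finite in number, citing Proposition~\ref{prop:min_cld:stats}\ref{item:MCS:len}. Finiteness of $\len(\bn)$ does \emph{not} bound the number of crossings: the scaffolding $\xi(\bn)$ is a spectrally positive stable process, which leaves level $y$ downward continuously but re-crosses it by jumps of arbitrarily small overshoot, so a finite-length clade conditioned on $\zeta^+>y$ has infinitely many excursions above $y$. This is confirmed directly by Lemma~\ref{lem:min_cld:skewer}\ref{item:MCSk:entrance}, which gives $\skewer(y,\bn)\stackrel{d}{=}B^y\bar\beta$ with $\bar\beta\sim\PDIP(\alpha,0)$; this interval partition a.s.\ has infinitely many blocks, and each block corresponds to exactly one crossing spindle, so $K=\infty$ a.s. Your plan of ``carrying the nested conditional independences correctly across $K$ iterations'' is therefore an infinite procedure, and the finite-induction bookkeeping you envision does not terminate; one needs a limiting argument (e.g.\ truncating to the finitely many crossings producing blocks of mass $>\varepsilon$ and sending $\varepsilon\downarrow 0$), or a result such as \cite[Proposition 5.9]{Paper1-1} that handles the countable concatenation directly. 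The secondary obstacle you flag---that the residual after the first MSMP is a killed $\PRM(\Leb\otimes\mBxcA)$, not an $\umCladeA$-clade---is real but resolvable via the excursion decomposition you sketch (together with the strong Markov property to recombine the post-excursion residual); it is the implicit $K<\infty$ that would make the argument fail.
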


This is akin to a Markov property, with $\cutoffLB{y}{\bn}$, $\beta^y$, and 
$\cutoffGB{y}{\bn}$ playing the respective parts of past, present, and future. A closely related statement is proved in \cite[Corollary 4.8]{FVAT}
for the construction of measure-valued processes. While the proof is identical, we repeat it here because it is short and the upper cutoff process of \eqref{eq:cutoff} captures important additional order structure compared to the upper point measures of \cite{FVAT}. 

\begin{proof}
 Using the MSMP Lemma~\ref{lem:min_cld:mid_spindle} and the notation in its statement, the conditional distribution that we wish to characterize is the same as the distribution of $\cutoffGB{0}{N'}$ given $\beta'=\skewer(0,N')$, where $N'= \Dirac{0,f'}+ \restrict{\bN'}{[0,\tau]}$. 
 By the Markov-like property of $N'$, noted in \cite[Proposition~5.9]{Paper1-1}, the latter conditional law is the same as $\Pr^{\alpha,0}_{\beta'}$. 
 This proves the corollary.
\end{proof}

To describe the Markov property we require additional notation. Recall \eqref{eq:cutoff}. 
For any point measure $\cev{F}$ on $[0,\infty)\times \mathcal{N}_{\rm fin}^{\rm sp}$ we define
\begin{equation}\label{eq:cevF:N0}
 N^{\ge y}_0 (\cev{F}) := \Concat_{\text{points }(s,N_s)\text{ of }\cev{F}\colon s\in [y,0]}  \cutoffGB{y-s}{N_s},
\end{equation}
provided that this concatenation is well-defined, with the same convention as in \eqref{eq:fskewer} that we write $[y,0]$ as the time interval over which we concatenate, despite $y$ being positive, to indicate that we concatenate in reverse order, setting cutoff point measures with $s$ high to the left of those with $s$ low. Extending the notion of cutoff processes to point measures of clades, we set 
\begin{equation}\label{eq:cevF:cutoffL}
 \cutoffLB{y}{\cev{F}}:= \sum_{\text{points }(s,N_s)\text{ of }\cev{F}\colon s\in [y,0]} \delta\big(s, \cutoffLB{y-s}{N_s}\big).
\end{equation}

The following result is analogous to \cite[Lemma 5.7]{FVAT}. 

\begin{lemma}\label{lem:cevF:cutoff}
 Fix $y>0$ and let $\cev{\bF}_\theta$ be a $\PRM[\frac{\theta}{\alpha}\Leb\otimes\umCladeA]$ on $(0,\infty)\times \HfinA$. Then given $\cutoffLB{y}{\cev{\bF}_\theta}$, we have the following conditional distribution: 
 \begin{equation}\label{eq:lem:cevF:cutoff}
  \left( \Restrict{\cev{\bF}_\theta}{(y,\infty)\times \mathcal{N}_{\rm fin}^{\rm sp}}
  , N^{\ge y}_0\big(\cev{\bF}_\theta\big) \right) 
  \sim \PRM\Big(\frac{\theta}{\alpha}\Restrict{\Leb}{(y,\infty)}\otimes\umCladeA\Big)
  \!\otimes \Pr^{\alpha,0}_{\beta^y},
 \end{equation}
 where $\beta^y= \fskewer(y, \cev{\bF}_\theta)$. 
\end{lemma}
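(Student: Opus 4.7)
The plan is to decompose $\cev{\bF}_\theta$ by the entry-time coordinate and then, atom-by-atom, apply the Markov-like property for $\umCladeA$ from Corollary \ref{cor:min_cld:mid_spindle} combined with the branching property of $\Pr^{\alpha,0}_{\,\cdot\,}$ built into Definition \ref{def:IPPA}.

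First I would split $\cev{\bF}_\theta = \cev{\bF}^{\le y} + \cev{\bF}^{>y}$, where $\cev{\bF}^{\le y} := \Restrict{\cev{\bF}_\theta}{(0,y]\times\HfinA}$ and $\cev{\bF}^{>y} := \Restrict{\cev{\bF}_\theta}{(y,\infty)\times\HfinA}$. By standard restriction/thinning properties of Poisson random measures, these two are independent and $\cev{\bF}^{>y}\sim \PRM\big(\frac{\theta}{\alpha}\Restrict{\Leb}{(y,\infty)}\otimes\umCladeA\big)$, giving the first marginal in \eqref{eq:lem:cevF:cutoff}. Note also that $\cutoffLB{y}{\cev{\bF}_\theta}$ and $N^{\ge y}_0(\cev{\bF}_\theta)$ both depend only on $\cev{\bF}^{\le y}$: atoms of $\cev{\bF}^{>y}$ have entry times $s>y$ and so contribute neither to the lower cutoff \eqref{eq:cevF:cutoffL} nor to the upper-cutoff concatenation \eqref{eq:cevF:N0}. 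It therefore suffices to identify the conditional law of $N^{\ge y}_0(\cev{\bF}^{\le y})$ given $\cutoffLB{y}{\cev{\bF}^{\le y}}$.

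Next I would enumerate the atoms $(s_i,N_i)$ of $\cev{\bF}^{\le y}$ and, for each $i$, break the clade $N_i$ about its first up-crossing of level $y-s_i$ (if any), producing the pair $\bigl(\cutoffLB{y-s_i}{N_i},\cutoffGB{y-s_i}{N_i}\bigr)$ with top block mass $m^{y-s_i}(N_i)$. Conditional on the entry times $(s_i)$ and on the events $\{\zeta^+(N_i)>y-s_i\}$, the clades $N_i$ are (by the marking/disintegration property of $\cev{\bF}^{\le y}$) conditionally independent with $N_i\sim\umCladeA(\,\cdot\mid \zeta^+>y-s_i)$; non-crossing clades contribute $\varnothing$ to the skewer and are fully determined by the lower cutoff. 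Applying Corollary \ref{cor:min_cld:mid_spindle} atom-wise yields that, given $\cutoffLB{y}{\cev{\bF}^{\le y}}$, the family $\bigl(\cutoffGB{y-s_i}{N_i}\bigr)_i$ is conditionally independent with $\cutoffGB{y-s_i}{N_i}\sim \Pr^{\alpha,0}_{\skewer(y-s_i,N_i)}$; here the skewers $\skewer(y-s_i,N_i)$ are $\cutoffLB{y}{\cev{\bF}^{\le y}}$-measurable because each can be read off the top spindle masses in $\cutoffLB{y-s_i}{N_i}$ (cf.\ Figure \ref{fig:min_cld_y}). Ordering atoms as in \eqref{eq:fskewer}--\eqref{eq:cevF:N0} (high $s$ to the left of low $s$) and invoking the branching/concatenation structure of $\Pr^{\alpha,0}$ from Definition \ref{def:IPPA}, the concatenation $\ConcatIL_i \cutoffGB{y-s_i}{N_i}$ has conditional law $\Pr^{\alpha,0}_{\,\ConcatIL_i\skewer(y-s_i,N_i)}=\Pr^{\alpha,0}_{\beta^y}$, which is the second marginal in \eqref{eq:lem:cevF:cutoff}. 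Independence of the two marginals given the conditioning follows because $\cev{\bF}^{>y}$ was already independent of $\cev{\bF}^{\le y}$.

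The main technical obstacle will be the infinite-concatenation step: by Proposition \ref{prop:min_cld:stats}\ref{item:MCS:max}, the intensity of clades crossing level $y$, $\int_0^y\frac{\theta}{\alpha}\cdot\frac{\alpha}{y-s}\,ds$, is infinite, so infinitely many atoms contribute to $N^{\ge y}_0(\cev{\bF}^{\le y})$, and the branching property in Definition \ref{def:IPPA} is stated for a fixed $\beta\in\IPH$. I would handle this by truncation: fix $\varepsilon>0$, restrict to clades with $\zeta^+>y-s+\varepsilon$ (finitely many, by Proposition \ref{prop:min_cld:stats}\ref{item:MCS:max} and the PRM property on a finite strip), verify the conditional law there using $\Pr^{\alpha,0}$ on finite concatenations, and then pass to the limit $\varepsilon\downarrow 0$ using the almost sure convergence established in the proofs of Proposition \ref{prop:cts_imm} and Lemma \ref{lem:cts_imm-step3}, together with the kernel property in Proposition \ref{prop:0:len}\ref{item:IPPA:kernel} to ensure that $\beta\mapsto\Pr^{\alpha,0}_\beta$ is continuous enough (weakly continuous on $\IPA$) for the limit to identify $\Pr^{\alpha,0}_{\beta^y}$. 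This truncation argument is the direct analogue of that used in the proof of \cite[Lemma 5.7]{FVAT}.
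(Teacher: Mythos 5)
Your approach is correct and follows essentially the same route as the paper: decompose $\cev\bF_\theta$ by entry time, condition on lower cutoffs, apply the Markov-like property of $\umCladeA$ (Corollary \ref{cor:min_cld:mid_spindle}) clade-by-clade, and concatenate via Definition \ref{def:IPPA}. The one place you depart is the proposed $\varepsilon$-truncation of the infinite concatenation, and there the stated obstacle is slightly mislocated: Definition \ref{def:IPPA} already treats arbitrary $\beta\in\HIPspace$, which generically has infinitely many blocks, and Proposition \ref{prop:0:len}\ref{item:IPPA:finite} guarantees that the corresponding infinite concatenation has a.s.\ finite length. The paper therefore closes the argument in one step: given $\cutoffLB{y}{\cev\bF_\theta}$, the upper-cutoff pieces $\cutoffGB{y-S_i}{N_i}$ are conditionally independent with conditional laws $\Pr^{\alpha,0}_{\beta_i^y}$, each of which is itself a conditionally independent concatenation over blocks; regrouping all these blocks over $\beta^y$ reproduces the defining construction of $\Pr^{\alpha,0}_{\beta^y}$ verbatim, with no limit to take. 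Your truncation-and-limit route would additionally require a weak-continuity property of $\beta\mapsto\Pr^{\alpha,0}_\beta$ stronger than the measurability supplied by Proposition \ref{prop:0:len}\ref{item:IPPA:kernel}, so it would actually add work rather than save it.
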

\begin{proof} The proof of \cite[Lemma 5.7]{FVAT} is easily adapted, with no adjustments needed for the conditional independence and first marginal of
  \eqref{eq:lem:cevF:cutoff}. For the second marginal, 
  we recall from the proof of Proposition \ref{prop:IPPAT:entrance} the notation $(S_i,N_i)$, denoting the $i^{\text{th}}$ point in $\restrict{\cev{\bF}_\theta}{[0,y)\times\HA}$ whose scaffolding $X_i=\xi(N_i)$ exceeds level $y-S_i$. In this notation,
 $N^{\ge y}_0\big(\cev{\bF}_\theta\big) = \ConcatIL_{i=\infty}^1 \cutoffGB{y\! -\! S_i}{N_i}$. 
  As noted in that proof, the $(N_i,\,i\ge1)$ are conditionally independent given the $(S_i,\,i\ge1)$, with respective conditional laws  $N_i \sim \umCladeA(\,\cdot\,\mid\,\zeta^+ \!>\! y\!-\!S_i)$, $i\ge1$. 
  Corollary \ref{cor:min_cld:mid_spindle} then implies that, given $\cutoffLB{y}{\cev{\bF}_\theta}$, the $i^{\rm th}$ term in this concatenation has conditional law 
  $\Pr^{\alpha,0}_{\beta_i^y}$, where $\beta_i^y=\skewer(y-S_i,N_i)$. Finally, $N_0^{\ge y}(\cev{\bF})$   
  concatenates upper cutoff processes in the same order in which $\fskewer(y,\cev{\bF})$ concatenates the contributions $\beta_i^y$ to the level-$y$ interval partition, $\beta^y$. It follows straight from Definition \ref{def:IPPA}, applied to $\beta=\beta^y$, that the conditional distribution of 
  $N_0^{\ge y}(\cev{\mathbf{F}}_\theta)$ given $\cutoffLB{y}{\cev{\bF}_\theta}$ is indeed $\mathbf{P}_{\beta^y}^{\alpha,0}$.
\end{proof} 

The following version of the Markov property was already obtained in \cite[Proposition 6.13]{Paper1-1} in the case $\theta=0$ and in \cite[Proposition 3.11]{Paper1-2} in the case $\theta=\alpha$. We can now establish the general case.

\begin{proposition}[Simple Markov property]\label{prop:MP}
 For $\mu$ a probability distribution on $\IPH$, let $(\beta^y,y\ge 0)\sim \BPr^{\alpha,\theta}_{\mu}$. Then for any $y>0$, given $(\beta^r,r\le y)$, the process $(\beta^{z+y},z\ge 0)$ has conditional distribution $\BPr^{\alpha,\theta}_{\beta^y}$. 
\end{proposition}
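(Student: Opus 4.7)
The plan is to split both ``sources'' of the process at level $y$ and recombine the pieces above $y$ into an object that has the structure of Definition \ref{def:IPPAT} starting from the state $\beta^y$. Write $\bN_\mu$ and $\cev{\bF}_\theta$ for the ingredients used to build $(\beta^z,z\ge 0)$ as in \eqref{eq:SSIPEAT:def}. For each $y>0$, decompose
\[
  \bN_\mu = \cutoffLB{y}{\bN_\mu}\concat\cutoffGB{y}{\bN_\mu},\qquad \cev{\bF}_\theta = \cutoffLB{y}{\cev{\bF}_\theta}\concat\Restrict{\cev{\bF}_\theta}{(y,\infty)\times\HfinA}\concat\big(\text{upper cutoffs of level-}y\text{-crossing clades}\big),
\]
where the rightmost object is the concatenated point measure $N_0^{\ge y}(\cev{\bF}_\theta)$ of \eqref{eq:cevF:N0}. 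On one hand, by the Markov-like property of $\Pr^{\alpha,0}_\beta$ (\cite[Proposition 5.9]{Paper1-1}, applied blockwise and concatenated as in Definition \ref{def:IPPA}), given $\cutoffLB{y}{\bN_\mu}$ the process $\cutoffGB{y}{\bN_\mu}$ has conditional law $\Pr^{\alpha,0}_{\gamma^y}$, where $\gamma^y=\skewer(y,\bN_\mu)$. On the other hand, Lemma \ref{lem:cevF:cutoff} gives that given $\cutoffLB{y}{\cev{\bF}_\theta}$ the pair $\big(\Restrict{\cev{\bF}_\theta}{(y,\infty)\times\HfinA},\,N_0^{\ge y}(\cev{\bF}_\theta)\big)$ is independent with the announced product law $\PRM\big(\tfrac{\theta}{\alpha}\Restrict{\Leb}{(y,\infty)}\otimes\umCladeA\big)\otimes\Pr^{\alpha,0}_{\cev{\beta}^y}$.

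Next I would concatenate the two ``past-free'' pieces at level $y$: set $\bN^\prime_{\beta^y}:=N_0^{\ge y}(\cev{\bF}_\theta)\concat\cutoffGB{y}{\bN_\mu}$. By combining the two conditional laws above with the independence of $\bN_\mu$ and $\cev{\bF}_\theta$ and the definition of $\beta^y=\cev{\beta}^y\concat\gamma^y$, it follows that, conditional on the pair of lower cutoffs $\big(\cutoffLB{y}{\bN_\mu},\,\cutoffLB{y}{\cev{\bF}_\theta}\big)$, $\bN^\prime_{\beta^y}$ has conditional law $\Pr^{\alpha,0}_{\beta^y}$, and is independent of the shifted immigration $\cev{\bF}_\theta^\prime:=\shiftrestrict{\cev{\bF}_\theta}{(y,\infty)\times\HfinA}$, which is itself a $\PRM\big(\tfrac{\theta}{\alpha}\Leb\otimes\umCladeA\big)$ by stationarity of $\Leb$ under translation. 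Applying Definition \ref{def:IPPAT} to the pair $(\bN^\prime_{\beta^y},\cev{\bF}_\theta^\prime)$, the process $(\beta^{y+z},z\ge0)$ built from them equals $\cev{\beta}^{\prime\,z}\concat\skewer(z,\bN^\prime_{\beta^y})$, so conditional on the past cutoffs it has law $\BPr^{\alpha,\theta}_{\beta^y}$.

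Finally I need to replace the conditioning $\sigma$-algebra generated by the lower cutoffs by the (coarser) one generated by $(\beta^r,r\le y)$. Since $\beta^r$ is a measurable function of $(\cutoffLB{r}{\bN_\mu},\,\cutoffLB{r}{\cev{\bF}_\theta})$ for each $r\le y$, the history $\sigma(\beta^r,r\le y)$ is contained in the $\sigma$-algebra generated by $\big(\cutoffLB{y}{\bN_\mu},\cutoffLB{y}{\cev{\bF}_\theta}\big)$. By the tower property, the conditional law of $(\beta^{y+z},z\ge0)$ given $\sigma(\beta^r,r\le y)$ depends only on $\beta^y$ and equals $\BPr^{\alpha,\theta}_{\beta^y}$, as required.

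The main obstacle will be the bookkeeping of the concatenation orders: one must check that assembling $N_0^{\ge y}(\cev{\bF}_\theta)\concat\cutoffGB{y}{\bN_\mu}$ block by block really produces a point measure distributed as $\Pr^{\alpha,0}_{\beta^y}$ with the \emph{correct} left-to-right order of descendants of the blocks of $\beta^y=\cev{\beta}^y\concat\gamma^y$. This is handled by observing that the concatenation order in \eqref{eq:cevF:N0} mirrors the one used in \eqref{eq:fskewer}, so Corollary \ref{cor:min_cld:mid_spindle} applied per-clade and per-block gives the matching order.
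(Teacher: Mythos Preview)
Your proof follows essentially the same approach as the paper's: split both $\bN_\mu$ and $\cev{\bF}_\theta$ at level $y$ via cutoffs, invoke Lemma~\ref{lem:cevF:cutoff} for the immigration side and the Markov-like property from \cite{Paper1-1} for the $\bN_\mu$ side, concatenate the upper pieces into a $\Pr^{\alpha,0}_{\beta^y}$-distributed point measure, and conclude via Definition~\ref{def:IPPAT}. The paper cites \cite[Proposition~6.6]{Paper1-1} rather than \cite[Proposition~5.9]{Paper1-1} for the $\bN_\mu$ step (the former already handles the concatenated $\bN_\beta$ directly), and it leaves the tower-property reduction implicit, but otherwise the argument is the same.
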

\begin{proof}
 We follow the notation of equation \eqref{eq:SSIPEAT:def}. We extend the notation from Definition \ref{def:stable_exc} for the shifted restriction of a point measure of spindles, $\shiftrestrict{N}{[a,b]\times\Exc}$, to apply to point measures of clades. 
Using \cite[Lemma 5.4(i)]{Paper1-1}, which asserts that 
 $$\skewer(z,N) = \skewer(z\!-\!y,\cutoffG{y}{N}),$$
 we get, for $z\ge0$, 
 \[
 \begin{split}
  \beta^{z+y} &= \fskewer \Big(z, \ShiftRestrict{\cev{\bF}_\theta}{(y,\infty)\times \HA}\Big) \concat \skewer \Big(z, N^{\ge y}_0(\cev{\bF}_\theta) \concat \cutoffG{y}{\bN_{\mu}}\Big).
 \end{split}
 \]
 The process $(\beta^r,r\le y)$ is a function of $\cutoffLB{y}{\cev{\bF}_\theta}$ and $\cutoffL{y}{\bN_\mu}$. Thus, it suffices to prove that, given these two cutoff point measures, $(\beta^{z+y},\,z\ge0)$ has conditional law $\BPr^{\alpha,\theta}_{\beta^y}$.
 
 Lemma \ref{lem:cevF:cutoff} asserts that under this conditioning, $\ShiftRestrict{\cev{\bF}}{(y,\infty)\times \HA}$ and $N^{\ge y}_0(\cev{\bF}_\theta)$ are conditionally independent, with conditional laws $\PRM\big(\frac\theta\alpha\Leb\otimes\umCladeA\big)$ and $\Pr^{\alpha,0}_{\cev\beta^y}$\linebreak respectively. Analogously, \cite[Proposition 6.6]{Paper1-1} implies that $\cutoffG{y}{\bN_\mu}$ has conditional law $\Pr^{\alpha,0}_{\gamma^y}$, and by construction, this is conditionally independent of the other two point measures. Finally, it follows from Definition \ref{def:IPPA} of these laws that
 $$N^{\ge y}_0(\cev{\bF}_\theta) \concat \cutoffG{y}{\bN_{\mu}}\text{\ \ has conditional law\ \ }\Pr^{\alpha,0}_{\cev\beta^y\concat\gamma^y} = \Pr^{\alpha,0}_{\beta^y}.$$
 We conclude by Definition \ref{def:IPPAT} of $\BPr^{\alpha,\theta}_{\beta^y}$.
\end{proof}

\subsection{Semigroup, Hunt property, and 1-self-similarity}\label{sec:Hunt}

Having prepared the required ingredients in the preceding sections, we can now prove parts of Proposition \ref{prop:sp}. The proofs in this section largely mirror the corresponding proofs in \cite[Section 5]{FVAT}, with the differences being predominantly carried in the aforementioned preparation.

\begin{proposition}[Transition semigroup under $\BPr^{\alpha,\theta}_{\mu}$]\label{prop:IP:transn}
 The kernels $\kappa^{\alpha,\theta}_y$ of Definition \ref{def:kernel:sp} comprise the transition semigroup of a process with law $\BPr^{\alpha,\theta}_{\mu}$ for any law $\mu$ on $\IPH$.
\end{proposition}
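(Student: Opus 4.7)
The plan is to compute the one-dimensional marginal law of $\beta^y$ under $\BPr^{\alpha,\theta}_\beta$ for deterministic $\beta\in\IPH$, identify it with $\kappa^{\alpha,\theta}_y(\beta,\cdot)$, then integrate against $\mu$ and invoke the simple Markov property of Proposition \ref{prop:MP} to obtain both the Chapman--Kolmogorov relation and the identification of $(\kappa^{\alpha,\theta}_y)$ as the transition semigroup of $\BPr^{\alpha,\theta}_\mu$.

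First I would fix $\beta\in\IPH$ and $y>0$ and write $\beta^y=\cev\beta^y\concat\gamma^y$ as in \eqref{eq:SSIPEAT:def}, with $\cev\beta^y$ and $\gamma^y$ independent by Definition \ref{def:IPPAT}. Proposition \ref{prop:IPPAT:entrance} immediately gives $\cev\beta^y\stackrel{d}{=}G_0^y\bar\beta_0$ with $G_0^y\sim\GammaDist[\theta,1/2y]$ independent of $\bar\beta_0\sim\PDIP(\alpha,\theta)$, which matches the immigration factor in Definition \ref{def:kernel:sp} and is placed to the left in both constructions.

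Next I would analyse the factor $\gamma^y=\skewer(y,\bN_\beta)$. By Definition \ref{def:IPPA}, $\bN_\beta=\ConcatIL_{U\in\beta}\bN_U$ with independent one-block clades $\bN_U$; since the skewer map commutes with left-to-right concatenation of point measures of spindles (each level of the scaffolding slices each clade independently), we obtain $\gamma^y=\Concat_{U\in\beta}\skewer(y,\bN_U)$ with the summands independent. For a single block $U$ of mass $b=\Leb(U)$, the law of $\skewer(y,\bN_U)$ was computed in \cite{Paper1-2}: with probability $e^{-b/2y}$ the clade dies before level $y$ and contributes $\varnothing$, and otherwise the contribution consists of a leftmost block of size $L_{b,1/2y}^{(\alpha)}$ with Laplace transform \eqref{LMBintro}, followed by independent $G\bar\beta$ with $G\sim\GammaDist[\alpha,1/2y]$ and $\bar\beta\sim\PDIP[\alpha,\alpha]$. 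This is precisely the law $\mu^{(\alpha)}_{b,1/2y}$ defined in \eqref{blockdesc}, so $\gamma^y\stackrel{d}{=}\Concat_{U\in\beta}\beta_U^y$ with independent $\beta_U^y\sim\mu^{(\alpha)}_{\Leb(U),1/2y}$. Combining the two independent factors in the correct left-to-right order yields $\beta^y\sim\kappa^{\alpha,\theta}_y(\beta,\cdot)$, as in Definition \ref{def:kernel:sp}.

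Finally, integrating over the initial law identifies the one-dimensional marginal of $\BPr^{\alpha,\theta}_\mu$ at time $y$ as $\int\kappa^{\alpha,\theta}_y(\beta,\cdot)\mu(d\beta)$. The simple Markov property of Proposition \ref{prop:MP} then gives, for $s,t\ge 0$ and any bounded measurable $\phi\colon\IPH\to\BR$,
\[
\BPr^{\alpha,\theta}_\mu\!\left[\phi(\beta^{s+t})\,\big|\,(\beta^r,r\le s)\right]=\BPr^{\alpha,\theta}_{\beta^s}\!\left[\phi(\beta^t)\right]=\int\kappa^{\alpha,\theta}_t(\beta^s,d\eta)\,\phi(\eta),
\]
which is the defining property of $(\kappa^{\alpha,\theta}_y)$ being a transition semigroup for the process, and yields the Chapman--Kolmogorov identity $\kappa^{\alpha,\theta}_{s+t}=\kappa^{\alpha,\theta}_s\kappa^{\alpha,\theta}_t$ by taking $\phi=\mathbf{1}_A$. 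The main obstacle, namely matching the single-block skewer distribution with $\mu^{(\alpha)}_{b,1/2y}$, is already resolved by the calculation carried out in \cite{Paper1-2}; what remains here is the careful bookkeeping of independence and concatenation order, plus the appeal to Proposition \ref{prop:MP}.
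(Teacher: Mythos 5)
Your proposal is correct and follows essentially the same route as the paper's own proof: reduce to the one-dimensional marginal via Proposition~\ref{prop:MP}, handle the immigration factor $\cev\beta^y$ via Proposition~\ref{prop:IPPAT:entrance}, and handle the clade factors $\gamma^y=\Concat_{U\in\beta}\skewer(y,\bN_U)$ by appealing to the $\theta=0$ transition kernel established in the predecessor papers. You simply unfold the content of that earlier result (the death probability $e^{-b/2y}$, the leftmost block $L^{(\alpha)}_{b,1/2y}$, and the $G\bar\beta$ tail) where the paper contents itself with a citation.
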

\begin{proof}
 Fix $\gamma\in\IPH$. 
 As we have already shown in Proposition \ref{prop:MP} that $(\beta^y,\,y\ge0)\sim \BPr^{\alpha,\theta}_{\gamma}$ possesses the Markov property, it suffices to show that $\beta^y \sim \kappa^{\alpha,\theta}_y(\gamma,\,\cdot\,)$ for any $y$. 
 Recall from Definition \ref{def:kernel:sp} that a $\kappa^{\alpha,\theta}_y(\gamma,\,\cdot\,)$-distributed interval partition comprises a \PDIPAT, $\bar\beta$, scaled by a Gamma-distributed factor $G^y$, concatenated with partitions $\beta_U^y$ corresponding to the blocks $U\in\gamma$. We therefore complete the proof with reference to \cite[Corollary 3.7]{Paper1-1} and Proposition~\ref{prop:IPPAT:entrance}. The former states the $\theta=0$ case of the present result, accounting for all of the $(\beta_U^y,\,U\in\gamma)$; the latter describes the blocks arising from immigration with rate $\theta>0$, accounting for $G^y \bar\beta$, as required.
\end{proof}

\begin{proposition}[Continuity in the initial condition]\label{prop:IP:initial}
 For a convergent sequence $\beta_n \to \beta$ in $(\IPH,\dHs)$, we get weak convergence 
 $\BPr^{\alpha, \theta}_{\beta_n} \to \BPr^{\alpha, \theta}_{\beta}$ 
 in the sense of finite-dimensional distributions on $(\IPA,\dIA)$ at positive times.
\end{proposition}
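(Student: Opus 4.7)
My plan is first to reduce to weak convergence at a single positive time $y>0$; by the Markov property of Proposition \ref{prop:MP}, a straightforward induction then upgrades this to finite-dimensional distributions at any finite collection of positive times. Fix $y>0$ and set $r=1/(2y)$. By Proposition \ref{prop:IP:transn} and Definition \ref{def:kernel:sp}, the kernel $\kappa^{\alpha,\theta}_y(\beta,\,\cdot\,)$ is the law of $G^y\bar\beta_0\concat\Concat_{U\in\beta}\beta_U^y$ with independent factors, and only the second factor depends on $\beta$. Since concatenation is $\dIA$-subadditive by Lemma \ref{lem:IP:concat_bound}, the desired weak continuity for general $\theta\ge 0$ reduces to the weak continuity of the pure-branching kernel, i.e.\ $\kappa^{\alpha,0}_y(\beta_n,\,\cdot\,)\to\kappa^{\alpha,0}_y(\beta,\,\cdot\,)$ on $(\IPA,\dIA)$.

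\textbf{Coupling.} Choose correspondences $C_n=(U_j^n,V_j^n)_{j=1}^{k_n}$ from $\beta_n$ to $\beta$ with $\dis_H(\beta_n,\beta,C_n)\le\e_n\to 0$, and write $b_j^n=\Leb(U_j^n)$, $c_j^n=\Leb(V_j^n)$. I would construct a single coupling of the block-wise descendants $\beta_{U_j^n}^y\sim\mu^{(\alpha)}_{b_j^n,r}$ and $\beta_{V_j^n}^y\sim\mu^{(\alpha)}_{c_j^n,r}$ as follows: for each matched pair, use a common uniform variable to decide activity (so that both blocks are $\varnothing$ or both are non-empty with maximum probability); on the both-active event, use a common $\GammaDist[\alpha,r]$ variable $G_j$ and a common $\PDIP[\alpha,\alpha]$ partition $\bar\beta_j$, with the leftmost-block variables $L^{(\alpha)}_{b,r}$ coupled via quantile transform from a common uniform. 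The Laplace transform \eqref{LMBintro} shows $L^{(\alpha)}_{b,r}$ depends continuously on $b>0$, so the quantile coupling makes $|L^{(\alpha)}_{b_j^n,r}-L^{(\alpha)}_{c_j^n,r}|$ small in probability whenever $|b_j^n-c_j^n|$ is small. Descendants from unmatched blocks are sampled independently.

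\textbf{Error control and main obstacle.} Three error sources must be bounded. The expected number of active unmatched blocks is at most $\sum_{U\text{ unmatched}}(1-e^{-\Leb(U)r})\le r\sum_{U\text{ unmatched}}\Leb(U)\le r\e_n\to 0$ (using $1-e^{-x}\le x$), so with probability tending to $1$, no unmatched block contributes a non-empty descendant. The expected number of activity disagreements across matched pairs is bounded by $\sum_j|e^{-b_j^nr}-e^{-c_j^nr}|\le r\sum_j|b_j^n-c_j^n|\le r\e_n\to 0$. On the complementary clean event, the two concatenations have identical block structure apart from the leftmost blocks of each active pair, and Lemma \ref{lem:IP:concat_bound} bounds $\dIA\big(\Concat_j\beta_{U_j^n}^y,\Concat_j\beta_{V_j^n}^y\big)$ by the sum of individual pair distances, which in turn are controlled by the small $L^{(\alpha)}$ discrepancies. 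The principal technical hurdle will be the diversity component of $\dIA$: since diversity is a fine-scale functional of block sizes, pointwise mass comparisons are not sufficient. The coupling circumvents this by making matched pairs share identical $G_j\bar\beta_j$ components, so their diversity contributions agree exactly and only the single-block $L^{(\alpha)}$ discrepancies remain; verifying carefully that these, together with the rare contributions of unmatched and disagreeing blocks, produce distortion in $\IPLT^{(\alpha)}$ that vanishes in probability is where the bulk of the technical work will lie.
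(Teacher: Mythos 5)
Your reduction from general $\theta$ to $\theta=0$ via the independent decomposition $G^y\bar\beta_0\concat\Concat_{U\in\beta}\beta_U^y$ (and concatenation subadditivity, Lemma~\ref{lem:IP:concat_bound}) is the same idea the paper uses, but the paper stops there: it simply cites the $\theta=0$ case from the prior work \cite[Corollary 6.16 and Proposition 6.20]{Paper1-1}, which already supplies a coupling on one probability space in which the $\SSIPEA$ processes converge in probability, and then appends an independent immigration process common to all $n$. You instead attempt to re-derive the $\theta=0$ continuity by directly coupling the transition kernels $\mu^{(\alpha)}_{b,r}$. That is a genuinely different route, but as written it is a sketch with two real gaps, neither of which you have closed.

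First, the quantile-coupling estimate is asserted, not proved. What you need is that $\EV\bigl[\sum_j\cf\{\text{both active}\}\,\lvert L^{(\alpha)}_{b_j^n,r}-L^{(\alpha)}_{c_j^n,r}\rvert\bigr]\to 0$; saying ``the Laplace transform \eqref{LMBintro} shows $L^{(\alpha)}_{b,r}$ depends continuously on $b$'' only gives pointwise weak convergence for each fixed pair, not a summable bound over the random, $n$-dependent collection of active matched pairs. Moreover, the part you call the main obstacle (diversity) is actually the easy part of your own coupling: on the clean event, each matched pair shares an identical $G_j\bar\beta_j$ and differs only by a single leftmost block, so the $\IPLT^{(\alpha)}$-functions of the two blockwise descendants agree exactly under the natural correspondence (a single block contributes zero to diversity). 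The real unfinished work is the mass term, i.e.\ the $L$-sum above. Second, the passage from one positive time to finite-dimensional distributions via ``a straightforward induction by the Markov property'' is not straightforward as stated: to close that induction you need $\gamma\mapsto\int g\,d\kappa^{\alpha,\theta}_{z}(\gamma,\cdot)$ to be continuous on $(\IPA,\dIA)$ for bounded continuous $g$, which is stronger than weak convergence along the one fixed sequence $\beta_n\to\beta$ that your base step provides. The paper avoids this entirely because the cited result already gives the whole process converging in probability on a common probability space, from which fdd convergence is immediate.
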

\begin{proof}
 In the case $\theta= 0$, this has been proved by 
 \cite[Corollary 6.16]{Paper1-1} in the space $(\IPA, \dIA)$. 
 The same conclusion holds for $(\IPH, \dHs)$ as well; 
 see \cite[Proposition 6.20]{Paper1-1} and the remarks below it. 
 By these results, there exists a probability space supporting a sequence of processes 
 $(\beta^y_n,\,y\ge0)$, $n\ge0$, that converges in probability, with respective distributions $\BPr^{\alpha,0}_{\beta_n}$. Now, on (a suitable enlargement of) the same probability space, consider and independent process $\big(\cev\beta^y,\,y\ge0\big) \sim \BPr^{\alpha,\theta}_\varnothing$. Then for each $n$, by Definition \ref{def:IPPAT}, $\big(\cev\beta^y\concat\beta^y_n,\,y\ge0\big) \sim \BPr^{\alpha,\theta}_{\beta_n}$, and these processes converge in probability in the sense of finite-dimensional distributions.
\end{proof}


\begin{proposition}[Strong Markov property]\label{prop:IP:SMP}
 Let $(\beta^y,y\ge 0)\sim \BPr^{\alpha,\theta}_{\mu}$ for some initial law $\mu$ on $\IPH$. 
 Denote by $(\cFI^y, y\ge 0)$ the right-continuous natural filtration of this process and consider an a.s.\@ finite $(\cFI^y ,y\ge 0)$-stopping time, $Y$. 
Then given $\cFI^Y$, the process $(\beta^{Y+y}, y\ge 0)$ has conditional distribution 
$\BPr^{\alpha,\theta}_{\beta^Y}$.
\end{proposition}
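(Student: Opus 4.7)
The plan is the standard discretization of the stopping time, applying the simple Markov property of Proposition \ref{prop:MP} along the discretization and then passing to the limit using path-continuity together with the continuity in the initial condition of Proposition \ref{prop:IP:initial}. For $n\ge 1$, set $Y_n := 2^{-n}\lceil 2^n Y\rceil$, so that each $Y_n$ is an a.s.\ finite $(\cFI^y)$-stopping time taking values in the countable set $2^{-n}\BN$, and $Y_n\downto Y$.

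Fix $k\ge 1$, $0<y_1<\cdots<y_k$, and a bounded $\dIA$-continuous $F\colon \IPA^k\to\BR$. For any bounded $\cFI^Y$-measurable $G$, note that $G$ is also $\cFI^{Y_n}$-measurable (since $\cFI^Y\subseteq\cFI^{Y_n}$ by right-continuity and $Y\le Y_n$). Decomposing $\{Y_n = m 2^{-n}\}$, $m\in\BN$, we apply Proposition \ref{prop:MP} at each deterministic time $m 2^{-n}$ to obtain
\begin{equation*}
 \EV\big[F(\beta^{Y_n+y_1},\ldots,\beta^{Y_n+y_k})\, G\big] = \EV\big[\Phi(\beta^{Y_n})\, G\big],
\end{equation*}
where $\Phi(\gamma) := \int F(\gamma^{y_1},\ldots,\gamma^{y_k})\, \BPr^{\alpha,\theta}_\gamma(d\boldsymbol{\gamma})$ for $\gamma\in\IPH$.

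I would then pass to the limit $n\to\infty$ on both sides. On the left, since $Y_n\downto Y$ and $Y+y_1>0$ a.s., the $\dIA$-path-continuity of $(\beta^y,\,y>0)$ asserted in Definition \ref{def:IPPAT} gives $\beta^{Y_n+y_i}\to\beta^{Y+y_i}$ in $\dIA$ almost surely for each $i$, and bounded convergence gives convergence of the expectation. On the right, $\beta^{Y_n}\to \beta^Y$ in the sense that $\beta^{Y_n}\to\beta^Y$ in $\dHs$ (the paths are $\dHs$-continuous even at time $0$, so this holds whether $Y=0$ or $Y>0$). Proposition \ref{prop:IP:initial} then gives that $\BPr^{\alpha,\theta}_{\beta^{Y_n}}\to\BPr^{\alpha,\theta}_{\beta^Y}$ weakly in the sense of finite-dimensional distributions at positive times, so $\Phi(\beta^{Y_n})\to\Phi(\beta^Y)$ a.s., and bounded convergence yields
\begin{equation*}
 \EV\big[F(\beta^{Y+y_1},\ldots,\beta^{Y+y_k})\, G\big] = \EV\big[\Phi(\beta^Y)\, G\big].
\end{equation*}
Since this holds for all bounded $\cFI^Y$-measurable $G$, the conditional finite-dimensional distributions of $(\beta^{Y+y},\,y\ge 0)$ given $\cFI^Y$ agree with those of $\BPr^{\alpha,\theta}_{\beta^Y}$, proving the claim.

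The main obstacle is ensuring that $\Phi$ has the right continuity to pass to the limit: Proposition \ref{prop:IP:initial} only gives finite-dimensional convergence at strictly positive times, which is why the discretization from above (so that $Y_n+y_i\ge y_i>0$) together with the $\dHs$-continuity at $0$ of the approximating states is crucial. Once one is careful that the inputs to $\Phi$ arise from a $\dHs$-convergent sequence of initial conditions and that the observation times $y_1,\ldots,y_k$ are strictly positive, the argument is routine.
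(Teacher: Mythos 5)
Your proof is correct and follows essentially the same route as the paper: the paper simply states that the strong Markov property follows by the standard discretization-of-stopping-time argument, using the simple Markov property (Proposition \ref{prop:MP}) and continuity in the initial condition (Proposition \ref{prop:IP:initial}), with a pointer to \cite[Theorem 19.17]{Kallenberg} for the details. Your write-up fills in exactly those details. The key point you handle correctly is that Proposition \ref{prop:IP:initial} only yields finite-dimensional weak convergence at strictly positive observation times, so one must approximate $Y$ from above and note that $\beta^{Y_n}\to\beta^Y$ in $\dHs$ (not $\dIA$, since the path is only $\dHs$-continuous at time $0$); your argument gets both of these right.
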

\begin{proof}
 This follows from standard arguments: first assume the stopping time takes a finite number of possible values, then pass to the general case via approximation, using continuity in the initial condition, Proposition~\ref{prop:IP:initial}. See the proof of \cite[Theorem 19.17]{Kallenberg} for such an argument written in detail. 
\end{proof}

We now prove the existence of a Hunt process with the transition kernel introduced in Definition \ref{def:kernel:sp} by showing that $\BPr^{\alpha,\theta}_{\mu}$ is the law of such a process, thereby proving Proposition \ref{prop:sp}\ref{item:SSIP:diffusion} and Proposition \ref{prop:identify_SSIPE}.

\begin{proof}[Proof of Proposition \ref{prop:sp}\ref{item:SSIP:diffusion} and Proposition \ref{prop:identify_SSIPE}]
 The process $(\beta^y,\,y\ge0)\sim\BPr^{\alpha,\theta}_\beta$ was constructed by concatenating path-continuous parts (Propositions \ref{prop:0:len}\ref{item:IPPA:cts} and \ref{prop:cts_imm}), so we know that it is continuous, and we have shown that it has the strong Markov property (Proposition \ref{prop:IP:SMP}) with the claimed semigroup (Proposition \ref{prop:IP:transn}). These properties, along with continuity in the initial condition and the Lusin property of the state space $(\IPA, \dIA)$, noted in Propositions \ref{prop:IP:initial} and \ref{prop:IPH} respectively, satisfy Sharpe's definition of a Hunt process; see e.g.\@ \cite[Definition A.18]{Li11}. 
\end{proof}

Now, we establish the scaling invariance and total mass process of \SSIPEAT.

\begin{proof}[Proof of Proposition \ref{prop:sp}\ref{item:SSIP:SS}, scaling invariance for \SSIPEAT]
 We follow the notation of \eqref{eq:SSIPEAT:def}. Since the statement in the case $\theta=0$, which applies to $(\gamma^y,\,y\ge0)$, has been obtained as a part of \cite[Theorem~1.2]{Paper1-1}, it suffices to prove scaling invariance for $\big(\cev{\beta}^y,\,y\ge0\big)$. 
 
 Recall the definition of scaling of clades in \eqref{eq:min-cld:xform_def}. Let $\cev{\bF}_\theta^{(c)}$ denote the point measure obtained by replacing each atom $(s,N_s)$ of $\cev{\bF}_\theta$ with $(cs,c\scaleH N_s)$. Note that 
 $\skewer (y, c\scaleH N_s) = c  \skewer(y/c, N_s)$ for every $y>0$. 
 Using this fact and a change of variable,  we have the identity
 \begin{equation*}
 \begin{split}
  \cev{\beta}^{y}_c :=&\ \fskewer(y,\cev{\bF}_\theta^{(c)}) = \Concat_{\text{points } (s,N_s) \text { of } \cev{\bF}_\theta^{(c)}\colon s\in [y,0]}
  \skewer (y- s, N_s )\\
  =& \Concat_{\text{points } (r,N_r) \text { of } \cev{\bF}_\theta\colon r\in [y/c,0]}
  	c \,\skewer (y/c- r, N_r )
  = c \cev{\beta}^{y/c}, \qquad\quad y\ge 0. 
 \end{split}
 \end{equation*}
 On the other hand, it follows from \eqref{eq:min_cld:scaling} that $\cev{\bF}_\theta^{(c)}\stackrel{d}{=}\cev{\bF}_\theta$, so we have
 \[
  (\cev{\beta}^y,\,y\ge 0)\stackrel{d}{=}(\cev{\beta}^{y}_c,\,y\ge 0) = (c \cev{\beta}^{y/c},\,y\ge 0), 
 \]
 as desired. 
\end{proof}

\begin{proof}[Proof of Proposition \ref{prop:sp}\ref{item:SSIP:mass}, total mass process for \SSIPEAT]
 Again following the notation of \eqref{eq:SSIPEAT:def}, let $(A^y,\,y\ge0)\sim \BESQ_{\|\beta^0\|}(0)$ given $\beta^0$, and independently let $(Z^y,\,y\ge 0)\sim \BESQ_0(2 \theta)$. We know from \cite[Equation (50)]{GoinYor03} and Proposition \ref{prop:IPPAT:entrance} that 
 $Z^y \sim \GammaDist(\theta,1/2y) \sim \big\|\cev{\beta}^y\big\|$. From \cite[Theorem 1.4]{Paper1-2}, which is the $\theta=0$ special case of Proposition \ref{prop:sp}\ref{item:SSIP:mass}, we know $\|\gamma^y\| \stackrel{d}{=}A^y$.  Thus,
 $$\|\beta^y\| = \big\|\cev\beta^y\big\| + \|\gamma^y\| \stackrel{d}{=} Z^y+A^y.$$ 
 By the additivity of squared Bessel processes \cite[Theorem 7]{GoinYor03}, this has the law of the level $y$ evaluation of a $\BESQ_{\|\beta^0\|}(2\theta)$.
 
 To pass from one-dimensional to finite-dimensional distributions, we appeal to the simple Markov property. In particular, $\cev\beta^{y+z}$ can be decomposed into an \SSIPEzAT{\varnothing} (analogous to $\cev\beta$) comprising blocks entering via immigration between levels $y$ and $y+z$, and, given $\beta^y$, a conditionally independent \SSIPEzA{\beta^y} (analogous to $\gamma$). The same \BESQ\ additivity argument as above completes the argument.
\end{proof}

\subsection{Pseudo-stationarity}\label{sec:IPAT:pseudostat}

In this section, we will prove Proposition \ref{prop:sp}\ref{item:SSIP:pseudostat}, which asserts that, if the initial state of a \SSIPEAT, $\beta^0$, is a \PDIPAT\ scaled by an independent random mass, then so is its state at every subsequent time. As in the proof of the superprocess analog to this result, \cite[Theorem 6.1]{FVAT}, we begin with a special case.

\begin{proposition}\label{prop:pseudostat:gamma}
 Suppose that in the setting of Proposition \ref{prop:sp}\ref{item:SSIP:pseudostat} with $\theta \!>\! 0$, we have $Z(0) \!\sim\! \GammaDist[\theta,\rho]$ for some $\rho \!\in\! (0,\infty)$. Then $\beta^y \stackrel{d}{=} (2y\rho\!+\!1)Z(0) \bar\beta$.
\end{proposition}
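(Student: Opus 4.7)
The plan is to reduce the proposition to Proposition \ref{prop:IPPAT:entrance} applied at two different times, exploiting the observation that the initial distribution $Z(0)\bar\beta$ in the statement, with $Z(0)\sim\GammaDist[\theta,\rho]$, is itself the distribution at a specific time of the immigration process $\cev{\boldsymbol{\beta}}$ started from $\varnothing$.

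First I would observe that, under $\BPr^{\alpha,\theta}_\varnothing$, the decomposition $\beta^y=\cev\beta^y\concat\gamma^y$ in \eqref{eq:SSIPEAT:def} has $\bN_\varnothing=0$ and hence $\gamma^y=\varnothing$ for every $y\ge 0$, so the process coincides with $(\cev\beta^y)_{y\ge 0}$. By Proposition \ref{prop:IPPAT:entrance}, $\cev\beta^z$ has the law of $G\gamma$ with $G\sim\GammaDist[\theta,1/(2z)]$ and $\gamma\sim\PDIP(\alpha,\theta)$ independent. Taking $s:=1/(2\rho)$ yields $\cev\beta^s\stackrel{d}{=}Z(0)\bar\beta$, which exactly matches the initial distribution in the statement.

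Next I would invoke the simple Markov property (Proposition \ref{prop:MP}) at the deterministic time $s$: conditionally on $\cev\beta^s$, the process $(\cev\beta^{s+y})_{y\ge 0}$ is an \SSIPEAT\ started from $\cev\beta^s$. Integrating over $\cev\beta^s$, the law of $(\cev\beta^{s+y})_{y\ge 0}$ coincides with the law of $(\beta^y)_{y\ge 0}$ in the proposition. In particular, $\beta^y\stackrel{d}{=}\cev\beta^{s+y}$, and a second application of Proposition \ref{prop:IPPAT:entrance}, now at time $s+y$, gives
\[
\cev\beta^{s+y}\stackrel{d}{=}\GammaDist[\theta,1/(2(s+y))]\cdot\PDIP(\alpha,\theta) = \GammaDist[\theta,\rho/(1+2y\rho)]\cdot\PDIP(\alpha,\theta).
\]
The elementary scaling identity $(2y\rho+1)Z(0)\sim\GammaDist[\theta,\rho/(2y\rho+1)]$ identifies this with the distribution of $(2y\rho+1)Z(0)\bar\beta$, as required.

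I do not foresee any real obstacle: the Markov property and the entrance law are already in place, and the substance of the argument is essentially the scale-enforced coincidence that a $\GammaDist[\theta,\rho]$ total mass is precisely the immigration-process mass at time $1/(2\rho)$, so that the initial condition of the proposition fits seamlessly into a longer excursion of the same dynamics. Everything else reduces to classical Beta–Gamma manipulations.
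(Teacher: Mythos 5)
Your proof is correct and follows essentially the same route as the paper: start the $\varnothing$-started process, use Proposition \ref{prop:IPPAT:entrance} at time $1/(2\rho)$ to match the initial distribution, apply the simple Markov property to shift time, and apply Proposition \ref{prop:IPPAT:entrance} again at time $y+1/(2\rho)$. The only superficial difference is that you phrase it through $\cev\beta$ and Definition \ref{defn:cev-beta} rather than directly through a \SSIPEzAT{\varnothing}, but that is the same object.
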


This proposition is proved in the same manner as its analog in the superprocess setting, \cite[Proposition 6.2]{FVAT}. For completeness, we include the short proof here.

\begin{proof}
 Let $\boldsymbol{\gamma} = (\gamma^y,\,y\ge0)$ be a \SSIPEzAT{\varnothing}. 
 By Proposition \ref{prop:IPPAT:entrance}, $\gamma^{1/2\rho}$ is distributed like $\beta^0$, 
 the initial state of the process $\boldsymbol{\beta}$. 
 Then it follows from the simple Markov property that $\widetilde{\gamma}^y:=\gamma^{y+(1/2\rho)}$ has the same law as $\beta^y$ for $y\ge0$. 
 Applying Proposition~\ref{prop:IPPAT:entrance} to $\boldsymbol{\gamma}$, 
 $\beta^y \stackrel{d}{=} \gamma^{y+(1/2\rho)} \stackrel{d}{=} G^y  \bar\beta$, 
 where $G^y\sim\GammaDist[\theta,1/2(y+(1/2\rho))]$, independent of $\bar\beta$. 
 Since $G^y$ has the same the distribution as $(2y\rho+1)Z(0)$, the desired statement follows.
\end{proof}

\begin{proof}[Proof of Proposition~\ref{prop:sp}\ref{item:SSIP:pseudostat}]
 The cases $\theta=0$ and $\theta =\alpha$ have been proved in \cite[Thoerem~1.5]{Paper1-2}. 
 When $\theta>0$, it is known \cite[Equation~49]{GoinYor03} that the transition density of a $\BESQ(2\theta)$ is 
 \[
  q_y (b,c):= \frac{1}{2y} \left(\frac{c}{b}\right)^{(\theta-1)/2} 
  \exp\left(-\frac{b+c}{2y}\right) I_{\theta-1} \! \left(\frac{\sqrt{bc}}{y}\right), \quad y,b,c>0,
 \]
 where $I_{\theta-1}$ is the modified Bessel function of the first kind of index $\theta-1$. 
 Since $(q_y(b,c),\, c> 0)$ is a probability density, substituting $u=bc/y^{2}$ and $x = y/2b$, we get the identity  
 \begin{equation}\label{eq:besselfuction}
  \int_{0}^{\infty}  u^{(\theta -1)/2} e^{-x u} I_{\theta -1} (\sqrt{u}) d u 
  	= e^{1/(4x)} x^{-\theta} 2^{-(\theta-1)}, \quad \forall x>0. 
 \end{equation}
 
 For every $b>0$, let $(\beta_b^y, y\ge 0)$ be a \SSIPEzAT{b\bar\beta}. 
 For all bounded continuous  
 $f\colon \IPH \to \mathbb{R}_+$ with $f(\varnothing)=0$, 
 Proposition~\ref{prop:pseudostat:gamma} leads to the identity 
 \begin{equation*}
 \begin{split}
  &\int_{0}^{\infty} \! \frac{1}{\Gamma(\theta)} \rho^{\theta} b^{\theta-1} e^{-\rho b}
  		\mathbf{E}[ f(\beta_b^y) ] db 
  	= \int_0^{\infty} \! \frac{1}{\Gamma(\theta)} \rho^{\theta} b^{\theta-1} e^{-\rho b} 
  		\mathbf{E}[ f( (2y\rho \!+\! 1)b   \bar\beta) ] db \\
  &\qquad\qquad\qquad 
  = \int_0^{\infty} \frac{1}{\Gamma(\theta)} \frac{\rho^{\theta}}{(2 y \rho +1)^{\theta}} c^{\theta-1} 
  		\exp\left(-\frac{\rho}{2y\rho+1} c\right) \mathbf{E}[f (c  \bar\beta)] dc.
 \end{split}
 \end{equation*}
 From \eqref{eq:besselfuction}, substituting $u = bc/y^2$ and $x=y(2y\rho+1)/2c$, we get
 \[
 \begin{split}
  \int_0^\infty \!\!q_y(b,c)e^{-\rho b}b^{\theta-1}db
  	&= \frac{1}{2y}e^{-c/2y}\int_0^{\infty}\!\!\exp\!\left(\!-\frac{2y\rho \!+\! 1}{2y} b\right)\! (bc)^{(\theta-1)/2} I_{\theta-1}\!\left(\!\frac{\sqrt{bc}}{y}\right)\! db\\
  	&= \exp\left( -\frac{c\rho}{2y\rho+1}\right) \frac{c^{\theta-1}}{(2yp+1)^{\theta}}.
 \end{split}
 \]
 Then, by Fubini's Theorem,
 \begin{multline*}
  \int_0^{\infty} \frac{1}{\Gamma(\theta)} \frac{\rho^{\theta}}{(2 y \rho +1)^{\theta}} c^{\theta-1}  \exp\left(-\frac{c\rho}{2y\rho+1}\right) 
  		\mathbf{E}[f (c  \bar \beta)] d c \\
  	= \int_{0}^{\infty}  \frac{1}{\Gamma(\theta)} \rho^{\theta} b^{\theta-1} e^{-\rho b} 
  		\left( \int_0^{\infty} q_y (b,c) \mathbf{E}[f (c  \bar \beta)] d c\right) d b. 
 \end{multline*}
 Uniqueness of the Laplace transform implies that for Lebesgue-a.e.\@ $b>0$, 
 \[
  \mathbf{E}[f (\beta_b^y)] = \int_0^{\infty} q_y (b,c) \mathbf{E}[f (c  \bar\beta)] d c.  
 \]
 This extends to all $b>0$ by continuity.  
\end{proof}

\subsection{Ray--Knight theorem for perturbed marked stable L\'evy processes}\label{sec:rk1}

In this section we prove the first Ray--Knight theorem stated in Theorem \ref{thm:rayknight}(ii). Recall that we defined 
$\cev\bF_\theta$ as a $\PRM\big(\frac{\theta}{\alpha}\Leb\otimes\umCladeA\big)$ on $[0,\infty)\times \HfinA$ to serve as the key ingredient to our construction of 
${\tt SSIPE}_\varnothing(\alpha,\theta)$ in Definition \ref{defn:cev-beta}. We then showed in Proposition \ref{prop:identify_SSIPE} that 
 \begin{equation}\label{eq:cev-beta}
  \big(\cev{\beta}^y,\, y\ge 0\big)\sim{\tt SSIPE}_\varnothing(\alpha,\theta) \quad \text{where} \quad \cev{\beta}^y:= \fskewer\big(y, \cev\bF_\theta\big), \quad y\ge0.
 \end{equation}  
On the other hand, we claim in Theorem \ref{thm:rayknight}(ii) that an alternative construction of ${\tt SSIPE}_\varnothing(\alpha,\theta)$ is, as follows. Here, the key 
ingredient is $\mathbf{N}=\sum_{t\ge 0\colon \Delta\mathbf{X}_t>0}\delta(t,f_t)$, introduced by marking the jumps $(t,\Delta\mathbf{X}_t)$ of a spectrally positive stable 
L\'evy process $\bX$ of index $1+\alpha$, with Laplace exponent $\psi(\lambda) = \frac{\lambda^{1+\alpha}}{2^{\alpha}\Gamma(1+\alpha)}$, using the kernel 
$s\mapsto \BESQ^{s}_{0,0}(4+2\alpha)$. Then the claim is that
\[
(\beta^y,\,y\in[0,x])\sim{\tt SSIPE}_\varnothing(\alpha,\theta)\quad\text{where}\quad\beta^y := \skewer\left(y,\bN|_{[0,T^{(\theta)}_{-x}]}, x +\bX^{(\theta)} \right),
\]
and $\bX^{(\theta)}:= \bX-(1-\frac{\alpha}{\theta})\underline{\bX}$ is the L\'evy process $\bX$ perturbed at its infimum process $\underline{\bX} = \big(\underline{\bX}_t = \inf_{r\le t} \bX_r,\, t\ge 0\big)$, and $T^{(\theta)}_{-x} = \inf\{t\ge 0\colon \bX^{(\theta)}_t < -x\}$. 
	
\begin{proof}[Proof of Theorem \ref{thm:rayknight}(ii)] We first recall from \cite[Remark 5.8(i)]{PitmYor82} that the squared Bessel excursion measure  
  $\nu_{\BESQ}^{(-2\alpha)}$ conditionally on its length $\zeta=s$ is the bridge $\BESQ^{s}_{0,0}(4+2\alpha)$ of length $s$ from 0 to 0. Furthermore, we noted in 
  Proposition \ref{prop:stable_JCCP} that the lifetime intensity under $\nu_{\BESQ}^{(-2\alpha)}$ coincides with the jump size intensity of $\mathbf{X}$. By standard marking
  properties of Poisson random measures, this entails that $\mathbf{N}$ here is indeed a $\PRM[\Leb\otimes\mBxcA]$ as in \eqref{eq:imm_PPP}. 

  But $\mathbf{F}_{\perp}\sim\PRM(\Leb\otimes\umCladeA)$ is the Poisson random measure of marked excursions of $\mathbf{X}$ above $\underline{\bX}$, in which each such 
  excursion is associated with its starting level. While $\cev\bF_\theta$ actively varies this intensity by a factor of $\theta/\alpha$ and concatenates 
  \begin{equation}\label{firstconstr}
  \cev{\beta}^y= \Concat_{\text{points } (s,N_s) \text { of } \cev\bF_\theta\colon s\in [y,0]}
 	\skewer(y - s, N_s),
  \end{equation}
  the construction of Theorem \ref{thm:rayknight}(ii) modifies the scaffolding by perturbing the stable L\'evy process when it is at its infimum. Specifically, note that
  $x+\bX^{(\theta)}$ has the same excursions above $x+\underline{\bX}^{(\theta)}=x+\frac{\alpha}{\theta}\underline{\bX}$ as $\bX$ above $\underline{\bX}$. However, their intensity
  when associating each excursion with its starting level is again varied by a factor of $\theta/\alpha$. Finally, we have for all $y\in[0,x]$
  \[
  \skewer\!\left(\!y,\bN|_{[0,T^{(\theta)}_{-x}]}, x\! +\!\bX^{(\theta)} \!\right)\!=\!\!\!\Concat_{\overset{\scriptstyle\text{excursion intervals } [a,b)}{\text { of }\bX^{(\theta)}-\underline{\bX}^{(\theta)}\colon s:=x+\underline{\bX}^{(\theta)}_a\in [y,0]}}\!\!\!\skewer\left(\!y\!-\!s,\ShiftRestrict{\bN}{[a,b)}\right)\!.
  \]
  as in \eqref{firstconstr}. In particular, the construction in Theorem \ref{thm:rayknight}(ii) does indeed yield an ${\tt SSIPE}_\varnothing(\alpha,\theta)$, too. By Proposition 
  \ref{prop:sp}\ref{item:SSIP:mass} this also entails the claimed ${\tt BESQ}_0(2\theta)$ total mass evolution.
\end{proof}

\section{Hausdorff-continuous entry from generalized interval partitions} \label{sec:generalized}

In this section we explain how {\tt SSIPE} and {\tt PDIPE} can enter from the Hausdorff-completion of our space of interval partitions 
and satisfies continuity in the initial state thereby proving Theorems \ref{thm:sp} and \ref{thm:dP-IP}.


\subsection{Generalized interval partitions}\label{sec:F0}

Recall from Section \ref{sec:main_results} the (isometric) spaces $(\mathcal{K},d_H)$ of compact subsets of $[0,\infty)$ that contain 0 and $(\IPHs,d_H\circ C)$ of generalized interval partitions, which we use as completions of $(\HIPspace,d_H\circ C)$. While we emphasized the interval partition setup in the introduction, we will sometimes work on
$(\mathcal{K},d_H)$ in this section, using notation $\beta(K)$ for the collection of open subsets of $[0,\max K]\setminus K$. Also recall from Section \ref{sec:unitmass} the completion $(\overline{\mathcal{I}}_{H,1},d_H\circ C_1)$ of the subspace $(\mathcal{I}_{H,1},d_H\circ C)$ of $(\HIPspace, d_H\circ C)$. This space is isometric to the subspace
$\mathcal{K}_1:=\{K\in\mathcal{K}\colon \max K=1\}$ of $(\mathcal{K},d_H)$.  

We have equipped both $\IPH$ and $\IPHs$ with the metric $d_H\circ C$. In \cite[Theorem 2.3(b)]{Paper1-0} we showed that this is topologically equivalent to $\dHs$ on 
$\IPH$. Note that $\gamma\mapsto (\gamma,1)$ isometrically embeds $(\IPHos,d_H\circ C_1)$ in $(\IPHs,d_H\circ C)$. 
%
%
It is well-known that $(\mathcal{K},d_H)$ is locally compact and separable, while $(\mathcal{K}_1,d_H)$ is compact. 
We finally note, that 
$(\mathcal{K},d_H)$, or equivalently $(\IPHs,d_H\circ C)$, is also a metric completion of $(\IPA,d_H\circ C)$ in that every $K\in\mathcal{K}$ is a $d_H$-limit of some  
$C(\beta_n)$, $\beta_n\in \mathcal{I}_\alpha$, and likewise for $(\IPHos,d_H\circ C_1)$ in relation to $(\IPAo,d_H\circ C)$, as can be seen from Proposition \ref{prop:genIPE:cont0}.

\subsection{${\tt SSIPE}(\alpha,\theta)$ starting from a generalized interval partition}\label{sec:F:construction}

Fix $\alpha\in(0,1)$ and $\theta\ge 0$, and let $(\beta,M)\in\IPHs$. Let $K:= C(\beta,M)$. For each $U\in\beta$ independently, consider a point measure
$\mathbf{N}_U$ of law $\mathbf{P}_{\{(0,{\rm Leb}(U))\}}^{\alpha,0}$, as in Definition \ref{def:IPPA}. Also consider an independent
\begin{equation}\label{eq:Fdust}
 \mathbf{F}_{K}\sim{\tt PRM}\Big({\rm Leb}|_K\otimes\frac{1}{2\alpha}\nu_{\perp\rm cld}^{(\alpha)}\Big)
\end{equation}
so that we associate clades with the part ${\rm Leb}(K)$ of the initial mass that is associated with the partition 
points $K$ rather than the intervals in between. This point measure is non-trivial if and only if ${\rm Leb}(K)>0$. It is of the form $\sum_{i\in I}\delta(R_i,N_i)$ for some $R_i\in K$ and $N_i\in\mathcal{N}_{\rm fin}^{\rm sp}$, $i\in I$. To combine all clades into a single point measure of spindles, and to specify the appropriate scaffolding, we 
unify notation and set $U_i:=\{R_i\}$ and $\mathbf{N}_{U_i}:=N_i$. Using the natural total order on the collection $\beta(K)\cup\{U_i,i\in I\}$
of disjoint subsets of $[0,\infty)$, we define 
\begin{equation}\label{eq:NC}
 \mathbf{N}_K:=\Concat_{U\in\beta(K)\cup\{U_i,i\in I\}}\mathbf{N}_{U_i},\quad\mbox{and}\quad
 \mathbf{X}_K:=\Concat_{U\in\beta(K)\cup\{U_i,i\in I\}}\xi\big(\mathbf{N}_{U_i}\big),
\end{equation}
where the concatenation of scaffolding is defined in the natural sense of concatenating excursions above level 0. 
We will consider the skewer process of this pair of point measure of spindles and scaffolding. As a first crude check, we note that this achieves the 
correct total mass process to extend Proposition \ref{prop:sp}(iii). By the additivity of ${\tt BESQ}(0)$, we only need to consider the contribution 
from $\mathbf{F}_K$.

\begin{proposition}\label{prop:F:totalmass} Let $K\in\mathcal{K}$ and $\mathbf{F}_K$ as in \eqref{eq:Fdust}. Then setting\vspace{-0.1cm}
 $$M^0:={\textnormal{Leb}}(K)\quad\mbox{and}\quad M^y:=\sum_{{\rm points}\,(R,N)\,{\rm of}\,\mathbf{F}_K}\|\skewer(y,N,\xi(N))\|,\ y>0,\vspace{-0.1cm}
 $$
 yields $(M^y,\,y\ge 0)\sim{\tt BESQ}_{{\rm Leb}(K)}(0)$.
\end{proposition}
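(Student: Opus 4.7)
The plan is to identify $(M^y,\,y\ge 0)$ as $\BESQ_{\Leb(K)}(0)$ by first matching the marginal Laplace transform at each fixed level $y>0$ via Campbell's formula for the PRM $\bF_K$, and then upgrading this to full process-level identification by combining the mid-spindle Markov property (Lemma \ref{lem:min_cld:mid_spindle}) with the additivity of squared Bessel processes.

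At fixed $y>0$, I would combine Proposition \ref{prop:min_cld:stats}\ref{item:MCS:max}, which gives $\umCladeA\{\zeta^+>y\}=\alpha/y$, with Lemma \ref{lem:min_cld:skewer}\ref{item:MCSk:entrance}, which, since $\PDIP(\alpha,0)$ has total mass one, identifies $\IPmag{\skewer(y,N)}$ under $\umCladeA(\,\cdot\mid\zeta^+>y)$ as $\ExpDist(1/2y)$. The pushforward of $\umCladeA$ under $N\mapsto \IPmag{\skewer(y,N)}$ is therefore $(\alpha/(2y^2))e^{-x/(2y)}\,dx$ on $(0,\infty)$, and a short integration yields $\int(1-e^{-\lambda x})(\text{pushforward}) = 2\alpha\lambda/(1+2y\lambda)$. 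Campbell's formula for $\bF_K\sim\PRM(\Leb|_K\otimes\tfrac{1}{2\alpha}\umCladeA)$ then gives
\[
\EV\bigl[e^{-\lambda M^y}\bigr] = \exp\!\Bigl(-\tfrac{\Leb(K)}{2\alpha}\cdot\tfrac{2\alpha\lambda}{1+2y\lambda}\Bigr) = \exp\!\Bigl(-\tfrac{\Leb(K)\,\lambda}{1+2y\lambda}\Bigr),
\]
which matches the marginal Laplace transform of $\BESQ_{\Leb(K)}(0)$ at time $y$.

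For the Markov property and finite-dimensional distributions, I would apply the MSMP clade by clade. At each $y>0$ only finitely many atoms $(R_i,N_i)$ of $\bF_K$ satisfy $\zeta^+(N_i)>y$, so writing $m_i:=m^y(N_i)$ we have $M^y=\sum_i m_i$. By Lemma \ref{lem:min_cld:mid_spindle}, given $m_i$, the portion of $N_i$ above level $y$ has the conditional law of a $\Pr^{\alpha,0}_{\{(0,m_i)\}}$-distributed point measure; via the identity $\skewer(y+z,N)=\skewer(z,\cutoffG{y}{N})$ from \cite[Lemma~5.4(i)]{Paper1-1}, the mass contribution $(\IPmag{\skewer(y+z,N_i)},\,z\ge 0)$ is then the total mass of an $\SSIPEzA{\{(0,m_i)\}}$, and hence a $\BESQ_{m_i}(0)$ by the $\theta=0$ case of Proposition \ref{prop:sp}\ref{item:SSIP:mass}, established in \cite{Paper1-2}. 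Independence across clades together with additivity of $\BESQ(0)$ then gives that $(M^{y+z},\,z\ge 0)$ is conditionally $\BESQ_{M^y}(0)$ given the level-$y$ cutoff structure, so $(M^y,\,y>0)$ is Markov with the $\BESQ(0)$ semigroup.

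The main obstacle is the boundary behavior at $y=0$: infinitely many clades are present there with vanishing individual masses, so one cannot sum $\BESQ_0(0)$ contributions directly from level zero, and the MSMP only delivers conditional structure at positive levels. However, the Laplace formula above gives $M^y\to \Leb(K)$ in distribution (hence in probability, and in $L^2$ since the computed variance $2y\Leb(K)$ vanishes) as $y\downarrow 0$, matching the definition $M^0:=\Leb(K)$. Combined with the Markov property at positive levels and the path-continuity of $\BESQ(0)$, this identifies the law of $(M^y,\,y\ge 0)$ as $\BESQ_{\Leb(K)}(0)$.
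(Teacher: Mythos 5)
Your route is genuinely different from the paper's. The paper's proof is essentially one line at the process level: it cites \cite[Corollary~4.11]{FVAT} to identify the pushforward of $\tfrac{1}{2\alpha}\umCladeA$ under the total-mass-evolution map as the excursion measure $\nu_{\tt BESQ}^{(0)}$ (normalized so that $\nu_{\tt BESQ}^{(0)}\{\zeta>y\}=1/2y$), and then invokes Pitman--Yor's theorem that sums over a $\PRM\big(x\,\nu_{\tt BESQ}^{(0)}\big)$ give a $\BESQ_x(0)$. You instead compute the one-dimensional Laplace transform via Campbell's formula and then argue the Markov structure by hand; the Campbell computation and the treatment of the $y\downarrow 0$ boundary are correct.

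The Markov step, however, contains an error. You set $m_i:=m^y(N_i)$ and write $M^y=\sum_i m_i$, but $m^y(N_i)$ is only the mass of the \emph{leftmost} block of $\skewer(y,N_i)$ (the first spindle to cross level $y$), whereas $M^y$ sums the \emph{total} masses $\IPmag{\skewer(y,N_i)}$. By Lemma~\ref{lem:min_cld:skewer}\ref{item:MCSk:entrance}, $\skewer(y,N_i)\stackrel{d}{=}B^y_i\bar\beta_i$ with $\bar\beta_i\sim\PDIP(\alpha,0)$, so it a.s.\ has infinitely many blocks and $\IPmag{\skewer(y,N_i)}>m^y(N_i)$; hence $M^y\ne\sum_i m_i$. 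Correspondingly, the claim that $\cutoffG{y}{N_i}$ given $m_i$ has conditional law $\Pr^{\alpha,0}_{\{(0,m_i)\}}$ is wrong: Corollary~\ref{cor:min_cld:mid_spindle} gives conditional law $\Pr^{\alpha,0}_{\skewer(y,N_i)}$, starting from the full level-$y$ partition, not a single block. (The MSMP, Lemma~\ref{lem:min_cld:mid_spindle}, decomposes at the \emph{time} $T$ rather than at level $y$; its after-piece $\shiftrestrict{N_i}{(T,\infty)}+\delta(0,\hat f^y_T)$ is not $\Pr^{\alpha,0}_{\{(0,m_i)\}}$-distributed, as the scaffolding is stopped at $-(y+\zeta(f'))$, not at $-\zeta(f')$.) The fix is to replace $m_i$ by $B^y_i:=\IPmag{\skewer(y,N_i)}$ throughout and to cite Corollary~\ref{cor:min_cld:mid_spindle} instead of the MSMP: each surviving clade then contributes a conditional $\BESQ_{B^y_i}(0)$ above level $y$, and additivity of $\BESQ(0)$ yields $\BESQ_{M^y}(0)$ as desired.
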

\begin{proof}
 Recall from the discussion after \eqref{eq:imm_PPP} that $\nu_{\perp\rm cld}^{(\alpha)}$ is the pushforward of $\overline{\nu}_{\perp\rm cld}^{(\alpha)}$ under the projection 
  map $(V,X)\mapsto\varphi(V)$ that removes allelic types. Hence, \cite[Corollary 4.11]{FVAT} yields that the pushforward of $\frac{1}{2\alpha}\nu_{\perp\rm cld}^{(\alpha)}$ 
  onto the total mass evolution is the excursion measure $\nu_{\tt BESQ}^{(0)}$ of ${\tt BESQ}(0)$ with the 
  normalization $\nu_{\tt BESQ}^{(0)}(\zeta>y)=1/2y$. For this normalization, Pitman and Yor \cite[Theorem (4.1)]{PitmYor82} showed that 
  sums over a ${\tt PRM}$ of intensity $x\nu_{\tt BESQ}^{(0)}$ are ${\tt BESQ}_x(0)$.
\end{proof}

\begin{proposition}\label{prop:entrlaw}
  Let $K\in\mathcal{K}$ and consider $(\mathbf{N}_K,\mathbf{X}_K)$ as in \eqref{eq:NC}. Then the distributions of 
  $\skewer(y,\mathbf{N}_K,\mathbf{X}_K)$, $y>0$, form an entrance law for ${\tt SSIPE}(\alpha,0)$. In particular, we have
  $\skewer(y,\mathbf{N}_K,\mathbf{X}_K)\in\IPA$ for all $y>0$ almost surely.
\end{proposition}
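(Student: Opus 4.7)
Our plan is to decompose the skewer into contributions from the blocks in $\beta(K)$ and the dust clades of $\mathbf{F}_K$, treat each class using existing results, and then verify semigroup composition. Since $\mathbf{X}_K$ is a concatenation of independent non-negative scaffolding excursions from $0$ to $0$, one for each $U\in\beta(K)\cup\{U_i,i\in I\}$, the level-$y$ skewer splits cleanly as
\[
\skewer(y,\mathbf{N}_K,\mathbf{X}_K)=\Concat_{U\in\beta(K)\cup\{U_i,i\in I\}}\skewer(y,\mathbf{N}_U,\xi(\mathbf{N}_U)),
\]
where the order is inherited from the natural left-to-right ordering of the disjoint sets $U\subset[0,\max K]$. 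For blocks $U\in\beta(K)$, the process $\skewerP(\mathbf{N}_U)$ is an ${\tt SSIPE}_{\{(0,{\rm Leb}(U))\}}(\alpha,0)$ by Definition \ref{def:IPPA} and Proposition \ref{prop:identify_SSIPE}; for each dust clade, $\skewer(y,N)$ under $\frac{1}{2\alpha}\nu^{(\alpha)}_{\perp{\rm cld}}(\,\cdot\mid\zeta^+>y)$ is characterized by Lemma \ref{lem:min_cld:skewer}.

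The first step is to show that for each $y>0$, only finitely many summands contribute non-trivially. For blocks, a $\mathbf{P}^{\alpha,0}_{\{(0,b)\}}$-clade crosses level $y$ with probability at most $c_y\, b$ (this is read off the total-mass description in the proof of Proposition \ref{prop:IP:transn}, or from the scaling of \cite{Paper1-2}), and $\sum_{U\in\beta(K)}{\rm Leb}(U)\le\max K<\infty$, so Borel--Cantelli yields only finitely many non-empty block contributions. For the dust, Proposition \ref{prop:min_cld:stats}\ref{item:MCS:max} shows that the number of clades with $\zeta^+>y$ is Poisson with finite mean $\frac{1}{2\alpha}{\rm Leb}(K)\cdot\alpha y^{-1}=\frac{{\rm Leb}(K)}{2y}$. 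Each nonempty summand lies in $\IPA$ (by Proposition \ref{prop:0:len}\ref{item:IPPA:cts} for blocks and by Lemma \ref{lem:min_cld:skewer}\ref{item:MCSk:cts} for dust clades), and a finite concatenation of $\IPA$-partitions is in $\IPA$. This proves the second (``in particular'') assertion.

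To establish the entrance law property, fix $y,z>0$ and write $\beta^y:=\skewer(y,\mathbf{N}_K,\mathbf{X}_K)$. Applying Corollary \ref{cor:min_cld:mid_spindle} to each dust clade and the analogous Markov-like property \cite[Proposition 5.9]{Paper1-1} to each block clade, we obtain (summand-wise) that, given the lower cutoff $\cutoffL{y}{\mathbf{N}_U}$, the upper cutoff $\cutoffG{y}{\mathbf{N}_U}$ has conditional law $\mathbf{P}^{\alpha,0}_{\beta^y_U}$ where $\beta^y_U:=\skewer(y,\mathbf{N}_U,\xi(\mathbf{N}_U))$. Independence of the ingredients in \eqref{eq:NC} lets us combine these statements across $U$: given $\beta^y$, the concatenation $\Concat_U\cutoffG{y}{\mathbf{N}_U}$ has conditional law $\mathbf{P}^{\alpha,0}_{\beta^y}$, by Definition \ref{def:IPPA} of $\mathbf{P}^{\alpha,0}$. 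Since the scaffolding of this upper-cutoff concatenation drives $\skewer(z,\cdot)=\skewer(y+z,\mathbf{N}_K,\mathbf{X}_K)$, Proposition \ref{prop:IP:transn} then identifies the conditional law of $\skewer(y+z,\mathbf{N}_K,\mathbf{X}_K)$ given $\beta^y$ as $\kappa^{\alpha,0}_z(\beta^y,\,\cdot\,)$, as required.

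The main obstacle is the coupled Markov-like property in the last step: while each summand admits a clean mid-level split, assembling the conditional independence statements across the (potentially countable) family of summands, and justifying that the upper-cutoff concatenation genuinely has law $\mathbf{P}^{\alpha,0}_{\beta^y}$ rather than just a product structure, requires an argument analogous to Lemma \ref{lem:cevF:cutoff}. I expect this to go through with essentially the same proof, driven by the independence of $\mathbf{F}_K$'s atoms and the independence between blocks and dust in \eqref{eq:NC}.
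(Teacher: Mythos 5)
Your proposal is correct and follows essentially the same route as the paper's proof, which is terser but rests on the same decomposition into block clades and dust clades, the same finiteness-of-surviving-clades estimate (for which you give a direct Borel--Cantelli and Poisson computation where the paper instead cites \cite[Lemma~6.18]{Paper1-1} and Proposition~\ref{prop:min_cld:stats}\ref{item:MCS:max}), and the same Markov-like cutoff argument via Corollary~\ref{cor:min_cld:mid_spindle} and the clade construction. The independence bookkeeping you flag as a remaining obstacle is precisely what the paper's phrase ``by definition'' appeals to, and it is routine given the independence of the $\mathbf{N}_U$, $U\in\beta(K)$, and of the atoms of $\mathbf{F}_K$ in \eqref{eq:NC}.
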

\begin{proof}
  This follows straight from the clade construction of ${\tt SSIPE}(\alpha,0)$ for the clades starting from $U\in\beta(K)$ and from the   
  Markov-like property of Corollary \ref{cor:min_cld:mid_spindle} for $\nu_{\perp\rm cld}^{(\alpha)}$-clades. Specifically, 
  it follows from easy computations (\cite[Lemma 6.18]{Paper1-1} and Proposition \ref{prop:min_cld:stats}\ref{item:MCS:max}) 
  that at most 
  finitely many of both types of initial clades survive to any given positive level $y$, and the concatenation of their upper cutoff processes above level $y$ yields an 
  ${\tt SSIPE}(\alpha,0)$ starting in $\IPA$, by definition.
\end{proof}

Let $\beta^0 := \beta$ and for $y>0$ let $\beta^y := \skewer(y,\mathbf{N}_K,\mathbf{X}_K)$. Similarly let $M^0 := M$ and $M^y := \|\beta^y\|$ for $y>0$. In light of the preceding results $((\beta^y,M^y),\,y\ge0)$ extends the ${\tt SSIPE}(\alpha,0)$ of Definition \ref{def:IPPA} as a Markov process. 
We refer to this process as ${\tt SSIPE}^*_K(\alpha,0)$. 
For $(\cev\beta^y,\,y\ge0)$ an independent ${\tt SSIPE}_\varnothing(\alpha,\theta)$, we call the process $\big(\big(\cev\beta^y\concat\beta^y , \|\cev\beta^y\|+M^y\big),\,y\ge0\big)$ a ${\tt SSIPE}^*_K(\alpha,\theta)$. With this definition, for $\gamma\in\IPH$,  
 ${\tt SSIPE}_\gamma(\alpha,\theta)$ equals the projection of ${\tt SSIPE}^*_{C(\gamma)}(\alpha,\theta)$ onto its first coordinate. In this sense, this definition is 
consistent with Definition \ref{def:IPPAT} of ${\tt SSIPE}_\gamma(\alpha,\theta)$.

\begin{corollary}\label{thm:semigroup*}
   The transition semigroup of the $\IPHs$-valued ${\tt SSIPE}^*_K(\alpha,\theta)$ is given by $(\kappa^{\alpha,\theta}_y,\,y\ge 0)$ as defined in Definition \ref{def:kernel:sp2}.
\end{corollary}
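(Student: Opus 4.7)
The plan is to identify the law of $(\beta^y,\|\beta^y\|)$ for each $y>0$ with $\kappa^{\alpha,\theta}_y(\beta^*,\,\cdot\,)$ of Definition \ref{def:kernel:sp2}, then derive the semigroup identity from the Markov property already established on $\IPH$. Fix $y>0$. By construction, at level $y$ the process decomposes into three jointly independent contributions: the immigration clades arising from $\cev\bF_\theta$, the block clades $(\mathbf{N}_U)_{U\in\beta}$ from Definition \ref{def:IPPA}, and the dust clades from $\mathbf{F}_K$ in \eqref{eq:Fdust}. Joint independence is inherited from that of the three Poisson random measures.

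The immigration piece $\cev\beta^y$ is identified via Proposition \ref{prop:IPPAT:entrance} as $G_0^y\bar\beta_0$ with $G_0^y\sim\GammaDist[\theta,1/2y]$ and $\bar\beta_0\sim\PDIP(\alpha,\theta)$. Each block piece $\skewer(y,\mathbf{N}_U)$ is the single-block case of Proposition \ref{prop:IP:transn} specialized to $\theta=0$, so it has law $\kappa^{\alpha,0}_y(\{(0,\Leb(U))\},\,\cdot\,)=\mu^{(\alpha)}_{\Leb(U),1/2y}$. For the dust piece, Poisson thinning applied to $\mathbf{F}_K$ together with Proposition \ref{prop:min_cld:stats}\ref{item:MCS:max} shows that the atoms $(R,N)$ of $\mathbf{F}_K$ with $\zeta^+(N)>y$ form a finite Poisson point process on $K\times\HfinA$; consequently its total count $J_y$ is Poisson-distributed, and given $J_y$, the spatial positions are i.i.d.\ uniform on $C(\beta^*)$ while the surviving clades are i.i.d.\ with common law $\nu^{(\alpha)}_{\perp{\rm cld}}(\,\cdot\mid\zeta^+>y)$. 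Lemma \ref{lem:min_cld:skewer}\ref{item:MCSk:entrance} then identifies $\skewer(y,N)\stackrel{d}{=}B^y\bar\beta$ with $B^y\sim\ExpDist(1/2y)$ and $\bar\beta\sim\PDIP(\alpha,0)$, which is exactly $\widetilde\mu^{(\alpha)}_{0,1/2y}$.

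The concatenation structure required by Definition \ref{def:kernel:sp2} is inherited directly from \eqref{eq:NC}: the contributions from $U\in\beta$ and from the dust singletons $\{R_i\}$ are concatenated in the natural left-to-right order of their positions in $[0,M]$, and the immigration piece $\cev\beta^y$ is prepended on the left. This completes the identification $\mathbf{P}((\beta^y,\|\beta^y\|)\in\,\cdot\,)=\kappa^{\alpha,\theta}_y(\beta^*,\,\cdot\,)$. The semigroup identity $\kappa^{\alpha,\theta}_y\kappa^{\alpha,\theta}_z=\kappa^{\alpha,\theta}_{y+z}$ on $\IPHs$ then follows because, by Proposition \ref{prop:entrlaw}, $\beta^{y_0}\in\IPA\subset\IPH$ almost surely for every $y_0>0$, and the conditional evolution beyond time $y_0$ given $\beta^{y_0}$ is a ${\tt SSIPE}(\alpha,\theta)$ on $\IPH$ with semigroup $\kappa^{\alpha,\theta}_\cdot$ per Proposition \ref{prop:IP:transn}.

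The principal obstacle is the intensity bookkeeping for the dust clades: the $\frac{1}{2\alpha}$ normalization in \eqref{eq:Fdust} is chosen precisely so that the total-mass process is ${\tt BESQ}_{\Leb(K)}(0)$ (Proposition \ref{prop:F:totalmass}), and this same calibration, combined with Proposition \ref{prop:min_cld:stats}\ref{item:MCS:max} and the Poisson thinning step above, must deliver the Poisson mean of $J_y$ and the uniform location law on $C(\beta^*)$ appearing in Definition \ref{def:kernel:sp2}. Once this matching is verified, the remaining ingredients are standard applications of Poisson marking and the Markov-type properties already recorded in Corollary \ref{cor:min_cld:mid_spindle} and Lemma \ref{lem:cevF:cutoff}.
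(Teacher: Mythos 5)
Your proof takes essentially the same route as the paper's: identify the one-dimensional marginal $\kappa^{\alpha,\theta}_y(\beta^*,\cdot)$ by splitting the skewer at level $y$ into the independent immigration, block-clade, and dust-clade contributions (via Propositions~\ref{prop:IPPAT:entrance}, \ref{prop:IP:transn}, \ref{prop:min_cld:stats} and Lemma~\ref{lem:min_cld:skewer} plus Poisson thinning), and then deduce the semigroup identity from Proposition~\ref{prop:entrlaw} (marginals land in $\IPA$) together with the semigroup already established on $\IPH$ in Proposition~\ref{prop:IP:transn}. This is precisely what the paper does; your explicit flagging of the Poisson-intensity matching for the dust clades is a more careful unpacking of what the paper compresses into ``standard properties of Poisson random measures,'' but it is the same argument.
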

\begin{proof} By Proposition \ref{prop:entrlaw}, the marginal distributions of ${\tt SSIPE}^*_K(\alpha,\theta)$ are supported on $\mathcal{I}_\alpha$. In view of Proposition 
  \ref{prop:IP:transn}, the semigroup property of the extended kernels $(\kappa^{\alpha,\theta}_y,\,y\ge 0)$ of Definition \ref{def:kernel:sp2} follows as soon as we have 
  identified the marginal distribution of ${\tt SSIPE}^*_K(\alpha,\theta)$ as $\kappa^{\alpha,\theta}_y(\beta^*(K),\,\cdot\,)$. By the definition of $(\mathbf{N}_K,\mathbf{X}_K)$
  in \eqref{eq:NC}, this follows as in Proposition \ref{prop:IP:transn}, noting that the additional contributions from dust are obtained from Proposition \ref{prop:min_cld:stats}(i), 
  Lemma \ref{lem:min_cld:skewer}(ii) and standard properties of Poisson random measures.
\end{proof}
 
\subsection{Path-continuity}\label{sec:F2}

By Propositions \ref{prop:0:len}, \ref{prop:cts_imm} and \ref{prop:entrlaw}, the first coordinate of an ${\tt SSIPE}^*_K(\alpha,\theta)$ evolves as an almost surely path-continuous process in 
$(I_H,d_H^\prime)$ on any time interval $[y,\infty)$, hence on $(0,\infty)$. By \cite[Theorem 2.3(b)]{Paper1-0}, this is equivalent to path-continuity in $(I_H,d_H\circ C)$. This leaves the study of $(d_H\circ C)$-path-continuity at time 0, which also justifies referring to these processes as ${\tt SSIPE}^*_K(\alpha,\theta)$

\begin{lemma}\label{lm:dH} Let $K,\,K_n\in\mathcal{K}$, $n\ge 1$. Then $d_H(K_n,K)\rightarrow 0$ as $n\rightarrow\infty$
  if and only if 
  \begin{equation}\label{crit1}\forall_{(a,b)\in\beta(K)}\ \exists_{n_0\ge 1}\ \forall_{n\ge n_0}\ \exists_{(a_n,b_n)\in\beta(K_n)}\ a_n\rightarrow a\ \mbox{and}\ b_n\rightarrow b
  \end{equation}
  and
  \begin{equation}\label{crit2}\forall_{(n_k)_{k\ge 1}\colon n_k\rightarrow\infty}\ \forall_{(c_k,d_k)\in\beta(K_{n_k}),\,k\ge 1\colon d_k\rightarrow d\in(0,\infty],\,c_k\rightarrow c\neq d}\ (c,d)\in\beta(K).
  \end{equation}
\end{lemma}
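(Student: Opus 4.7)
The plan is to treat the two implications separately, relying on three standard consequences of Hausdorff convergence $d_H(K_n,K)\to 0$: $\max K_n\to\max K$; locally uniform convergence $d(\cdot,K_n)\to d(\cdot,K)$; and the equivalence $x\in K$ iff $x$ is a subsequential limit of points $x_{n_k}\in K_{n_k}$.

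For the forward direction, to verify \eqref{crit1}, I would fix $(a,b)\in\beta(K)$ and consider its midpoint $m=(a+b)/2$: since $d(m,K)=(b-a)/2>0$, distance convergence forces $m\notin K_n$ for large $n$, so $m$ lies in a unique gap $(a_n,b_n)\in\beta(K_n)$ with $a_n=\max(K_n\cap[0,m])$ and $b_n=\min(K_n\cap[m,\max K_n])$. Any subsequential limit $a^*$ of $(a_n)$ lies in $K\cap[0,m]$; a strict inequality $a^*<a$ is ruled out by Hausdorff-approximating $a\in K$, which would produce $K_{n_k}$-points in $[0,m]$ exceeding $a_{n_k}$, contradicting maximality. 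Since $K\cap[a,m]=\{a\}$, we get $a^*=a$, so $a_n\to a$ and likewise $b_n\to b$. To verify \eqref{crit2}, for $(c_k,d_k)\in\beta(K_{n_k})$ with $c_k\to c$, $d_k\to d\in(0,\infty]$, $c\ne d$: the bound $d_k\le\max K_{n_k}\to\max K<\infty$ forces $d<\infty$, while $c,d\in K$ by Hausdorff closedness; any $x\in(c,d)$ satisfies $d(x,K_{n_k})\ge\min(x-c_k,d_k-x)\to\min(x-c,d-x)>0$, so $x\notin K$, yielding $(c,d)\cap K=\varnothing$ and hence $(c,d)\in\beta(K)$.

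For the backward direction, assume \eqref{crit1} and \eqref{crit2}. I would first establish $\max K_n\to\max K$: the inequality $\liminf\max K_n\ge\max K$ follows by applying \eqref{crit1} to gaps $(a,b)\in\beta(K)$ with $b$ arbitrarily close to $\max K$, while $\limsup\max K_n\le\max K$ is extracted from \eqref{crit2} applied to the rightmost gaps of $K_{n_k}$, whose limiting right endpoint cannot exceed $\max K$ lest \eqref{crit2} produce a forbidden $(c,d)\in\beta(K)$ with $d>\max K$. With this boundedness in hand, I would argue by contradiction: if $d_H(K_n,K)\ge\varepsilon$ along a subsequence, compactness yields either $x^*\in K$ with $d(x^*,K_{n_k})\ge\varepsilon/4$ for $k$ large, or $y^*\in\limsup K_{n_k}$ with $d(y^*,K)\ge\varepsilon/4$. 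In the first case, the boundedness and the fact that $0\in K_{n_k}$ force $x^*\in(0,\max K)$, and $x^*$ sits in a gap $(c_k,d_k)\in\beta(K_{n_k})$ of length at least $\varepsilon/2$; extracting convergent subsequences $c_k\to c^*$, $d_k\to d^*$ with $c^*<x^*<d^*$ finite, \eqref{crit2} gives $(c^*,d^*)\in\beta(K)$, contradicting $x^*\in K$. In the second case, $y^*\in[0,\max K]\setminus K$ lies in some $(a,b)\in\beta(K)$, and \eqref{crit1} supplies $(a_n,b_n)\in\beta(K_n)$ with $a_n\to a$, $b_n\to b$; for $k$ large $a_{n_k}<y_k<b_{n_k}$, contradicting $y_k\in K_{n_k}\setminus(a_{n_k},b_{n_k})$.

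The main obstacle is the boundedness step in the backward direction when $\beta(K)=\varnothing$, since \eqref{crit1} is then vacuous; the only compact $K$ with $\beta(K)=\varnothing$ has the form $K=[0,\max K]$, and excluding persistent discrepancies $\max K_n\not\to\max K$ in that degenerate setting requires careful use of \eqref{crit2} together with the structural observation that any $K_n$-point lying outside a suitably chosen neighborhood of $K$ must generate a gap of $K_n$ whose limit violates \eqref{crit2}.
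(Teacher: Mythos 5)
Your forward direction is correct but takes a genuinely different route from the paper's. For \eqref{crit1}, the paper fixes $\varepsilon\in(0,(b-a)/2)$, uses Hausdorff convergence to locate a unique $(a_n,b_n)\in\beta(K_n)$ whose endpoints lie within $\varepsilon$ of $a$ and $b$ respectively, and then observes that uniqueness forces the same interval for every smaller $\varepsilon$, yielding convergence of the endpoints; you instead track $a_n=\max(K_n\cap[0,m])$ (with $m$ the midpoint), pass to subsequential limits and rule out $a^*<a$ by Hausdorff-approximating $a\in K$. Both are valid. For \eqref{crit2}, the paper shows $(c,d)\not\subseteq K$, deduces $d<\infty$, and rules out proper overlap with any gap of $K$ by appealing to the correspondence produced in \eqref{crit1}; you argue more directly that $c,d\in K$ and that every interior point has $d(x,K)=\lim_k d(x,K_{n_k})>0$, which avoids invoking \eqref{crit1} inside the proof of \eqref{crit2}.

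For the backward direction your structure parallels the paper's: both split the failure of Hausdorff convergence into (a) a point of $K$ far from $K_{n_k}$, handled via \eqref{crit2}, and (b) a point of $K_{n_k}$ far from $K$, handled via \eqref{crit1}. The boundedness step you flag is a genuine gap, and it is equally a gap in the paper's own proof: the sentence ``all points of $K_n$ are within $\varepsilon$ of an element of $K$'' ignores points of $K_n$ beyond $\max K$, and the subsequent contradiction argument tacitly assumes $\max K_{n_k}\ge x+\varepsilon$ in order to produce the gap $(c_k,d_k)$. In fact, \eqref{crit1} and \eqref{crit2} as stated do not imply $\max K_n\to\max K$, and the lemma is false as written: taking $K=[0,1]$ and $K_n=[0,1/2]$ (or $K_n=[0,2]$), both $\beta(K)$ and $\beta(K_n)$ are empty, so \eqref{crit1} and \eqref{crit2} hold vacuously while $d_H(K_n,K)\not\to 0$. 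Your proposed remedy for the case $\beta(K)=\varnothing$ --- that a $K_n$-point far from $K$ must generate a gap of $K_n$ violating \eqref{crit2} --- fails in exactly this example, since $K_n=[0,2]$ has no gaps at all. The lemma needs $\max K_n\to\max K$ as an additional hypothesis (equivalently, it should be phrased in terms of $\beta^*(K)=(\beta(K),\max K)$ rather than $\beta(K)$ alone); where the paper invokes the $\Leftarrow$ direction, in Proposition \ref{prop:genIPE:cont0}, this extra condition is supplied by the convergence of the squared Bessel total-mass process, so the application stands, but the lemma statement and both proofs are incomplete without it.
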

\begin{proof} ``$\Rightarrow$'': We first prove \eqref{crit1}. Without loss of generality, $\beta(K)\neq\varnothing$. Then  
  $n_0:=\inf\{n\ge 1\colon\forall_{m\ge n}\ \beta(K_m)\neq\varnothing\}<\infty$. Let $(a,b)\in\beta(K)$ and $\varepsilon\in(0,(b-a)/2)$. Then there
  is $n_\varepsilon\ge n_0$ such that for all $n\ge n_\varepsilon$, we have a unique $(a_n,b_n)\in\beta(K_n)$ with 
  $a-\varepsilon<a_n<a+\varepsilon<b-\varepsilon<b_n<b+\varepsilon$. But since these intervals are unique and therefore cannot depend on $\varepsilon$, the endpoints must converge to $a$ and $b$. 
  
    To prove \eqref{crit2}, note that $(c,d)\subset K$ is impossible since $\varnothing\neq(c+\varepsilon,d-\varepsilon)\subset(c_k,d_k)$ for 
    infinitely many $k$, for all $\varepsilon<(d-c)/2$, so $(c+d)/2$ stays at a positive distance from $K_{n_k}$ for all $k$ sufficiently large, which 
    contradicts $d_H(K_{n_k},K)\rightarrow 0$. Also more generally, $d_H(K_{n_k},K)\rightarrow 0$ implies $d<\infty$. Now, 
    if there is $(a,b)\in\beta(K)$ with $(a,b)\cap(c,d)\neq\varnothing$ and  
    $(a,b)\neq(c,d)$, this contradicts what we proved in \eqref{crit1}, since $(c_k,d_k)\in\beta(K_{n_k})$ cannot overlap 
    any other $(a_{n_k},b_{n_k})\in\beta(K_{n_k})$, nor can they be equal for infinitely many $k$ while $(a,b)\neq(c,d)$.
    
    ``$\Leftarrow$'': Let $\varepsilon>0$. Then there are at most finitely many intervals in $\beta(K)$ of mass exceeding $2\varepsilon$. By
    \eqref{crit1}, there are intervals in $\beta(K_n)$ whose corresponding endpoints differ by less than $\varepsilon$. Therefore, all points of 
    $K_n$ are within $\varepsilon$ of an element of $K$. 
    
    Now assume for contradiction that there are $\varepsilon>0$ and $(n_k)_{k\ge 1}$ with $n_k\rightarrow\infty$ such that $K$ has points $x_k$ 
    that are not within $2\varepsilon$ of $K_{n_k}$, $k\ge 1$. As $K$ is compact, $(x_k)_{k\ge 1}$ has a convergent 
    subsequence, so we may assume that $(n_k)_{k\ge 1}$ was chosen with $x_k\rightarrow x\in K$. But then 
    $x\in K$ is not within $\varepsilon$ of $K_{n_k}$ for $k$ sufficiently large. Hence, 
    $(x-\varepsilon,x+\varepsilon)\subseteq(c_k,d_k)$ for some $(c_k,d_k)\in\beta(K_{n_k})$. Again, by successively passing to suitable 
    subsequences, we may assume that $c_k\rightarrow c\in[0,x-\varepsilon]$ and $d_k\rightarrow d\in[x+\varepsilon,\infty]$. By
    \eqref{crit2}, we have $(c,d)\in\beta(K)$, which contradicts $x\in K$.
\end{proof}

\begin{proposition}\label{prop:genIPE:cont0}
 Fix $K\in\mathcal{K}$ and consider $(\mathbf{N}_K,\mathbf{X}_K)$ as in \eqref{eq:NC}. Let
  $\beta^y=\skewer(y,\mathbf{N}_K,\mathbf{X}_K)$, $y>0$. Then $d_H(C(\beta^y),K)\rightarrow 0$ almost surely as $y\downarrow 0$.
\end{proposition}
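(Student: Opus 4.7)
The plan is to verify conditions \eqref{crit1} and \eqref{crit2} of Lemma \ref{lm:dH} for $K_n:=C(\beta^{y_n})$ along an arbitrary sequence $y_n\downarrow 0$. For each $U=(a,b)\in\beta(K)$ and $y<\zeta(\mathbf{f}_U)$, let $I_U^y$ denote the block of $\beta^y$ arising from the initial spindle $\mathbf{f}_U$ of $\mathbf{N}_U$: its length is $\mathbf{f}_U(y)$ and its left endpoint $P_U^y$ equals the total mass at level $y$ of all clades concatenated to the left of $\mathbf{N}_U$ in \eqref{eq:NC}.

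For \eqref{crit1}, fix $U=(a,b)\in\beta(K)$. Since $\mathbf{f}_U\sim\BESQ_{b-a}(-2\alpha)$ is path-continuous at $0$, $\mathbf{f}_U(y_n)\to b-a$ almost surely. The clades to the left of $\mathbf{N}_U$ consist of $\mathbf{N}_V$ for $V\in\beta(K)$ with $V<U$ and the dust clades in $\Restrict{\mathbf{F}_K}{[0,a]\times\HfinA}$. Combining Proposition \ref{prop:sp}\ref{item:SSIP:mass} (applied to each $\mathbf{N}_V$), Proposition \ref{prop:F:totalmass} (applied to $K\cap[0,a]$), and the additivity of $\BESQ(0)$, the process $y\mapsto P_U^y$ is a $\BESQ_a(0)$, so $P_U^{y_n}\to a$ almost surely; hence $I_U^{y_n}\to(a,b)$.

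For \eqref{crit2} the key claim is: for every $\epsilon>0$, almost surely there exists $y_0>0$ such that for all $y\in(0,y_0]$, every block of $\beta^y$ of length exceeding $2\epsilon$ equals $I_U^y$ for some $U\in\beta_\epsilon(K):=\{V\in\beta(K)\colon\Leb(V)>\epsilon\}$ (a finite set). Granting this, given $(c_k,d_k)\in\beta^{y_{n_k}}$ with $(c_k,d_k)\to(c,d)$ and $d>c$, apply the claim with $\epsilon=(d-c)/4$: for $k$ large we have $(c_k,d_k)=I_{U_k}^{y_{n_k}}$ for some $U_k\in\beta_\epsilon(K)$, and finiteness plus pigeonhole give a subsequence along which $U_k\equiv U$; then \eqref{crit1} forces $(c,d)=U\in\beta(K)$.

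To prove the claim I split the non-$I_U^y$ blocks of $\beta^y$ into three sources: (A) non-initial-spindle blocks of $\mathbf{N}_U$ for $U\in\beta_\epsilon(K)$; (B) all blocks of $\mathbf{N}_U$ for $U\notin\beta_\epsilon(K)$; and (C) all blocks from dust clades in $\mathbf{F}_K$. For (A), $\|\beta^y_U\|-\mathbf{f}_U(y)\to 0$ almost surely for the finitely many $U\in\beta_\epsilon(K)$, by path-continuity of $\BESQ_{\Leb(U)}(0)$ and $\BESQ_{\Leb(U)}(-2\alpha)$ at $0$. For (B), each block is bounded by the non-negative martingale $\|\beta^y_U\|\sim\BESQ_{\Leb(U)}(0)$, and Doob's maximal inequality gives $\Pr(\sup_y\|\beta^y_U\|>2\epsilon)\le\Leb(U)/(2\epsilon)$; since $\sum_{U\notin\beta_\epsilon(K)}\Leb(U)/(2\epsilon)\le\max K/(2\epsilon)<\infty$ and the clades are independent, Borel--Cantelli leaves only finitely many $U$'s that ever exceed $2\epsilon$. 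For (C), the map $N\mapsto(\|\skewer(y,N)\|)_{y\ge0}$ pushes $\tfrac{1}{2\alpha}\umCladeA$ forward to the $\BESQ(0)$ excursion measure (as in the proof of Proposition \ref{prop:F:totalmass}), under which the set of excursions with supremum exceeding $2\epsilon$ has finite mass by the scale-function formula $s(z)=z$ for $\BESQ(0)$; hence only finitely many dust clades ever have total mass exceeding $2\epsilon$. In each of (B) and (C), the finitely many bad clades have continuous skewer paths (Proposition \ref{prop:0:len}\ref{item:IPPA:cts} and Lemma \ref{lem:min_cld:skewer}\ref{item:MCSk:cts}) starting from $\{(0,\Leb(U))\}$ with $\Leb(U)\le\epsilon$, respectively from $\varnothing$, so none of their blocks exceeds $2\epsilon$ for $y$ in a sufficiently small neighbourhood of $0$. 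The main obstacle is the uniform control over the infinitely many small-$U$ clades in (B), which is handled by the martingale maximal inequality together with Borel--Cantelli.
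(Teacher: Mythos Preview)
Your proof is correct and follows the same route as the paper: verify conditions \eqref{crit1} and \eqref{crit2} of Lemma~\ref{lm:dH}. Your argument for \eqref{crit1} is identical to the paper's. For \eqref{crit2}, both you and the paper show that, for each $\varepsilon>0$, every block of $\beta^y$ of length exceeding $2\varepsilon$ is eventually the initial-spindle block of some large $U\in\beta(K)$; you then make the pigeonhole step explicit, whereas the paper leaves it implicit.

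The one genuine difference is in how you control the non-initial-spindle blocks coming from interval clades $\mathbf{N}_U$. The paper treats \emph{all} $U\in\beta(K)$ at once with an aggregate argument: the total mass $\sum_U\|\beta^y_U\|$ is a single $\BESQ_{\max K-\Leb(K)}(0)$, and $\sum_U\mathbf{f}_U(y)$ is shown to be continuous at $0$ with the same initial value (via Fatou for the liminf and the $\BESQ$ bound for the limsup), so the non-negative difference $\sum_U(\|\beta^y_U\|-\mathbf{f}_U(y))$, which dominates every non-initial-spindle block, vanishes as $y\downarrow 0$. You instead split into big and small $U$: for the finitely many big $U$ you argue individually, and for the small $U$ you bound by the martingale $\|\beta^y_U\|$ and invoke Doob's maximal inequality plus Borel--Cantelli, then handle the finitely many exceptional small $U$ by path-continuity at $0$. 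The paper's version is shorter and avoids your three-way case split; your version is more explicit and sidesteps the need to justify continuity of the infinite sum $\sum_U\mathbf{f}_U$. One small point worth making explicit in your write-up: your key claim is stated as ``for every $\varepsilon>0$, almost surely\ldots'', but you apply it with $\varepsilon=(d-c)/4$ depending on the realisation, so you should intersect over a countable sequence $\varepsilon_m\downarrow 0$ to get a single almost-sure event.
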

\begin{proof} We check the criterion of Lemma \ref{lm:dH}. To establish that \eqref{crit1} holds, fix an arbitrary interval $U = (a,b)\in \beta(K)$, and we will show that for all sufficiently small $y$, there exists $(a(y),b(y))\in\beta^y$ so that $a(y)\to a$ and $b(y)\to b$ as $y\downto 0$. Now, consider $K_a:=K\cap[0,a]\in\mathcal{K}$ with
  similarly restricted $(\mathbf{N}_{K_a},\mathbf{X}_{K_a})$ obtained from $(\mathbf{N}_K,\mathbf{X}_K)$. By 
  Proposition \ref{prop:F:totalmass} and additivity of ${\tt BESQ}(0)$, we have
  $$a(y):=\big\|\skewer(y,\mathbf{N}_{K_a},\mathbf{X}_{K_a})\big\|\rightarrow a\quad\mbox{as $y\downarrow 0$.}$$
  By Definition \ref{def:IPPA}, the initial spindle of $\mathbf{N}_U$ is $\mathbf{f}_U \sim {\tt BESQ}_{b-a}(-2\alpha)$, so for all $y < \zeta(\mathbf{f}_U)$ we get $(a(y),b(y))\in \beta^y$ with $b(y) = a(y)+\mathbf{f}_U(y)$. By the continuity of \BESQ, the desired convergence holds. This satisfies \eqref{crit1}.
  
  For \eqref{crit2}, consider the maximal block mass at level $y>0$ among blocks that do not arise from an initial spindle $\mathbf{f}_U$ in a clade 
  $\mathbf{N}_U$, $U\in\beta(K)$. This collection consists of two types. 
  
  The first type is blocks arising from $\mathbf{N}_U$, $U\in\beta(K)$, that are not obtained from the respective initial spindle $\mathbf{f}_U$. The ${\tt BESQ}_{\max K-{\rm Leb}(K)}(0)$ sum of total masses of these clades is continuous. By an elementary analysis argument, the sum of contributions of initial spindles, $\sum_{U\in\beta(K)}\mathbf{f}_U$, is continuous as well, and satisfies
  $$\sum_{U\in\beta(K)}\mathbf{f}_U(0) = \max K-{\rm Leb}(K).$$
  Thus, the other spindles of these clades have a combined total mass that vanishes as $y\downarrow 0$. 
  
  The second type is blocks arising from $\mathbf{F}_K$. Their masses can
  be bounded by the total masses in each $\nu_{\perp\rm cld}^{(\alpha)}$-clade, which form a 
  ${\tt PRM}\big({\rm Leb}|_K\otimes\nu_{\tt BESQ}^{(0)}\big)$. 
  Fix $\varepsilon > 0$. There are at most finitely many clades in $\mathbf{F}_K$ whose total mass processes ever exceed $\varepsilon$, and they all start continuously from $0$ mass. Hence, there is some $z>0$ such that no block of this type in $\beta^y$, $y<z$, has mass greater than $\varepsilon$. Hence, this maximum must also tend to zero. 
  
  This implies \eqref{crit2}, and we conclude Hausdorff convergence by Lemma \ref{lm:dH}.
\end{proof}

\begin{corollary}
 The ${\tt SSIPE}_K^*(\alpha,\theta)$ is $(d_H\circ C)$-path-continuous.
\end{corollary}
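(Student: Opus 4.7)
The plan is to patch together $(d_H\circ C)$-path-continuity on $(0,\infty)$, which is essentially already in hand, with continuity at $y=0$, which is the content of Proposition~\ref{prop:genIPE:cont0} in the case $\theta=0$ and needs only a short extra step when $\theta>0$.

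For strictly positive times, the discussion preceding Lemma~\ref{lm:dH} records that the first coordinate $(\beta^y,\,y>0)$ is path-continuous in $(\mathcal{I}_H,d_H')$ by combining Propositions~\ref{prop:0:len}\ref{item:IPPA:cts}, \ref{prop:cts_imm} and \ref{prop:entrlaw}. The topological equivalence of $d_H'$ and $d_H\circ C$ on $\mathcal{I}_H$ recalled from \cite[Theorem~2.3(b)]{Paper1-0}, together with the identity $M^y=\|\beta^y\|$ for $y>0$, then gives $(d_H\circ C)$-path-continuity on every interval $[y_0,\infty)$ with $y_0>0$.

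To handle entry at $y=0$, I would use the decomposition $\beta^y=\cev{\beta}^y\concat\gamma^y$ of Definition~\ref{def:IPPAT}, where $\cev{\boldsymbol{\beta}}\sim{\tt SSIPE}_\varnothing(\alpha,\theta)$ is independent of $\gamma^y=\skewer(y,\mathbf{N}_K,\mathbf{X}_K)$. This concatenation yields
\begin{equation*}
C(\beta^y,M^y)=C\bigl(\cev{\beta}^y,\|\cev{\beta}^y\|\bigr)\cup\bigl(\|\cev{\beta}^y\|+C(\gamma^y,\|\gamma^y\|)\bigr).
\end{equation*}
By Proposition~\ref{prop:cts_imm}, $\cev{\beta}^y\to\varnothing$ in $d_\alpha$ as $y\downarrow 0$, and in particular $\|\cev{\beta}^y\|\to 0$; thus the first set in the union, which is contained in $[0,\|\cev{\beta}^y\|]$, converges in Hausdorff metric to $\{0\}\subseteq K$. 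By Proposition~\ref{prop:genIPE:cont0}, $C(\gamma^y,\|\gamma^y\|)\to K$ in Hausdorff metric, and a translation of size $\|\cev{\beta}^y\|\to 0$ is $d_H$-negligible. Applying the standard estimate $d_H(A_1\cup A_2,B_1\cup B_2)\le\max(d_H(A_1,B_1),d_H(A_2,B_2))$ for compact sets, and using $0\in K$, yields $d_H(C(\beta^y,M^y),K)\to 0$ almost surely.

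The substantive ingredient is Proposition~\ref{prop:genIPE:cont0}; the only potential obstacle specific to this corollary is the interaction between the immigration process and the dust-driven process near time zero, but this is cleanly sidestepped by the fact that the immigration contribution vanishes in total mass, so in the Hausdorff topology it is absorbed into an arbitrarily small perturbation.
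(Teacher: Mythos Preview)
Your proposal is correct and follows essentially the same route as the paper. The paper's proof is more terse: it simply asserts that the concatenation operation is $(d_H\circ C)$-continuous and invokes continuity of the two component processes (via Propositions~\ref{prop:cts_imm}, \ref{prop:0:len}\ref{item:IPPA:cts}, \ref{prop:entrlaw}, and \ref{prop:genIPE:cont0}), whereas you spell out explicitly the Hausdorff argument for the union of complements at $y=0$; this is exactly what underlies the paper's continuity assertion for concatenation.
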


\begin{proof}
 The concatenation operation defining the ${\tt SSIPE}_K^*(\alpha,\theta)$ from ${\tt SSIPE}_\varnothing(\alpha,\theta)$ and ${\tt SSIPE}_K^*(\alpha,0)$ is $(d_H\circ C)$-continuous. As the two component processes are each continuous, by Proposition \ref{prop:cts_imm} in the first component and by Propositions \ref{prop:0:len}\ref{item:IPPA:cts}, \ref{prop:entrlaw}, and \ref{prop:genIPE:cont0} in the second, so is their concatenation.
\end{proof}

\subsection{Continuity in the initial condition for ${\tt SSIPE}^*(\alpha,\theta)$}

Consider Brownian motion $B$ and $\gamma\in[-1,1]$. Recall from \cite{HarrShep81} that an \em associated skew Brownian motion \em can be obtained as the unique strong solution to the equation
$$X_\gamma(t)=B(t)-\gamma\ell_\gamma(t),\quad\mbox{where }\ell_\gamma(t)=\lim_{h\downarrow 0}\frac{1}{2h}\int_0^t1\{-h<X_\gamma(s)<h\}ds,\quad t\ge 0,$$
where we refer to $\ell_\gamma$ as the \em local time of $X_\gamma$\em. Figure \ref{fig:PW18} illustrates this as a decomposition of $B$ into 
$\gamma\ell_\gamma$ and $X_\gamma$. This figure also illustrates the implications for the Brownian local times, which we also state as a proposition.

\begin{figure}[t]
 \centering
 \includegraphics{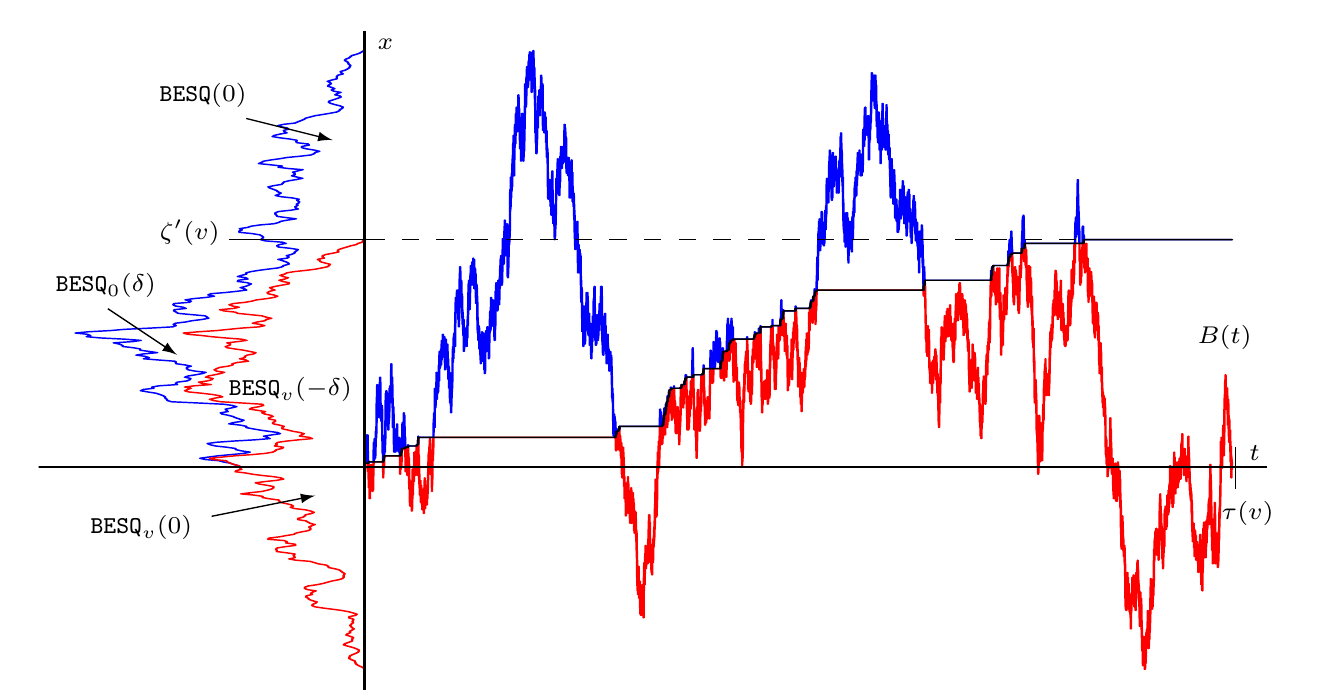}%
 \caption{This figure is from \cite{PW18}, showing the Brownian motion split along the increasing path $\gamma\ell_\gamma$, with the positive excursions of $X_\gamma$ in blue, above the increasing path, and the negative excursions of $X_\gamma$ in red, below the increasing path. The construction is stopped at $\tau(v)$, when the inverse local time of Brownian motion at level 0 exceeds $v>0$. The left-hand side of the figure shows the total Brownian local times (up to time $\tau(v)$), split according to the red and blue contributions, and identified as squared Bessel processes of dimensions $\delta$, $-\delta$ and 0, where $\delta=2\alpha$.\label{fig:PW18}\vspace{-0.1cm}}
\end{figure}

\begin{proposition}[Theorem 1.3 of \cite{PW18}]\label{propPW18} Let $\alpha\!\in\!(0,1)$ and $\gamma\!=\!1/(1\!+\!2\alpha)$. For $B$ Brownian motion and $X_\gamma$ 
  the associated skew Brownian motion with local time $\ell_\gamma$, let $S(x)=\inf\{t\!\ge\! 0\colon\gamma\ell_\gamma(t)\!>\!x\}$, $x\!\ge \!0$, and
  consider the jointly continuous space-time local times $(L(x,t),x\!\in\!\mathbb{R},t\!\ge\! 0)$ of $B$ with inverse 
  $\tau(v)\!=\!\inf\{t\!\ge\! 0\colon L(0,t)\!>\!v\}$ at level 0. Then the following families of random variables are independent
  \begin{itemize}\item $Z_0:=(L(x,S(x)),\,x\ge 0)\sim{\tt BESQ}_0(2\alpha)$
    \item $Z_v^\prime:=(L(x,\tau(v))-L(x,S(x)\wedge\tau(v)),\,x\ge 0)\sim{\tt BESQ}_v(-2\alpha)$ for all $v\ge 0$.
  \end{itemize}
  For each $v\ge 0$, the random level $\zeta^\prime(v):=
  \gamma\ell_\gamma(\tau(v))$ is almost surely finite and coincides with the absorption time of $Z_v^\prime$. Conditionally given $\zeta^\prime(v)=a$,
  \begin{itemize}\item the process $Z_{0,v}:=(L(x,S(x)\wedge\tau(v)),\,x\ge 0)$ is independent of $Z_v^\prime$ and evolves as ${\tt BESQ}_0(2\alpha)$ on $[0,a]$ and as ${\tt BESQ}(0)$ on $[a,\infty)$. 
  \end{itemize}
\end{proposition}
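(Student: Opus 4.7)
The plan is to use excursion theory for the skew Brownian motion $X_\gamma$ to decompose the Brownian local times into independent pieces of the asserted types. Writing $B=X_\gamma+\gamma\ell_\gamma$, one views $B$ as the increasing curve $t\mapsto\gamma\ell_\gamma(t)$ with the excursions of $X_\gamma$ away from zero attached: positive excursions of $X_\gamma$ lift to excursions of $B$ above the curve, and negative excursions sit below. I would first invoke the classical theory of skew-BM excursions (cf.\ \cite{LeGallYor1986}) to record that positive excursions of $X_\gamma$ form a Poisson point process with respect to $d\ell_\gamma$ of intensity $p\cdot n$, and negative excursions form an independent PPP of intensity $q\cdot n$, where $n$ is the It\^o BM excursion measure and $p=(1+\gamma)/2$, $q=(1-\gamma)/2$. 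Reparametrising both PPPs by the curve height $u=\gamma\ell_\gamma$ yields PPPs on $[0,\infty)$ whose intensity measures in $u$ are proportional to $n$.

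Next I would identify $Z_0$. Since $X_\gamma(S(x))=0$, we have $B(S(x))=x$, and $L(x,S(x))$ receives contributions only from positive excursions of $X_\gamma$ started at curve heights $u\le x$ that reach height $x-u$ above their start (negative excursions before $S(x)$ have starting height $\le x$ and lie below the curve, so do not touch level $x$). Summing excursion-local-times $\sum_{u\le x}L^{e_u}(x-u)$ across the PPP of positive excursions should produce, via the standard It\^o-excursion derivation of the second Ray--Knight theorem (see e.g.\ \cite[Section XI.2]{RevuzYor}), a $\BESQ_0(\delta^+)$ process in $x$; the intensity bookkeeping with $\gamma=1/(1+2\alpha)$ should force $\delta^+=2\alpha$. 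An exactly symmetric argument then identifies $Z_v'(x)=L(x,\tau(v))-L(x,S(x)\wedge\tau(v))$: this is nonzero only when $\tau(v)>S(x)$, in which case it counts local time at level $x$ accrued during $(S(x),\tau(v)]$, which comes entirely from negative excursions of $X_\gamma$ started at heights $u\in[x,\zeta'(v)]$ that dip down to level $x$. The same PPP calculation will yield $\BESQ_v(-2\alpha)$, with starting value $L(0,\tau(v))=v$. Independence of $Z_0$ from the family $(Z_v')_{v\ge 0}$ is then immediate from the independence of the positive and negative excursion PPPs.

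For the absorption and conditional statements: for $x>\zeta'(v)$ no negative excursion started in $[0,\tau(v)]$ has starting height above $x$, so $Z_v'(x)=0$; since $\tau(v)<\infty$ almost surely, $\zeta'(v)$ is finite and is the absorption time of $Z_v'$. Conditionally on $\zeta'(v)=a$, on $[0,a]$ we have $S(x)\le\tau(v)$ and $Z_{0,v}=Z_0$, which is $\BESQ_0(2\alpha)$ there; on $[a,\infty)$ the quantity $S(x)\wedge\tau(v)=\tau(v)$ is frozen and $Z_{0,v}(x)=L(x,\tau(v))$, which by the strong Markov property at $\tau(v)$ and the classical second Ray--Knight theorem applied from level $a$ onwards should evolve as $\BESQ(0)$ from $Z_{0,v}(a)$. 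The principal technical obstacle will be making the conditioning on $\zeta'(v)=a$ rigorous and disentangling the positive/negative excursion PPPs around the stopping time $\tau(v)$; this will require carefully combining the strong Markov property for the pair $(B,X_\gamma)$ at $\tau(v)$ with the Poissonian structure of the skew-BM excursion decomposition.
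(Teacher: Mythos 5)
The paper does not prove this proposition itself; it cites it as Theorem~1.3 of \cite{PW18}. Your sketch---decomposing $B$ along the increasing curve $\gamma\ell_\gamma$ into the independent Poisson point processes of positive and negative excursions of $X_\gamma$, identifying $Z_0$ and $Z_v'$ as the respective excursion-local-time sums, and reading off the squared Bessel laws of dimensions $2\alpha$ and $-2\alpha$ together with the conditional structure from this Poissonian picture---matches the ingredients the paper itself attributes to \cite{PW18} (see the proof of Corollary~\ref{cor0PW18}, which invokes the excursion decomposition of \cite[Lemma~2.3]{PW18} and the Pitman--Yor \cite{PitmYor82} point-measure construction of squared Bessel processes).
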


\begin{corollary}\label{cor1PW18}  In the setting of Proposition \ref{propPW18}, fix $v_0>0$ and $y>0$. Then there is a random $\Delta>0$ 
  such that $\zeta^\prime(v)=\zeta^\prime(v_0)$ and $Z_v^\prime|_{[y,\infty)}=Z_{v_0}^\prime|_{[y,\infty)}$ for all $v\in(v_0-\Delta,v_0+\Delta)$. In 
  particular, $Z_{0,v}=Z_{0,v_0}$ for all $v\in(v_0-\Delta,v_0+\Delta)$.
\end{corollary}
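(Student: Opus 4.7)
The plan is to choose a random $\Delta>0$ so that, almost surely, two properties hold on the local-time window $(v_0-\Delta,v_0+\Delta)$: (a) $\ell_\gamma$ is constant along $t=\tau(v)$, which forces $\zeta'(v)=\zeta'(v_0)$; and (b) no positive excursion of $B$ from $0$ whose local-time label lies in the window attains a height $\ge y':=\min(y,\zeta'(v_0))$, which pins the Brownian local times $L(x,\cdot)$ at every level $x\ge y'$ between $\tau(v_0)$ and $\tau(v)$. The three identities asserted in the corollary then drop out of a case analysis in the formulas for $Z_v'$ and $Z_{0,v}$.

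For (a), observe that $X_\gamma(\tau(v_0))=B(\tau(v_0))-\gamma\ell_\gamma(\tau(v_0))=-\zeta'(v_0)$, which is strictly negative almost surely since $\gamma>0$, $v_0>0$, and the local time $\ell_\gamma$ accumulates immediately from $X_\gamma(0)=0$. By continuity of $X_\gamma$ there is a random $h>0$ with $X_\gamma<0$ on $(\tau(v_0)-h,\tau(v_0)+h)$, so $\ell_\gamma$ is constant there. Since $\tau$ is a stable subordinator, its jumps form a Poisson point process in the local-time variable, so almost surely $v_0$ is not a jump time of $\tau$; hence $\tau$ is continuous at $v_0$ and we may pick $\Delta_1>0$ with $\tau(v_0\pm\Delta_1)\in(\tau(v_0)-h,\tau(v_0)+h)$. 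This yields $\zeta'(v)=\gamma\ell_\gamma(\tau(v))=\zeta'(v_0)$ for all $v\in(v_0-\Delta_1,v_0+\Delta_1)$.

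For (b), positive excursions of $B$ from $0$ reaching height at least $y'>0$ form the atoms of a Poisson point process of finite intensity in the local-time variable by It\^o excursion theory; almost surely $v_0$ is isolated from these atoms, producing $\Delta_2>0$ for which no such excursion has its local-time label in $(v_0-\Delta_2,v_0+\Delta_2)$. Setting $\Delta:=\min(\Delta_1,\Delta_2,v_0)$, for any $v\in(v_0-\Delta,v_0+\Delta)$ and any $x\ge y'$ the increment of $L(x,\cdot)$ between $\tau(v_0)$ and $\tau(v)$ is zero, since $L(x,\cdot)$ only grows during positive excursions reaching level $x$. Combining this with the equivalence $S(x)<\tau(v)\Leftrightarrow x<\zeta'(v)=\zeta'(v_0)$ and case-splitting on whether $x<\zeta'(v_0)$ or $x\ge\zeta'(v_0)$ in $Z_v'(x)=L(x,\tau(v))-L(x,S(x)\wedge\tau(v))$ and $Z_{0,v}(x)=L(x,S(x)\wedge\tau(v))$, one reads off $Z_v'(x)=Z_{v_0}'(x)$ for $x\ge y$ and $Z_{0,v}(x)=Z_{0,v_0}(x)$ for $x\ge 0$. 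The point most worth writing out carefully is the range $\zeta'(v_0)\le x<y$ in the analysis of $Z_{0,v}$, which is precisely why we need the smaller threshold $y'=\zeta'(v_0)$ rather than $y$ in (b); together with a clean invocation of the Poisson structure of tall excursions under the It\^o excursion law to justify $\Delta_2>0$, this is the main technical ingredient.
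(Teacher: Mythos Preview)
Your proof is correct and follows the same essential idea as the paper: isolate $v_0$ from the local-time labels of Brownian excursions whose height exceeds a suitable positive threshold, so that neither $\ell_\gamma$ nor the local times $L(x,\cdot)$ at the relevant levels can change across the window. Your case analysis for $Z'_v$ and $Z_{0,v}$ is careful and correct, and your observation that the threshold must be $y'=\min(y,\zeta'(v_0))$ rather than $y$ is exactly the right point.

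There is one imprecision worth flagging. You write that positive excursions reaching height at least $y'$ ``form the atoms of a Poisson point process of finite intensity,'' but $y'=\min(y,\zeta'(v_0))$ is random and depends on the entire excursion process up to local time $v_0$, so this is not literally a PPP statement. The conclusion you need still holds: since $y'>0$ a.s.\ and for every fixed $\epsilon>0$ the set of local-time labels of excursions with height $\geq\epsilon$ is discrete, $v_0$ (which is a.s.\ not the label of any excursion) is isolated from the atoms exceeding $y'$. The paper sidesteps this by choosing the threshold $y\wedge\gamma\ell_\gamma(\tau(v_0/2))$, which is measurable with respect to the process up to local time $v_0/2$ and hence independent of the excursions after $v_0/2$; this makes the ``finite-intensity PPP, so a gap exists'' argument literal. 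The paper's threshold also handles your step (a) in one stroke: if no excursion in the window has height exceeding $\gamma\ell_\gamma(\tau(v_0/2))\le\gamma\ell_\gamma(t)$ for $t\ge\tau(v_0/2)$, then $B$ never meets the rising curve $\gamma\ell_\gamma$ there, so $\ell_\gamma$ is automatically constant. Your separate treatment of (a) via continuity of $X_\gamma$ at $\tau(v_0)$ and continuity of $\tau$ at $v_0$ is perfectly valid, just slightly longer.
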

\begin{proof} Consider the Poisson random measure of excursions of $B$ away from 0. Since $\ell_\gamma(\tau(v_0))>0$ almost surely and 
  $\ell_\gamma$ only increases when $B(t)=\gamma\ell_\gamma(t)$, it suffices to show that there is $\Delta>0$ such that the Poisson random measure has
  no excursions of height greater than $y\wedge\gamma\ell_\gamma(\tau(v_0/2))>0$ on an interval $(v_0-\Delta,v_0+\Delta)$. This follows 
  from the properties of Poisson random measures, notably independence properties and finite intensity of large excursions.  
\end{proof}

\begin{corollary}\label{cor0PW18} In the setting of Proposition \ref{propPW18}, let
  \begin{align*}
    \mathbf{G} :=\sum_{x\ge 0\colon g_x \not\equiv 0} \delta(x,g_x)\ \ \text{where}\ \ 
    g_x(y) := L(x \!+\! y,S(x)) - L(x \!+\! y,S(x-)),\ y\ge 0.
  \end{align*}
  Then $\mathbf{G}$ has law ${\tt PRM}({\rm Leb} \otimes 2\alpha\nu_{\tt BESQ}^{(0)})$ and is independent of $Z_v^\prime$, for any $v\!\ge\!0$. More\-over, the rest\-riction $\mathbf{G}_v:=\mathbf{G}|_{[0,\zeta'(v)]}$ is conditionally independent of $Z_v^\prime$ given $\zeta^\prime(v)$.
\end{corollary}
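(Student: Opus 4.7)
The plan is to recognize $\mathbf{G}$ and $Z_v'$ as measurable functions of two independent Poisson point processes produced by It\^o's excursion theorem for $X_\gamma$, and then to pin down the intensity of $\mathbf{G}$ by matching one marginal of the Pitman--Yor (CSBP immigration) decomposition against the Ray--Knight identification in Proposition~\ref{propPW18}.

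First, by It\^o excursion theory applied to $X_\gamma$, the excursions of $X_\gamma$ away from $0$, indexed by the local time $\ell_\gamma$, split by sign into independent Poisson random measures $\mathbf{E}^+$ and $\mathbf{E}^-$. Each positive atom $(\ell_0,e)\in\mathbf{E}^+$ corresponds to an excursion of $B$ above level $x:=\gamma\ell_0$, since on that excursion interval $\gamma\ell_\gamma\equiv\gamma\ell_0$ is constant and $B=X_\gamma+\gamma\ell_0$. Its spatial local-time profile is precisely the $g=g_x$ in the definition of $\mathbf{G}$, so $\mathbf{G}$ is the image of $\mathbf{E}^+$ under the deterministic map $(\ell_0,e)\mapsto(\gamma\ell_0,\,\text{local-time profile of }e)$. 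In particular $\mathbf{G}$ is a PRM on $[0,\infty)\times\Exc$ whose intensity is translation-invariant in the first coordinate and, by the classical Ray--Knight pushforward that sends $\nu^+$ to $\nu^{(0)}_{\tt BESQ}$ under the local-time map, takes the form $c\,\Leb\otimes\nu^{(0)}_{\tt BESQ}$ for some $c>0$.

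To pin down $c=2\alpha$, I would verify the pathwise identity
\begin{equation*}
Z_0(x)=L(x,S(x))=\sum_{(x',g)\in\mathbf{G},\,x'\le x} g(x-x'),\qquad x\ge 0,
\end{equation*}
by level-set accounting: on $[0,S(x)]$ we have $\gamma\ell_\gamma\le x$, so $B$ visits level $x$ only during positive excursions of $X_\gamma$ at $\gamma\ell_\gamma$-position $x'\le x$ that reach height $x-x'$, and off excursion intervals $B=\gamma\ell_\gamma$ passes through level $x$ at the single instant $t=S(x)$ with no local-time contribution. The right-hand side is the standard Pitman--Yor / CSBP-immigration representation of $\BESQ_0(c)$; combined with $Z_0\sim\BESQ_0(2\alpha)$ from Proposition~\ref{propPW18}, this forces $c=2\alpha$.

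For the independence, the key point is that $Z_v'$ is $\sigma(\mathbf{E}^-)$-measurable. Since $\gamma\ell_\gamma$ is constant on each excursion interval, $B=0$ requires $X_\gamma=-\gamma\ell_\gamma(\text{start of excursion})<0$, i.e.\ a negative excursion; hence the Brownian local time $L(0,\cdot)$, its inverse $\tau(v)$, and $\zeta'(v)=\gamma\ell_\gamma(\tau(v))$ are all $\sigma(\mathbf{E}^-)$-measurable, and a parallel level-set calculation expresses $Z_v'(x)$ for $x<\zeta'(v)$ as a $\sigma(\mathbf{E}^-)$-measurable sum over negative excursions at $\ell_\gamma$-positions in $[x/\gamma,\ell_\gamma(\tau(v))]$ of their local times at the appropriate depths. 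With $\mathbf{G}$ being $\sigma(\mathbf{E}^+)$-measurable and $\mathbf{E}^+\indep\mathbf{E}^-$ by It\^o, we conclude $\mathbf{G}\indep Z_v'$. The conditional statement is then automatic: $\zeta'(v)$ is a function of $Z_v'$, so the unconditional independence of $\mathbf{G}$ and $Z_v'$ yields conditional independence of $\mathbf{G}$ and $Z_v'$ given $\zeta'(v)$, and $\mathbf{G}_v=\mathbf{G}|_{[0,\zeta'(v)]}$ is a measurable function of $(\mathbf{G},\zeta'(v))$. The main obstacle will be bookkeeping the normalization of $\ell_\gamma$ so as to be consistent with $X_\gamma=B-\gamma\ell_\gamma$ (so that the It\^o split and Ray--Knight pushforward yield the correct intensity), and carefully checking the two level-set accountings above to confirm that no outside-excursion contributions to $L(x,S(x))$ or $Z_v'(x)$ are missed.
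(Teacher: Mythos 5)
Your proof is sound and rests on the same pillars as the paper's citation-based argument: Itô's excursion theorem for the skew Brownian motion $X_\gamma$, splitting positive and negative excursions into independent Poisson random measures; the identification of positive excursions of $X_\gamma$ as Brownian excursions; the local-time pushforward to $\nu_{\tt BESQ}^{(0)}$; and the Pitman--Yor immigration construction. Where you genuinely diverge is in how the constant $2\alpha$ is obtained. The paper cites \cite[Lemma~2.3]{PW18} directly for the rate $\mu_\gamma^+$ at which positive Brownian excursions occur in the skew local-time clock; you instead pin down the constant by matching the Pitman--Yor sum $Z_0(x)=\sum_{x'\le x}g_{x'}(x-x')$ against the law $Z_0\sim{\tt BESQ}_0(2\alpha)$ already established in Proposition~\ref{propPW18}. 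Your route is self-contained and automatically absorbs the $1/\gamma$ Jacobian from the rescaling $\ell_0\mapsto\gamma\ell_0$ and the excursion-measure normalization -- precisely the bookkeeping you flag at the end -- whereas the paper delegates that to the cited reference. One imprecision in your independence step is worth correcting: as a stopping \emph{time}, $\tau(v)$ is \emph{not} $\sigma(\mathbf{E}^-)$-measurable, since the excursion clock of $X_\gamma$ (the map from $\ell_\gamma$-local time to calendar time) depends on the durations of excursions of both signs; likewise $L(0,\cdot)$ as a function of calendar time is not. What \emph{is} $\sigma(\mathbf{E}^-)$-measurable is $L(0,\cdot)$ reparametrized by $\ell_\gamma$, and hence the local-time quantity $\ell_\gamma(\tau(v))=\zeta'(v)/\gamma$. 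Since your level-set formula for $Z_v'(x)$ only ever uses $\ell_\gamma$-positions of negative excursions in $[x/\gamma,\ell_\gamma(\tau(v))]$, this is exactly what you need, so the independence conclusion stands; and the final reduction to conditional independence of $\mathbf{G}_v$ and $Z_v'$ given $\zeta'(v)$ is correct as written.
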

\begin{proof} Proposition \ref{propPW18} records implications about local time processes of the underlying decomposition of $B$ established in 
  \cite[Lemma 2.3]{PW18}. While the positive excursions of skew Brownian motion are well-known to be Brownian excursions 
  \cite{ItoMcKean,Walsh78} and the point measure construction of ${\tt BESQ}(2\alpha)$ from a ${\tt PRM}(2\alpha\nu_{\tt BESQ}^{(0)})$ 
  is also well-known \cite{PitmYor82}, the setting of Proposition \ref{propPW18} is that of a given Brownian motion $B$, and we refer to 
  \cite[Lemma 2.3]{PW18} as it yields the independence claims in this setting, and also provides a direct identification of the rate 
  $\mu_\gamma^+=2\gamma/(1-\gamma)=2\alpha$ of Brownian excursions in the given parametrisation.
\end{proof}

By \cite[Corollary 4.11]{FVAT}, there is a kernel $\kappa_\perp(g,dN)$ that associates with
an excursion $g$ a clade $N$ with total mass evolution $g$, such that
\begin{equation}\label{eq:kappa_perp}
 \mbox{the push-forward of $\nu_{\tt BESQ}^{(0)}$ under $\kappa_\perp$ is $\nu_{\perp\rm cld}^{(\alpha)}$.}
\end{equation}
By \eqref{eq:clade_from_F}, a clade with an initial spindle $f$ of height $a$ can be built from $f$, along with $\mathbf{F}\sim{\tt PRM}({\rm Leb}|_{[0,a]}\otimes\nu_{\perp\rm cld}^{(\alpha)})$. Thus, if we instead begin with $f$ and a point measure of $\mathbb{R}$-valued excursions $G$, as in Corollary \ref{cor0PW18}, then we can obtain a clade by first marking the points of $G$ via $\kappa_\perp$ to obtain a point measure $F$ of clades, and then assembling these into a single clade via \eqref{eq:clade_from_F}, 
but with the convention that, instead of concatenating in order of increasing $y$, we do so in order of decreasing $y$, in order to respect the levels at which excursions arise in Figure \ref{fig:PW18}. Let $\widetilde{\kappa}((f,G),dN)$ denote the kernel that carries out this construction.



\begin{lemma}\label{lem:clade_from_BM}
 Fix $v > 0$. Let $Z_v^\prime$ and $\mathbf{G}_v$ be constructed from a Brownian motion $B$, as in Proposition \ref{propPW18} and Corollary \ref{cor0PW18}. 
 Applying $\widetilde{\kappa}$ to the pair $(Z_v^\prime,\mathbf{G}_v)$ we obtain a clade $\mathbf{N}_v\!\sim\!\mathbf{P}_{\{(0,v)\}}^{\alpha,0}$. 
\end{lemma}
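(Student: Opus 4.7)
The plan is to verify that $\mathbf{N}_v := \widetilde\kappa(Z_v^\prime, \mathbf{G}_v)$ matches the description of $\mathbf{P}^{\alpha,0}_{\{(0,v)\}}$ given by \eqref{eq:clade_from_F}: a $\BESQ_v(-2\alpha)$ initial spindle $\mathbf{f}$, concatenated at levels $y \in [0,\zeta(\mathbf{f}))$ with an independent $\mathbf{F}_\perp \sim \PRM(\Leb \otimes \umCladeA)$. By the construction of $\widetilde\kappa$, the output $\mathbf{N}_v$ already has initial spindle $Z_v^\prime$ and carries subsequent spindles obtained by marking each atom of $\mathbf{G}_v$ through $\kappa_\perp$; so it remains to identify the joint law of these two ingredients.

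I would first work conditionally on $\zeta'(v)=a$. Proposition~\ref{propPW18} gives $Z_v^\prime \sim \BESQ_v(-2\alpha)$ absorbed at $a$, matching the law of $\mathbf{f}$ conditioned on $\zeta(\mathbf{f})=a$. Corollary~\ref{cor0PW18} provides the required conditional independence of $Z_v^\prime$ and $\mathbf{G}_v$ given $\zeta'(v)$, together with the conditional law $\mathbf{G}_v \sim \PRM(\Leb|_{[0,a]} \otimes 2\alpha\,\nu_{\BESQ}^{(0)})$. Pushing each atom of $\mathbf{G}_v$ through the regular conditional distribution $\kappa_\perp$, the Poisson mapping theorem yields a clade-valued Poisson random measure on $[0,a]$ whose intensity is $\Leb|_{[0,a]}$ times the $\kappa_\perp$-push-forward of $2\alpha\,\nu_{\BESQ}^{(0)}$. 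Combining \eqref{eq:kappa_perp} with the relation noted in the proof of Proposition~\ref{prop:F:totalmass} (that the total-mass push-forward of $\frac{1}{2\alpha}\umCladeA$ is $\nu_{\BESQ}^{(0)}$) identifies this push-forward as $\umCladeA$. Hence the marked measure agrees in law with the restriction $\mathbf{F}_\perp|_{[0,a]\times\cdot}$ appearing in \eqref{eq:clade_from_F} when $\zeta(\mathbf{f})=a$.

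One subtlety remains: $\widetilde\kappa$ concatenates the clades in \emph{decreasing} order of level, whereas \eqref{eq:clade_from_F} uses increasing order. The two orderings produce identically distributed spindle measures, since a $\PRM(\Leb|_{[0,a]} \otimes \umCladeA)$ is invariant in distribution under the measure-preserving reflection $y \mapsto a-y$, and this reflection interchanges the two orderings of the concatenation. Unconditioning on $\zeta'(v)=a$ then delivers $\mathbf{N}_v \sim \mathbf{P}^{\alpha,0}_{\{(0,v)\}}$.

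The only real care needed is the bookkeeping of the factor $2\alpha$: it appears in the intensity of $\mathbf{G}$ in Corollary~\ref{cor0PW18} precisely so that after push-forward through $\kappa_\perp$ one obtains $\umCladeA$ with the standard normalisation $\umCladeA(\zeta^+>z)=\alpha/z$. Beyond this constant, the argument is a direct translation between the Brownian-embedding construction of Proposition~\ref{propPW18} and the stable-scaffolding clade construction of Definition~\ref{def:IPPA}.
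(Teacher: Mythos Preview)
Your argument is correct and follows the same route as the paper's proof, which simply cites Proposition~\ref{propPW18} for $Z_v'\sim\BESQ_v(-2\alpha)$, Corollary~\ref{cor0PW18} for the conditional independence and the conditional law of $\mathbf{G}_v$, and then appeals to \eqref{eq:clade_from_F}. You are more explicit in two places the paper leaves implicit: the reflection argument showing that concatenating in decreasing versus increasing level order gives the same law (since $\PRM(\Leb|_{[0,a]}\otimes\umCladeA)$ is invariant under $y\mapsto a-y$), and the careful tracking of the factor $2\alpha$ in the intensity.
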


\begin{proof}
 Proposition \ref{propPW18} notes that $Z_v^\prime \sim \BESQ_{v}(-2\alpha)$. Corollary \ref{cor0PW18} then observes that $\mathbf{G}_v$ has conditional law ${\tt PRM}({\rm Leb}|_{[0,\zeta'(v)]}\otimes 2\alpha\nu_{\tt BESQ}^{(0)})$ and is conditionally independent of $Z_v^\prime$, given $\zeta'(v)$. The lemma follows by \eqref{eq:clade_from_F}.
\end{proof}

\begin{corollary}\label{cor2PW18} Consider $K\in\mathcal{K}$ and Brownian motion $W$ with inverse local time $\tau_W$ at level $0$. For each interval 
  $U=(a,b)\in\beta(K)$, consider $B_U:=(W(\tau_W(a)+t),t\ge 0)$. 
  Let $\mathbf{N}_{U}$ denote the clade arising from Lemma \ref{lem:clade_from_BM} applied to $B_U$ with $v\!=\!{\rm Leb}(U)$, so that $\mathbf{N}_{U}\!\sim\!\mathbf{P}_{\{(0,{\rm Leb}(U))\}}^{\alpha,0}$. 
  If we restrict the Poisson random measure of excursions of $W$ to $K$, map each excursion onto its total local time process and mark it by 
  $\kappa_\perp$, the resulting Poisson random measure $\mathbf{F}_K$ is distributed as in \eqref{eq:Fdust}. Furthermore, all $\mathbf{N}_U$, $U\in\beta(K)$, and $\mathbf{F}_K$ are independent. 
\end{corollary}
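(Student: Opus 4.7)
The plan is to organize the proof around the strong Markov property of $W$ at the stopping times $\tau_W(a)$ combined with the It\^o excursion decomposition of $W$ at level $0$. Let $\mathbf{M}:=\sum_i\delta(l_i,e_i)$ denote the Poisson random measure on $[0,\infty)\times\mathcal{E}_{\textnormal{BM}}$ of excursions of $W$ away from $0$, where $l_i$ is the value of the local time of $W$ at $0$ just before the $i$-th excursion and $e_i$ is the excursion itself; then $\mathbf{M}$ has intensity $\Leb\otimes\nu_{\textnormal{BM}}$, with $\nu_{\textnormal{BM}}$ It\^o's excursion measure for $W$. First I verify the law of each $\mathbf{N}_U$: by the strong Markov property at the stopping time $\tau_W(a)$, the process $B_U$ is itself a standard Brownian motion, and since $\Leb(U)=b-a$, Lemma \ref{lem:clade_from_BM} applied to $B_U$ with $v=b-a$ yields directly $\mathbf{N}_U\sim\mathbf{P}^{\alpha,0}_{\{(0,\Leb(U))\}}$.

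Next I establish the independence claim. The construction of $\mathbf{N}_U$ in Lemma \ref{lem:clade_from_BM} depends on $B_U$ only up to time $\tau_{B_U}(\Leb(U))=\tau_W(b)-\tau_W(a)$, and the excursions of $B_U$ from $0$ over this time interval correspond precisely to the atoms of $\Restrict{\mathbf{M}}{U\times\mathcal{E}_{\textnormal{BM}}}$ after the shift $l_i\mapsto l_i-a$; between excursions the local time accrues deterministically, so no further information is lost. Hence each $\mathbf{N}_U$ is a measurable function of $\Restrict{\mathbf{M}}{U\times\mathcal{E}_{\textnormal{BM}}}$, while $\mathbf{F}_K$ is by construction a measurable function of $\Restrict{\mathbf{M}}{K\times\mathcal{E}_{\textnormal{BM}}}$. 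Since $\{U\colon U\in\beta(K)\}\cup\{K\}$ is a pairwise disjoint family of subsets of $[0,\infty)$, Poisson thinning yields mutual independence of the corresponding restrictions of $\mathbf{M}$, and thus of $(\mathbf{N}_U)_{U\in\beta(K)}$ and $\mathbf{F}_K$.

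Finally I identify the law of $\mathbf{F}_K$. The restriction $\Restrict{\mathbf{M}}{K\times\mathcal{E}_{\textnormal{BM}}}$ is a Poisson random measure of intensity $\Restrict{\Leb}{K}\otimes\nu_{\textnormal{BM}}$. Pushing each atom forward under the total-local-time map $e\mapsto L^e$ produces, by a Ray--Knight-type identity for Brownian excursions, a Poisson random measure whose per-unit-local-time excursion intensity is a constant multiple $c$ of $\nu_{\tt BESQ}^{(0)}$; marking the result via $\kappa_\perp$ and using \eqref{eq:kappa_perp} then yields intensity $\Restrict{\Leb}{K}\otimes c\,\nu^{(\alpha)}_{\perp\textnormal{cld}}$. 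The main obstacle is to track the normalizations and confirm that $c=\frac{1}{2\alpha}$: this is the dust-side counterpart of the factor $2\alpha$ that appeared on the clade side in Corollary \ref{cor0PW18}, and it is pinned down by the normalizations of $\nu_{\textnormal{BM}}$, $\nu_{\tt BESQ}^{(0)}$, and $\nu^{(\alpha)}_{\perp\textnormal{cld}}$ fixed earlier in the paper.
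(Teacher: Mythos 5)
Your argument takes the same route as the paper's proof: $B_U$ is a Brownian motion by the strong Markov property; each construction only depends on the corresponding piece of $W$ in local time, so independence follows from the Poisson structure of Brownian excursions; and the law of $\mathbf{F}_K$ is obtained by pushing forward the Brownian excursion measure to total local time profiles (Pitman--Yor) and then applying $\kappa_\perp$.

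The part you explicitly leave open is exactly where care is required, and the value you commit to for the constant is not what Ray--Knight gives. The pushforward of the It\^o measure of positive Brownian excursions onto total local time processes is $\nu_{\tt BESQ}^{(0)}$ with no extra factor, in the normalization $\nu_{\tt BESQ}^{(0)}(\zeta>y)=1/2y$; this is precisely what makes $(L(x,\tau(v)),\,x\ge 0)$ a $\BESQ_v(0)$, as used in Proposition \ref{prop:F:totalmass}. So $c=1$, not $c=\frac{1}{2\alpha}$. The factor $\frac{1}{2\alpha}$ in \eqref{eq:Fdust} instead comes from the relation between $\nu_{\tt BESQ}^{(0)}$ and $\umCladeA$: Proposition \ref{prop:min_cld:stats}\ref{item:MCS:max} gives $\umCladeA\{\zeta^+>z\}=\alpha z^{-1}=2\alpha\cdot\nu_{\tt BESQ}^{(0)}\{\zeta>z\}$, so the pushforward of $\umCladeA$ onto total mass is $2\alpha\nu_{\tt BESQ}^{(0)}$, and $\kappa_\perp$ should be read as carrying $2\alpha\nu_{\tt BESQ}^{(0)}$ to $\umCladeA$ (equivalently $\nu_{\tt BESQ}^{(0)}$ to $\frac{1}{2\alpha}\umCladeA$), which is the version consistent with Proposition \ref{prop:F:totalmass}. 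With $c=1$ and this normalization of $\kappa_\perp$, marking gives exactly $\mathrm{Leb}|_K\otimes\frac{1}{2\alpha}\umCladeA$ as in \eqref{eq:Fdust}. Your suggested ``dust-side counterpart'' of the $2\alpha$ in Corollary \ref{cor0PW18} is also not the right heuristic: that $2\alpha$ is the excursion rate $\mu_\gamma^+=2\gamma/(1-\gamma)=2\alpha$ of the skew Brownian decomposition above $\gamma\ell_\gamma$, whereas here the excursions are those of $W$ at level $0$, so no such factor appears on the excursion side.
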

\begin{proof} Clearly, $B_U$ is a Brownian motion for each $U\in\beta(K)$. Also, the construction of Proposition \ref{propPW18} only depends
  on $B$ restricted to $[0,\tau(v))$. Therefore, the constructions are all independent as $U\in\beta(K)$ varies, and further independent of the 
  restriction to $K$ of the Poisson random measure of excursions of $W$. That the Brownian excursion measure is pushed forward to 
  $\nu_{\tt BESQ}^{(0)}$ was observed by Pitman and Yor \cite{PitmYor82}. The claimed distributions, after applying $\kappa_\perp$ and $\widetilde{\kappa}$, follow from 
  \eqref{eq:kappa_perp} and Lemma \ref{lem:clade_from_BM}.  
\end{proof}

Figure \ref{fig:PW18} shows the case $x=0$ of a more general setting of Burdzy et al. \cite{BurdChen01,BBKM01,BurdKasp04} where multiple skew Brownian motions of the same skewness parameter but starting from different initial states are driven by the same Brownian motion. 
Specifically, for any $x\in\mathbb{R}$, or any countable (but not uncountable) collection of values $x\in\mathbb{R}$, there are unique strong
solutions to $X^x_\gamma(t)=x+B(t)-\gamma\ell^x_\gamma(t)$, which we can again write to decompose 
$B(t)=X^x_\gamma(t)-x-\gamma\ell^x_\gamma(t)$. Instances of this can be found in Figure \ref{fig:PW18} to the right of any zero of Brownian
motion, when $X_\gamma$ is negative. It is natural to consider zeroes that are stopping times, but more relevant for us to consider the start of a Brownian excursion or indeed, the corresponding equation driven by Brownian motion conditioned to stay positive, ${\tt BES}_0(3)$.

\begin{proposition}[Lemma 2.4 and its proof in \cite{BurdKasp04}]\label{prop:BK} Let $\vec{B}\sim{\tt BES}_0(3)$. Then 
  $$\vec{X}^x_\gamma(t)=x+\vec{B}(t)-\gamma\vec{\ell}_\gamma^x(t),\ \ \mbox{where } \vec{\ell}_\gamma^x(t)=\lim_{h\downarrow 0}\frac{1}{2h}\int_0^t\!\!1\{-h\!<\!\vec{X}_\gamma^x(s)\!<\!h\}ds,\ \  t\ge 0,$$
  has a unique strong solution for each $x\in\mathbb{R}$. Furthermore, for any sequence $x_n\downarrow 0$, we have $\vec{\ell}_\gamma^{-x_n}(\infty)\rightarrow 0$ almost surely, as $n\rightarrow\infty$.
\end{proposition}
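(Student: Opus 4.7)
The plan is to handle strong existence/uniqueness and the local-time convergence separately.

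For strong existence and uniqueness: if $x\ge 0$, then $\vec{X}^x_\gamma(t):=x+\vec{B}(t)$ stays non-negative, hits zero at most at $t=0$, and has vanishing occupation density at zero (standard estimates give $\int_0^t\mathbf{1}\{\vec{B}(s)<h\}\,ds=O(h^2)$ for ${\tt BES}_0(3)$, so $\vec{\ell}_\gamma^x\equiv 0$); the SDE is thus trivially solved and any solution must coincide with this one by comparison. If $x<0$, set $\tau_x:=\inf\{t\ge 0\colon\vec{B}(t)=|x|\}$, which is a.s.\ finite since $\vec{B}(t)\to\infty$. Any putative solution stays strictly negative on $[0,\tau_x)$, so $\vec{\ell}_\gamma^x\equiv 0$ there, forcing $\vec{X}^x_\gamma=x+\vec{B}$ on $[0,\tau_x]$ and $\vec{X}^x_\gamma(\tau_x)=0$. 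On $[\tau_x,\infty)$, $\vec{B}$ is a continuous semimartingale whose drift $1/\vec{B}$ is locally bounded (the driver's only zero occurs before $\tau_x$), and a Harrison--Shepp-type fixed-point argument yields pathwise uniqueness and existence for the skew SDE driven by such a semimartingale, using only continuity of paths and the skew-reflection balance identity.

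For the local-time convergence: couple the family $\{\vec{X}^{-x_n}_\gamma\}_n$ by driving them all with the same $\vec{B}$. A Tanaka-type comparison (cf.\ \cite{BurdKasp04}) gives that the paths and their local times are monotone in the initial value, so $\vec{\ell}_\gamma^{-x_n}(\infty)\downarrow L_\infty$ a.s.\ for some random $L_\infty\ge 0$. Since no local time accumulates before $\tau_{x_n}$ and $\tau_{x_n}\downarrow 0$, one needs only control the post-$\tau_{x_n}$ contribution. By the strong Markov property at $\tau_{x_n}$, this is governed by a skew SDE driven by $\vec{B}(\tau_{x_n}+\cdot)$, a ${\tt BES}(3)$ started from $|x_n|$ that converges on compact intervals to the unconditioned ${\tt BES}_0(3)$ as $x_n\downarrow 0$. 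The limiting equation is solved by the driver itself and carries no local time at zero, which identifies $L_\infty=0$.

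The main obstacle is the final limiting step: although the heuristic is clear, rigorously ruling out accumulation of many small excursions of $\vec{X}^{-x_n}_\gamma$ across zero after $\tau_{x_n}$ requires either an explicit excursion-theoretic bound on the expected post-$\tau_{x_n}$ local time, or a careful monotone/dominated convergence argument that leverages continuity of the skew-SDE solution map with respect to the initial condition of its driver.
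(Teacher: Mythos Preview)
The paper does not prove this proposition; it is quoted verbatim as ``Lemma 2.4 and its proof in \cite{BurdKasp04}'' and used as an input. So there is no in-paper argument to compare against, and the intended reference for both claims is the original Burdzy--Kaspi paper.

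Regarding your sketch on its own merits: the existence/uniqueness split into the regimes $x\ge 0$ (trivial, since ${\tt BES}_0(3)$ carries no local time at $0$) and $x<0$ (no local time before $\tau_x$, then a skew equation driven by a semimartingale with locally bounded drift) is the right decomposition, and your monotonicity set-up for the local times is also the natural route. Two points deserve more care. First, for the post-$\tau_x$ phase you invoke a ``Harrison--Shepp-type fixed-point argument'' for a general continuous semimartingale driver; this is plausible but not the original Harrison--Shepp statement, and you should either cite a result that covers this case (e.g.\ via a Girsanov reduction to the Brownian skew equation, using that $1/\vec B$ is bounded on $[\tau_x,\infty)$) or carry it out. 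Second, and more seriously, you correctly flag that the final step --- passing from the monotone limit $\vec\ell_\gamma^{-x_n}(\infty)\downarrow L_\infty$ to $L_\infty=0$ --- is where the real work lies. Your proposed argument via the strong Markov property at $\tau_{x_n}$ decouples the paths from the common driving $\vec B$ and does not by itself justify interchanging the limit with the skew-SDE solution map; ``the limiting equation carries no local time'' is the conclusion you want, not a premise you can invoke. This is precisely what Burdzy--Kaspi's coalescence and comparison results for multiple skew Brownian motions driven by the same noise address, and a self-contained proof would need either those techniques or a direct quantitative bound on $\mathbb{E}[\vec\ell_\gamma^{-x}(\infty)]$ as $x\downarrow 0$.
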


\begin{corollary}\label{corBK} Let $\widetilde{B}$ be a Brownian excursion distributed according to It\^o's measure conditioned on height exceeding $y$. Then
    $$\widetilde{X}^x_\gamma(t)=x+\widetilde{B}(t)-\gamma\widetilde{\ell}_\gamma^x(t),\ \ \mbox{where } \widetilde{\ell}_\gamma^x(t)=\lim_{h\downarrow 0}\frac{1}{2h}\int_0^t\!\!1\{-h\!<\!\widetilde{X}_\gamma^x(s)\!<\!h\}ds,\ \  t\ge 0,$$
    has a unique strong solution for each $x\in\mathbb{R}$. Furthermore, for any sequence of nonnegative $x_n\rightarrow 0$, there is (random) 
    $N\ge 1$ a.s., such that for all $n\ge N$, the solution $\widetilde{X}^{-x_n}_\gamma$ has precisely one positive excursion during which the
    combined height of $\widetilde{X}^{-x_n}_\gamma$ and $x_n+\gamma\widetilde{\ell}_\gamma^{-x_n}$ exceeds level $y$.
\end{corollary}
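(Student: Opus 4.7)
My plan is to decompose the Brownian excursion $\widetilde{B}$ at its first hit of level $y$ via Williams' decomposition, apply Proposition~\ref{prop:BK} to each ${\tt BES}_0(3)$-like piece, and then exploit the strict positivity of $\widetilde{B}$ on compact subsets of $(0,\zeta)$.

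For existence and uniqueness, I would set $T_y := \inf\{t \ge 0 \colon \widetilde{B}(t) = y\}$. By Williams' decomposition of the It\^o excursion conditioned on $\max \widetilde{B} > y$, the restriction $(\widetilde{B}(t),\, 0 \le t \le T_y)$ has the law of ${\tt BES}_0(3)$ stopped at its first hitting of $y$, so Proposition~\ref{prop:BK} yields a unique strong $\widetilde{X}^x_\gamma$ on $[0, T_y]$ for every $x \in \mathbb{R}$. On each $[T_y, \zeta - \varepsilon]$, $\widetilde{B}$ is a continuous semi-martingale, and the Harrison--Shepp theory of skew-Brownian SDEs gives a unique strong extension starting from $\widetilde{X}^x_\gamma(T_y)$; passing $\varepsilon \downarrow 0$ and applying Proposition~\ref{prop:BK} to the time-reversal $\widetilde{B}(\zeta - \cdot)$ (which is again an It\^o excursion conditioned on $\max > y$, hence ${\tt BES}_0(3)$-like near $\zeta$) extends the solution to all of $[0,\zeta]$. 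The same coupling to ${\tt BES}_0(3)$ yields $\widetilde{\ell}^{-x_n}_\gamma(T_y) \le \vec{\ell}^{-x_n}_\gamma(\infty) \to 0$ a.s.\ as $x_n \downarrow 0$, by the second assertion of Proposition~\ref{prop:BK}.

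For the main claim, set $R_y := \sup\{t \ge 0 \colon \widetilde{B}(t) = y\}$. Since $\widetilde{B}$ is continuous and strictly positive on $(0,\zeta)$ and $[T_y, R_y] \subset (0,\zeta)$ is compact, we have $m^* := \min_{t \in [T_y, R_y]} \widetilde{B}(t) > 0$ almost surely. Choose $N$ such that $x_n + \gamma \widetilde{\ell}^{-x_n}_\gamma(T_y) < m^*$ for all $n \ge N$. If $\widetilde{X}^{-x_n}_\gamma$ were to first return to $0$ at some $t^* \in (T_y, R_y]$, then $\widetilde{X}^{-x_n}_\gamma > 0$ on $[T_y, t^*)$ so the local time $\widetilde{\ell}^{-x_n}_\gamma$ is constant there, and evaluating the SDE at $t^*$ yields $\widetilde{B}(t^*) = x_n + \gamma \widetilde{\ell}^{-x_n}_\gamma(T_y) < m^* \le \widetilde{B}(t^*)$, a contradiction. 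Hence $\widetilde{X}^{-x_n}_\gamma > 0$ throughout $[T_y, R_y]$. Since $\{t \colon \widetilde{B}(t) > y\} \subset (T_y, R_y)$, all times at which the combined height $\widetilde{X}^{-x_n}_\gamma(t) + x_n + \gamma \widetilde{\ell}^{-x_n}_\gamma(t) = \widetilde{B}(t)$ exceeds $y$ fall within a single positive excursion of $\widetilde{X}^{-x_n}_\gamma$ above $0$, while on any other positive excursions $\widetilde{B} < y$.

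The main obstacle will be the construction of the strong solution on $[T_y, \zeta)$, where $\widetilde{B}$ degenerates near $\zeta$ and is not globally a semi-martingale; this forces the localisation-plus-time-reversal argument and is the only step that genuinely uses the excursion structure of $\widetilde{B}$ rather than the ${\tt BES}(3)$ input of Proposition~\ref{prop:BK}.
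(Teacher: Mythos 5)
Your proof takes essentially the same approach as the paper's: Williams' decomposition to apply Proposition~\ref{prop:BK} on the piece of $\widetilde{B}$ up to $T_y$, followed by the observation that the strictly positive minimum $m^*$ of $\widetilde{B}$ on $[T_y,R_y]$ forces $\widetilde{X}^{-x_n}_\gamma$ to stay positive (hence the local time to stay constant) on that whole interval once $x_n+\gamma\widetilde{\ell}^{-x_n}_\gamma(T_y)<m^*$. The paper's argument is terser — it asserts the local-time constancy on $[T_y,R_y]$ rather than deriving it by contradiction, and it does not spell out strong existence/uniqueness via localization and time-reversal — but your version correctly fills in those details and is in fact slightly more careful in keeping the $x_n$ term in the threshold condition.
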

\begin{proof} Denote by $\varepsilon>0$ the minimum of $\widetilde{B}$ between the first and last visits of level $y$. Consider the Williams 
  decomposition of $\widetilde{B}$ into two ${\tt BES}_0(3)$. Specifically, $\widetilde{B}$ stopped when it first hits level $y$, can be seen as the part of $\vec{B}\sim{\tt BES}_0(3)$ before reaching level $y$, and Proposition \ref{prop:BK} yields $N$ such that $\gamma\vec{\ell}_\gamma^{-x_n}(\infty)<\varepsilon$ for all $n\ge N$. But then $\vec{\ell}_\gamma^{-x_n}=\widetilde{\ell}_\gamma^{-x_n}$ up to the first time $\widetilde{B}$ hits $y$, and $\widetilde{\ell}_\gamma^{-x_n}$ then stays constant until the last time $\widetilde{B}$ hits $y$, for all $n\ge N$. Hence, the 
  entire set of times where $\widetilde{B}$ is above level $y$ is part of the same excursion interval of $\widetilde{X}_\gamma^{x_n}$, for all $n\ge N$.
\end{proof}

\begin{proposition}\label{prop:gencontinitial} Let $K$, $K_n\in\mathcal{K}$, $n\ge 1$, with $d_H(K_n,K)\rightarrow 0$. Consider 
  $((\beta^y_n,M_n^y),y\,\ge 0)\sim{\tt SSIPE}_{K_n}^*(\alpha,\theta)$, $n\ge 1$, and 
  $((\beta^y,M^y),\,y\ge 0)\sim{\tt SSIPE}_{K}^*(\alpha,\theta)$. Then, for all $y>0$, we have $(\beta_n^y,M_n^y)\rightarrow(\beta^y,M^y)$
  in distribution in the product topology induced by the metric $d_\alpha$ on $\mathcal{I}_\alpha$ and the Euclidean distance on $[0,\infty)$.
\end{proposition}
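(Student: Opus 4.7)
The plan is to establish almost-sure convergence along a coupling based on the Brownian embedding of Corollary \ref{cor2PW18}. Take a Brownian motion $W$ with inverse local time $\tau_W$ at $0$, and independently take an ${\tt SSIPE}_\varnothing(\alpha,\theta)$ realisation $(\cev{\beta}^y,y\ge 0)$ serving as the common immigration. For each $K'\in\{K,K_1,K_2,\ldots\}$, apply Corollary \ref{cor2PW18} to construct jointly the clades $\mathbf{N}_U^{K'}\sim\mathbf{P}^{\alpha,0}_{\{(0,\Leb(U))\}}$ for each $U=(a,b)\in\beta(K')$ from $B_U=W(\tau_W(a)+\,\cdot\,)$ with parameter $v=\Leb(U)$ via Lemma \ref{lem:clade_from_BM}, together with the dust clade measure $\mathbf{F}_{K'}$ built from the excursions of $W$ whose associated local times lie in $K'$. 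Concatenating the skewer processes of these objects and $\cev{\beta}$ yields coupled realisations $((\beta^y_{K'},M^y_{K'}),y\ge 0)\sim{\tt SSIPE}^*_{K'}(\alpha,\theta)$ on a common probability space. The plan is to show that $(\beta^y_n,M^y_n)\to(\beta^y,M^y)$ almost surely in $(\IPA,d_\alpha)\times[0,\infty)$ for every fixed $y>0$, which yields the claimed weak convergence.

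Fix $y>0$ and write $\beta^y_{K'}=\cev{\beta}^y\concat\gamma^y_{K'}$ so that the immigration contribution is identical across $n$. Proposition \ref{prop:min_cld:stats}, together with standard Poisson arguments, ensures that almost surely only finitely many clades across all constructions have scaffolding height exceeding $y$, so $\gamma^y_{K'}$ depends only on finitely many contributions. By Lemma \ref{lm:dH}, the intervals of $\beta(K_n)$ split into \emph{matched} intervals $U_n\to U\in\beta(K)$ (via \eqref{crit1}) and \emph{extra} intervals whose endpoints converge to a common point of $K$ (by \eqref{crit2}). For a matched pair, $a_n\to a$ and $v_n=\Leb(U_n)\to v=\Leb(U)$; since $\tau_W$ is almost surely continuous at each fixed time and $\beta(K)$ has countably many endpoints, $\tau_W(a_n)\to\tau_W(a)$ almost surely, so $B_{U_n}\to B_U$ locally uniformly. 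Corollary \ref{cor1PW18} then gives that the skew-Brownian decomposition is locally constant in $v$ above level $y$, so $\skewer(y,\mathbf{N}_{U_n}^{K_n})=\skewer(y,\mathbf{N}_U^K)$ for all $n$ sufficiently large.

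The main obstacle is the coherent matching of the extra intervals with the dust contributions. Each excursion of $W$ whose associated clade (via $\kappa_\perp$) reaches level $y$ occurs at some local time $\ell\in[0,\max K]$. If $\ell\in K$, then for large $n$ either $\ell\in K_n$, so the excursion is retained as a point of $\mathbf{F}_{K_n}$; or $\ell$ lies in an extra interval $U_n=(c_n,d_n)\in\beta(K_n)$ with $c_n,d_n\to\ell$. In the latter case the time interval $[\tau_W(c_n),\tau_W(d_n)]$ driving $\mathbf{N}_{U_n}^{K_n}$ converges to $[\tau_W(\ell-),\tau_W(\ell)]$, which captures precisely this excursion; as $v_n=d_n-c_n\to 0$ the initial ${\tt BESQ}_{v_n}(-2\alpha)$ spindle vanishes, and by Corollary \ref{corBK} the skew-Brownian path has a single dominant positive excursion above level $y$, so the level-$y$ structure of the clade reduces to the clade produced by $\kappa_\perp$ from the underlying Brownian excursion; this matches the contribution of the same excursion to $\mathbf{F}_K$. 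Extra intervals whose Brownian segments contain no level-$y$-crossing excursion contribute nothing at level $y$. Combining these matchings across the finitely many relevant excursions, together with continuity of the skewer and concatenation maps and the $\alpha$-diversity formula of Proposition \ref{prop:entrlaw}, yields $\gamma^y_{K_n}\to\gamma^y_K$ a.s.\ in $(\IPA,d_\alpha)$ and $M^y_n\to M^y$. The hardest technical point is making this matching precise and verifying convergence of the $\alpha$-diversities at level $y$: this requires careful book-keeping of which Brownian excursions are absorbed by which clade under each $K_n$, plus a uniform control as $n\to\infty$ of the finitely many level-$y$-crossing contributions.
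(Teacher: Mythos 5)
Your high-level strategy mirrors the paper's: couple all the $K_n$-constructions and the $K$-construction to a common Brownian motion via Corollary \ref{cor2PW18}, split the intervals of $\beta(K_n)$ into matched and extra intervals using Lemma \ref{lm:dH}, invoke Corollary \ref{cor1PW18} to handle matched blocks and Corollary \ref{corBK} to handle extra intervals arising in dust. However, the coupling you choose creates a genuine gap at the crucial step.

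For a matched interval $U_n=(a_n,b_n)\to U=(a,b)$, you build $\mathbf{N}_{U_n}^{K_n}$ from $B_{U_n}=W(\tau_W(a_n)+\cdot)$ with parameter $v_n={\rm Leb}(U_n)$, and then invoke Corollary \ref{cor1PW18} to conclude that the level-$y$ skewers coincide for $n$ large. But Corollary \ref{cor1PW18} is a statement for a \emph{single fixed} driving Brownian motion $B$: it says the skew-Brownian decomposition and the derived objects $Z^\prime_v$, $\mathbf{G}_v$ above level $y$ are locally constant as $v$ varies around $v_0$. In your coupling, two things vary simultaneously: the parameter $v_n$ and the driving path $B_{U_n}$ (which is $B_U$ with a prefix of $W$ on $[\tau_W(a_n),\tau_W(a)]$ spliced on or removed). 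Local uniform convergence $B_{U_n}\to B_U$ is not enough: the skew-Brownian SDE, the associated local time $\ell_\gamma$, and the excursion-thresholding operations are not continuous under such perturbations of the driving path, and no version of Corollary \ref{cor1PW18} that also handles a varying prefix is available in the paper. This is precisely why the paper designs the coupling differently: for each matched block $U^{(j)}$ it builds a \emph{fixed} Brownian motion $B_j$ by concatenating $\shiftrestrict{W}{[\tau_W(a^{(j)}),\tau_W(b^{(j)})]}$ with an independent Brownian motion $W_j$, and then constructs \emph{both} $\mathbf{N}^{(j)}$ (for $K$) and $\mathbf{N}^{(j)}_n$ (for $K_n$) from the \emph{same} $B_j$ and the \emph{same} $\mathbf{F}_j$, with only the parameter $v$ differing. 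Corollary \ref{cor1PW18} then applies verbatim and yields exact pathwise equality $\beta^{(j)}(y)=\beta^{(j)}_n(y)$ for $n$ large. The same device (separate segments $\widetilde W_{m,j}$ extended by independent BMs $W_{-j}$) handles the remainder pieces $K^{(m,j)}$ and $K_n^{(m,j)}$, again producing eventual equality rather than mere convergence.

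This matters for a second reason you yourself flag at the end. Because the paper's coupling yields eventual \emph{equality} of the level-$y$ interval partitions, there is no need to control the $\alpha$-diversities: the partitions are literally identical for $n$ large, so convergence in $(\IPA,d_\alpha)$ is immediate. Your approach, if it only produced convergence of the underlying Brownian excursion structure, would need to establish separately that the diversity functionals $\IPLT^{(\alpha)}$ converge — a nontrivial verification that you acknowledge but do not carry out. The coupling design is therefore not a cosmetic choice but the load-bearing step; without it the argument does not close.
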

\begin{proof} W.l.o.g., $\theta=0$. We will couple point-measure-scaffolding pairs $(\mathbf{N}_{K},\mathbf{X}_K)$ and 
  $(\mathbf{N}_{K_n},\mathbf{X}_{K_n})$, $n\ge 1$. We assume for simplicity that $\beta(K)$ consists of infinitely many intervals. The case of
  finitely many intervals can be seen similarly. Denote by $U^{(m)}:=(a^{(m)},b^{(m)})$ the $m$th-largest block of $\beta(K)$, with 
  ties broken from left to right, $m\ge 1$. 
  By Lemma \ref{lm:dH}, there is $n_m\ge 1$ such that for all 
  $n\ge n_m$, we can find $U_n^{(m)}:=(a_n^{(m)},b_n^{(m)})\in\beta(K_n)$ with $(a_n^{(m)},b_n^{(m)})\rightarrow(a^{(m)},b^{(m)})$ 
  as $n\rightarrow\infty$. Furthermore, we can choose $n_m$ so large that all the $m$ largest limiting intervals can be matched in $\beta(K_n)$ 
  so as to repect the left-to-right order of the limiting intervals for all $n\ge n_m$, and so that 
  $|{\rm Leb}(U_n^{(j)})-{\rm Leb}(U^{(j)})|<2^{-m-j}$ for 
  all $j=1,\ldots,m$. 
  
  For each $m\ge 1$ and $n\ge n_m$, this partitions $K$ and $K_n$ into $m+1$ disjoint parts (possibly empty), which we denote by $K^{(m,j)}$ and $K^{(m,j)}_n$, $0\le j\le m$, where the 
  $j^{\text{th}}$ is the part to the right of $U^{(j)}$, respectively $U_n^{(j)}$, $1\le j\le m$, with $K^{(m,0)}$ and $K^{(m,0)}_n$ being the remainder 
  beyond the left-most matched interval. We note that the blocks of $U^{(1)},\ldots,U^{(m)}$ are not indexed in left-to-right order, and therefore neither are the $K^{(m,j)}$. We further suppose that $n_m$ is chosen large enough so that for $n\ge n_m$, the diameters of
  $K^{(m,j)}$ and $K^{(m,j)}_n$ differ by less than $2^{-m-j}$. 
  
  \begin{figure}
   \input{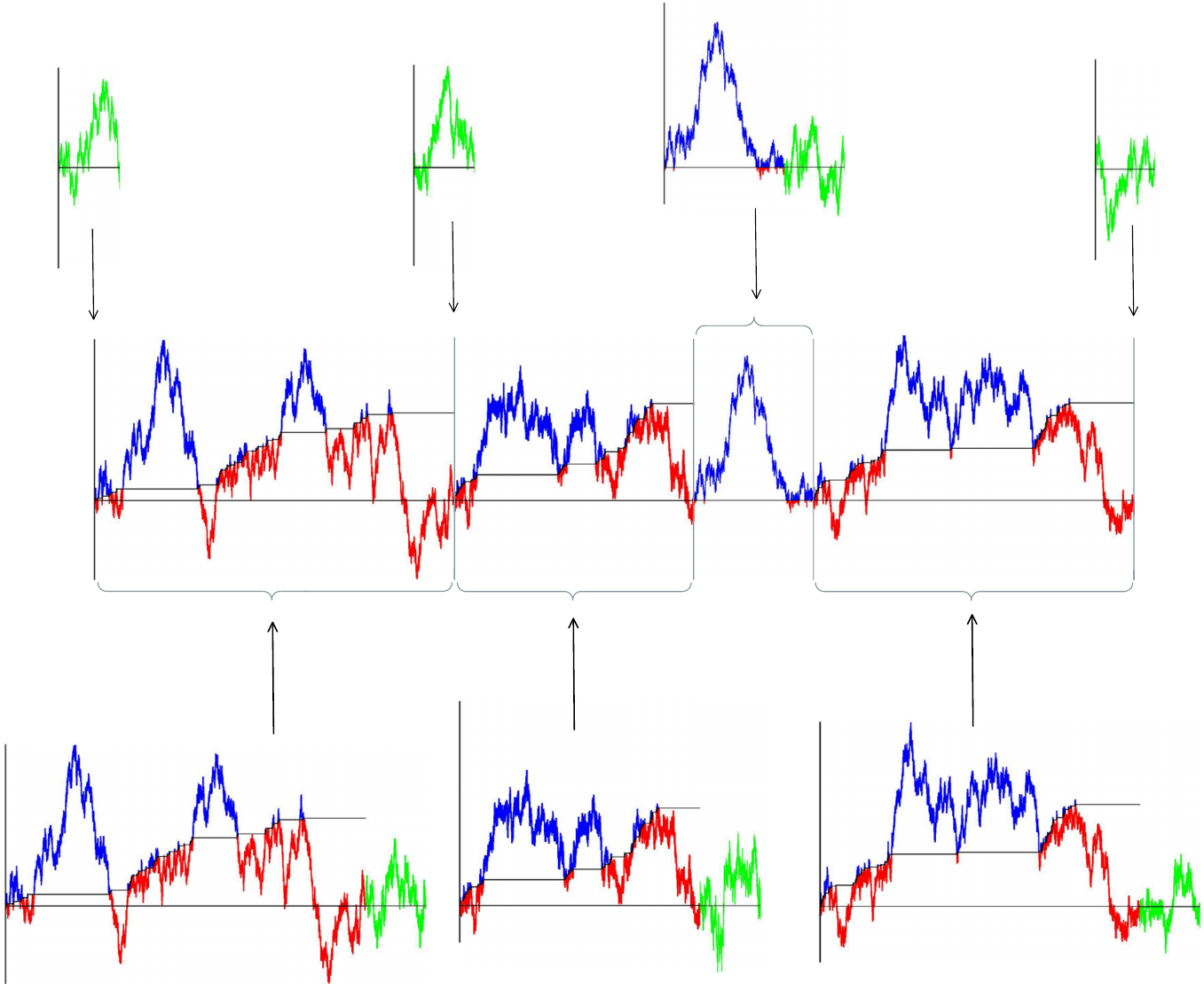_t}
   \caption{The middle panel illustrates the construction of Corollary \ref{cor2PW18}, decomposing Brownian motion $W$ into path segments used to construct $\mathbf{N}_U$, $U\in\beta(K)$, and $\mathbf{F}_K$. Here, 
   $\beta(K)$ consists of three intervals and $K$ has dust between the second and the third. 
   The remainder of this figure 
   shows how this construction is modified for 
   the proof of Proposition \ref{prop:gencontinitial}, in which each path segment is extended by an independent Brownian motion, shown in green.\label{fig:coupling}}
  \end{figure}
  
  Now consider a Brownian motion $W$, the construction of $(\mathbf{N}_K,\mathbf{X}_K)$ in Corollary \ref{cor2PW18} and 
  \eqref{eq:NC}, and an independent
  family $W_j$, $j\in\mathbb{Z}$, of Brownian motions. We denote by $(\mathbf{N}^{(j)},\mathbf{X}^{(j)})$, $j\ge 1$, the clades in $(\mathbf{N}_K,\mathbf{X}_K)$ corresponding to the respective blocks $U^{(j)}$. We will enhance the construction by considering, for each $U^{(j)} = (a^{(j)},b^{(j)})$, $j\ge 1$, the 
  Brownian motion $B_j$ obtained by concatenating $\shiftrestrict{W}{[\tau_W(a^{(j)}),\tau_W(b^{(j)})]}$ with the independent Brownian motion $W_j$. For comparison, in Corollary \ref{cor2PW18}, $B_{U^{(j)}} = \shiftrestrict{W}{[\tau_W(a^{(j)}),\infty)}$. 
  Let $Z^\prime_j$ and $\zeta^\prime_j$ be derived from $B_j$ as in Proposition \ref{propPW18}. Let $\mathbf{G}_{j}$ denote the associated ${\tt PRM}(2\alpha\nu_{\tt BESQ}^{(0)})$, as in Corollary \ref{cor0PW18}, and let $\mathbf{F}_{j}$ denote a ${\tt PRM}(2\alpha\umCladeA)$ obtained by applying $\kappa_\perp$ to the points of $\mathbf{G}_{j}$. 
  Then $\mathbf{N}^{(j)}$ is formed by concatenating the initial spindle $\mathbf{f}^{(j)} = Z^\prime_j$ with the clades of $\restrict{\mathbf{F}_{j}}{[0,\zeta^\prime_j]}$, as in \eqref{eq:clade_from_F}. 
  
  Now fix $m\ge 1$ and $n\ge n_m$. For $1\le j\le m$, we construct $\mathbf{N}_{n}^{(j)} \sim \mathbf{P}^{\alpha,0}_{\{(0,{\rm Leb}(U_n^{(j)}))\}}$ from $B_j$ and $\mathbf{F}_{j}$ in the same manner as above. Let $\mathbf{X}_n^{(j)} := \xi\big(\mathbf{N}_{n}^{(j)}\big)$. To emphasize, $\mathbf{N}^{(j)}$ and $\mathbf{N}_n^{(j)}$ are coupled by: (1) constructing them from the same Brownian motion $B_j$ and (2) using the same $\kappa_\perp$-marked excursions in $\mathbf{F}_j$.
  
  We now address the omitted blocks and dust that make up the segments $K^{(m,j)}$ and $K_n^{(m,j)}$. For each $0\le j\le m$, we denote by $\widetilde{W}_{m,j}$ the part of $W$ relevant for $K^{(m,j)}$, i.e.\  the interval $\big[\tau_W\big(\min\big(K^{(m,j)}\big)-\big),\tau_W\big(\max\big(K^{(m,j)}\big)\big)\big]$, concatenated with $W_{-j}$. We denote by $\big(\widetilde{\mathbf{N}}^{(m,j)},\widetilde{\mathbf{X}}^{(m,j)}\big)$ the part of $(\mathbf{N}_K,\mathbf{X}_K)$ that corresponds to $K^{(m,j)}$; this scaffolding and spindles pair arises from $\widetilde{W}_{m,j}$ and the compact set $K^{(m,j)}$, shifted to start at the origin, via the construction of Corollary \ref{cor2PW18} and \eqref{eq:NC}. We construct $\big(\widetilde{\mathbf{N}}_n^{(m,j)},\widetilde{\mathbf{X}}_n^{(m,j)}\big)$ in the same manner, from $\widetilde{W}_{m,j}$ and the set $K^{(m,j)}_n$ shifted back to the origin. Finally, we concatenate the $2m+1$ point measures and scaffoldings to form $\big(\mathbf{N}_{K_n}^{(m)},\mathbf{X}_{K_n}^{(m)}\big)$.  
  
  Fix $y>0$. For $n\ge n_m$ and $1\le j\le m$, let $\beta^{(j)}(y) := \skewer(y,\mathbf{N}^{(j)},\mathbf{X}^{(j)})$. We correspondingly define $\beta^{(j)}_n(y)$, $\widetilde\beta^{(m,j)}(y)$, and $\widetilde\beta^{(m,j)}_n(y)$ as respective skewers of $\mathbf{N}^{(j)}_n$, $\widetilde{\mathbf{N}}^{(m,j)}$, and $\widetilde{\mathbf{N}}^{(m,j)}_n$. 
 We will show that under our coupling, plus some further coupling in the case ${\rm Leb}(K)>0$, we get $(\beta_n^y,M_n^y)=(\beta^y,M^y)$ for all sufficiently large $n$. It suffices to prove that for all sufficiently large $m$ and all sufficiently large $n$ depending on $m$,
  \begin{enumerate}[label=(A\arabic*),ref=(A\arabic*)]
   \item $\beta^{(j)}(y) = \beta_{n}^{(j)}(y)$ for all $1\le j\le m$, and\label{item:GCI:bigblocks}
   \item $\widetilde\beta^{(m,j)}(y) = \widetilde\beta_{n}^{(m,j)}(y)$ for all $0\le j\le m$.\label{item:GCI:smallblocks}
  \end{enumerate}
  
  First we prove \ref{item:GCI:bigblocks}. Fix $m>0$. Recall that by Corollary \ref{cor1PW18}, there exists some $\Delta>0$ such that for all $j=1,\ldots,m$ and all $n\ge n_{m}$ satisfying $|{\rm Leb}(U_n^{(j)})-{\rm Leb}(U^{(j)})|<\Delta$, we have $\beta_{n}^{(j)}(y)=\beta^{(j)}(y)$ (as both partitions arise from $B_j$ and $\mathbf{F}_j$). By Lemma \ref{lm:dH}, there is some a.s.\ finite $R_m\ge n_m$ sufficiently large so that
  $$|{\rm Leb}(U_n^{(j)})-{\rm Leb}(U^{(j)})|<\Delta\mbox{ for all }n\ge R_m,\ 1\le j\le m.$$
  This proves \ref{item:GCI:bigblocks}.
  
  We now prove \ref{item:GCI:smallblocks} in the special case ${\rm Leb}(K) = 0$. Since $\sum_{j\in\mathbb{Z}}2^{-|j|}<\infty$, there is a (random) $m_1$ such that for all $m\ge m_1$, none of the Brownian motions $W_j$, $j\in\mathbb{Z}$, has excursions above height $y$ before reaching an 
  inverse local time of $2^{-m-|j|}$ at level 0. There is also $m_2\ge m_1$ such that for all $j>m_2$, the clade $\mathbf{N}^{(j)}$ has 
  height less than $y$. Recall that we choose $n_m$ to be sufficiently large so that, for $n\ge n_m$, $|{\rm Leb}(K^{(m,j)}_n) - {\rm Leb}(K^{(m,j)})| < 2^{-m-j}$. Thus, for $m\ge m_2$ and $n\ge n_m$, the segments of the processes $\widetilde{W}_{m,j}$ used to construct the $\widetilde{\mathbf{N}}_n^{(m,j)}$, $0\le j\le m$, never reach height $y$. Thus, $\widetilde\beta^{(j)}(y) = \widetilde\beta_{n}^{(j)}(y)$ as desired, with both equaling $\varnothing$.
  
  Now consider the case ${\rm Leb}(K)>0$. Let $\mathbf{F}_K$ be as in Corollary \ref{cor2PW18}, describing the point measure of clades arising from the dust in $K$. In this case, $\mathbf{F}_K$ may also have clades that exceed level $y$, and the 
  present coupling does not allow us to conclude that $\widetilde\beta^{(m,j)}(y) = \widetilde\beta_{n}^{(m,j)}(y)$ for $n$ sufficiently large. 
  Recall that blocks in these interval partitions correspond to excursions in $\widetilde W^{(m,j)}$ that exceed level $y$. While our bound $n \ge n_{m_1}$ still precludes excursions from the $W_{-j}$ from contributing to $\widetilde\beta_{n}^{(m,j)}(y)$, the same cannot be said of $W$. Indeed, any clades exceeding level $y$ in $\mathbf{F}_K$ would correspond to large excursions of $W$ that are then partitioned off into the processes $\widetilde W^{(m,j)}$. 
  These same excursions are used to construct both partitions, but the application of the kernel $\kappa_\perp$ to obtain clades from such excursions has so far not been coupled appropriately.
  
  Suppose $\mathbf{F}_K$ has $H$ clades of height exceeding $y$, associated with excursions 
  $\widetilde{E}^{(i)}:=(W(\tau_W(v^{(i)}-)+s),\,0\le s\le\tau_W(v^{(i)})-\tau_W(v^{(i)}-))$, $1\le i\le H$. By virtue of having each $v^{(i)}$ belong to $K$, in particular, these excursions of $W$ are partitioned off into the $\widetilde{W}_{m,j}$ (rather than appearing in the $B_{j}$). Fix $m\ge m_2$. For $1\le i\le H$ let $J_i$ denote the index of the component to which $v^{(i)}$ belongs, $v^{(i)} \in K^{(m,J_i)}$. Let $\bar v^{(i)} := v^{(i)} - b^{(J_i)}$, so that the relevant excursion now appears in $\widetilde{W}_{m,J_i}$ at local time $\bar v^{(i)}$. 
  
  For each $i=1,\ldots,H$ and $n\ge 1$, we have two cases: either $\bar v^{(i)} \in K^{(m,J_i)}_n - b_n^{(J_i)}$ or $\bar v^{(i)}\in\big(\bar c_n^{(i)},\bar d_n^{(i)}\big)\in\beta(K^{(m,J_i)}_n - b_n^{(J_i)})$. In the first case, this point lies in dust rather than in a small block, and we can couple the two clades. 
  In this case we adopt the convention $\bar c_n^{(i)} := \bar d_n^{(i)} := \bar v^{(i)}$. 
  In both cases, note that $v^{(i)}$ a.s.\ does not lie on the boundary of any block of $\beta(K)$. 
  Lemma \ref{lm:dH} then implies $\bar d^{(i)}_n - \bar c^{(i)}_n \to 0$ as $n\to\infty$.  
  In the second case, consider the maximal height $h_n^{(i)}$ of the excursions of $\widetilde{W}_{m,J_i}$ in the interval $\big(\tau\big(\bar c_n^{(i)}\big),\tau\big(\bar v^{(i)}\big)\big)$, where $\tau$ denotes inverse local time in $\widetilde{W}_{m,J_i}$. In particular, the associated $\gamma\ell_\gamma$ process as in Figure \ref{fig:PW18} will finish at some 
  $x_n^{(i)}\le h_n^{(i)}$, and since $h_n^{(i)}\rightarrow 0$ a.s., we also have $x_n^{(i)}\rightarrow 0$ a.s.. 
  By Corollary \ref{corBK}, there is $N\ge 1$ such that for each $1\le i\le H$ and $n\ge N$ in the second case, there is a single excursion above this $\gamma\ell_\gamma$ process that contributes to level $y$ within this clade. 
  
  As a consequence, for $1\le i\le H$ and $n\ge N$ in the second case, the initial spindles of the clades associated with $(\bar c_n^{(i)},\bar d_n^{(i)})$ do not 
  reach level $y$. We are applying $\kappa_\perp$ to each excursion above the embedded local time $\gamma\ell_\gamma$, and the single excursion that contributes to level $y$ splits the total mass at level $y$ into ${\tt PDIP}(\alpha,0)$ proportions, by 
  Lemma \ref{lem:min_cld:skewer}\ref{item:MCSk:entrance}. As $y$ is fixed, this can, on the event $\{n\ge N\}$, be perfectly coupled to the ${\tt PDIP}(\alpha,0)$ 
  proportions that $\kappa_\perp$ assigns to the level-$y$ split of the mass contributed by $\widetilde{E}^{(i)}$. This completes the proof of assertion \ref{item:GCI:smallblocks} and thus proves the proposition.
\end{proof}

\subsection{The Feller property and the proof of Theorems \ref{thm:sp} and \ref{thm:dP-IP}}\label{sec:Feller}

Recall that Theorem \ref{thm:sp} claims that the extension of ${\tt SSIPE}(\alpha,\theta)$ to $(\IPHs,d_H\circ C)$ is a Feller process.  

\begin{proof}[Proof of Theorem \ref{thm:sp}] First recall from Corollary \ref{thm:semigroup*} that ${\tt SSIPE}(\alpha,\theta)$ has as its semigroup 
    $(\kappa^{\alpha,\theta}_y,\,y\ge 0)$ as defined in Definition \ref{def:kernel:sp2}. We now check the conditions of a Feller process as given in \cite[p.369]{Kallenberg}. Recall 
    that $(\mathcal{K},d_H)$ and $(\IPHs,d_H\circ C)$ are isometric, so we can work mainly in $(\mathcal{K},d_H)$. 

    To obtain the local compactness of $(\mathcal{K},d_H)$, we first note that it is well-known, e.g.\ \cite[Theorem 7.3.8]{BuraBuraIvan01}, 
    that the compactness of $[0,m]$ entails the compactness of the subspace $\mathcal{K}_{\le m}\subset\mathcal{K}$ of compact subsets of 
    $[0,m]$. But $\mathcal{K}=\bigcup_{m\ge 0}\mathcal{K}_{\le m}$, and every $K\in\mathcal{K}_{\le m}$ has $\mathcal{K}_{\le m+1}$ as a compact 
    neighborhood. This is what it means for $(\mathcal{K},d_H)$ to be locally compact.
  
    Fix $y\!>\!0$. To obtain (F1) of \cite[p.369]{Kallenberg}, we have to show that $K\mapsto\mathbb{E}_K[f(\beta^y,M^y)]$ is continuous 
    and vanishes at infinity, whenever $f\colon\mathcal{K}\rightarrow\mathbb{R}$ is continuous and vanishes at infinity. The continuity follows 
    directly from Proposition \ref{prop:gencontinitial}. Since for every $m\in[0,\infty)$, the set 
    $\mathcal{K}_{\le m}$ is compact, it suffices to show that for any sequence $(K_n,\, n\ge 1)$ in 
    $\mathcal{K}$ whose total masses tend to infinity, we have  $\mathbb{E}_K[f(\beta^y_n,M^y_n)]\rightarrow 0$. This follows because we can 
    couple the ${\tt BESQ}(2\theta)$ total masses so that total masses at level $y$ also tend to infinity.
  
    To obtain (F2), we have to show that $\mathbb{E}_K[f(\beta^y,M^y)]\rightarrow f(K)$ as $y\rightarrow 0$ for all $f$ that are continuous
    and vanish at infinity. But since $f$ is bounded, this follows from the path-continuity of $((\beta^y,M^y),y\ge 0)$ established in 
    Proposition \ref{prop:genIPE:cont0}, by dominated convergence.
\end{proof}

For $K\in\mathcal{K}_1$, we can consider $((\beta^y,M^y),\,y\ge 0)\sim{\tt SSIPE}^*_K(\alpha,\theta)$. By Propositions \ref{prop:sp}(iii) and \ref{prop:F:totalmass} and additivity of ${\tt BESQ}(0)$ and ${\tt BESQ}(2\theta)$, the total mass process $(M^y,\,y\ge0)$ is a ${\tt BESQ}_1(2\theta)$, and $M^y = \|\beta^y\|$ for all $y>0$ by Proposition \ref{prop:entrlaw}. Therefore, we can carry out the time-change of \eqref{eq:tau-beta} and normalize to obtain a de-Poissonized process.  Rather than formalize this as a process on $\IPHs$ with second coordinate identically 1, we project onto the first coordinate and consider it as a process on $\IPHos$, denoting its law by ${\tt PDIPE}_K(\alpha,\theta)$. 

We prove Theorem \ref{thm:dP-IP} by confirming that 
${\tt PDIPE}_K(\alpha,\theta)$ has all of the properties claimed in the theorem: it is a Feller process that enters $\IPAo$ $(d_H\circ C_1)$-continuously and instantly and never leaves, almost surely. 

\begin{proof}[Proof of Theorem \ref{thm:dP-IP}]
 We just note that ${\tt BESQ}_1(2\theta)$ is almost surely path-continuous with the property that the time-change 
  $\tau_{\boldsymbol{\beta}}$ of \eqref{eq:tau-beta} maps $[0,\infty)$ to $[0,\inf\{y\ge 0:$ $\beta^y=\varnothing\})$.
  By Proposition \ref{prop:entrlaw}, we have $(\beta^{y+z},\,z\ge 0)\sim{\tt SSIPE}_{\beta^y}(\alpha,\theta)$ with $\beta^y\in\IPA$ almost surely for any arbitrarily small $y>0$.  By Proposition \ref{prop:cts_imm}, ${\tt SSIPE}_{\beta^y}(\alpha,\theta)$ never leaves $\IPA\subset\mathcal{I}_H$. 
  Hence ${\tt PDIPE}_K(\alpha,\theta)$ enters $\IPAo$ instantly and never leaves, as claimed.
  
  By \cite[Theorem 19.25]{Kallenberg}(iv), the continuity in the initial state holds in path space $\mathbb{D}([0,\infty),\widehat{\mathcal{K}})$,
  where $\widehat{\mathcal{K}}$ is the one-point compactification of $\mathcal{K}$. By \cite[Theorem 2.6]{Helland78}, the time-change part of 
  de-Poissonization preserves the continuity in the initial state. Since sample paths are actually continuous and the time-changed processes 
  never hit $\varnothing$ a.s., normalization to unit mass also preserves (F1) and (F2). Hence, the Feller property is preserved under 
  de-Poissonization.

  The argument of \cite[Proof of Theorem 1.6]{FVAT} is easily transferred from the measure-valued context to the setting of interval partitions to establish 
  ${\tt PDIP}(\alpha,\theta)$ as stationary law of ${\tt PDIPE}(\alpha,\theta)$, using the pseudo-stationarity derived in Section \ref{sec:IPAT:pseudostat}. 
\end{proof}

Denote by ${\tt RANKED}\colon\mathcal{K}_1\rightarrow\overline{\nabla}_\infty$ the map that associates with $K\in\mathcal{K}_1$ the 
decreasing sequence of interval lengths in $\beta(K)$, where
\begin{equation*}
 \overline{\nabla}_\infty := \left\{ (x_i)_{i\ge1}\in [0,\infty)^{\mathbb{N}} \colon \sum\nolimits_{i\ge1} x_i\le 1\right\}
\end{equation*}
is equipped with the metric $d_{\infty}((x_i)_{i\ge1},(y_i)_{i\ge1}) = \sup_{i\ge1}|x_i-y_i|$.

\begin{corollary}\label{cor:RANKED2} Mapping ${\tt PDIPE}_K(\alpha,\theta)$, $K\in\mathcal{K}_1$, under ${\tt RANKED}$ yields a 
  $\overline{\nabla}_\infty$-valued Feller process.
\end{corollary}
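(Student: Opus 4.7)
The plan is to establish two things: that the ${\tt RANKED}$-image defines a Markov process on $\overline{\nabla}_\infty$, and that this projection inherits the Feller property from ${\tt PDIPE}(\alpha,\theta)$. First I would observe that ${\tt RANKED}\colon(\IPHos,d_H\circ C_1)\to(\overline{\nabla}_\infty,d_\infty)$ is continuous: if $d_H(C_1(\beta_n),C_1(\beta))\to 0$, then by the characterization in Lemma \ref{lm:dH}, every block of $\beta$ is the limit of matching blocks in $\beta_n$, and any persistent ``large'' block in $\beta_n$ has a corresponding block in $\beta$, so the ranked block-length sequences converge in the supremum metric. Together with the compactness of $\overline{\nabla}_\infty$ (a standard fact, since decreasing summable sequences can be uniformly approximated by their finite initial segments) and of $\IPHos$, this makes ${\tt RANKED}$ a continuous surjection between compact metric spaces, hence a quotient map: a function on $\overline{\nabla}_\infty$ is continuous iff its pullback via ${\tt RANKED}$ is continuous on $\IPHos$.

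The main step is to verify that ${\tt PDIPE}$ projects to a Markov process on $\overline{\nabla}_\infty$. I would unpack Definition \ref{def:kernel:sp2}: the kernel $\kappa^{\alpha,\theta}_y(\beta^*,\cdot)$ is an order-dependent concatenation of (i) an immigration piece $G_0^y\bar\beta_0$ whose law depends on nothing in $\beta^*$, (ii) independent contributions $\mu_{{\rm Leb}(U),1/2y}^{(\alpha)}$ from each block $U$ of $\beta$, whose laws depend only on block masses, and (iii) a Poisson$(\alpha m/y)$ number of independent contributions $\widetilde\mu_{0,1/2y}^{(\alpha)}$ at uniform positions in $C(\beta^*)$, whose count depends only on $m={\rm Leb}(C(\beta^*))$. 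Since ${\tt RANKED}$ ignores order and positions, the law of ${\tt RANKED}(\beta^y)$ under $\kappa^{\alpha,\theta}_y(\beta^*,\cdot)$ depends on $\beta^*$ only through the pair $\big({\tt RANKED}(\beta),{\rm Leb}(C(\beta^*))\big)$, and for $\beta^*\in\IPHos$ the second component equals $1-\sum_i {\tt RANKED}(\beta)_i$, hence is a function of the first. The de-Poissonization does not break this symmetry because the time-change $\tau_{\boldsymbol{\beta}}$ depends only on the total-mass process, which under ${\tt PDIPE}_K(\alpha,\theta)$ with $K\in\mathcal{K}_1$ is a ${\tt BESQ}_1(2\theta)$ whose law is determined by the initial mass alone. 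Thus the projected transition kernel on $\overline{\nabla}_\infty$ is well-defined and Markov.

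Given these two ingredients, the Feller property follows routinely: for $f\in C(\overline{\nabla}_\infty)$, the function $f\circ{\tt RANKED}$ lies in $C(\IPHos)$, so applying the ${\tt PDIPE}$ semigroup keeps it continuous on $\IPHos$ by Theorem \ref{thm:dP-IP}; by the Markov property of the projection this continuous function factors through ${\tt RANKED}$, and by the quotient-map observation the factored function is continuous on $\overline{\nabla}_\infty$. For the continuity at $y=0$, path-continuity of ${\tt PDIPE}$ in $(\IPHos,d_H\circ C_1)$ combined with continuity of ${\tt RANKED}$ gives pointwise convergence of $\widetilde T_y f$ to $f$, which upgrades to strong convergence via bounded convergence on the compact state space. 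The hard part is step two: verifying carefully that every order- and position-dependent aspect of Definition \ref{def:kernel:sp2} (most delicately, the dust-induced clades attached at uniform random locations in $C(\beta^*)$) washes out under ${\tt RANKED}$, and that the de-Poissonization time-change does not re-introduce any dependence on the ordering.
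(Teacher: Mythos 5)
Your argument is correct but proves the Markov property of the projection by a genuinely different route than the paper's. Where you read Dynkin's criterion directly off the transition kernel of Definition~\ref{def:kernel:sp2}---observing that the multiset of block masses of $\beta^y$ is an unordered union of contributions whose laws depend only on the block masses of $\beta$ and on the dust mass $m$---the paper instead constructs a \emph{pathwise coupling}: for $K,K'\in\mathcal{K}_1$ with ${\tt RANKED}(K)={\tt RANKED}(K')$ it matches the clades $\mathbf{N}_U$ across blocks of equal length and couples the dust Poisson random measures $\mathbf{F}_K$ and $\mathbf{F}_{K'}$ via a measure-preserving Borel isomorphism between $K$ and $K'$ (citing a descriptive set theory result of Kechris), so that the ${\tt RANKED}$ projections of the two trajectories are path-wise identical, and then invokes Dynkin's criterion in Sharpe's form. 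Your kernel-inspection approach is more elementary and avoids re-entering the clade machinery; the paper's coupling, on the other hand, yields a pathwise statement for free, which it then feeds directly into the conclusion that the projected process is a path-continuous Borel right process. For (F1), the paper constructs an explicit continuous section of ${\tt RANKED}$---the canonical representative in $\mathcal{K}_1$ with blocks in decreasing order from left to right---and checks that $d_\infty$-convergence forces Hausdorff convergence of representatives, whereas you invoke the abstract fact that a continuous surjection between compact Hausdorff spaces is a quotient map; these are equivalent here and equally valid. One minor slip: the total-mass process that governs the de-Poissonization time-change $\tau_{\boldsymbol{\beta}}$ is a ${\tt BESQ}_1(2\theta)$ under ${\tt SSIPE}^*_K(\alpha,\theta)$, not under ${\tt PDIPE}_K(\alpha,\theta)$ (where total mass is identically $1$); the intended point, that $\tau_{\boldsymbol{\beta}}$ is a measurable function of the ${\tt RANKED}$-projected ${\tt SSIPE}$ trajectory so that de-Poissonization commutes with the projection, is correct, and the self-similarity of the ${\tt RANKED}$-projected ${\tt SSIPE}$ then justifies that the Lamperti-type transformation preserves Markovianity.
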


\begin{lemma}\label{lm:RANKED} ${\tt RANKED}\colon\mathcal{K}_1\rightarrow\overline{\nabla}_\infty$ is continuous.
\end{lemma}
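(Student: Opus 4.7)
Let $K_n\to K$ in $(\mathcal{K}_1,d_H)$ and write $(a_i)_{i\ge 1}={\tt RANKED}(K)$, $(a_i^n)_{i\ge 1}={\tt RANKED}(K_n)$, padded with zeros. The goal is $\sup_{i\ge 1}|a_i^n-a_i|\to 0$. My strategy is to first establish pointwise convergence $a_i^n\to a_i$ for each fixed $i$ via Lemma~\ref{lm:dH}, and then upgrade to uniform convergence using the summability bound $\sum_{i\ge 1}a_i\le 1$.

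For the lower bound $\liminf_n a_i^n\ge a_i$, I will pick $i$ distinct intervals $U^{(1)},\ldots,U^{(i)}\in\beta(K)$ with $\Leb(U^{(j)})=a_j$. Criterion~\eqref{crit1} of Lemma~\ref{lm:dH} provides, for each $j$ and all sufficiently large $n$, intervals $U_n^{(j)}\in\beta(K_n)$ converging endpoint-wise to $U^{(j)}$; in particular $\Leb(U_n^{(j)})\to a_j\ge a_i$. For $n$ large these $i$ matches are pairwise distinct, so $a_i^n\ge\min_{j\le i}\Leb(U_n^{(j)})\to a_i$. For the upper bound $\limsup_n a_i^n\le a_i$, I argue by contradiction: if along some subsequence $n_k\to\infty$ we have $a_i^{n_k}\ge a_i+\delta$, then each $\beta(K_{n_k})$ contains $i$ pairwise disjoint intervals $(c_k^{(j)},d_k^{(j)})$, $j=1,\ldots,i$, of length at least $a_i+\delta$. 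Compactness of $[0,1]$ lets me extract a further subsequence along which all $i$ pairs of endpoints converge to $(c^{(j)},d^{(j)})$ with $d^{(j)}-c^{(j)}\ge a_i+\delta$. Criterion~\eqref{crit2} then places each $(c^{(j)},d^{(j)})$ in $\beta(K)$. Since the prelimiting intervals are disjoint, their left-to-right ordering persists in the limit, and their endpoints lie in $K_{n_k}$ with limits in $K$ by Hausdorff convergence; hence the limits are genuinely distinct connected components of $[0,1]\setminus K$. This produces $i$ distinct blocks of $\beta(K)$ of length strictly greater than $a_i$, contradicting the definition of $a_i$ as the $i$-th largest block length.

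For the uniform upgrade, fix $\varepsilon>0$. Since $\sum_{i\ge 1}a_i\le 1$, there is $I$ with $a_I<\varepsilon/3$. By the pointwise convergence just established, for all sufficiently large $n$ we have $|a_i^n-a_i|<\varepsilon/3$ for every $i\le I$; in particular $a_I^n<2\varepsilon/3$, and the monotonicity $a_i^n\le a_I^n$ for $i>I$ together with $a_i\le a_I<\varepsilon/3$ yields $|a_i^n-a_i|<\varepsilon$ on $i>I$ as well. I anticipate that the main obstacle is the distinctness verification in the limsup argument: one must combine the preserved left-to-right ordering of the disjoint prelimiting intervals with the fact that their endpoints, sitting in $K_{n_k}$, have Hausdorff-limits in $K$, so that any two limiting intervals sharing a boundary point are still separated by a point of $K$ and thus really are different components of $[0,1]\setminus K$, rather than collapsing onto a common block.
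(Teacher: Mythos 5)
Your proof is correct. The paper's own argument is a more compressed, one-shot version: it fixes $\varepsilon>0$, uses criterion \eqref{crit1} of Lemma~\ref{lm:dH} to match the finitely many blocks of $\beta(K)$ exceeding $\varepsilon/2$ to distinct blocks of $\beta(K_n)$ with close lengths, and then appeals to compactness of $[0,1]$ (in effect \eqref{crit2}, via a subsequence/contradiction argument) to conclude that no unmatched block of $K_n$ can exceed $\varepsilon$; this yields the sup bound directly. You instead first establish pointwise convergence $a_i^n\to a_i$ for each fixed $i$ --- the lower bound by selecting the top $i$ blocks of $\beta(K)$ and matching via \eqref{crit1}, the upper bound by the contradiction/compactness argument through \eqref{crit2} --- and then upgrade to uniform convergence over $i$ by exploiting the monotone, summable tail of the ranked sequence. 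The ingredients from Lemma~\ref{lm:dH} are identical, so this is more an organizational variant than a fundamentally different proof; the pointwise-then-uniform decomposition is somewhat longer but cleaner, and it makes explicit the distinctness-of-limit-blocks verification that the paper compresses into a terse ``by compactness'' clause. Your distinctness argument is sound: the $i$ prelimiting intervals are pairwise disjoint, so their limits $(c^{(j)},d^{(j)})$ are pairwise non-overlapping open intervals of length $\ge a_i+\delta>0$, hence pairwise distinct as sets, and each lies in $\beta(K)$ by \eqref{crit2}. One small remark: your lower-bound step implicitly assumes $a_i>0$ in order to select $i$ distinct blocks of $\beta(K)$; the case $a_i=0$ is trivial, so this is not a gap, but it is worth flagging.
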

\begin{proof} Let $d_H(K_n,K)\rightarrow 0$ and $\varepsilon>0$. Then ${\tt RANKED}(K)$ has only finitely many blocks exceeding size 
  $\varepsilon/2$. By Lemma \ref{lm:dH}, we can find $n_0\ge 1$ such that for $n\ge n_0$, they have distinct corresponding blocks
  in $K_n$ with sizes within $\varepsilon/2$ of the limit and that, by compactness of $[0,1]$, no other blocks in $K_n$ have size exceeding 
  $\varepsilon$.
\end{proof}

\begin{proof}[Proof of Corollaries \ref{cor:RANKED2} and \ref{cor:RANKED}] By Lemma \ref{lm:RANKED}, the projected process inherits the path-continuity of 
  ${\tt PDIPE}_K(\alpha,\theta)$. Also, for any $K,K^\prime\!\in\!\mathcal{K}_1$ with ${\tt RANKED}(K)\!=\!{\tt RANKED}(K^\prime)$, we 
  can couple the clade constructions so that the projections to decreasing sequences are path-wise identical. Specifically, we refer to 
  \cite[Theorem 17.41]{Kechris} to relate the intensity measures and hence to couple the Poisson random measures $\mathbf{F}_K$ on $K$ and 
  $\mathbf{F}_{K^\prime}$ on $K^\prime$, which can be any compact subsets of $[0,1]$ with ${\rm Leb}(K)={\rm Leb}(K^\prime)$. 
  Therefore, Dynkin's criterion 
  \cite[Theorem 13.5]{Sharpe} applies and yields that the projected process is a path-continuous Borel right Markov process. In particular, (F2) 
  holds. To check (F1), take any convergent sequence in $\overline{\nabla}_\infty$. Consider canonical representatives in $\mathcal{K}_1$ that 
  have all blocks arranged in decreasing order from left to right. Then $d_\infty$-convergence implies the Hausdorff convergence of the representatives, hence
  (F1) for ${\tt PDIPE}(\alpha,\theta)$ applied to functions of the form $f\circ{\tt RANKED}$ establishes (F1) for the projection.

  This proves Corollary \ref{cor:RANKED2}. Corollary \ref{cor:RANKED} follows since $(\mathcal{K}_1,d_H)$ and $(\overline{I}_{H,1},d_H\circ C_1)$ are isometric.
\end{proof}

In \cite{Paper1-3} we use this corollary to show that this projection is Petrov's two-parameter Poisson--Dirichlet diffusion.

\subsection{Ray--Knight theorem for marked reflected stable L\'evy processes}

In this section we prove the second Ray--Knight theorem stated in Theorem \ref{thm:rayknight}(i). Recall from \eqref{eq:Fdust} that the main ingredient for our construction of
${\tt SSIPE}^*_{[0,z]}(\alpha,0)$ is ${\tt F}_{[0,z]}\sim{\tt PRM}\big({\rm Leb}|_{[0,z]}\otimes\frac{1}{2\alpha}\nu_{\perp\rm cld}^{(\alpha)}\big)$. Then Proposition 
\ref{prop:entrlaw} entails that
\[
\skewerP\left(\Concat_{\text{points }(s,N_s)\text{ of }\mathbf{F}_{[0,z]}}N_s,\Concat_{\text{points }(s,N_s)\text{ of }\mathbf{F}_{[0,z]}}\xi(N_s)\right)\sim{\tt SSIPE}_{[0,z]}^*(\alpha,0).
\]
Theorem \ref{thm:rayknight}(i) claims an alternative construction of ${\tt SSIPE}_{[0,z]}^*(\alpha,0)$. As established in the proof of Theorem \ref{thm:rayknight}(ii) in 
Section \ref{sec:rk1}, the main ingredient is $\mathbf{N}\sim\PRM[\Leb\otimes\mBxcA]$. Specifically, Theorem \ref{thm:rayknight}(i) claims that for
\[
\beta^y := \skewer(y,\bN|_{[0,T_{-z/2\alpha}]},\bX- \underline{\bX}), ,\qquad  y> 0,
\]
we have that $\beta^0_*=(\varnothing,z)$ and $\beta^y_* = \big(\beta^y, \|\beta^y\|\big)$, $y>0$, yields an ${\tt SSIPE}_{[0,z]}^*(\alpha,0)$.

\begin{proof}[Proof of Theorem \ref{thm:rayknight}(i)] As we observed in the proof of Theorem \ref{thm:rayknight}(ii), with reference to \eqref{eq:imm_PPP}, the marked 
  excursions of $\bX=\xi(\bN)$ above its infimum process $\underline{\bX}$ form a $\PRM(\Leb\otimes\umCladeA)$. Stopped at $T_{-z/2\alpha}$, linear scaling maps this to a
  ${\tt PRM}\big({\rm Leb}|_{[0,z]}\otimes\frac{1}{2\alpha}\nu_{\perp\rm cld}^{(\alpha)}\big)$, as required to align with the construction of ${\tt SSIPE}_{[0,z]}^*(\alpha,0)$
  of Proposition \ref{prop:entrlaw}. By Proposition \ref{prop:F:totalmass}, this also entails the claim that the total mass evolves as ${\tt BESQ}_z(0)$.    
\end{proof}

\section{Identification with the Rivera-Lopez--Rizzolo diffusive limit}\label{sec:infgen}

The proof of Theorem \ref{thm:twopargen} is based on several auxiliary results. We begin with an observation that will allow us (inductively) to reduce the identification of the generator to functions $m_\sigma^\circ$ associated with compositions $\sigma$ that have no consecutive 1s. To this end, we denote by $\mathbf{1}_k=(1,\ldots,1)\in\mathcal{C}_k$ the composition of $k$ into $k$ parts.

\begin{lemma}\label{lm:1k} For all $k\ge 2$, we have
  \[
  m^\circ_{\mathbf{1}_k}=\frac{1}{k!}-\sum_{\rho\in\mathcal{C}_k\setminus\{\mathbf{1}_k\}}\left( \prod_{i=1}^{\ell(\rho)}\frac{1}{\rho_i!}\right)m_\rho^\circ.
  \]
\end{lemma}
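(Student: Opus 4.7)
The plan is to prove the identity via a generating function argument, exploiting that both sides are symmetric functions in the block lengths of $\beta$. I would first restrict attention to $\beta\in\mathcal{I}_{H,1}$ (so $\|\beta\|=1$) and start from the trivial factorization
\[
 e^t \;=\; e^{t\|\beta\|} \;=\; \prod_{U\in\beta} e^{t\,\mathrm{Leb}(U)} \;=\; \prod_{U\in\beta}\bigl(1+(e^{t\,\mathrm{Leb}(U)}-1)\bigr),
\]
valid for all $t\in\mathbb{R}$ (for each fixed $t$, the infinite product converges absolutely since $\sum_{U\in\beta}(e^{t\,\mathrm{Leb}(U)}-1)\le e^{|t|}-1<\infty$).

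Next, I would expand the product over $\beta$ as a sum over finite subsets, getting
\[
 e^t - 1 \;=\; \sum_{\ell\ge 1}\;\sum_{U_1<\cdots<U_\ell\text{ in }\beta}\;\prod_{i=1}^\ell\bigl(e^{t\,\mathrm{Leb}(U_i)}-1\bigr).
\]
Then I would expand $e^{t\,\mathrm{Leb}(U_i)}-1=\sum_{\rho_i\ge 1}t^{\rho_i}\mathrm{Leb}(U_i)^{\rho_i}/\rho_i!$, and collect the coefficient of $t^k$. By definition \eqref{eq:msigmacirc}, grouping by the composition $\rho=(\rho_1,\ldots,\rho_\ell)\in\mathcal{C}_k$ yields exactly $\bigl(\prod_i 1/\rho_i!\bigr)m_\rho^\circ(\beta)$. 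Matching coefficients against $e^t-1=\sum_{k\ge 1}t^k/k!$ gives
\[
 \frac{1}{k!} \;=\; \sum_{\rho\in\mathcal{C}_k}\Bigl(\prod_{i=1}^{\ell(\rho)}\frac{1}{\rho_i!}\Bigr)m_\rho^\circ(\beta)\qquad\text{for all }\beta\in\mathcal{I}_{H,1},
\]
and isolating the $\rho=\mathbf{1}_k$ term (where $\prod_i 1/\rho_i!=1$) is the claimed identity.

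Finally, I would extend from $\mathcal{I}_{H,1}$ to $\overline{\mathcal{I}}_{H,1}$ by continuity, which is exactly how the $m_\sigma^\circ$ are defined on the completion. The only real obstacle is justifying the two interchanges of infinite sums, but since all quantities are nonnegative for $t>0$ and uniformly dominated by the finite $e^t-1$, this is a routine application of Tonelli. Everything else is bookkeeping.
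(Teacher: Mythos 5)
Your proof is correct, but it reaches the identity by a longer route than the paper does. The paper simply expands $1=\bigl(\sum_{V\in\beta}\mathrm{Leb}(V)\bigr)^k$ by the multinomial theorem and regroups the ordered $k$-tuples $(V_1,\ldots,V_k)$ by the strictly increasing selection $U_1<\cdots<U_{\ell(\rho)}$ and the composition $\rho$ of multiplicities, giving $1=\sum_{\rho\in\mathcal{C}_k}\binom{k}{\rho_1,\ldots,\rho_{\ell(\rho)}}m^\circ_\rho(\beta)$; dividing by $k!$ and isolating $\rho=\mathbf{1}_k$ (where the multinomial coefficient is $k!$) is the claim. Your exponential generating function argument arrives at the same master relation $\frac{1}{k!}=\sum_{\rho\in\mathcal{C}_k}\bigl(\prod_i\frac{1}{\rho_i!}\bigr)m^\circ_\rho(\beta)$ by matching $[t^k]$-coefficients in $e^t=\prod_{U}e^{t\,\mathrm{Leb}(U)}$ --- this is the multinomial expansion in generating-function disguise --- but commits you to justifying convergence of an infinite product and two interchanges of infinite sums, overhead the paper's single finite expansion avoids. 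Both arguments are sound, both implicitly restrict to $\mathcal{I}_{H,1}$ and extend to $\overline{\mathcal{I}}_{H,1}$ by continuity at the end; the paper's is just the more economical way of saying the same thing.
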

\begin{proof} This is just a rearrangement of the observation that for $\beta\in\mathcal{I}_{H,1}$
  \begin{align*}
  1=\left(\sum_{V\in\beta}{\rm Leb}(V)\right)^k&=\sum_{V_1,\ldots,V_k\in\beta}\prod_{j=1}^k{\rm Leb}(V_j)\\
    &=\sum_{\rho\in\mathcal{C}_k}\sum_{\overset{\scriptstyle U_1,\ldots,U_{\ell(\rho)}\in\beta}{\rm strictly\,increasing}} {k\choose \rho_1\;\cdots\;\rho_{\ell(\rho)}}\prod_{i=1}^{\ell(\rho)} \left({\rm Leb}(U_i)\right)^{\rho_i}.\\
  &=\sum_{\rho\in\mathcal{C}_k}{k\choose \rho_1\;\cdots\;\rho_{\ell(\rho)}}m_\rho^\circ(\beta).
  \end{align*}
  noting that the multinomial coefficient equals $k!$ when $\rho=\mathbf{1}_k$.
\end{proof}

Denote by 
\[
\widetilde{\mathcal{C}}_n=\left\{\sigma\in\mathcal{C}_n\colon\sigma_{r-1}+\sigma_r\ge 3\mbox{ for all }2\le r\le\ell(\sigma)\right\}
\]
the set of compositions of $n\ge 0$ with no two consecutive 1s, and set $\widetilde{\mathcal{C}}=\bigcup_{n\ge 0}\widetilde{\mathcal{C}}_n$. For $\sigma\in\widetilde{\mathcal{C}}$, the
calculation of the generator is similar to generator calculations for the ranked process in \cite[Section 2-3]{Paper1-3}, and we recall some relevant results, beginning with the 
identification of Wright--Fisher processes taking values in the simplex $\Delta_d:=\big\{\mathbf{w}=(w_1,\ldots,w_d)\in[0,1]^d\colon\sum_{i\in[d]}w_i=1\big\}$ with generator
\[
\mathcal{A}_{\tt WF}^{\mathbf{r}}=2\sum_{i\in[d]}w_i\frac{\partial^2}{\partial w_i^2}-2\sum_{i,j\in[d]}w_iw_j\frac{\partial^2}{\partial w_i\partial w_j}-2\sum_{i\in[d]}(r_+w_i-r_i)\frac{\partial}{\partial w_i},
\]
where $\mathbf{r}=(r_1,\ldots,r_d)$ is a vector of real parameters and $r_+=\sum_{i\in[d]}r_i$. Wright--Fisher processes have infinite lifetime if $r_i\ge 0$ for all $i\in[d]$.
Otherwise, the lifetime of the process is the first time the $i^{\rm th}$ coordinate vanishes for an $i\in[d]$ with $r_i<0$. We denote by ${\tt WF}_\mathbf{b}(\mathbf{r})$ the
distribution of a Wright--Fisher process with parameter vector $\mathbf{r}$ and initial state $\mathbf{b}$.
\begin{lemma}[Special case of Lemma 2.2 of \cite{Paper1-3}]\label{lm:WFgen} The domain of $\mathcal{A}_{\tt WF}^{\mathbf{r}}$ includes all functions $g_\mathbf{q}$ for $\mathbf{q}\in\big(\mathbb{N}\cup\{0\}\big)^d$, given by
  \[
  g_\mathbf{q}(\mathbf{w})=\prod_{i\in[d]}w_i^{q_i},\qquad\begin{array}{l}\mbox{if }r_i\ge 0\mbox{ for all }i\in[d]\mbox{ with }q_i=1,\\[0.1cm]
                                                                                                                                            \mbox{and }r_i\in\mathbb{R}\mbox{ for all }i\in[d]\mbox{ with }q_i\in\{0,2,3,\ldots\}.\end{array}
  \]
\end{lemma}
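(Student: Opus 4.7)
The plan is to verify Lemma \ref{lm:WFgen} by direct computation of $\mathcal{A}_{\tt WF}^{\mathbf{r}} g_\mathbf{q}$ and then to invoke Lemma 2.2 of \cite{Paper1-3} to upgrade the formal calculation to an actual generator identification under the stated parameter conditions. The present lemma corresponds to the specialization of that general result to pure product monomials, where the polynomial structure lets us check the required hypotheses explicitly.

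First, I would compute the partial derivatives
\[
\frac{\partial g_\mathbf{q}}{\partial w_i} = q_i w_i^{q_i-1}\prod_{j\neq i}w_j^{q_j}, \qquad
\frac{\partial^2 g_\mathbf{q}}{\partial w_i^2} = q_i(q_i-1)w_i^{q_i-2}\prod_{j\neq i}w_j^{q_j},
\]
together with the mixed second derivatives, and substitute into the expression for $\mathcal{A}_{\tt WF}^{\mathbf{r}}$. The key cancellations to record are: (i) the diffusion factor $w_i$ in front of $\partial^2/\partial w_i^2$ absorbs the problematic $w_i^{q_i-2}$ when $q_i\geq 2$, and is harmless when $q_i\in\{0,1\}$ since the prefactor $q_i(q_i-1)$ vanishes; (ii) the off-diagonal term $-2 w_i w_j\, \partial^2/\partial w_i\partial w_j$ produces the bounded polynomial $-2 q_i q_j g_\mathbf{q}$; (iii) the drift contribution $2 r_+ w_i q_i w_i^{q_i-1}\prod_{j\neq i}w_j^{q_j} = 2 r_+ q_i g_\mathbf{q}$ is bounded, while the remaining piece $-2 r_i q_i w_i^{q_i-1}\prod_{j\neq i}w_j^{q_j}$ is bounded on $\Delta_d$ whenever either $q_i=0$ (when it vanishes) or $q_i\geq 1$ (when $w_i^{q_i-1}$ is bounded). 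Consequently, $\mathcal{A}_{\tt WF}^{\mathbf{r}} g_\mathbf{q}$ is a bounded polynomial on $\Delta_d$, which I would record in closed form for later use.

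The main obstacle is the boundary analysis that distinguishes the formal action from the bona-fide generator action, and this is exactly what the hypothesis on $r_i$ when $q_i=1$ addresses. When $r_i<0$ and $q_i=1$, the Wright--Fisher process is killed on the face $\{w_i=0\}$, and although $g_\mathbf{q}$ vanishes there, its derivative does not, so the Dynkin martingale $g_\mathbf{q}(\mathbf W_{t\wedge\zeta})-\int_0^{t\wedge\zeta}\mathcal{A}_{\tt WF}^{\mathbf{r}} g_\mathbf{q}(\mathbf W_s)\,ds$ can fail to be a true martingale at the killing time. The sign condition $r_i\geq 0$ for every $i$ with $q_i=1$ rules out killing on any face where $g_\mathbf{q}$ vanishes only to first order, and for those coordinates with $q_i\in\{0,2,3,\ldots\}$ either $g_\mathbf{q}$ is independent of $w_i$ or vanishes to second order, so killing at $\{w_i=0\}$ causes no difficulty.

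Having verified these two points — boundedness of $\mathcal{A}_{\tt WF}^{\mathbf{r}} g_\mathbf{q}$ and the compatible boundary behaviour of $g_\mathbf{q}$ with the sign pattern of $\mathbf r$ — I would conclude by quoting \cite[Lemma 2.2]{Paper1-3}, which packages the Dynkin/martingale-problem verification for Wright--Fisher diffusions into precisely these hypotheses and yields $g_\mathbf{q}\in\mathrm{Dom}(\mathcal{A}_{\tt WF}^{\mathbf{r}})$.
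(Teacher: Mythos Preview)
The paper gives no proof of this lemma at all: it is stated with the parenthetical label ``Special case of Lemma 2.2 of \cite{Paper1-3}'' and is then used without further justification. Your proposal likewise ends by invoking \cite[Lemma 2.2]{Paper1-3}, so at the level of what is actually being cited, you and the paper agree.

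The difference is that you supply an explanatory sketch --- computing $\mathcal{A}_{\tt WF}^{\mathbf{r}} g_{\mathbf q}$, checking it is a bounded polynomial on $\Delta_d$, and discussing why the sign condition $r_i\ge 0$ for coordinates with $q_i=1$ is exactly what avoids trouble at the killing boundary --- whereas the paper treats the result as a black-box citation. Your sketch is reasonable and captures the right mechanism; one small slip is that in the drift term you have the signs reversed (the generator has $-2(r_+w_i-r_i)\partial_i$, so the bounded piece is $-2r_+q_ig_{\mathbf q}$ and the potentially singular piece is $+2r_iq_iw_i^{q_i-1}\prod_{j\ne i}w_j^{q_j}$), but this does not affect the boundedness or boundary discussion.
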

Fix $\beta\in\mathcal{I}_{H,1}$ and let  
$\big(\widebar{\beta}^u,\,u\ge 0\big)\sim{\tt PDIPE}_\beta(\alpha,\theta)$ be obtained by de-Poissonizing the process $\big(\beta^y,\,y\ge 0\big)$ constructed in Definition \ref{def:IPPAT}. This gives access to continuous block evolutions $\big(W_u^{(V)},u\ge 0\big)$ starting from ${\rm Leb}(V)$, $V\in\beta$, and to evolutions $\big(\widebar{W}_u^{(0)},\,u\ge 0\big)$ and 
$\big(\widebar{W}_u^{(V)},\,u\ge 0\big)$, $V\in\beta$, starting from $0$ that capture the aggregate mass of other blocks arising, respectively, from $\cev{\mathbf{F}}_\theta$ and 
$\mathbf{N}_V$, $V\in\beta$. We also write $X_u^{(V)}=W_u^{(V)}+\widebar{W}_u^{(V)}$. More precisely, denote by $\mathbf{f}_V$ the left-most spindle of $\mathbf{N}_V$ and recall the de-Poissonization notation of Definition \ref{def:depoiss}. We then set
\begin{align*}
W_u^{(V)}=\left\|\beta^{\tau_{\boldsymbol{\beta}}(u)}\right\|^{-1}\mathbf{f}_V(\tau_{\boldsymbol{\beta}}(u)),\quad
X_u^{(V)}=&\left\|\beta^{\tau_{\boldsymbol{\beta}}(u)}\right\|^{-1}\Big\|\skewer\big(\tau_{\boldsymbol{\beta}}(u),\mathbf{N}_V\big)\Big\|,\\
\widebar{W}_u^{(V)}=X_u^{(V)}-W_u^{(V)},\quad
\widebar{W}_u^{(0)}=X^{(0)}_u=&\left\|\beta^{\tau_{\boldsymbol{\beta}}(u)}\right\|^{-1}\Big\|\fskewer\big(\tau_{\boldsymbol{\beta}}(u),\cev{\mathbf{F}}_\theta\big)\Big\|.
\end{align*}
The following lemma is a variant of \cite[Proposition 2.6]{Paper1-3}. The important variation for the present requirements is that the setting of \cite{Paper1-3}, focussing entirely on ranked block sizes,  
allowed to only consider functions $g_\mathbf{q}$ with powers $q_i\neq 1$ for all $i\in[d]$. The inclusion of the case $q_i=1$ means the following generalisation is needed.

\begin{lemma}\label{lm:WF} Let $V_1,\ldots,V_k\in\beta$ be a strictly increasing sequence of intervals. For $0\le i\le k+1$, let $\widebar{W}_u^{(V_{i},V_{i+1})}=\widebar{W}_u^{(V_{i})}+\sum_{V\in\beta\colon V_{i}<V<V_{i+1}}X_u^{(V)}$, with the conventions $V_0=0$ and $V_{k+1}=1$. Then
   \[
  \left(\widebar{W}_u^{(0,V_1)},W_u^{(V_1)},\widebar{W}_u^{(V_{1},V_{2})},\ldots, W_u^{(V_k)},\widebar{W}_u^{(V_k,1)}\right)\sim{\tt WF}_{\mathbf{b}}(\mathbf{r}),
   \]
    where $b_{2i}={\rm Leb}(V_i)$, $i\in[k]$, $b_{2i+1}=\sum_{V\in\beta\colon V_i<V<V_{i+1}}{\rm Leb}(V)$, $0\le i\le k$, and $r_1=\theta$, $r_{2i}=-\alpha$, $r_{2i+1}=\alpha$, $i\in[k]$.

  Similarly, $\big(X^{(V_1^\prime)}_u,\ldots,X^{(V_p^\prime)}_u,1-\sum_{i\in[p]}X_u^{(i)}\big)\sim{\tt WF}_{\mathbf{b}^\prime}(\mathbf{r}^\prime)$, for distinct 
  $V_1^\prime,\ldots,V_p^\prime\in\beta\cup\{0\}$, where $(b_i^\prime,r_i^\prime)=(0,\theta)$ if $V_i^\prime=0$ and $(b_i^\prime,r_i^\prime)=({\rm Leb}(V_i^\prime),0)$ if $V_i^\prime\in\beta$, $i\in[p]$.
\end{lemma}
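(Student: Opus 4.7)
The second assertion is a direct application of \cite[Proposition 2.6]{Paper1-3}: a tracked $V'_i=0$ contributes dimension $2\theta$ from the immigration mass, each tracked $V'_i\in\beta$ contributes dimension $0$ from the corresponding clade total mass, and the residual component absorbs the remaining dimension.

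For the first assertion, the plan is to work in the Poissonized picture, before the time change $\tau_{\boldsymbol{\beta}}$ and normalization. Introduce the residual clade mass $\hat M^{(V_i)}_y := \big\|\skewer(y,\mathbf{N}_{V_i})\big\|-\mathbf{f}_{V_i}(y)$, so that $\hat X^{(V_i)}_y = \mathbf{f}_{V_i}(y)+\hat M^{(V_i)}_y$. The central claim is that, in Poissonized time and up to the first absorption time $\min_i\zeta(\mathbf{f}_{V_i})$,
\[
\widebar W^{(0)},\quad \big(\mathbf{f}_{V_i},\,\hat M^{(V_i)}\big)_{i\in[k]},\quad \big(\hat X^{(V)}\big)_{V\in\beta\setminus\{V_1,\ldots,V_k\}}
\]
are mutually independent squared Bessel processes of respective dimensions $2\theta$, alternating pairs $(-2\alpha,2\alpha)$, and $0$, with the obvious initial masses. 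Granted this, additivity of squared Bessel processes assembles the aggregates $\widebar W^{(0,V_1)}$ and $\widebar W^{(V_i,V_{i+1})}$ into independent $\BESQ(2\theta)$ and $\BESQ(2\alpha)$, respectively, and a standard It\^o computation for normalized ratios under the de-Poissonization time change identifies the resulting vector as the ${\tt WF}_{\mathbf{b}}(\mathbf{r})$ diffusion with the stated parameters, with absorption at the simplex faces matching the $\BESQ(-2\alpha)$ components hitting zero.

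Cross-clade independence and independence from $\widebar W^{(0)}$ are immediate from the construction in Definitions~\ref{def:IPPA} and~\ref{defn:cev-beta}. The marginal laws $\widebar W^{(0)}\sim\BESQ_0(2\theta)$ and $\hat X^{(V)}\sim\BESQ_{{\rm Leb}(V)}(0)$ follow from Proposition~\ref{prop:sp}\ref{item:SSIP:mass} applied to ${\tt SSIPE}_\varnothing(\alpha,\theta)$ and to ${\tt SSIPE}_{\{(0,{\rm Leb}(V))\}}(\alpha,0)$, respectively. The main obstacle is the within-clade joint law: for each $V\in\beta$, showing that $(\mathbf{f}_V,\hat M^{(V)})$ restricted to $[0,\zeta(\mathbf{f}_V))$ is distributed as the corresponding restriction of an independent pair of a $\BESQ_{{\rm Leb}(V)}(-2\alpha)$ and a $\BESQ_0(2\alpha)$. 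By Definition~\ref{def:IPPA}, $\mathbf{f}_V$ is independent of the PRM $\bN$ that drives the remainder of $\mathbf{N}_V$, while $\hat M^{(V)}$ is a measurable functional of $\bN$ and $\zeta(\mathbf{f}_V)$. Conditionally on $\zeta(\mathbf{f}_V)=\zeta$, Theorem~\ref{thm:rayknight}(ii) with $\theta=\alpha$ and $x=\zeta$ identifies $\hat M^{(V)}\!\mid_{[0,\zeta]}$ as the total mass of an ${\tt SSIPE}_\varnothing(\alpha,\alpha)$, hence as $\BESQ_0(2\alpha)$ on that interval; since this is exactly the restriction to $[0,\zeta]$ of an unconditional $\BESQ_0(2\alpha)$, the required joint law is obtained, and the Wright--Fisher identification then follows via the de-Poissonization procedure within the non-absorbed regime.
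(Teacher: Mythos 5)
Your proposal is correct and follows the same overall architecture as the paper: decompose the relevant Poissonized total masses into independent squared Bessel processes of the right dimensions, then invoke Pal's time-change construction (normalization by the total mass under $\tau_{\boldsymbol{\beta}}$) to identify the resulting vector as a ${\tt WF}_{\mathbf{b}}(\mathbf{r})$ diffusion. The one substantive place where you diverge is the within-clade joint law, i.e.\ the claim that $\big(\mathbf{f}_V,\hat M^{(V)}\big)$ on $[0,\zeta(\mathbf{f}_V))$ is distributed like an independent pair $\big(\BESQ_{\Leb(V)}(-2\alpha),\BESQ_0(2\alpha)\big)$ restricted to the lifetime of the first. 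The paper obtains this by directly citing \cite[Corollary 3.16]{Paper1-2}, whereas you derive it from Theorem~\ref{thm:rayknight}(ii) (with $\theta=\alpha$, so $\bX^{(\alpha)}=\bX$, and $x=\zeta(\mathbf{f}_V)$ after conditioning). Your derivation is sound: $\hat M^{(V)}$ depends on $\mathbf{f}_V$ only through $\zeta(\mathbf{f}_V)$, the scaffolding $\zeta(\mathbf{f}_V)+\xi(\bN)$ matches the shifted stable scaffolding of the theorem, and since $\bN\indep\mathbf{f}_V$ the conditional identification with $\BESQ_0(2\alpha)|_{[0,\zeta]}$ gives the independent joint law. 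This buys a more self-contained argument within the present paper at the cost of invoking a result (Theorem~\ref{thm:rayknight}(ii)) whose proof itself leans on Proposition~\ref{prop:sp}\ref{item:SSIP:mass} and the $\theta=0$ machinery of \cite{Paper1-2}; no circularity arises since those are established before Section~\ref{sec:infgen}. Two minor points: you should make explicit, as the paper does, that for Pal's construction each $\BESQ_0(2\alpha)$ component must be extended independently beyond the stopping time $\zeta(\mathbf{f}_{V_i})$ so that it runs on all of $[0,\infty)$; and the ``standard It\^o computation'' should really be a citation to Pal \cite[Proposition 12]{Pal11} and \cite[Theorem 4]{Pal13}, which handle the nonnegative and negative dimension parameters respectively and are not entirely routine to re-derive.
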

\begin{proof} Like \cite[Proposition 2.6]{Paper1-3}, this can be seen using Pal's \cite{Pal11,Pal13} construction of Wright--Fisher diffusions from independent squared Bessel
  processes. In the present setting, we note from Proposition \ref{prop:sp}\ref{item:SSIP:mass} that $\mathbf{g}_1:=\big(\big\|\fskewer(y,\cev{\mathbf{F}}_\theta)\big\|,\,y\ge 0\big)$ $\sim{\tt BESQ}_{b_1}(2\theta)$ and $\big(\big\|\skewer(y,\mathbf{N}_{V})\big\|,\,y\ge 0)\sim{\tt BESQ}_{{\rm Leb}(V)}(0)$ for all $V\in\beta$. We see from 
  Definition \ref{def:IPPA} that $\mathbf{g}_{2i}:=\mathbf{f}_{V_i}\sim{\tt BESQ}_{b_{2i}}(-2\alpha)$. Furthermore, as has been observed in \cite[Corollary 3.16]{Paper1-2}, the independence of 
  $\mathbf{f}$ and $\mathbf{N}$ in Definition \ref{def:IPPA} entails that $\big(\big\|\skewer\big(y,\mathbf{N}_{V_i}|_{(0,\infty)\times\mathcal{E}},\xi(\mathbf{N}_{V_i})\big)\big\|,\,0\le y\le \zeta(\mathbf{f}_{V_i})\big)$ is distributed like an independent ${\tt BESQ}_0(2\alpha)$ process, which is then stopped at $\zeta(\mathbf{f}_{V_i})$. We extend
  our probability space to support the full ${\tt BESQ}_0(2\alpha)$ by extending it independently beyond $\zeta(\mathbf{f}_{V_i})$, for each $i\in[k]$.
  By the additivity of ${\tt BESQ}$-processes, adding the independent ${\tt BESQ}_0(2\alpha)$ and ${\tt BESQ}_{{\rm Leb}(V)}(0)$, $V_{i}<V<V_{i+1}$, yields
  a process $\mathbf{g}_{2i+1}\sim{\tt BESQ}_{b_{2i+1}}(2\alpha)$, for each $i\in[k]$, so that for $0\le y\le \zeta(\mathbf{f}_{V_i})$,
  \[
  \mathbf{g}_{2i+1}(y)=\big\|\skewer\big(y,\mathbf{N}_{V_i}|_{(0,\infty)\times\mathcal{E}},\xi(\mathbf{N}_{V_i})\big)\big\|+\sum_{V\colon V_i<V<V_{i+1}}\big\|\skewer(y,\mathbf{N}_{V})\big\|.
  \]
  By construction, $\mathbf{g}_i$, $i\in[2k+1]$, are independent ${\tt BESQ}_{b_i}(2r_i)$,
  $i\in[2k+1]$. In this setting, let $\mathbf{g}=\sum_{i\in[2k+1]}\mathbf{g}_i$. As noted in \cite[Section 2.3]{Paper1-3} in the relevant generality, by reference to \cite[Proposition 12]{Pal11} and \cite[Theorem 4]{Pal13} for non-negative and non-positive parameters, respectively, this entails
  \[
  \Big(\Big(\big(\mathbf{g}(\tau_\mathbf{g}(u))\big)^{-1}\mathbf{g}_i(\tau_\mathbf{g}(u)),\,i\in[2k+1]\Big),\,0\le u<T\Big)\sim{\tt WF}_\mathbf{b}(\mathbf{r}),
  \]
  where $\tau_\mathbf{g}(u)=\inf\big\{z\ge 0\colon\int_0^z(\mathbf{g}(y))^{-1}dy>u\big\}$ and $T$ is the first time in the time-changed process that an even component 
  vanishes, i.e.\ such that $\tau_\mathbf{g}(T)=\min\big\{\zeta(\mathbf{f}_{V_i}),\,i\in[k])\big\}$. This completes the proof since, by construction, 
  $\mathbf{g}(y)=\|\beta^y\|$, $0\le y\le\min\{\zeta(\mathbf{f}_{V_i}),\,i\in[k]\}$. The second claim is proved similarly.
\end{proof}

\begin{proposition}\label{prop:nocons} Let $\beta\in\mathcal{I}_{H,1}$, $(\widebar{\beta}^u,\,u\ge 0)\sim{\tt PDIPE}_\beta(\alpha,\theta)$ and $\sigma\in\widetilde{\mathcal{C}}$. Then 
  \begin{equation}\label{eq:genclaim}
  \lim_{u\downarrow 0}\frac{\mathbb{E}[m_\sigma^\circ(\widebar{\beta}^u)]-m_\sigma^\circ(\beta)}{u}=2\mathcal{A}_{\alpha,\theta}m_\sigma^\circ(\beta).
  \end{equation}
  Moreover, this convergence holds in $\mathbf{L}^2({\tt PDIP}(\alpha,\theta))$.
\end{proposition}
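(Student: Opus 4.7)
The plan is to decompose $m_\sigma^\circ(\widebar\beta^u)$ according to the slot origins of the blocks in each strictly increasing tuple, and apply the Wright--Fisher representations of Lemma \ref{lm:WF} together with the generator formula of Lemma \ref{lm:WFgen}. Writing $\gamma_u^{(0)}$ for the (normalized) immigration piece in $\widebar\beta^u$ and $\gamma_u^{(U)}$, $U\in\beta$, for the (normalized) clade descending from $U$, the partition $\widebar\beta^u$ is a concatenation $\gamma_u^{(0)}\concat\Concat_{U\in\beta}\gamma_u^{(U)}$, so every block lies in exactly one slot $s\in\{0\}\cup\beta$ and
\[
m_\sigma^\circ(\widebar\beta^u)=\sum_{\pi}\prod_{s\colon\pi^{-1}(s)\neq\varnothing}m^\circ_{\sigma|_{\pi^{-1}(s)}}\bigl(\gamma_u^{(s)}\bigr),
\]
where $\pi$ ranges over non-decreasing maps $[\ell(\sigma)]\to\{0\}\cup\beta$ (with $0$ preceding every block of $\beta$).

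Next I would classify the maps $\pi$ by the leading-order $u$-asymptotics of their contribution. At $u=0$ only strictly increasing $\pi$ into $\beta$ survive, recovering $m_\sigma^\circ(\beta)$. First-order contributions fall into three families: (i) $\pi$ strictly increasing into $\beta$ but with one slot $U$ perturbed via its clade evolution, yielding the drift term $-|\sigma|(|\sigma|-1+\theta)m_\sigma^\circ$ together with the $\sum_{j\colon\sigma_j\ge 2}\sigma_j(\sigma_j-1-\alpha)m_{\sigma-\square_j}^\circ$ terms; (ii) $\pi$ placing one position in the dust slot (necessarily $j=1$), yielding $\theta\, m_{\sigma\ominus\square_1}^\circ$; and (iii) $\pi$ placing some $j\ge 2$ with $\sigma_j=1$ into a ``new-block'' portion of a clade slot (a $\widebar W$-coordinate of Lemma \ref{lm:WF}), contributing $\alpha\, m_{\sigma\ominus\square_j}^\circ$. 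All other $\pi$ contribute $o(u)$, as follows from Wright--Fisher moment bounds.

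For family (i) the relevant joint moment $\prod_i(W_u^{(V_i)})^{\sigma_i}$ is a power function $g_{\mathbf q}$ of the Wright--Fisher vector of Lemma \ref{lm:WF}, but each $\sigma_i$ lives at an ``even'' coordinate whose parameter is $r=-\alpha<0$, so Lemma \ref{lm:WFgen} does not directly apply when some $\sigma_i=1$. The hypothesis $\sigma\in\widetilde{\mathcal C}$ resolves this by permitting the decomposition $m^\circ_{(1)}(\gamma_u^{(V_i)})=W_u^{(V_i)}+\widebar W_u^{(V_i)}$ at each $\sigma_i=1$, which moves the $q=1$ factor to an ``odd'' coordinate with $r\ge 0$ where Lemma \ref{lm:WFgen} does apply; the no-consecutive-1s condition is precisely what ensures that no $q=1$ remains on an even coordinate after these substitutions. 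Matching the three families to the three sums in $2\mathcal A_{\alpha,\theta}m_\sigma^\circ$---with $\eta_1=\theta$ arising from the dust-slot WF parameter $r_1=\theta$ and $\eta_j=\alpha$ arising from the between-block parameters $r_{2i+1}=\alpha$---yields \eqref{eq:genclaim} pointwise. The $\mathbf L^2({\tt PDIP}(\alpha,\theta))$ statement then follows from the pointwise convergence upgraded by a uniform bound on the difference quotient, obtained by applying the same WF argument to second moments of $m_\sigma^\circ$.

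The main obstacle will be the combinatorial bookkeeping for family (iii) and the verification that families (i)--(iii) exhaust all first-order contributions; in particular, pairing each $\sigma_j=1$ position with the correct $\eta_j$ while respecting the strict-ordering constraint on the tuple requires care, since it is at the interface between the dust-slot and between-block contributions that the distinction $\eta_1\neq\eta_j$ (for $j\ge 2$) arises in the WF parameters. A secondary technical point is the uniform $\mathbf L^2$ bound on the difference quotient needed for the $\mathbf L^2$ convergence.
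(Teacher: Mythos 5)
Your proposal reaches for the same pair of lemmas (\ref{lm:WF} and \ref{lm:WFgen}) as the paper's proof, but your handling of the $\sigma_j=1$ positions has a genuine flaw. You correctly observe that the family-(i) joint moment $\prod_i(W_u^{(V_i)})^{\sigma_i}$ puts a power $q=1$ on an even Wright--Fisher coordinate with parameter $-\alpha<0$, outside the scope of Lemma~\ref{lm:WFgen}, but your proposed repair---expanding $m^\circ_{(1)}(\gamma_u^{(V_i)}) = W_u^{(V_i)} + \widebar W_u^{(V_i)}$ and claiming this \emph{moves} the $q=1$ factor to an odd coordinate---does not work. The expansion merely splits the factor: the cross-terms still contain a solitary $W_u^{(V_i)}$ to the first power, and neither $X_u^{(V_i)}$ nor $\widebar W_u^{(V_i)}$ on its own is a single coordinate of the Wright--Fisher vector in Lemma~\ref{lm:WF}. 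The odd coordinate there is $\widebar W_u^{(V_i,V_{i+1})}$, which aggregates the new blocks of $V_i$ \emph{together with} the total mass of every clade strictly between $V_i$ and $V_{i+1}$; $\widebar W_u^{(V_i)}$ alone is only a piece of it. The no-consecutive-1s hypothesis does nothing to remove the offending $W^1$ terms at the level of a fixed slot assignment $\pi$. Your family list is also not exhaustive: a $\sigma_j=1$ position may land on a new block of an intermediate clade strictly between $\pi(j-1)$ and $\pi(j+1)$, which falls under none of (i)--(iii) as stated.

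The paper avoids all of this by never assigning $\sigma_j=1$ positions to individual slots. It indexes the Wright--Fisher vector of Lemma~\ref{lm:WF} only by $\{j:\sigma_j\ge 2\}$ (the blocks $U_j$ hosting an initial spindle $\mathbf f_{U_j}$), and lets each $\sigma_j=1$ power fall directly on the between-block aggregate $\widebar W_u^{(U_{j-1},U_{j+1})}$, the odd coordinate with nonnegative parameter $\theta$ or $\alpha$. The condition $\sigma\in\widetilde{\mathcal{C}}$ enters precisely to guarantee that each between-block region absorbs at most one $\sigma_j=1$, so the ordering constraint is automatic and every factor of the power function sits on a coordinate of the right sign for Lemma~\ref{lm:WFgen}. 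This yields a lower bound at once (restrict the $\sigma_j\ge 2$ blocks to initial spindles, aggregate the rest); the matching upper bound isolates a finite main contribution and shows that the discarded remainders vanish, and those same remainder bounds furnish the uniform domination required for the $\mathbf{L}^2$ statement. Pursuing your per-slot decomposition would ultimately force you to re-aggregate everything you split off before the generator computation can be applied---in effect rediscovering the paper's choice of Wright--Fisher vector, but via far more combinatorics.
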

\begin{proof} This proof is a refinement of the proof of \cite[Proposition 3.3]{Paper1-3}. Fix $\beta\in\mathcal{I}_{H,1}$ and $\sigma\in\widetilde{\mathcal{C}}_n$ with $\ell(\sigma)=\ell$ and $k:=\#\{j\in[\ell]\colon\sigma_j\ge 2\}$. Then we can 
  bound the expectation in \eqref{eq:genclaim} below by only considering blocks of $\widebar{\beta}^u$ that arise from $\mathbf{f}_U$, $U\in\beta$, for powers 
  $\sigma_j\ge 2$, but all blocks for power $\sigma_j=1$, in the following sense, 
  \begin{align}\label{eq:lowerb}
  &\mathbb{E}[m_\sigma^\circ(\widebar{\beta}^u)]
  =\mathbb{E}\Bigg[\!\!\sum_{\overset{\scriptstyle\widetilde{U}_j\in\widebar{\beta}^u,\,j\in[\ell]}{\rm strictly\,increasing}}\!\prod_{j\in[\ell]}\!({\rm Leb}(\widetilde{U}_j))^{\sigma_j}\!\Bigg]\!\\
  &=\mathbb{E}\Bigg[\sum_{\overset{\scriptstyle\widetilde{U}_j\in\widebar{\beta}^u,\,j\in[\ell]\colon\sigma_j\ge 2}{\rm strictly\,increasing}}\prod_{j\in[\ell]\colon\sigma_j\ge 2}\!({\rm Leb}(\widetilde{U}_j))^{\sigma_j}\prod_{j\in[\ell]\colon\sigma_j=1}\Bigg(\sum_{\overset{\scriptstyle\widetilde{U}_j\in\widebar{\beta}^u}{\widetilde{U}_{j-1}<\widetilde{U}_j<\widetilde{U}_{j+1}}}{\rm Leb}(\widetilde{U}_j)\Bigg)\!\Bigg]\!\nonumber\\
  &\ge\!\sum_{\overset{\scriptstyle U_j\in\beta,\,j\in[\ell]\colon\sigma_j\ge 2}{\rm strictly\,increasing}}\!\!\!\mathbb{E}\Bigg[\prod_{j\in[\ell]}\!(Z_u^{(j)})^{\sigma_j}\!\Bigg]\!,\nonumber
  \end{align}
  where in the notation of Lemma \ref{lm:WF} with $\{V_i,\,i\in[k]\}=\{U_j,\,j\in[\ell]\colon\sigma_j\ge 2\}$, we set $Z^{(j)}_u=W^{(U_j)}_u$ if $\sigma_j=2$ and 
  $Z^{(j)}_u=\widebar{W}_u^{(U_{j-1},U_{j+1})}$ if $\sigma_j=1$, using conventions $U_0=0$ and $U_{\ell+1}=1$. Indeed, $\sigma\in\widetilde{\mathcal{C}}$ entails that for $j$ with 
  $\sigma_j=1$ the intervals $U_{j-1}$ and $U_{j+1}$ are adjacent members of $(V_i,\,i\in[k])$. 
We write $a_j=Z^{(j)}_0$, $j\in[\ell]$, and
  note $(a_j,\,j\in[\ell])$ is made up of some but in general not all entries of $(b_i,\,i\in[2k+1])$, and $g_\sigma(\mathbf{a})=g_\mathbf{q}(\mathbf{b})$ for a vector $\mathbf{q}$ that has a zero component
  inserted into $\sigma$ wherever the first or two adjacent or the last entry of $\sigma$ are at least 2. 

  We continue with a fixed selection of strictly increasing $U_j\in\beta$, $j\in[k]\colon\sigma_j\ge 2$ and note that the Wright--Fisher parameters in Lemma \ref{lm:WF} are
  such that any components $Z^{(j)}_u$ when $\sigma_j=1$ have parameter $\theta\ge 0$ if $j=1$ and $\alpha\in(0,1)$ if $j\ge 2$. Hence, we can apply Lemma \ref{lm:WFgen}
  and find
  \begin{align}
  &\lim_{u\downarrow 0}\frac{\mathbb{E}\left[\prod_{j=1}^{\ell}(Z_u^{(j)})^{\sigma_j}\right]-\prod_{j=1}^{\ell}a_j^{\sigma_j}}{u}\label{eq:WFgencalc}\\
  &=2\sum_{j\in[\ell]\colon\sigma_j\ge 2}\sigma_j(\sigma_j\!-\!1)g_{\sigma-\square_j}(\mathbf{a})-2\sum_{i,j\in[\ell]\colon i\neq j}\sigma_i\sigma_jg_{\sigma}(\mathbf{a})
           -2\sum_{j\in[\ell]}\sigma_j(\sigma_j\!-\!1)g_{\sigma}(\mathbf{a})\nonumber\\
  &\quad     -2\theta\sum_{j\in[\ell]}g_\sigma(\mathbf{a})-2\alpha\sum_{j\in[\ell]\colon\sigma_j\ge 2}\sigma_jg_{\sigma-\square_j}(\mathbf{a})
       +2\sum_{j\in[\ell]\colon\sigma_j=1}\eta_jg_{\sigma\ominus\square_j}(\mathbf{a})\nonumber\\
  &=-2|\sigma|(|\sigma|\!-\!1\!+\!\theta)g_\sigma(\mathbf{a})+2\sum_{j\colon\sigma_j\ge 2}\sigma_j(\sigma_j\!-\!1\!-\!\alpha)g_{\sigma-\square_j}(\mathbf{a})
       +2\sum_{j\colon\sigma_j=1}\eta_jg_{\sigma\ominus\square_j}(\mathbf{a}).\nonumber
  \end{align}
  To conclude that the RHS of \eqref{eq:genclaim} is a lower bound for the LHS, we want to sum over $(U_j)$ as on the RHS of \eqref{eq:lowerb}. Indeed, subject to checking the 
  conditions of the dominated convergence theorem, this yields the claimed lower bound because \eqref{eq:lowerb} is an equality for $u=0$ and for each $j\in[\ell]$, summing 
  $g_{\sigma-\square_j}(\mathbf{a})$ or $g_{\sigma\ominus\square_j}(\mathbf{a})$, respectively, over all $\mathbf{a}$ that arise from summing over $(U_j)$ similarly yields
  $m_{\sigma-\square_j}^\circ(\beta)$ and $m_{\sigma\ominus\square_j}^\circ(\beta)$, as required. The domination condition can be checked as in 
  \cite[Lemmas 3.5 and 3.8]{Paper1-3} using It\^o's formula and dropping negative terms to first show that $\mathbb{E}\big[\prod_{j\in[\ell]\colon\sigma_j\ge 2}Z_s^{(j)}\big]\le\prod_{j\in[\ell]\colon \sigma_j\ge 2}a_j$ for all $s\ge 0$, then similarly for all $u>0$
  \begin{equation}\label{eq:dom}
  \left|\frac{\mathbb{E}\big[\prod_{j\in[\ell]}(Z_u^{(j)})^{\sigma_j}\big]-\prod_{j\in[\ell]}a_j^{\sigma_j}}{u}\right|\le 2|\sigma|\big(2|\sigma|+2+\alpha+\theta\big)\prod_{j\in[\ell]\colon\sigma_j\ge 2}a_j.
  \end{equation}
  As the final product is summable, this completes the proof of the lower bound.

  For the upper bound, we denote by $\beta_n$ the set of $n\ge 0$ longest intervals of $\beta$, breaking any ties by choosing from left to right, to be definite. For each $n$, we
  distinguish the main contribution from $\mathbf{f}_U$, $U\in\beta_n$, for powers $\sigma_j\ge 2$, still allowing all blocks for power $\sigma_j=1$, and the remainder that consists
  of several parts that cover all other choices of $\widetilde{U}_j\in\widebar{\beta}^u$, $j\in[\ell]$. The argument for the lower bound deals with the main contribution and 
  will yield the RHS of \eqref{eq:genclaim}, 
  and we show that the remainder vanishes, as $n\rightarrow\infty$. Specifically, we bound $\mathbb{E}[m_\sigma^\circ(\widebar{\beta}^u)]$ above by
  \begin{align}&\sum_{\overset{\scriptstyle (U_j,\,j\in[\ell]\colon\sigma_j\ge 2)\in\beta_n^k}{\rm strictly\,increasing}}\!\mathbb{E}\Bigg[\prod_{j\in[\ell]}\!(Z_u^{(j)})^{\sigma_j}\!\Bigg]
      +\sum_{\overset{\scriptstyle (U_j,\,j\in[\ell]\colon\sigma_j\ge 2)\in\beta^k\setminus\beta_n^k}{\rm strictly\,increasing}}\!\mathbb{E}\Bigg[\prod_{j\in[\ell]}\!(Z_u^{(j)})^{\sigma_j}\!\Bigg]\nonumber\\             
      &\qquad\quad+\sum_{r\in[\ell]\colon\sigma_r\ge 2}\;\sum_{U_j\in\beta\cup\{0\},\,j\in[\ell]}\mathbb{E}\Bigg[\big(\widebar{W}_u^{(U_r)}\big)^{\sigma_r}\prod_{j\in[\ell]\setminus\{r\}}\big(X_u^{(U_j)}\big)^{\sigma_j}\Bigg].\label{eq:upperbound}
  \end{align}
  Note that the remainder term in the second line of \eqref{eq:upperbound} covers the choice of blocks where, in the notation of \eqref{eq:lowerb}, $\widetilde{U}_r$ is a block of $\widebar{\beta}^u$
  that is not arising from $\mathbf{f}_U$, $U\in\beta$, but from a block included in $\widebar{W}_u^{(U_r)}$ for some $U_r\in\beta\cup\{0\}$. Note also that
  for $j\neq r$, we allow all choices of blocks without constraining the order. 
  Indeed, the asymptotics as $u\downarrow 0$ for the first term, a finite sum, with each member compensated and divided by $u$ as in \eqref{eq:WFgencalc}, were established in the calculation of \eqref{eq:WFgencalc}, and then letting 
  $n\rightarrow\infty$ yields the RHS of \eqref{eq:genclaim}. In the compensated second term divided by $u$, we use \eqref{eq:dom} to upper bound this part of the remainder by 
  \begin{equation}\label{eq:upterm2}
  2|\sigma|\big(2|\sigma|+2+\alpha+\theta\big)\sum_{\overset{\scriptstyle (U_j,\,j\in[\ell]\colon\sigma_j\ge 2)\in\beta^k\setminus\beta_n^k}{\rm strictly\,increasing}}\;\prod_{j\in[\ell]\colon\sigma_j\ge 2}{\rm Leb}(U_j),
  \end{equation}
  and this vanishes as $n\rightarrow\infty$. Finally, the last term of \eqref{eq:upperbound} is easily bounded above by 
  \begin{equation}\label{eq:upterm3}
  k\sum_{U\in\beta_n\cup\{0\}}\mathbb{E}\Big[\big(\widebar{W}_u^{(U)}\big)^2\Big]+k\sum_{U\in\beta\setminus\beta_n}\mathbb{E}\Big[\big(\widebar{W}_u^{(U)}\big)^2\Big].
  \end{equation}
  For this term, when divided by $u$, \cite[Lemma 3.6]{Paper1-3} ensures that the first, finite, sum still tends to 0 as $u\downarrow 0$, and the second sum is bounded by 
  $4(2+\theta)\sum_{U\in\beta\setminus\beta_n}{\rm Leb}(U)$ and hence vanishes as $n\rightarrow\infty$. This shows that the LHS of \eqref{eq:genclaim} is also bounded 
  above by the RHS.

  Finally, we note that the dominations identified in \eqref{eq:upterm3}--\eqref{eq:upterm2}, in the case $n=0$, further yield
  \[
  \sup_{\overset{\scriptstyle u\in(0,1]}{\beta\in\mathcal{I}_{H,1}}}\left|\frac{\mathbb{E}[m_\sigma^\circ(\widebar{\beta}^u)]-m_\sigma^\circ(\beta)}{u}\right|\le 
  2|\sigma|(2|\sigma|+2+\alpha+\theta)+4k(2+\theta)+\sup_{u\in(0,1]}\frac{\mathbb{E}[(\overline{W}_u^{(0)})^2]}{u},
  \]
  which is finite, again by \cite[Lemma 3.6]{Paper1-3}, and the dominated convergence theorem establishes the ${\bf L}^2$-claim.
\end{proof}

For general $\sigma\in\mathcal{C}$, we now adapt the proof of Proposition \ref{prop:nocons} to establish an auxiliary result, in which consecutive 1s are included, but not treated
in the way needed for $m^\circ_\sigma$. Fix $\beta\in\mathcal{I}_{H,1}$ and $\sigma\in\mathcal{C}$ with $\ell=\ell(\sigma)$.
Then the composition $\sigma$ is of the form $\mathbf{1}_{\ell_0}$ followed by $\sigma_j\mathbf{1}_{\ell_j}$ for each $j\in[\ell]$ with $\#\sigma_j\ge 2$, where
$\ell_j\in\mathbb{N}\cup\{0\}$ counts the length of the run of 1s in $\sigma$ to the right of $\sigma_j$. Let 
\[
m_\sigma^*(\beta)=\sum_{\overset{\scriptstyle U_j\in\beta,\,j\in[\ell]\colon\sigma_j\ge 2}{\rm strictly\,increasing}}
\left\|\beta_0\right\|^{\ell_0}\prod_{j\in[\ell]\colon\sigma_j\ge 2}\left(\big({\rm Leb}(U_j)\big)^{\sigma_j}\left\|\beta_j\right\|^{\ell_j}\right),
\]
where we abuse notation and write $(\beta_j,\,j\in[\ell]\colon\sigma_j\ge 2)$ for the decomposition of $\beta$ at the blocks $(U_j)$ so that
\begin{equation}\label{eq:betadecomp}
\beta=\beta_0\concat\Concat_{j\in[\ell]\colon\sigma_j\ge 2}\left(U_j\concat\beta_j\right).
\end{equation}
Then $m^*_\sigma=m^\circ_\sigma$ if $\sigma\in\widetilde{\mathcal{C}}$. 

\begin{corollary}\label{lm:mstar} Let $\beta\in\mathcal{I}_{H,1}$, $(\widebar{\beta}^u,\,u\ge 0)\sim{\tt PDIPE}_\beta(\alpha,\theta)$ and $\sigma\in\mathcal{C}$. Then
  \begin{equation}\label{eq:lm:genclaim}
  \lim_{u\downarrow 0}\frac{\mathbb{E}[m_\sigma^*(\widebar{\beta}^u)]-m_\sigma^*(\beta)}{u}=2\mathcal{A}_{\alpha,\theta}m_\sigma^*(\beta),
  \end{equation}
  where
  \begin{align}
  \mathcal{A}_{\alpha,\theta}m_\sigma^*=&-|\sigma|(|\sigma|-1+\theta)m_\sigma^*\ +\ \ell_0(\ell_0-1+\theta)m^*_{\sigma\ominus\square_1}\label{eq:mstargen}\\
  &+\sum_{j\colon\sigma_j\ge 2}\left(\sigma_j(\sigma_j-1-\alpha)m_{\sigma-\square_j}^*+\ell_j(\ell_j-1+\alpha)m_{\sigma\ominus\square_{j+1}}^*\right).\nonumber
  \end{align}
  Furthermore, the convergence in \eqref{eq:lm:genclaim} holds in ${\bf L}^2({\tt PDIP}(\alpha,\theta))$.
\end{corollary}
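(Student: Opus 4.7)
The plan is to adapt the proof of Proposition \ref{prop:nocons}, now working with the aggregate-mass factors $\|\beta_j\|^{\ell_j}$ in place of sums over strictly increasing 1-position blocks. First I would fix a strictly increasing configuration $(U_{j_1},\ldots,U_{j_k})$ of blocks in $\beta$ indexed by the positions $j_1<\cdots<j_k$ with $\sigma_{j_i}\ge 2$, yielding the decomposition \eqref{eq:betadecomp}. The first part of Lemma \ref{lm:WF} then identifies the $(2k\!+\!1)$-dimensional joint mass process $\big(\widebar{W}_u^{(0,U_{j_1})}, W_u^{(U_{j_1})}, \widebar{W}_u^{(U_{j_1},U_{j_2})},\ldots,W_u^{(U_{j_k})},\widebar{W}_u^{(U_{j_k},1)}\big)$ as a Wright--Fisher diffusion with alternating parameter vector $\mathbf{r}=(\theta,-\alpha,\alpha,-\alpha,\alpha,\ldots,-\alpha,\alpha)$ satisfying $r_+=\theta$.

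Next, I would apply Lemma \ref{lm:WFgen} to the monomial $g_\mathbf{q}$ with exponent vector $\mathbf{q}=(\ell_0,\sigma_{j_1},\ell_{j_1},\ldots,\sigma_{j_k},\ell_{j_k})$, noting $|\mathbf{q}|=|\sigma|$. The hypothesis of Lemma \ref{lm:WFgen} is satisfied since every coordinate with $q_i=1$ corresponds to a $\widebar{W}$-component carrying a non-negative parameter ($\theta$ or $\alpha$), while the $W^{(U_{j_i})}$-coordinates have $q_{2i}=\sigma_{j_i}\ge 2$. A direct computation using the Wright--Fisher generator then yields
\[
 \mathcal{A}_{\tt WF}^{\mathbf{r}}g_\mathbf{q} = -2|\sigma|(|\sigma|-1+\theta)\,g_\mathbf{q} + 2\sum_i q_i(q_i-1+r_i)\,g_{\mathbf{q}-e_i}.
\]
Summing the corresponding Ethier--Kurtz martingale identity over all strictly increasing $(U_{j_i})\in\beta^k$ yields the $u\downarrow 0$ limit as a lower bound for \eqref{eq:lm:genclaim}; the decremented sums $\sum_{(U_{j_i})}g_{\mathbf{q}-e_i}$ are identified with the claimed terms in \eqref{eq:mstargen} under the convention that $m^*_{\sigma-\square_{j_i}}$ refers to the sum using $\sigma$'s decomposition with ${\rm Leb}(U_{j_i})^{\sigma_{j_i}-1}$ (which coincides with the natural $m^*$ on $\sigma-\square_{j_i}$ when $\sigma_{j_i}\ge 3$).

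The matching upper bound and the $\mathbf{L}^2(\PoiDirAT)$-convergence proceed as in Proposition \ref{prop:nocons}: contributions to $m^*_\sigma(\widebar\beta^u)$ from big blocks that are not initial spindles of some $U\in\beta$ (arising from secondary spindles in $\mathbf{N}_U$ or from immigration via $\cev\bF_\theta$), and from the tail of $\beta$ beyond a truncation $\beta_n$, are controlled by the second-moment estimate of \cite[Lemma 3.6]{Paper1-3} on $\widebar{W}^{(U)}_u$ combined with the analogs of \eqref{eq:upterm2}--\eqref{eq:upterm3}; the factors $\|\beta_j\|^{\ell_j}\le 1$ only help. A uniform bound on the difference quotient analogous to \eqref{eq:dom}, combined with dominated convergence, gives the $\mathbf{L}^2$ claim. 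The hard part will be the combinatorial bookkeeping in the edge case $\sigma_{j_i}=2$, where the decomposition structure of $\sigma-\square_{j_i}$ differs from that of $\sigma$, and the convention above must be tracked consistently through the identification of each $\sum_{(U_{j_i})}g_{\mathbf{q}-e_i}$ with its target $m^*$-function.
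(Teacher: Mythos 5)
Your probabilistic half — the Wright--Fisher reduction via Lemma \ref{lm:WF} with the $(2k\!+\!1)$-coordinate process, the choice $\mathbf{q}=(\ell_0,\sigma_{j_1},\ell_{j_1},\ldots,\sigma_{j_k},\ell_{j_k})$, the appeal to Lemma \ref{lm:WFgen} (noting the $q_i=1$ positions carry parameter $\theta$ or $\alpha\ge 0$), the Wright--Fisher generator identity, the truncation of $\beta$ to $\beta_n$, and the domination argument reusing the bounds of the Proposition \ref{prop:nocons} proof (indeed, $\|\beta_j\|^{\ell_j}\le 1$ only tightens \eqref{eq:dom} and its analogues) — matches the second half of the paper's proof. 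You are also right, and sharper than the paper's prose, in flagging the $\sigma_{j_i}=2$ edge case: when $\sigma_{j_i}-1=1$, the run structure of $\sigma-\square_{j_i}$ differs from that of $\sigma$, and the sum $\sum_{(U_{j_i})}g_{\mathbf{q}-e_{2i}}(\mathbf{a})$ does not equal $m^*_{\sigma-\square_{j_i}}(\beta)$ with $m^*$ literally applied to the decremented composition; the correct reading keeps $\sigma$'s decomposition, exactly the convention you state.

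There is, however, a genuine gap. The operator $\mathcal{A}_{\alpha,\theta}$ is an a priori defined linear operator on the span of $\{m^\circ_\sigma,\,\sigma\in\mathcal{C}\}$. The corollary asserts not only that the PDIPE limit equals the RHS of \eqref{eq:mstargen}, but also that this RHS is $2\mathcal{A}_{\alpha,\theta}m^*_\sigma$, where $\mathcal{A}_{\alpha,\theta}m^*_\sigma$ is computed by writing $m^*_\sigma$ as a linear combination of $m^\circ_\rho$'s and applying $\mathcal{A}_{\alpha,\theta}$ term by term. This second equality is a purely combinatorial identity and is the entire first half of the paper's proof: one expands $\|\beta_j\|^{\ell_j}$ as in Lemma \ref{lm:1k} to get $m^*_\sigma=\sum\big(\prod_r\binom{\ell_r}{\rho^{(r)}}\big)m^\circ_{\rho^{(0)}\concat\Concat_r(\sigma_r\concat\rho^{(r)})}$, applies $\mathcal{A}_{\alpha,\theta}$ to each $m^\circ$-term, and collects coefficients (the $2\ell(\widetilde\rho^{(j)})+1$-to-one counting and the multinomial identities ${\ell_j\choose\rho^{(j)}}\rho^{(j)}_i=\ell_j{\ell_j-1\choose\rho^{(j)}-\square_i}$, etc.). Without this verification, your proof shows only that the PDIPE generator acting on $m^*_\sigma$ equals some explicit linear combination of $m^*$-functions; it does not show that this combination agrees with the already-defined operator $\mathcal{A}_{\alpha,\theta}$ evaluated at $m^*_\sigma$, which is what Theorem \ref{thm:twopargen} ultimately needs. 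You cannot circumvent this by invoking Proposition \ref{prop:nocons} and linearity either, because the $m^\circ_\rho$'s appearing in the expansion of $m^*_\sigma$ need not lie in $\widetilde{\mathcal{C}}$. So the combinatorial coefficient-collection step must be supplied.
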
  
\begin{proof} To simplify notation, we write $A=\{j\in[\ell]\colon\sigma_j\ge 2\}$ and $A_0=\{0\}\cup A$. We first check the claimed form \eqref{eq:mstargen} of the generator when applied to
  $m_\sigma^*$ and note that as in Lemma \ref{lm:1k}, we can write for $j\in A_0$ and $\beta_j$ as in \eqref{eq:betadecomp}
  \[
  \left\|\beta_j\right\|^{\ell_j}=\sum_{\rho^{(j)}\in\mathcal{C}_{\ell_j}}{\ell_j\choose\rho_1^{(j)}\;\cdots\;\rho_{\ell(\rho^{(j)})}^{(j)}}  
  \sum_{\overset{\scriptstyle V_1,\ldots,V_{\ell(\rho^{(j)})}\in\beta_j}{\rm strictly\,increasing}}\prod_{i=1}^{\ell(\rho^{(j)})}({\rm Leb}(V_i))^{\rho_i^{(j)}}
  \]
  Writing shorthand ${\ell_j\choose\rho^{(j)}}$ for the multinomial coefficient, this entails 
  \begin{equation}\label{eq:mstar}
  m_\sigma^*=\sum_{(\rho^{(r)},\,r\in A_0)\in\prod_{r\in A_0}\mathcal{C}_{\ell_r}}\left(\prod_{r\in A_0} {\ell_r\choose\rho^{(r)}}\right)
                       m^\circ_{\rho^{(0)}\concat\Concat_{r\in A}(\sigma_r\concat\rho^{(r)})},
  \end{equation}
  where the symbols $\concat$ and $\Concat$ are used here to denote the concatenation of entries and compositions, i.e.\ forming longer compositions by listing all parts in a
  single vector of length $\ell(\rho^{(0)})+\sum_{r\in A}(1+\ell(\rho^{(r)}))$. In particular, $m_\sigma^*$ is in the domain of $\mathcal{A}_{\alpha,\theta}$. Furthermore, 
  applying the generator to this linear combination easily yields the claimed $m_\sigma^*$- and $m_{\sigma-\square_j}^*$-terms, $j\in A$, of the RHS of \eqref{eq:mstargen}. To identify the other terms, we note that
  $m_{\sigma\ominus\square_{j+1}}^*$ for $j\in A_0$ can be written as in \eqref{eq:mstar}, but with $\ell_j$ replaced by $\ell_{j}-1$. There are $2\ell(\widetilde{\rho}^{(j)})+1$ ways to
  obtain a given $\widetilde{\rho}^{(j)}\in\mathcal{C}_{\ell_j-1}$ from members $\rho^{(j)}\in\mathcal{C}_{\ell_j}$: the $i^{\rm th}$ part could have been one greater in
  $\rho^{(j)}\in\mathcal{C}_{\ell_j}$ than in $\widetilde{\rho}^{(j)}\in\mathcal{C}_{\ell_j-1}$, for any one $i\in[\ell(\widetilde{\rho}^{(j)})]$, or there could have been an 
  additional part in $\rho^{(j)}\in\mathcal{C}_{\ell_j}$, inserted into $\widetilde{\rho}^{(j)}\in\mathcal{C}_{\ell_j-1}$ in one of $\ell(\widetilde{\rho}^{(j)})+1$ possible places. 
  In these two cases, we observe
  \[
  {\ell_j\choose\rho^{(j)}}\rho_i^{(j)}=\ell_j{\ell_j-1\choose\rho^{(j)}-\square_i}\quad\mbox{respectively}\quad 
  {\ell_j\choose\rho^{(j)}}=\ell_j{\ell_j-1\choose\rho^{(j)}\ominus\square_i}
  \]
  Collecting the coefficients from the respective terms in $\cA_{\alpha,\theta}m^\circ_{\rho^{(0)}\concat\Concat_{r\in A}\sigma_r\concat\rho^{(r)}}$, we obtain a coefficient
  of $m^*_{\sigma\ominus\square_{j+1}}$ of
  \[
  \ell_j\left(\sum_{i\in[\ell(\widetilde{\rho}^{(j)})]}(\widetilde{\rho}_i^{(j)}-\alpha)+\eta_j+\ell(\widetilde{\rho}^{(j)})\alpha\right)=\ell_j(\ell_j-1+\eta_j), 
  \] 
  as claimed in \eqref{eq:mstargen}. For the main claim \eqref{eq:lm:genclaim}, we use the notation of the proof of Proposition \ref{prop:nocons} in that we similarly set 
  $Z_u^{(j)}=W_u^{(U_j)}$ for $j\in A$, but we now use the same $Z_u^{(r)}$ for each run of consecutive $\sigma_r=1$. To be precise, we write $Z_u^{(j+r)}=\widebar{Z}^{(j)}_u=\widebar{W}_u^{(U_j,U_{j+\ell_j+1})}$, $r\in[\ell_j]$, $j\in A_0$, with the convention that $U_0=0$ and $U_{\ell+1}=1$. We also write $a_j={\rm Leb}(U_j)$ 
  for $j\in A$ and $a_{j+r}=\|\beta_j\|$ for all $r\in[\ell_j]$ and all $j\in A_0$.
  Then 
 \eqref{eq:WFgencalc} generalises to
   \begin{align*}
  &\lim_{u\downarrow 0}\frac{\mathbb{E}\left[\prod_{j=1}^{\ell}(Z_u^{(j)})^{\sigma_j}\right]-\prod_{j=1}^{\ell}a_j^{\sigma_j}}{u}\\
  &=\lim_{u\downarrow 0}\frac{\mathbb{E}\left[\big(\widebar{Z}_u^{(0)}\big)^{\ell_0}\prod_{j\colon\sigma_j\ge 2}\left(\big(Z_u^{(j)}\big)^{\sigma_j}\big(\widebar{Z}_u^{(j)}\big)^{\ell_j}\right)\right]-\|\beta_0\|^{\ell_0}\prod_{j\colon\sigma_j\ge 2}\left(a_j^{\sigma_j}\|\beta_j\|^{\ell_j}\right)}{u}
 \\
&=-2|\sigma|(|\sigma|\!-\!1\!+\!\theta)g_\sigma(\mathbf{a})+2\sum_{j\in A}\sigma_j(\sigma_j\!-\!1\!-\!\alpha)g_{\sigma-\square_j}(\mathbf{a})\\
 &\qquad\qquad\qquad\qquad \qquad\ \ \      +2\sum_{j\in A_0}\ell_j(\ell_j\!-\!1\!+\!\eta_j)g_{\sigma\ominus\square_{j+1}}(\mathbf{a}).\nonumber
  \end{align*}
  The remainder of the proof of Proposition \ref{prop:nocons} applies with no further changes.
\end{proof}

\begin{proof}[Proof of Theorem \ref{thm:twopargen}]
  Based on Proposition \ref{prop:nocons} and Corollary \ref{lm:mstar}, this will follow by linearity and general theory. Indeed, we will show that the linear span of 
  $\{m_\sigma^*,\,\sigma\in\mathcal{C}\}$ is the same as the linear span of $\{m_\sigma^\circ,\,\sigma\in\mathcal{C}\}$. Specifically, one inclusion follows from \eqref{eq:mstar}.
  For the other inclusion, we take $\sigma\in\mathcal{C}$ and inductively replace all runs of 1s in $\sigma$ using Lemma \ref{lm:1k}. 
More precisely, we will set up an induction to
  show a stronger statement that allows us to handle one run of 1s at a time. Consider $\sigma$ of the form 
  $\sigma=\sigma^*\concat\mathbf{1}_k\concat\sigma^\circ$ where $\sigma^*$ does not end with a 1 and $\sigma^\circ$ does not begin with a 1, but where we do allow 
  degenerate cases where $\sigma^*=\emptyset$ and/or $k=0$ and/or $\sigma^\circ=\emptyset$. For all such $(\sigma^*,k,\sigma^\circ)$, we define
  \begin{align*}
  m_{\sigma^*,\mathbf{1}_k\concat\sigma^\circ}^{*,\circ}(\beta)=\sum_{\overset{\scriptstyle U_j^*\in\beta,\,j\in[\ell^*]\colon\sigma_j^*\ge 2,}{\overset{\scriptstyle U_j^\circ\in\beta,\,j\in[k+\ell^\circ],}{\rm all\,strictly\,increasing}}}\|\beta_0^*\|^{\ell_0^*}&\prod_{j\in[\ell^*]\colon\sigma_j^*\ge 2}\left(\left({\rm Leb}(U_j^*)\right)^{\sigma_j^*}\|\beta_j^*\|^{\ell_j^*}\right)\\[-0.7cm]
    &\times\left(\prod_{j=1}^{k}{\rm Leb}(U_j^\circ)\right)\prod_{j=1}^{\ell^\circ}\left({\rm Leb}(U_{k+j}^\circ)\right)^{\sigma_j^\circ}
  \end{align*}
  where notation around \eqref{eq:betadecomp} has been superscripted by $^*$ or $^\circ$ in the natural way, so that, in particular 
  $(\beta_j^*,j\in[\ell^*]\colon\sigma_j^*\ge 2)$ is (the first part of) the decomposition of $\beta$ at the blocks $(U_j^*)$ so that 
  \begin{equation}\label{eq:betasplit}
  \beta=\beta_0^*\concat\Concat_{j\in[\ell^*]\colon\sigma_j^*\ge 2}(U_j^*\concat\beta_j^*)\concat\Concat_{j\in[k+\ell^\circ]\colon\sigma_j^\circ\ge 2}(U_j^\circ\concat\beta_j^\circ)
  \end{equation}
  for some $(\beta_j^\circ)$ that do not feature in the above formula. We also note that for $j=\ell^*$, we have $\ell_j^*=0$, so the formula does not depend on the right-most 
  $\beta_j^*$ either, and we will also denote it by $\beta_0^\circ$. With this notation, we claim that for any $(\sigma^*,\mathbf{1}_k\concat\sigma^\circ)$ of this form, 
  $m_{\sigma^*,\mathbf{1}_k\concat\sigma^\circ}^{*,\circ}$ is in the linear span of $\{m_\sigma^*,\,\sigma\in\mathcal{C}\}$. We prove this by strong induction on the 
  number $r$ of 1s in $\mathbf{1}_k\concat\sigma^\circ$ that are adjacent to at least one other 1. We call these 1s ``adjacent 1s'' for simplicity. If $r=0$, then 
  $m_{\sigma^*,\mathbf{1}_k\concat\sigma^\circ}^{*,\circ}=m_\sigma^*$ for $\sigma=\sigma^*\concat\mathbf{1}_k\concat\sigma^\circ$. The case $r=1$ is void. Assuming 
  that the induction hypothesis holds up to and including a given $r\ge 1$, we consider $(\sigma^*,\mathbf{1}_k\concat\sigma^\circ)$ such that 
  $\mathbf{1}_k\concat\sigma^\circ$ has $r+1$ adjacent 1s. If $k=0$ or $k=1$, we can include part of $\mathbf{1}_k\concat\sigma^\circ$ in $\sigma^*$ without changing
  $m_{\sigma^*,\mathbf{1}_k\concat\sigma^\circ}^{*,\circ}$. Therefore, we may assume without loss of generality that $k\ge 2$. Then, using the notation introduced in and below \eqref{eq:betasplit}, 
  \begin{align*}
  m_{\sigma^*,\mathbf{1}_k\concat\sigma^\circ}^{*,\circ}(\beta)=\sum_{\overset{\scriptstyle U_j^*\in\beta,\,j\in[\ell^*]\colon\sigma_j^*\ge 2,}{\overset{\scriptstyle U_j^\circ\in\beta,\,j\in[k+\ell^\circ]\setminus[k],}{\rm all\,strictly\,increasing}}}\|\beta_0^*\|^{\ell_0^*}&\prod_{j\in[\ell^*]\colon\sigma_j^*\ge 2}\left(\left({\rm Leb}(U_j^*)\right)^{\sigma_j^*}\|\beta_j^*\|^{\ell_j^*}\right)\\[-0.8cm]
        &\times\|\beta_0^\circ\|^km_{\mathbf{1}_k}^\circ\left(\|\beta_0^\circ\|^{-1}\beta_0^\circ\right)\prod_{j=1}^{\ell^\circ}\left({\rm Leb}(U_{k+j}^\circ)\right)^{\sigma_j^\circ}\!.
  \end{align*}
  We apply Lemma \ref{lm:1k} to $m_{\mathbf{1}_{k}}^\circ\big(\|\beta_0^\circ\|^{-1}\beta_0^\circ\big)$. The resulting linear combination contains only terms of the same form and we
  can write 
  \begin{equation}\label{eq:lincomb}
  m_{\sigma^*,\mathbf{1}_k\concat\sigma^\circ}^{*,\circ}=\frac{1}{k!}m_{\sigma^*\concat\mathbf{1}_k,\sigma^\circ}^{*,\circ}-\sum_{\rho\in\mathcal{C}_k\setminus\{\mathbf{1}_k\}}\left(\prod_{i=1}^{\ell(\rho)}\frac{1}{\rho_i!}\right)m_{\sigma^*,\rho\concat\sigma^\circ}^{*,\circ}.
  \end{equation}
  As $\mathbf{1}_k$ has $k\ge 2$ adjacent 1s, the compositions $\sigma^\circ$ and $\rho\concat\sigma^\circ$ for $\rho\in\mathcal{C}_k\setminus\{\mathbf{1}_k\}$ have
  at most $r$ adjacent 1s. By the induction hypothesis, all terms in \eqref{eq:lincomb} can be written as linear combinations of members of $\{m_{\sigma^\prime}^*,\,\sigma^\prime\in\mathcal{C}\}$, and this completes the induction step. 

  In particular, $m^\circ_\sigma=m^{*,\circ}_{\emptyset,\sigma}$ is in the span of $\{m_{\sigma^\prime}^*,\sigma^\prime\in\mathcal{C}\}$ for all $\sigma\in\mathcal{C}$. 
  Since $\cA_{\alpha,\theta}$ agrees with the generator of ${\tt PDIPE}(\alpha,\theta)$ on 
  $\{m_\sigma^\circ,\,\sigma\in\mathcal{C}\}$, this completes the proof, as in \cite[Theorem 1.2]{Paper1-3}.
\end{proof}

\section*{Acknowledgements}

We thank Soumik Pal for his contributions in early discussions of this project.

\bibliographystyle{abbrv}
\bibliography{AldousDiffusion4}
\end{document}